\pgfplotsset{compat=1.14}
\title[Coalescence of geodesics and the BKS midpoint problem]{Coalescence of geodesics and the BKS midpoint problem in planar first-passage percolation}
\date{}
\author{Barbara Dembin}
\address{Barbara Dembin\hfill\break
    D-MATH, ETH Z\"urich,
    Switzerland.}
\email{barbara.dembin@math.ethz.ch}
\author{Dor Elboim}
\address{Dor Elboim\hfill\break
    Department of Mathematics,
    Princeton University,
    New Jersey, United States.}
\email{delboim@math.princeton.edu}
\author{Ron Peled}
\address{Ron Peled\hfill\break
    School of Mathematical Sciences,
    Tel Aviv University,
    Israel. Institute for Advanced Study and Princeton University, USA.}
\email{peledron@tauex.tau.ac.il}
\newtheorem{thm}{Theorem}[section]
\DeclareMathOperator{\sides}{Sides}
\newtheorem{lem}[thm]{Lemma}  
\newtheorem{prop}[thm]{Proposition}
\newtheorem{remark}[thm]{Remark}
\newtheorem{claim}[thm]{Claim}
\newtheorem*{theorem*}{Theorem}
\newcommand{\PP}{\mathbb{P}}
\newcommand{\ZZ}{\mathbb{Z}}
\newcommand{\EE}{\mathbb{E}}
\newcommand{\cB}{\mathcal{B}}
\newcommand{\cE}{\mathcal{E}}
\newcommand{\ind}{\mathds{1}}
\newcommand{\ep}{\epsilon}
\numberwithin{equation}{section}
\theoremstyle{plain} % just in case the style had changed
\newcommand{\thistheoremname}{}
\newtheorem*{genericthm*}{\thistheoremname}
\newenvironment{namedthm*}[1]
  {\renewcommand{\thistheoremname}{#1}%
   \begin{genericthm*}}
  {\end{genericthm*}}
\newcommand{\tube}{\textup{Tube}}
\newcommand{\OB}{\Omega_{\textup{basic}}}
\newcommand{\OT}{\Omega_{\textup{Tal}}}
\begin{document}

\maketitle

\begin{abstract}
    We consider first-passage percolation on $\ZZ^2$ with independent and identically distributed weights whose common distribution is absolutely continuous with a finite exponential moment. Under the assumption that the limit shape has more than 32 extreme points, we prove that geodesics with nearby starting and ending points have significant overlap, coalescing on all but small portions near their endpoints. The statement is quantified, with power-law dependence of the involved quantities on the length of the geodesics.
    
    The result leads to a quantitative resolution of the Benjamini--Kalai--Schramm midpoint problem. It is shown that the probability that the geodesic between two given points passes through a given edge is smaller than a power of the distance between the points and the edge.

    We further prove that the limit shape assumption is satisfied for a specific family of distributions.

    Lastly, related to the 1965 Hammersley--Welsh highways and byways problem, we prove that the expected fraction of the square $\{-n,\ldots,n\}^2$ which is covered by infinite geodesics starting at the origin is at most an inverse power of $n$. This result is obtained without explicit limit shape assumptions. 
\end{abstract}

% arXiv comment for version 2: Added a new Theorem 1.3 on the density of points on the vertical axis which are visited by horizontal geodesics, fixed a small mistake in the proof of inequality (2.15) and made various minor improvements.

\section{Introduction}
First-passage percolation is a model for a random metric space, formed by a random perturbation of an underlying base space. Since its introduction by Hammersley--Welsh in 1965~\cite{HammersleyWelsh}, it has been studied extensively in the probability and statistical physics literature. We refer to \cite{Kesten:StFlour} for general background and to \cite{auffinger201750} for more recent results.

We study first-passage percolation on the square lattice $(\ZZ^2,E(\mathbb{Z}^2))$ with independent and identically distributed (IID) random environment. The model is specified by a \emph{weight distribution $G$}, a probability measure on the non-negative reals. Each edge $e\in E(\mathbb{Z}^2)$ is assigned a random passage time $t_e$ with distribution $G$, independently between edges. Then, each finite path $p$ in $\mathbb Z ^2$ is assigned the passage time
\begin{equation}\label{eq:time of a path}
    T(p):=\sum _{e\in p} t_e.
\end{equation}
A random metric $T$ on $\ZZ^2$ is defined by setting the \emph{passage time between $u,v\in \mathbb Z ^2$} to
\begin{equation}
    T(u,v):=\inf_{p} T(p),
\end{equation}
where the infimum ranges over all paths connecting $u$ and $v$. Any path achieving the infimum is termed a \emph{geodesic} between $u$ and $v$. A unique geodesic exists when $G$ is atomless (in particular, under our assumption~\eqref{eq:assumption ii} below) and will be denoted $\gamma(u,v)$ (and regarded as a subgraph of $\mathbb Z^2$). The focus of first-passage percolation is the study of the large-scale properties of the random metric $T$ and its geodesics.

%In this work we develop quantitative methods for planar first-passage percolation models which are applicable under verifiable assumptions on the limit shape.

\subsection{Results} We proceed to describe our main results. Background and further discussion is provided in Section~\ref{sec:background}.

Throughout we assume that $G$ possesses an exponential moment,
\begin{equation}\label{eq:assumption i}
    \exists \alpha >0 \ \text{ so that } \ \mathbb E \big[ \exp (\alpha t_e) \big]<\infty \tag{EXP} 
\end{equation}
and also that
\begin{equation}\label{eq:assumption ii}
    \quad G\text{ is absolutely continuous}. \tag{ABS}
\end{equation}

The first-order behavior of the metric $T$ is governed by the following result. Define the metric ball of radius $t$ by 
\begin{equation}
B(t):=\big\{\,v\in\ZZ^2:\, T(0,v)\leq t\,\big\}+ \left[-\frac{1}{2},\frac{1}{2}\right] ^2.
\end{equation}

\begin{namedthm*}{Limit Shape Theorem}[Cox and Durrett \cite{CoxDurrett}]
For any distribution $G$ satisfying \eqref{eq:assumption i} and \eqref{eq:assumption ii}, there exists a deterministic convex set $\mathcal B_G$ such that for all $\epsilon >0$,
\begin{equation}
     \mathbb P \Big( \exists t_0>0  \text{ such that }\  \forall t\geq t_0, \quad (1-\ep)\mathcal B_G \subseteq \frac{B(t)}{t}\subseteq (1+\ep)\mathcal B_G \Big) =1.
\end{equation}
\end{namedthm*}

The set $\mathcal B _G$ is called the \emph{limit shape} corresponding to $G$. The theorem holds under weaker assumptions but the above generality suffices for the purposes here.

\subsubsection{Coalescence of geodesics}
Our first result concerns the coalescence of geodesics with nearby starting and ending points. It is shown that, with high probability, such geodesics overlap almost entirely, differing only in short segments near their endpoints. The statement is quantified, obtaining power-law dependence of the involved quantities on the length of the geodesics.

We define $\sides(\mathcal B_G)$ as the number of sides of $\mathcal B_G$: if $\mathcal B_G$ is a polygon then $\sides(\mathcal B_G)$ is its number of edges while if $\mathcal B_G$ is not a polygon then  $\sides(\mathcal B_G):=+\infty$ (equivalently, $\sides(\mathcal B_G)$ is the number of extreme points of $\mathcal B_G$).
Our proof requires a lower bound on $\sides(\mathcal B_G)$.
This assumption is weaker than the condition that the limit shape be strictly convex, a condition which is believed, but not proved, to follow from assumption~\eqref{eq:assumption ii} (see~\cite[Question 11]{auffinger201750}). Theorem~\ref{thm:log over loglog sides} below identifies an explicit class of distributions $G$ satisfying~\eqref{eq:assumption i}, ~\eqref{eq:assumption ii} and the required lower bound on $\sides(\mathcal B_G)$ so that our result holds unconditionally for this class.

We write $\|\cdot\|$ for the $\ell_1$ norm on $\mathbb R^2$. For $n\ge 0$, set \begin{equation*}
    \Lambda_n:=[-n,n]^2\cap \ZZ^2.
\end{equation*}
\begin{thm}\label{mainthm2}
   Suppose $G$ satisfies \eqref{eq:assumption i}, \eqref{eq:assumption ii} and $\sides(\mathcal B_G)>32$. There exists $C>0$ (depending only on $G$) such that for each $0<\epsilon \le1/17$, each $\delta\ge0$ and all $y\in\ZZ^2$ with $\|y\|\ge 2$,
  \begin{equation}
      \mathbb P \Big(\exists u,z\in \Lambda_{\|y\|^{1/8 -\epsilon}}\, \exists v,w\in  (y+\Lambda_{\|y\|^{1/8 -\epsilon}})\quad \!\! \big| \gamma (u,v) \triangle \gamma (z,w) \big| > \|y\|^{1-\delta } \Big) \!\le \! \frac{C\log^2 \|y\|} {\|y\|^{\ep-\delta/8 }},
  \end{equation}
  where $p_1\triangle p_2$ is the set of edges belonging to exactly one of the paths $p_1, p_2$.
\end{thm}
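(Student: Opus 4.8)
The plan is to reduce the four-point statement to a two-point coalescence estimate and then prove that estimate by a renewal/regeneration argument along the geodesic. I would first set $n = \|y\|$ and work with the box scale $m := n^{1/8-\epsilon}$. The key reduction is: if two geodesics $\gamma(u,v)$ and $\gamma(z,w)$, with $u,z$ both in $\Lambda_m$ and $v,w$ both in $y+\Lambda_m$, fail to have large symmetric difference, then in particular there must exist some pair of points $a,b$ (at distance of order $n$ from each other, roughly on the segment from $0$ to $y$) such that the geodesic $\gamma(a,b)$ is not "stable" — i.e. perturbing its endpoints within a box of size $m$ changes it by more than a small fraction of its length. So the heart of the matter is to show that for a single pair $a,b$ with $\|a-b\|$ of order $n$, with probability at least $1 - C\log^3 n / n^{\epsilon}$ the geodesic $\gamma(a,b)$ coalesces with all geodesics $\gamma(a',b')$ for $a'\in a+\Lambda_m$, $b'\in b+\Lambda_m$, outside of segments of length $O(n^{1-\delta})$ near $a$ and $b$; the $\delta$-dependence then comes from a union bound over the $O(n^\delta)$ "windows" of length $n^{1-\delta}$ along the geodesic, each of which contributes a failure probability of the right order, giving the final bound $C\log^3 n \cdot n^{\delta}/n^{\epsilon} \cdot n^{-\delta/8}$... more carefully, the exponent $\epsilon - \delta/8$ suggests that the union bound is over roughly $n^{\delta/8}$ sub-events (one per box-scale chunk of a length-$n^{1-\delta}$ geodesic, since $m \approx n^{1/8}$), each failing with probability $\lesssim \log^3 n/n^{\epsilon - \delta/8 + \text{something}}$.

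**The core probabilistic input** I would develop is a coalescence estimate at a single scale: fix a point $p$ roughly midway on a geodesic of length $\ell$; then with probability $1 - \ell^{-c}$ (for an explicit $c$ depending on $\sides(\mathcal B_G)$) there is a single edge (or short segment) through which \emph{every} geodesic between a point at distance $\gtrsim \ell$ on one side and a point at distance $\gtrsim \ell$ on the other side must pass, provided the endpoints stay within boxes of the appropriate polynomial size. To get this I would use the hypothesis $\sides(\mathcal B_G) > 32$ to ensure there are many distinct "directions" of linear-order fluctuation available: the standard tool is that when the limit shape has a corner (extreme point), geodesics in that direction are confined to sublinear-width tubes, and with more than $32$ extreme points one can run a competition/entropy argument showing that the transversal fluctuation exponent is genuinely bounded below $1$ with a quantitative rate. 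Concretely, I expect to invoke (or re-derive) the Alexander-type approximation $|\mathbb{E} T(0,v) - g(v)| = O(\|v\|^{1/2}\log\|v\|)$ together with concentration of $T$ (from \eqref{eq:assumption i}, via Talagrand/Kesten), and combine these with a chaining argument over the polynomially many candidate geodesic endpoints in the boxes $\Lambda_m$.

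**The mechanism for coalescence itself** is the classical one: two geodesics that come within $O(1)$ of each other at an intermediate point, and which have the same weights on the overlapping region, must agree from that point on (by uniqueness of geodesics, which holds under \eqref{eq:assumption ii}) — so it suffices to show the geodesics are forced to \emph{touch} at many intermediate points. I would establish touching via a "bridge" or "highway" argument: with high probability there is a low-weight configuration near the midpoint whose cost advantage over any competing route is of order a positive power of $\ell$, which swamps the $O(\sqrt{\ell}\log\ell)$ fluctuations, forcing all the relevant geodesics through it. Iterating this over $\Theta(n^\delta)$ (or $\Theta(n^{\delta/8})$) disjoint intermediate scales along the geodesic and taking a union bound yields the statement with the stated polylog factor $\log^3 \|y\|$ — I'd expect one $\log$ from the Alexander correction, one from a maximal inequality over scales, and one from the union over endpoints.

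**The main obstacle** will be making the single-scale coalescence estimate quantitative with a power-law rate, rather than just "with high probability." Most existing coalescence results in planar FPP are qualitative (almost-sure statements about semi-infinite geodesics, Busemann functions, etc.) and rely on either exactly solvable structure or on strict convexity of the limit shape. Here we have neither — only the weak assumption $\sides(\mathcal B_G) > 32$ — so the challenge is to extract a polynomial coalescence rate from that combinatorial input alone. I expect this to require a delicate multi-scale competition argument: at each scale, compare the cost of a geodesic that "commits" to passing through a favorable region against the cost of all geodesics that avoid it, using the many available extreme-point directions to get enough independent-ish comparisons that the union bound over bad scales closes. Controlling the error terms (the Alexander correction and the concentration tails) uniformly across all the polynomially many endpoint pairs in $\Lambda_m$ and $y+\Lambda_m$, while keeping the exponents matched to yield exactly $\|y\|^{\epsilon - \delta/8}$, will be the technically demanding core of the argument.
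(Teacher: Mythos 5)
Your overall architecture (reduce to showing that geodesics with nearby endpoints coalesce outside short end segments, then sum over scales) points in the right direction, but both mechanisms you propose for actually producing coalescence have genuine gaps, and they are where all the difficulty lives. First, the ``bridge/highway'' idea --- a low-weight configuration near the midpoint whose cost advantage ``of order a positive power of $\ell$'' swamps the $O(\sqrt{\ell}\log\ell)$ fluctuations --- cannot work as stated: a favourable configuration localized in a region of diameter $k$ yields a cost advantage of at most $O(k)$, and for such a configuration to occur at a prescribed location with probability $1-\ell^{-c}$ one is forced to take $k=O(\operatorname{poly}\log \ell)$, so the gain is far smaller than $\sqrt{\ell}$. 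This is exactly the obstruction that makes the BKS problem hard. The paper's substitute is different in kind: it does not force geodesics through a favourable local region, but shows (Proposition~\ref{prop:attractive geodesics intro}) that a geodesic $\gamma$ is \emph{attractive} --- any second geodesic that stays within distance $r$ of $\gamma$ on a $(1-\xi)$-fraction of $\sim N$ intervals of length $m$ must share an edge with $\gamma$. The gain there is $\sqrt{m/r}$ per interval, obtained from a Mermin--Wagner style perturbation of \emph{all} the weights in the $r$-tube above an interval of length $m$ (so the gain grows with the length of the stretch, not with a local box), combined with Harris' inequality and a Chernoff bound over $\sim N$ essentially independent intervals; the concavity of the square root in Talagrand's inequality is what allows the per-interval deviations of $\gamma$ itself to be summed favourably.

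Second, your coalescence mechanism ``two geodesics that come within $O(1)$ of each other must agree from that point on, by uniqueness'' is false: uniqueness only gives that two geodesics intersecting at \emph{two} points coincide between them; touching once (let alone merely coming close) forces nothing, since the endpoints differ. Relatedly, nothing in your outline guarantees that the competing geodesics ever come close to one another in the first place. The paper handles both issues with planarity: it builds two reference geodesics $\gamma^{\pm}$ started and ended at height $\pm\kappa\ell$ relative to $0$ and $y$, uses the hypothesis $\sides(\mathcal B_G)>32$ (via Proposition~\ref{prop:limit}) to show they stay ordered and trap every geodesic from $\Lambda_\ell$ to $y+\Lambda_\ell$, uses the ordering together with translation invariance and Markov's inequality to show $\gamma^+$ is $r$-close to $\gamma^-$ on most intervals near each end, applies attractiveness near \emph{both} ends to get two well-separated intersection points of $\gamma^+$ and $\gamma^-$, and only then concludes that all trapped geodesics coincide on the common middle portion, with the symmetric difference controlled by the lengths $\sim Nm=\|y\|^{1-\delta}$ of the end segments. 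Any proof along your lines would need replacements for these three ingredients (a quantitative closeness-implies-intersection mechanism, a two-intersection coalescence argument, and a reason the geodesics approach each other), none of which your proposal supplies.
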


The theorem thus shows that all geodesics which start at distance at most $\|y\|^{1/8-\epsilon}$ from the origin and end at distance at most $\|y\|^{1/8-\epsilon}$ from $y$ coalesce with high probability. In this sense, it is shown that the \emph{coalescence exponent} of first-passage percolation is at least $1/8$. This is the first result establishing the positivity of the coalescence exponent for an explicit class of weight distributions in first-passage percolation (using Theorem~\ref{thm:log over loglog sides}); see Section~\ref{sec:coalescence discussion} for further discussion.

We point out that the coalescence set of two geodesics is necessarily a path. This follows from the fact that there is a unique geodesic between every pair of points.
 See Figure~\ref{fig:galaxy} for simulation results showing the phenomenon of coalescence. 
 
\subsubsection{The influence of edges}

The passage time of the geodesic between given endpoints is naturally a function of the weights assigned to all edges. To what extent is this passage time influenced by the weight assigned to a specific edge? This notion is formalized here by the probability that the geodesic passes through that edge. 
It is clear that the influence of edges near the endpoints cannot be uniformly small, but it is not clear whether the influence must diminish for edges far from the endpoints. This issue was highlighted by Benjamini--Kalai--Schramm~\cite{BKS} in their seminal study of the variance of the passage time, where the following problem, later termed the BKS midpoint problem, was posed: Consider the geodesic between $0$ and $v$. Does the probability that it passes at distance $1$ from $v/2$ tend to zero as $\|v\|\to \infty$? A proof that this probability tends to zero as a power of $\|v\|$ (and analogous estimates for other edges) would simplify the argument of~\cite{BKS}.

The BKS midpoint problem on the square lattice was resolved positively by Damron--Hanson~\cite{damron2017bigeodesics} under the assumption that the limit shape boundary is differentiable and then resolved unconditionally by Ahlberg--Hoffman~\cite{ahlberghoffman}. While both resolutions apply to the more general setup of ergodic edge weights (rather than simply IID), they also share the drawback that no quantitative decay rate for the probability is obtained. As a consequence of our quantitative control on the coalescence of geodesics, we are able to prove power-law decay rates for the influence of edges. These apply, in particular, for the ``midpoint edges'', yielding a quantitative resolution of the BKS midpoint problem.

\begin{thm}\label{prop:probedgebulk}
Suppose $G$ satisfies \eqref{eq:assumption i}, \eqref{eq:assumption ii} and $\sides(\mathcal B_G)>40$. There exists $C>0$ (depending only on $G$) such that for all $u,v,z\in\ZZ^2$,
\begin{equation}\label{eq:influence of vertices}
    \mathbb P\big(z\in\gamma(u,v)\big)\le\frac{C\log^2 (D_z^{u,v}+2)}{(D_z^{u,v})^{1/16}},
\end{equation}
where $D_z^{u,v}:=\min\{\|u-z\|,\|v-z\|\}$. 
\end{thm}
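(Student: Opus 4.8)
The plan is to derive the bound from the quantitative coalescence estimate in Theorem~\ref{mainthm2} by averaging over joint translations of the two endpoints, together with a uniform bound on the number of vertices a geodesic can place inside a small box. By the symmetry $\gamma(u,v)=\gamma(v,u)$ we may assume $\|u-z\|=D_z^{u,v}=:r$, and that $r$ exceeds a fixed constant, the estimate being trivial otherwise. The argument splits into the ``bulk'' case, in which $z$ is also at distance of order $r$ from the second endpoint, and the general case, which is reduced to it.

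\emph{The bulk case.} Suppose $L:=\|u-v\|$ is of order $r$, and set $m:=\lfloor L^{1/8-\epsilon}\rfloor$. By Theorem~\ref{mainthm2} applied at scale $L$ (with boxes of half-width $m$ around $u$ and around $v$, and a small $\delta>0$) there is an event $\mathcal E$ with $\mathbb P(\mathcal E^{c})\le C\log^{3}L\,/\,L^{\,\epsilon-\delta/8}$ on which every geodesic with one endpoint in $u+\Lambda_{m}$ and the other in $v+\Lambda_{m}$ differs from $\gamma(u,v)$ by at most $\tfrac12 L^{1-\delta}$ edges. Since the coalescence set of two geodesics is a path, such a symmetric difference consists of two subpaths emanating from the endpoints, so on $\mathcal E$ each of these geodesics contains every vertex of $\gamma(u,v)$ at $\ell_1$-distance more than $\tfrac12L^{1-\delta}$ from both $u$ and $v$. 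As $z$ lies at distance of order $r\gg L^{1-\delta}$ from the endpoints, on $\mathcal E$ we get $z\in\gamma(u,v)\Rightarrow z\in\gamma(u',v')$ for all $u'\in u+\Lambda_m$, $v'\in v+\Lambda_m$; call this event $\mathcal A$, and for $a\in\Lambda_m$ let $\mathcal A_a$ be its translate by $a$, so $\mathbb P(\mathcal A_a)=\mathbb P(\mathcal A)$ by translation invariance. On $\mathcal E\cap\mathcal A_a$, a second use of the same estimate — comparing $\gamma(u+a,v+a)$ with $\gamma(u,v)$, which again agree away from the endpoints — forces $z+a\in\gamma(u,v)$. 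Summing $\ind_{\mathcal E\cap\mathcal A_a}\le\ind[z+a\in\gamma(u,v)]$ over $a\in\Lambda_m$, taking expectations, and using $\{z\in\gamma(u,v)\}\subseteq\mathcal E^{c}\cup\mathcal A$, we obtain
\begin{equation*}
\mathbb P\big(z\in\gamma(u,v)\big)\ \le\ 2\,\mathbb P(\mathcal E^{c})\ +\ \frac{\mathbb E\big[\#\{\text{vertices of }\gamma(u,v)\text{ in }z+\Lambda_m\}\big]}{|\Lambda_m|}.
\end{equation*}

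\emph{Occupation of a small box.} It remains to show $\mathbb E[\#\{\text{vertices of }\gamma(u,v)\text{ in }Q\}]\le C\ell\log\ell$ for a box $Q$ of side $\ell$ not containing $u$ or $v$ (applied with $\ell=2m$). If $p$ and $q$ are the first and last vertices of $\gamma(u,v)$, read off from $u$, that lie in $Q$, then by the restriction property the corresponding subpath of $\gamma(u,v)$ equals $\gamma(p,q)$, it contains every vertex of $\gamma(u,v)$ in $Q$, and $\|p-q\|\le 2\ell$; hence the quantity in question is at most $1+\max\{|\gamma(x,y)|:x,y\in Q\}$, whose expectation is $O(\ell\log\ell)$ by a standard exponential tail bound on the number of edges of a geodesic between points at bounded $\ell_1$-distance (Kesten), valid under~\eqref{eq:assumption i} — this step uses no coalescence. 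Inserting this into the display and optimising over $\epsilon,\delta$ gives $\mathbb P(z\in\gamma(u,v))\le C\log^{3}L\,/\,L^{c}$ for some $c>0$; the passage from the exponent $1/17$ directly accessible through Theorem~\ref{mainthm2} to the exponent $1/16$ is exactly where the strengthened hypothesis $\sides(\mathcal B_G)>40$ (rather than $>32$) is used, allowing the coalescence estimate to be run with parameters for which the two error terms balance at $L^{-1/16}$.

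\emph{Reduction of the general case, and the main obstacle.} When $\|u-v\|$ is much smaller than $r$, the event $\{z\in\gamma(u,v)\}$ forces a macroscopic detour of the geodesic, hence has probability decaying faster than any power of $r$ by the Limit Shape Theorem and standard concentration under~\eqref{eq:assumption i}; this case is negligible. The substantive case is $\|u-v\|\gg r$: one wishes to replace $v$ by a point of $\gamma(u,v)$ at distance of order $r$ from $z$ — for instance the last vertex of $\gamma(u,v)$ in the $\ell_1$-ball of radius $2r$ about $u$ — after which $z$ becomes bulk and the scale drops to $r$. I expect this to be the main obstacle and the heart of the proof: that intermediate point is random, ranging over $\Theta(r)$ positions, and a naive union bound over them is far too lossy; handling it without pinning the point down — by a further, more delicate use of coalescence at intermediate scales, or a telescoping over scales — is where the real work lies. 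Everything else is bookkeeping around Theorem~\ref{mainthm2} and classical first-passage estimates.
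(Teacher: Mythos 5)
Your bulk case is correct and is essentially the paper's averaging argument, and you are right that the upgrade from the exponent $1/17$ to $1/16$ is where $\sides(\mathcal B_G)>40$ enters (the paper records the needed variant of the coalescence theorem, valid for $\ep$ up to $1/16$ under that hypothesis, as Theorem~\ref{mainthm2bis}). But the case $\|v-z\|\gg\|u-z\|$ that you explicitly leave open is a genuine gap --- the theorem is claimed for all triples $u,v,z$ --- and your diagnosis of what it takes to close it (locating a random intermediate point on the geodesic, a union bound to be avoided, telescoping over scales) misreads the difficulty: no intermediate point is ever needed.

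The fix is a restructuring of the argument you already ran. Instead of applying coalescence to boxes around $u$ and $v$ (which fails because $z$ may sit inside the non-coalesced stretch of length $\|u-v\|^{1-\delta}$ near $u$), apply it to the two pairs $(u,z)$ and $(z,v)$ separately, with a common box radius $\ell=\|u-z\|^{1/8-\ep}$ governed by the nearer endpoint. Let $\cE_0$ be the event that all geodesics from $u+\Lambda_\ell$ to $z+\Lambda_\ell$ coalesce and $\cE_1$ the analogous event for $z+\Lambda_\ell$ and $v+\Lambda_\ell$; Theorem~\ref{mainthm2bis} gives $\PP(\cE_0^c\cup\cE_1^c)\le C\log^3\|u-z\|\,/\,\|u-z\|^{\ep}$ (the bound for $\cE_1$ is only better since $\|v-z\|\ge\|u-z\|$, and a box smaller than the one in the theorem is a subset of it). On $\cE_0\cap\cE_1$, any two geodesics starting in $u+\Lambda_\ell$, ending in $v+\Lambda_\ell$ and meeting $z+\Lambda_\ell$ contain sub-geodesics from $u+\Lambda_\ell$ to $z+\Lambda_\ell$ and from $z+\Lambda_\ell$ to $v+\Lambda_\ell$; these coalesce pairwise, so the two geodesics share an edge before and an edge after the box $z+\Lambda_\ell$ and hence, by uniqueness of geodesics, have the identical trace inside $z+\Lambda_\ell$. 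Consequently $\sum_{w\in\Lambda_\ell}\ind_{z+w\in\gamma(u+w,v+w)}$ is bounded by the number of vertices of this single common trace, your occupation estimate applies verbatim, and the two error terms balance exactly as in your bulk computation. This works uniformly in $\|v-z\|/\|u-z\|$, so the case split becomes unnecessary (note also that your ``macroscopic detour'' dismissal of $\|u-v\|\ll\|u-z\|$ only covers $\|u-v\|\le c\|u-z\|$ for a specific constant $c$ coming from comparing Kesten's linear lower bound for long paths with the linear upper bound on $T(u,v)$, so even the negligible regime needs this care).
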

A variant of the result may also be obtained under the weaker assumption $\sides(\mathcal B_G)>32$, see~\eqref{eq:BKS variant}.

It is clear that one cannot have a decay rate in~\eqref{eq:influence of vertices} which is uniform in $z$ at a given distance from $u$ and $v$ and is faster than a power law, since for any integer $0<k\le \frac{1}{2}\|u-v\|$ the geodesic $\gamma(u,v)$ must pass through at least one vertex $z$ with $D_z^{u,v}=k$.

\begin{figure}[htp]
    \centering
    \includegraphics[width=13cm]{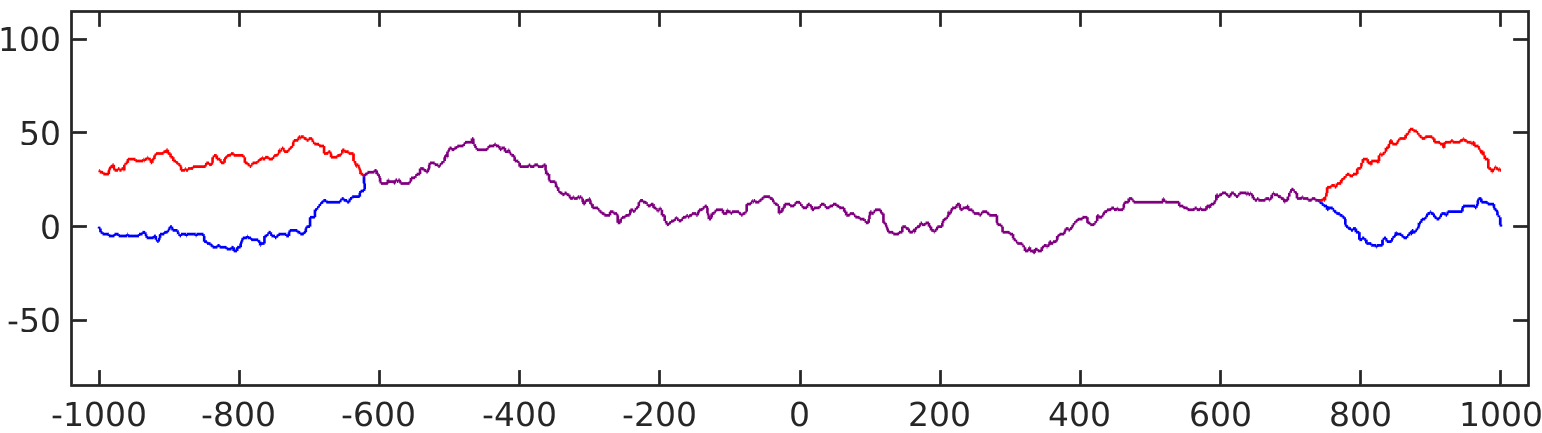}
    \includegraphics[width=13cm]{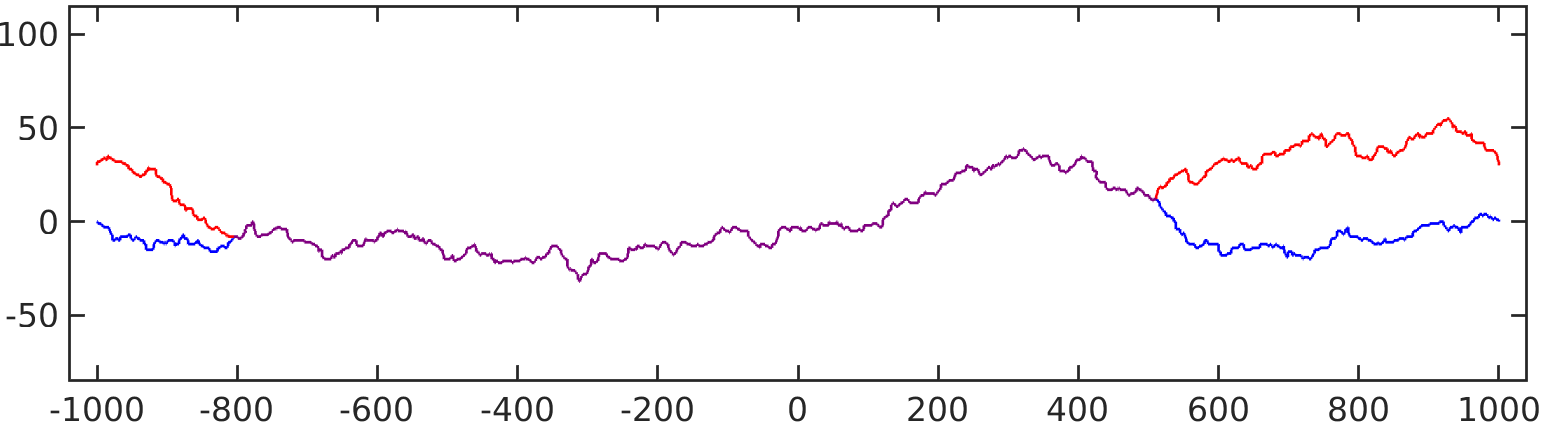}
    \includegraphics[width=13cm]{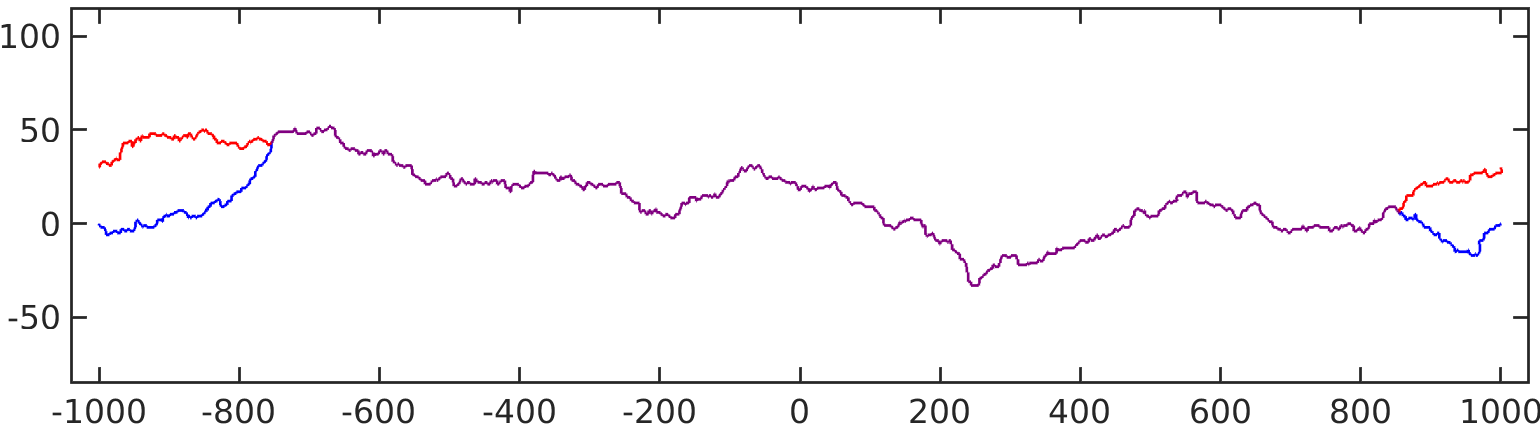}
    \includegraphics[width=13cm]{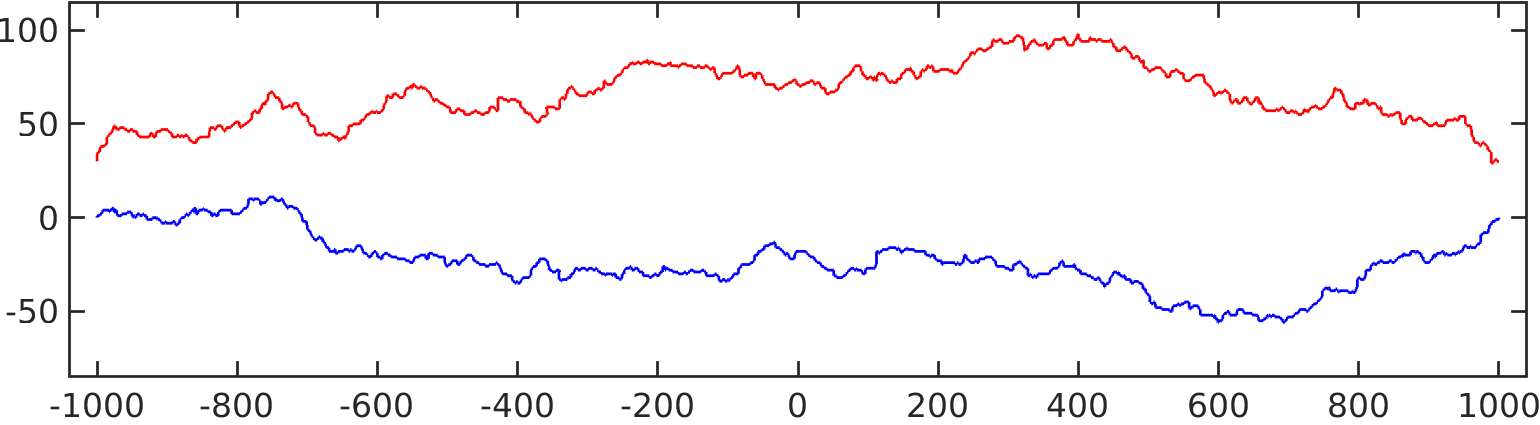}
    \caption{A computer simulation of the geodesics from $(-1000,0)$ to $(1000,0)$ (blue and purple) and from $(-1000,30)$ to $(1000, 30)$ (red and purple) in first-passage percolation on $\mathbb Z ^2$ with weight distribution uniform on $[0,1]$. The pictures depict the geodesics in independent samples of the environment. Theorem~\ref{mainthm2} states that nearby geodesics coalesce with high probability.\\
The geodesics in the fourth simulation did not coalesce and, moreover, were far from each other for most of the way. This is compatible with our results as Proposition~\ref{prop:attractive geodesics intro} shows that geodesics which stay close to each other for a significant amount of time have a very high probability to coalesce.
    \label{fig:galaxy}}
\end{figure}

\smallskip
Theorem~\ref{prop:probedgebulk} implies, as a special case, that the probability that the origin lies on the geodesic between $(-n,0)$ and $(n,0)$ is smaller than a power of $n$. In fact, the method used to derive Theorem~\ref{prop:probedgebulk} allows to prove a stronger fact: the probability that there exists an integer $s$ for which the origin lies on a geodesic from $(-n,s)$ to $(n,s)$ is smaller than a power of $n$. We state this fact in our next theorem.
\begin{thm}\label{thm:density}
Suppose $G$ satisfies \eqref{eq:assumption i}, \eqref{eq:assumption ii} and $\sides(\mathcal B_G)>40$. There exists $C>0$ (depending only on $G$) such that for all integer $n\ge2$,
\begin{equation}
    \mathbb P\Big( (0,0)\in\bigcup_{s\in\ZZ}\gamma \big( (-n,s),(n,s) \big) \Big) \le \frac{C\log ^2n}{n^{1/24}}.
\end{equation}
% \begin{equation}
%     \mathbb P\bigg( \!\!\! \begin{array}{c} \text{there exists $s\in\ZZ$ such that}\\ \text{  }(0,0)\in\gamma \big( (-n,s),(n,s+y) \big) \end{array} \!\! \bigg) \le\frac{C\log^2 n}{n^{1/24}}.
% \end{equation}
\end{thm}
Our methods also allow to prove related statements in which the horizontal geodesics are replaced by geodesics with a fixed slope.

We point out that the theorem is related to the well-known problem of proving that there are no infinite bigeodesics (doubly-infinite paths for which every finite sub-path is a geodesic) in first-passage percolation. Indeed, the latter problem can be rephrased as proving that the probability that there exist two points at distance $n$ from the origin such that the geodesic between the points passes through the origin, tends to zero with $n$.

\subsubsection{Highways and byways}
In their seminal paper~\cite{HammersleyWelsh}, Hammersley and Welsh coined the notions of highways and byways. An edge of $\mathbb{Z}^2$ is called a \emph{highway edge} if it belongs to a geodesic of the form $\gamma(0,z)$ for infinitely many values of $z$ (equivalently, if it belongs to an infinite geodesic starting from the origin). Non-highway edges are called \emph{byway edges}. Hammersley and Welsh asked whether the number of highway edges intersecting the circle of radius $R$ around the origin tends to infinity with $R$, and, if so, how fast?

Very recently, Ahlberg--Hanson--Hoffman~\cite{AhlbergHansonHoffman} obtained the first upper bound on the density of highway edges, proving that the probability that a given edge $e$ is a highway edge tends to zero with the distance of $e$ from the origin. This result is proved solely under the assumptions that the weight distribution $G$ is non-atomic and that the minimum of four independent samples from $G$ has a finite second moment. Moreover, the result is proved in a more general setup, when the edge weights $(t_e)_{e\in E(\mathbb{Z}^2)}$ are merely assumed to come from an ergodic distribution, rather than an IID distribution, which satisfies several additional assumptions.

Our next theorem provides the first \emph{quantitative} upper bound on the density of highway edges, showing that the expected proportion of highway edges in $\Lambda_n$ is at most an inverse power of $n$. Moreover, the result applies already to edges lying on long \emph{finite} geodesics. Significantly, in this application of our techniques there is no need for explicit assumptions on the limit shape $\mathcal B_G$.

To state the result, let
\begin{equation}
    \partial \Lambda_n:=\{x\in\ZZ^d : \|x\|_\infty=n\}
\end{equation}
and denote by $\mathcal T_n$ the union of all geodesics from $0$ to a point in $\partial \Lambda_n$, that is
\[\mathcal T_n:=\bigcup_{x\in \partial \Lambda_n}\gamma(0,x).\]

\begin{thm}\label{thm:density2}
Suppose $G$ satisfies~\eqref{eq:assumption i} and~\eqref{eq:assumption ii}. There exists $C>0$ (depending only on $G$) such that for all integer $n\ge 2$,
\begin{equation}
    \mathbb E\left[\frac{|\mathcal T_{4n}\cap \Lambda_n|}{|\Lambda_n|}\right] \le \frac{C\log^2 n}{n^{1/8}}.
\end{equation}
\end{thm}
Section~\ref{sec:assumptions discussion} briefly comments on possible relaxations of our assumptions~\eqref{eq:assumption i} and~\eqref{eq:assumption ii}.

\subsubsection{Many sides to the limit shape}\label{sec:unconditional}

The following theorem identifies a wide class of weight distributions for which the limit shape has many sides (so that the assumptions of Theorem~\ref{mainthm2}, Theorem~\ref{prop:probedgebulk} and Theorem~\ref{thm:density} are satisfied).

\begin{thm}\label{thm:log over loglog sides}
Let $X$ be a random variable supported on $[0,1]$ with $\text{Var}(X)=\sigma ^2 >0$. There exists $\epsilon _0 (\sigma )>0$, depending only on $\sigma$, such that the following holds for all $0<\epsilon <\epsilon _0(\sigma )$. Let $G$ be the distribution of $1+\epsilon X$. Then the limit shape corresponding to $G$ satisfies
\begin{equation}
   \sides(\mathcal B _G) \ge \frac{\log (1/\epsilon )}{\log \log (1/\epsilon )}.
\end{equation}
\end{thm}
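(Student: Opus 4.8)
The plan is to show that when $\epsilon$ is small, the weight distribution $G$ of $1+\epsilon X$ is a small perturbation of the deterministic weight $1$, whose limit shape is the $\ell^1$ ball (a square, or rather a diamond, with $4$ sides), and to extract a polygonal structure with many sides from the fluctuations induced by $X$. For the constant-weight model the passage time $T(0,v)$ equals $\|v\|_1$ and every monotone lattice path is a geodesic; the point is that turning on a tiny amount of randomness $\epsilon X$ breaks this massive degeneracy and the resulting limit shape $\mathcal B_G$ should be very close to the diamond but with its flat sides replaced by slightly curved/faceted pieces. To make $\sides(\mathcal B_G)$ large one needs to exhibit many distinct supporting lines of $\mathcal B_G$ with distinct contact sets, equivalently many distinct ``facet normals''. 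I would work on one side of the diamond, say directions $v=(v_1,v_2)$ with $v_1,v_2\ge 0$, where the relevant combinatorial object is the set of monotone up-right lattice paths from $0$ to $v$, of which there are $\binom{v_1+v_2}{v_1}$, and the passage time is $\|v\|_1 + \epsilon\cdot(\text{sum of } X\text{'s along the best path})$.

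The key step is a careful analysis of the first-order correction to the time constant in the perturbative regime. Writing $g_G(v)$ for the time constant (so $\mathcal B_G=\{v: g_G(v)\le 1\}$), we have $g_G(v)=\|v\|_1+\epsilon\,\mu_\epsilon(v/\|v\|_1)+o(\epsilon)$ where $\mu_\epsilon(\theta)$ is, up to normalization, the time constant of the ``last/first passage''-type problem of optimizing $\sum X_e$ over monotone paths in direction $\theta$ — this is essentially a directed polymer / corner growth quantity. The strategy is: (i) identify a directional function $\theta\mapsto\psi(\theta)$ (the limiting normalized optimal $\sum X$ along monotone paths in direction $\theta$) that controls the $O(\epsilon)$ correction; (ii) show $\psi$ is not affine on any subinterval, or more precisely that it has at least $N\approx \log(1/\epsilon)/\log\log(1/\epsilon)$ ``corners'' when one only resolves the problem down to the accuracy $\epsilon$ actually permits; and (iii) transfer corners of the correction function into genuine extreme points of $\mathcal B_G$, using that a first-order correction of size $\epsilon$ with a corner of angular width $\gg$ (something) $\cdot\epsilon^{?}$ produces an extreme point of the actual convex shape. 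The counting bound $\log(1/\epsilon)/\log\log(1/\epsilon)$ strongly suggests the mechanism: along a monotone path of length $n$, the optimal $\sum X_e$ concentrates, and by choosing scales $n_k$ geometrically (so there are $\sim\log(1/\epsilon)/\log\log(1/\epsilon)$ of them before the $o(\epsilon)$ error swamps the signal) one gets a distinct optimizing behavior — hence a distinct facet — at each scale. Concretely I expect one fixes rational directions $\theta_k=p_k/q_k$ with denominators growing like a fast-growing sequence (factorials, giving the $\log\log$), shows the correction $\mu_\epsilon$ at $\theta_k$ is strictly ``more negative'' (more favorable) than the chord between its neighbors by a gap that survives the error terms, and concludes each $\theta_k$ is an extreme point.

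The main obstacle, and where the real work lies, is step (ii)–(iii): controlling the error in the first-order expansion of the time constant uniformly well enough — both in $\epsilon$ and across the $\approx\log(1/\epsilon)/\log\log(1/\epsilon)$ scales — to guarantee that the predicted corners are not washed out. This requires quantitative concentration for the perturbed passage time (using \eqref{eq:assumption i} via standard subadditive/Kesten-type concentration, plus \eqref{eq:assumption ii} only insofar as it is inherited—actually here $X$ is bounded so concentration is clean), and a quantitative lower bound on the ``curvature gap'' at each chosen direction. One convenient route: restrict attention to a finite, $\epsilon$-dependent family of candidate facet directions and run a block/renormalization argument on boxes whose aspect ratios realize those directions, showing that within the box the constrained time constant (geodesics forced to have a given coarse slope) strictly exceeds the unconstrained one unless the slope matches, with a gap $\ge c(\sigma)\epsilon/(\text{poly in the scale})$. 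Summing the scales geometrically and stopping when the gap drops below the $o(\epsilon)$-error threshold yields exactly $\Theta(\log(1/\epsilon)/\log\log(1/\epsilon))$ usable directions. The remaining steps — deducing $\mathcal B_G$ is within $o(\epsilon)$ of the diamond (from the Limit Shape Theorem applied to $G$ together with $|t_e-1|\le\epsilon$), and that a convex set $o(\epsilon)$-close to the diamond but with $N$ separated supporting directions of strict local curvature has $\ge N$ extreme points — are comparatively routine convex-geometry bookkeeping, and the threshold $\epsilon<\epsilon_0(\sigma)$ is precisely what is needed to make the $\sigma^2>0$ ``signal'' beat the implicit constants.
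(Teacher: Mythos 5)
Your high-level strategy coincides with the paper's: work in the perturbative regime where $\mathcal B_G$ is close to the $\ell_1$ diamond, zoom in near a corner direction, pick slopes at polylog-separated scales between $\epsilon$ and a constant, and show that at each chosen slope the time constant lies strictly below the chord joining its neighbours (so that, by the characterization of flat edges via equality in the triangle inequality for $\mu$, no two chosen slopes lie on the same flat edge); the count $\log(1/\epsilon)/\log\log(1/\epsilon)$ then comes from the number of admissible scales, multiplied by $8$ using lattice symmetries. However, your proposal stops exactly where the proof begins. The entire mathematical content is the pair of two-sided estimates on $\mu_\epsilon(1,\delta)$ for $\epsilon<\delta<\delta_0(\sigma)$: an upper bound $\mu_\epsilon(1,\delta)\le(1+\delta)(1+\epsilon E)-c_\sigma\epsilon\sqrt{\delta}$, obtained by chaining $\delta n$ blocks of aspect ratio $1/\delta\times 1$ and, in each block, taking the minimum of two disjoint (hence independent) monotone paths, the gain $\epsilon\sqrt{1/\delta}$ per block coming from Berry--Esseen (this is the only place $\sigma^2>0$ enters); and a lower bound $\mu_\epsilon(1,\delta)\ge(1+\delta)(1+\epsilon E)-C\epsilon\sqrt{\delta\log(1/\delta)}$, obtained by bounding the number of relevant paths by $\exp(C\delta\log(1/\delta)n)$ (using $t_e\ge1$ to discard long paths) and applying Azuma--Hoeffding plus a union bound. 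Your ``block/renormalization argument showing the constrained time constant strictly exceeds the unconstrained one unless the slope matches'' is precisely the statement these two lemmas establish; as written it is a restatement of the goal, not an argument, and you yourself flag it as ``where the real work lies.''

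Two further points in your framing would mislead you if pursued. First, the expansion $g_G=\|v\|_1+\epsilon\psi+o(\epsilon)$ cannot resolve the facet structure: the relevant corrections at slope $\delta\in[\epsilon,\sqrt{\epsilon}]$ have size $\epsilon\sqrt{\delta}\in[\epsilon^{3/2},\epsilon^{5/4}]$, i.e.\ they are themselves $o(\epsilon)$, so knowing $\mathcal B_G$ to within $o(\epsilon)$ of the diamond (or knowing $\psi$ up to an unquantified error) gives nothing; one must work with the exact two-sided bounds above. Second, the $\log\log$ in the side count does not come from ``the $o(\epsilon)$ error swamping the signal'' but from the $\sqrt{\log(1/\delta)}$ mismatch between the upper and lower envelopes: to beat the chord between slopes $\delta_1<\delta_2$ at an intermediate $\delta_3$ one needs roughly $\sqrt{\delta_3}\gg\sqrt{\delta_1\log(1/\delta_1)}$ and $\delta_3\sqrt{\log(1/\delta_2)/\delta_2}\ll\sqrt{\delta_3}$, forcing consecutive slopes to be separated by a factor $\log^3(1/\delta)$; the number of such multiplicative steps between $\epsilon$ and a constant is $\Theta(\log(1/\epsilon)/\log\log(1/\epsilon))$. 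Your factorial-denominator guess happens to land at the right numerology, but without the two quantitative lemmas there is no proof.
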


Denote by $U[a,b]$ the uniform distribution on the interval $[a,b]$. Using Theorem~\ref{thm:log over loglog sides}, an explicit class of distributions satisfying the assumptions of Theorem~\ref{mainthm2}, Theorem~\ref{prop:probedgebulk} and Theorem~\ref{thm:density} is $G=U[1,1+\epsilon]$ for a sufficiently small $\epsilon>0$ (equivalently, $U[M,M+1]$ for a sufficiently large $M>0$, as multiplying the edge weights by a constant only dilates the limit shape).

\begin{remark}
The proof of Theorem \ref{thm:log over loglog sides} gives not only that there are many sides, but also that there are many sides close to the $(1,0)$ direction. More precisely, we prove that the limit shape has many extreme points between the directions $(1,0)$ and $(1,\sqrt{\ep})$.

We also mention that the proof of Theorem~\ref{thm:log over loglog sides} may be adapted to first-passage percolation on $\mathbb{Z}^d$ with $d>2$, yielding a similar lower bound for the number of sides of the limit shape (defined as the number of hyperfaces if the limit shape is a polytope and infinity otherwise).
\end{remark}

\subsection{Attractive geodesics}\label{c}
The main technical proposition underlying the proofs of our coalescence and highways and byways results is presented in this section (Proposition~\ref{prop:attractive geodesics intro} below). Roughly, it shows that if two geodesics spend significant amount of time near each other then they intersect. Its proof does not rely on the planar geometry and may be adapted also to geodesics in $\ZZ^d$ for $d>2$. Planarity is used when deducing Theorem~\ref{mainthm2} from the proposition, in order to verify that two geodesics with nearby starting and ending points will spend a significant amount of time near each other, with high probability. Planarity is similarly used when deducing Theorem~\ref{thm:density2}.

We wish to make precise the idea that a geodesic $\gamma$ is \emph{attractive} in the sense that any geodesic which spends significant amount of time near $\gamma$ must share an edge with $\gamma$. Our formalization of this idea is in~\eqref{eq:attractive geodesic event}; it requires the following definitions.

Denote $S_A:=A\times\mathbb R$ for a subset $A\subset\mathbb R$. For $x\in\mathbb R$, we shorthand $S_{\{x\}}$ to $S_x$.

For a finite path $p$ in $\ZZ^2$: %$p\in\mathcal P$:
\begin{itemize}
    \item Write $X(p)$ for the interval whose endpoints are the $x$-coordinates of the endpoints of $p$. Precisely, if $p$ has endpoints $(t_1, s_1)$ and $(t_2,s_2)$, with $t_1\le t_2$, then $X(p):=[t_1,t_2]$.
    \item For $x\in X(p)$, let $f_p(x)$ be such that $(x,f_p(x))$ is the \emph{first} intersection point of $p$ with $S_x$; we refer to the points $(x,f_p(x))$ as \emph{pioneer points} of $p$. 
    \item Given $r>0$, the \emph{$r$-tube of (the pioneer points of) $p$} is the set
\begin{equation}
    \tube_r(p):=\{(x,y)\in\ZZ^2\colon x\in X(p), |y-f_p(x)|\le r\}.
\end{equation}
    \item Given an interval $J=[a,b]$ with integer $a,b\in X(p)$ and a second path $q$ in $\ZZ^2$, we say that \emph{$q$ is $r$-close to $p$ on $J$} if the following conditions hold:
\begin{enumerate}
    \item $q$ has a vertex $u\in \tube_r(p)\cap S_a$ and a vertex $v\in \tube_r(p)\cap S_b$.
    \item In the sub-path of $q$ between $u$ and $v$, the number of edges with both endpoints in $\tube_r(p)\cap S_J$ is at least $\frac{1}{2}|J|$.
\end{enumerate}
\begin{figure}[!ht]
\def\svgwidth{1\textwidth}
 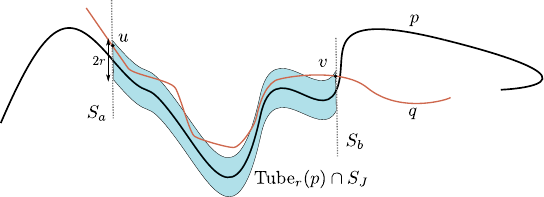
 \caption[figtube]{\label{fig:tube}Illustration of the event that the path $q$ is $r$-close to the path $p$ on the interval $J$. The blue region depicts $\tube_r(p)\cap S_J$. This event will be used in the attractive geodesics proposition, Proposition~\ref{prop:attractive geodesics intro}, where $p$ will have length of order $L$ and $J$ will have length of order $m$.}
\end{figure}
\end{itemize}
The proposition below states that a geodesic is attractive with high probability, provided that it satisfies the following technical requirement of \emph{bounded slope}: For $\rho,m>0$, we say that a finite path $p$ in $\ZZ^2$ has $(\rho,m)$-bounded slope if for all $x_1,x_2\in X(p)$ satisfying $|x_1-x_2|\ge m$ it holds that $|f_p(x_1)-f_p(x_2)|\le \rho |x_1-x_2|$. In words, the slope of $p$ between its pioneer points is bounded above by $\rho$ for every pair of pioneer points with horizontal separation at least $m$. This requirement is discussed further following the statement of the proposition.

\begin{prop}[Attractive Geodesics]\label{prop:attractive geodesics intro} Suppose $G$ satisfies \eqref{eq:assumption i} and \eqref{eq:assumption ii}.
Let $\rho>0$. There exist $C_\rho,c_\rho>0$ and $0<\alpha_\rho\le 1$, depending only on $G$ and $\rho$, such that the following holds.
%Let $\rho>0$. There exist $C,c>0$, depending only on $G$, and $C_\rho>0$, depending only on $G$ and $\rho$, such that the following holds.

 Consider the geodesic
\begin{equation}\label{eq:geodesic in attractive geodesic prop}
\gamma:=\gamma((0,0),(L,s))
\end{equation}
for integer $L>0$ and $s$.
Let $(I_i)_{i=0}^{N-1}$ be intervals of the form $I_i = [a_i, a_{i+1}]$ where $0=a_0<a_1<\cdots<a_N=L$ are integers and with $m\le |I_i|\le 2m$ for some $m$. Define the event
\begin{equation}\label{eq:attractive geodesic event}
    \textup{Att}_{r,\xi}:=\left\{ \!\! \begin{array}{c}\text{every geodesic which is $r$-close to $\gamma$ on at least $(1-\xi)N$}\\ \text{of the intervals $(I_i)_{i=0}^{N-1}$ shares an edge with $\gamma$}\end{array} \!\! \right\}.
\end{equation}
Then 
\begin{equation}\label{eq:attractive geodesic probability estimate}
    \mathbb P\big( (\textup{Att}_{r,\xi})^c\cap\{\text{$\gamma$ has $(\rho,m)$-bounded slope}\} \big) \le C_\rho e^{-c_\rho(\log L)^2}
\end{equation}
when
\begin{equation}\label{eq:xi def}
    \xi:=\frac{\alpha_\rho}{\sqrt{r}\log L}
\end{equation}
and the parameters satisfy
\begin{equation}\label{eq:assumptions on r}
    1\le r\le \alpha_\rho\min\left\{\frac{N}{\log^2L},\frac{N^2}{\log^6L}\right\}\quad\text{and}\quad \max\{r,\log^2 L\}\le \alpha_\rho \sqrt{\frac{m}{r}}.
\end{equation}
\end{prop}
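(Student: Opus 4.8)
The plan is to show that a geodesic is attractive by a first-moment argument: bound the expected number of ``bad'' geodesics, i.e. paths that stay $r$-close to $\gamma$ on a $(1-\xi)$-fraction of the intervals yet share no edge with $\gamma$, and show this expectation is super-polynomially small in $L$. Concretely, I would first reduce to a local statement. Since a competitor geodesic $q$ that avoids all edges of $\gamma$ but is $r$-close on $(1-\xi)N$ intervals must, on each such interval $I_i$, travel within the tube $\tube_r(\gamma)\cap S_{I_i}$ using at least $\frac12|I_i|$ edges without ever using an edge of $\gamma$, one can try to ``improve'' $\gamma$ locally: on a positive fraction of these intervals, the portion of $q$ inside the tube is an edge-disjoint alternative route between two nearby points $(a_i,\cdot)$ and $(a_{i+1},\cdot)$, and since $q$ is a geodesic, its cost on $I_i$ is at most that of the corresponding piece of $\gamma$ (plus the cost of connecting to $\gamma$'s pioneer points within the tube). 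The key probabilistic input is then a \emph{uniqueness-of-geodesics at a local scale} estimate: for two fixed nearby points, the probability that there exist two edge-disjoint paths between them, both of near-optimal passage time and both confined to a tube of width $r$ and horizontal extent of order $m$, is bounded by something like $e^{-c\sqrt{m/r}}$ or a small power, using \eqref{eq:assumption ii} (absolute continuity) to rule out ties and \eqref{eq:assumption i} to control fluctuations of passage times over such boxes.

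Next I would set up the union bound. The number of possible ``shapes'' of the competitor inside the union of tubes is at most exponential in the total tube volume times the number of intervals, but the bounded-slope hypothesis together with \eqref{eq:assumptions on r} is exactly what makes the tube thin enough that this entropy is beaten by the per-interval gain. Specifically, the bounded-slope condition ensures that the pioneer graph of $\gamma$ doesn't oscillate, so $\tube_r(\gamma)$ genuinely looks like a width-$2r$ neighborhood of a Lipschitz graph of length $L$; its volume is $O(rL)$. Splitting $\gamma$ into the $N\asymp L/m$ blocks $I_i$, a competitor that is $r$-close on $(1-\xi)N$ of them and shares no edge gains a ``surplus'' on each good block which, summed, must be nonnegative (since $q$ is a geodesic and $\gamma$ is optimal), forcing each individual surplus to be, on average, $0$; but absolute continuity makes exact ties have probability zero, and the quantitative version gives that the chance of a near-tie on a given block is small. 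Choosing $\xi$ as in \eqref{eq:xi def} makes the number of blocks on which a tie is \emph{allowed} small enough that, over the $\asymp N$ blocks, a Chernoff/BK-type bound yields $e^{-c_\rho(\log L)^2}$ after the union over the (polynomially or quasi-polynomially many) choices of the endpoints $u,v$ of $q$ within the tube and over the choice of which $\xi N$ blocks are the exceptional ones. The constraints in \eqref{eq:assumptions on r} are precisely what is needed to absorb the entropy factors: $r\le \alpha_\rho N/\log^2 L$ controls $\binom{N}{\xi N}$ and $r\le \alpha_\rho N^2/\log^6 L$ together with $\max\{r,\log^2L\}\le \alpha_\rho\sqrt{m/r}$ guarantees the per-block gain $\sqrt{m/r}$ dominates $\log L$ and the volume entropy $rm$.

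More carefully, on a single block the event to control is: there exist vertices $u\in\tube_r(\gamma)\cap S_{a_i}$, $v\in\tube_r(\gamma)\cap S_{a_{i+1}}$ and a path from $u$ to $v$ lying in $\tube_r(\gamma)\cap S_{I_i}$, edge-disjoint from $\gamma$, whose passage time is at most $T(u,v)$ computed through $\gamma$'s pioneer points. I would bound the probability of this by conditioning on the weights outside a box of size $O(m)\times O(r)$ around the block (note $\gamma$ restricted to the block is measurable with respect to weights in a slightly larger region, which needs a short argument controlling transversal fluctuations — or one works on the further event that $\gamma$ stays in its own tube, which can be added to the bounded-slope event at negligible cost), and then use that, inside this box, the difference between the best and second-best edge-disjoint route between two fixed points has a density bounded near zero (an input of the flavor of Alexander's or the Newman–Piza type bounds, combined with absolute continuity of a single weight which already kills exact equality, upgraded quantitatively via the exponential moment to give a polynomial or stretched-exponential small-ball estimate of order $(r/m)^{c}$ or $e^{-c\sqrt{m/r}}$). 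Taking a product over a fraction of the $N$ blocks that must simultaneously be near-ties, and then summing over the $\exp(O(\xi N\log(rm)))+\binom{N}{\xi N}$ many configurations, yields the stated bound once $\xi$ and $r$ satisfy \eqref{eq:xi def}–\eqref{eq:assumptions on r}.

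The main obstacle, I expect, is making the ``local uniqueness'' / near-tie estimate both quantitative and uniform over all pairs of endpoints in the tube and over the (random) geometry of $\gamma$. One has to decouple the event from the very weights that determine $\gamma$, which requires either a careful conditioning argument on a resampled box together with a bound on how much $\gamma$ can shift, or an a priori restriction to a ``good'' configuration of $\gamma$ (bounded slope, staying in its own tube, no long stretches of anomalously light edges) that happens with probability $1-C_\rho e^{-c_\rho(\log L)^2}$; absorbing this into \eqref{eq:attractive geodesic probability estimate}. Balancing the entropy of competitor shapes in a width-$2r$, length-$m$ tube against the per-block surplus gain of order $\sqrt{m/r}$ is the reason the hypotheses take the precise form \eqref{eq:assumptions on r}, and getting the logarithmic powers to line up so that the final bound is $e^{-c_\rho(\log L)^2}$ rather than merely polynomially small is the delicate bookkeeping step.
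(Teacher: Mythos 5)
There is a genuine gap, and it sits exactly at the point your plan treats as an available input. Your argument needs a per-block ``local uniqueness / near-tie'' estimate: that, uniformly over endpoint pairs in the tube (and uniformly over the conditioning on $\gamma$), the probability that some edge-disjoint competitor confined to a width-$2r$, length-$\asymp m$ tube comes within the connection cost $O(\max\{r,\log^2L\})$ of the geodesic's passage time is as small as $e^{-c\sqrt{m/r}}$, or at least a negative power. No such bound is known, and it is essentially the crux of the problem: absolute continuity of $G$ only rules out exact ties with no rate, and Alexander/Newman--Piza type inputs give fluctuation bounds for passage times, not quantitative anti-concentration of the gap between a geodesic and its best edge-disjoint tube-confined competitor. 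The paper cannot prove that a given block is attractive with high probability at all; it proves only that a block is attractive with probability at least of order $1/(\sqrt{r}\log L)$ (via the Talagrand self-improvement across blocks together with a Mermin--Wagner style perturbation of the weights in the tube, which shows the restricted passage time $\bar T_\gamma(I_i)$ exceeds its ``expected'' value by $\sqrt{m/r}$ with probability $\gtrsim 1/(\sqrt r\log L)$). The whole architecture --- taking $\xi\sim 1/(\sqrt r\log L)$, needing only $\xi N$ attractive blocks (one of which must meet the $(1-\xi)N$ blocks where a competitor is $r$-close), independence of restricted passage times on well-separated blocks, and a Chernoff bound giving $e^{-\xi N/4}$ with $\xi N\gtrsim\log^2L$ --- is designed precisely around the weakness of the per-block estimate. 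Your first-moment scheme, which requires each block to be a near-tie only with very small probability, asks for something far stronger.

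Two further points would break the bookkeeping even granting some per-block input. First, the entropy: the number of competitor sub-paths (or even coarse ``shapes'') inside a width-$2r$, length-$m$ tube is $e^{\Theta(m)}$, which cannot be beaten by any gain of size $e^{-c\sqrt{m/r}}$ per block; the conditions \eqref{eq:assumptions on r} are not there to make that trade work, and the paper avoids enumerating competitors altogether by treating the restricted passage time $\bar T_p(I_i)$ (a minimum over the whole class $Q_p(I_i)$) as a single random variable, using a path union bound only in the benign step where each competitor must contain many ``perturbable'' edges. Second, the decoupling from $\gamma$: your resampling sketch leaves open how to control the event uniformly in the random geometry of $\gamma$; the paper's solution is to sum over deterministic $p$, condition on $\{\gamma=p\}$ and on the weights along $p$, and apply Harris' inequality, using that $\{\gamma=p\}$ is increasing and the bad event $\mathcal E_{p,D_p}$ decreasing in the off-path weights. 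Without these three ingredients (weak per-block lower bound exploited via Chernoff, no competitor enumeration, Harris/FKG decoupling), the proposed route does not yield \eqref{eq:attractive geodesic probability estimate}.
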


In our application, the parameters $r,N,m$ will be chosen as suitable powers of $L$, so that, in particular, assumption~\eqref{eq:assumptions on r} is satisfied.

To deduce from the proposition that geodesics are typically attractive, we need to prove that they typically have $(\rho,m)$-bounded slope. This is handled by the next result, for which we require the following limit shape assumption,
\begin{equation}\label{eq:assumption not ell1}
    \text{the limit shape }\mathcal B _G \text{ is not a dilation of the $\ell_1$ unit ball}.\tag{N$\ell_1$}
\end{equation}
For horizontal geodesics (in the sense of~\eqref{eq:horizontal geodesic} below), the assumption may be waived.

\begin{prop}\label{prop:nobigjumps}Suppose $G$ satisfies \eqref{eq:assumption i}, \eqref{eq:assumption ii} and \eqref{eq:assumption not ell1}. Let $\delta>0$. There exist $C,c,\rho>0$, depending only on $G$, and $C_\delta>0$, depending only on $G$ and $\delta$, such that the following holds.
Consider the geodesic
\begin{equation}
\gamma:=\gamma((0,0),(n,s))
\end{equation}
for integer $n>0$ and $s$.
Assume the `at most 45-degree slope' condition
\begin{equation}\label{eq:at most 45 degree slope}
    |s|\le n.
\end{equation}
Then, for all $m\ge C_\delta n^{\frac{1}{2}+\delta}$,
\begin{equation}\label{eq:regularity prob}
    \mathbb P\big( \text{$\gamma$ does not have $(\rho,m)$-bounded slope}\big) \le Ce^{-c(\log n)^2}\,.
\end{equation}
Moreover, if
\begin{equation}\label{eq:horizontal geodesic}
    m\ge |s|+\sqrt{n}  \log^2 n
\end{equation} then~\eqref{eq:regularity prob} holds also without the assumption~\eqref{eq:assumption not ell1}.
\end{prop}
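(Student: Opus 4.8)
The plan is to control the slope of $\gamma$ along its pioneer points by a concentration argument for passage times, comparing the true geodesic to a ``shortcut'' which straightens out any overly steep segment. Suppose $\gamma=\gamma((0,0),(n,s))$ fails to have $(\rho,m)$-bounded slope. Then there are pioneer points $(x_1,f_\gamma(x_1))$ and $(x_2,f_\gamma(x_2))$ with $|x_1-x_2|\ge m$ but $|f_\gamma(x_1)-f_\gamma(x_2)|>\rho|x_1-x_2|$. Since $\gamma$ is a geodesic, the segment of $\gamma$ between these two pioneer points is itself a geodesic, and its horizontal extent is $|x_1-x_2|$ while its endpoints differ in height by more than $\rho|x_1-x_2|$; thus this sub-geodesic connects two points $a,b$ with $\|a-b\|\ge(1+\rho)|x_1-x_2|$ but is ``forced'' to realize the metric distance $T(a,b)$. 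The idea is that if $\mathcal B_G$ is not a dilation of the $\ell_1$ ball, then for directions that are sufficiently steep relative to direction $(1,0)$, the limit-shape norm is \emph{strictly} larger than the $\ell_1$-norm-based lower bound one gets from the overall displacement $(n,s)$ with $|s|\le n$; more precisely, a geodesic from $(0,0)$ to $(n,s)$ with $|s|\le n$ should not contain long sub-segments of slope exceeding some $\rho=\rho(G)$, because doing so would cost strictly more time than going ``straight.''

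Concretely, I would (i) fix $\rho$ so that the limit-shape passage time in any direction with slope $>\rho$ exceeds, by a multiplicative factor $1+c_0$, the best linear interpolation cost compatible with the net displacement; this uses~\eqref{eq:assumption not ell1} together with the at-most-45-degree condition~\eqref{eq:at most 45 degree slope} and convexity of $\mathcal B_G$. Then (ii) union bound over the $O(n^2)$ possible pairs of endpoints $a,b\in\Lambda_{Cn}$ of a candidate steep sub-geodesic, over all ``lengths'' $\ell=|x_1-x_2|$ from $m$ to $n$: for a sub-geodesic to be steep on horizontal span $\ell\ge m$ we need $T(a,b)$ to be abnormally small — at most the straight cost minus a $c_0\ell$ savings — which, by standard concentration for first-passage times with exponential moments (Talagrand-type / Kesten's bounds), has probability at most $C\exp(-c\,\ell^2/\ell)=C\exp(-c\ell)$ when $\ell\ge \text{(fluctuation scale)}\sim n^{1/2+\delta}$; more carefully one compares $T(a,b)$ to $\mu(b-a)$ and $T((0,0),(n,s))$ to $\mu(n,s)$ and uses that a steep detour would violate the triangle-inequality slack by an amount $\asymp \ell$ that dominates the $O(n^{1/2}\log n)$-scale fluctuations once $\ell\gtrsim n^{1/2+\delta}$. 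Summing $C\exp(-c\ell)$ over $\ell\ge m\ge C_\delta n^{1/2+\delta}$ and the polynomial number of endpoint pairs gives the bound $Ce^{-c(\log n)^2}$ (indeed much smaller, but this form suffices).

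For the ``moreover'' clause with $m\ge(|s|+\sqrt n)\log^2 n$ and no assumption~\eqref{eq:assumption not ell1}: here one cannot use a limit-shape gap, but a steep sub-segment on horizontal span $\ell$ with slope exceeding $\rho$ forces the geodesic to accumulate vertical displacement $>\rho\ell$ at \emph{some} location while the total vertical displacement of $\gamma$ is only $|s|$; so the geodesic must also make an equal-and-opposite steep excursion elsewhere. Two opposing steep excursions each of horizontal span comparable to $\ell$ can be simultaneously shortcut by a single nearly-straight replacement path, saving time $\asymp\ell$ while the replacement stays within the already-explored region; again this requires $T$ of the relevant pair of points to be smaller than typical by $\asymp\ell$, which has probability $\le C e^{-c\ell}$ once $\ell$ exceeds the fluctuation scale $\asymp(|s|+\sqrt n)\log^2 n \le m$. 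The key input throughout is the exponential-moment concentration of passage times around their means, e.g.\ $\mathbb P(|T(a,b)-\mathbb E T(a,b)|>t\sqrt{\|a-b\|}\,)\le Ce^{-ct}$ for $t$ up to order $\sqrt{\|a-b\|}$, combined with $|\mathbb E T(a,b)-\mu(b-a)|=O(\sqrt{\|a-b\|}\log\|a-b\|)$ (Alexander's bound).

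The main obstacle I anticipate is step (i)--(ii) bookkeeping: quantifying precisely how a locally steep portion of a geodesic translates into a genuine global time-saving that can be detected by concentration, while keeping track of the pioneer-point reparametrization (the geodesic may re-cross vertical lines many times, so ``horizontal span $\ell$ between two pioneer points'' must be related carefully to an honest sub-geodesic whose Euclidean length is comparably large). Making the limit-shape non-$\ell_1$ assumption yield a \emph{uniform} multiplicative gap $1+c_0$ over all directions steep enough — rather than just some single bad direction — is the conceptual heart, and is where convexity of $\mathcal B_G$ and the $45$-degree restriction~\eqref{eq:at most 45 degree slope} do the real work.
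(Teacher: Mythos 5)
Your overall strategy---convert a steep sub-segment of $\gamma$ into a defect in the triangle inequality for the limiting norm $\mu$, then detect that defect with Alexander's bound (Theorem~\ref{claim:non random}) and Talagrand concentration via a union bound over pairs of lattice points---is the same as the paper's, and your probabilistic bookkeeping is essentially sound (the defect $\asymp\ell\ge m\ge C_\delta n^{1/2+\delta}$ indeed dominates the $\sqrt n\log n$ fluctuation scale, even though the correct exponent is $\ell^2/n$ rather than your $\ell^2/\ell$, and the number of pairs is $O(n^4)$ rather than $O(n^2)$; neither slip matters). The genuine gap sits exactly at your step (i), and it is the step you yourself defer. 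The input ``$\mu$ in any direction of slope $>\rho$ exceeds the $\ell_1$-based cost by a factor $1+c_0$'' is not the right statement and is not what \eqref{eq:assumption not ell1} provides. The correct mechanism (Claim~\ref{claim:basic claim on limit shape}) is that $\mu(u)+\mu(v)=\mu(u+v)$ holds \emph{if and only if} $u$ and $v$ lie on a common flat edge of $\mathcal B_G$; hence a steep middle segment $v_2=b-a$ produces \emph{no} defect whenever $v_2$ lies on the flat edge containing $\arg(n,s)$. For an octagonal limit shape with a flat edge spanning $[0,\pi/4]$ and $s>0$, a slope-$1$ sub-segment costs nothing, so $\rho$ cannot be chosen freely: it must exceed the slope of every direction on that flat edge. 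What \eqref{eq:assumption not ell1} buys, via the lattice symmetries, is an extreme direction $\theta_0\in(0,\pi/4]$, which confines the flat edge of any direction in $[0,\pi/4]$ to an arc bounded away from $\pm\pi/2$ and thus yields a finite admissible $\rho$. This derivation is absent from your proposal, and without it $\rho$ is undefined.

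Even granting a correct $\rho$, after reducing $\mu(a)+\mu(b-a)+\mu((n,s)-b)-\mu(n,s)\ge\mu(v_2)+\mu(w)-\mu(v_2+w)$ with $w=(n,s)-v_2$, you must still show that $\arg v_2$ and $\arg w$ never lie on a common flat edge, with a gap constant \emph{uniform} over all admissible positions of $a,b$ (the constant $c_4$ in Claim~\ref{claim:limit} degenerates as the pair of directions approaches a common flat edge). This forces a case analysis---e.g.\ ``both $v_2$ and $w$ steep upward'' is excluded only by invoking $|s|\le n$---which the paper implements via Lemma~\ref{lem:theta_1} together with a two-sided cone argument (constraining $y$ as seen from $x$ toward $(n,s)$ \emph{and} $x$ as seen from $y$ toward the origin, Claim~\ref{claim: cone}). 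Your route, which always anchors at the global endpoints, could avoid the cone bookkeeping but then needs its own version of this case analysis; either way the work has to be done. Finally, for the ``moreover'' clause your two-opposing-excursions shortcut conflates savings in path length with savings in passage time; the bridge the paper uses is Claim~\ref{claim:ineqnu}, $\mu(v)\ge\|v\|_\infty\,\mu(1,0)$, which together with $\mu(n,s)\le(n+|s|)\mu(1,0)$ converts the excess travel $|c|-d$ of a single steep pair directly into an excess $\mu$-cost $(|c|-d)\mu(1,0)$, with no need to locate a compensating excursion or construct a replacement path.
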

We can relax the restriction~\eqref{eq:at most 45 degree slope} (to at most $90-\delta$ degree slope) with extra limit shape assumptions.
The proof under condition~\ref{eq:horizontal geodesic} (without assumption~\eqref{eq:assumption not ell1}) uses only the convexity and the lattice symmetries of the limit shape.

We make several remarks regarding the results of this section.

First, the notion of attractive geodesic is not invariant to rotations of $\ZZ^2$, as the $x$-axis plays a special role in the definition of $r$-closeness (the interval $J$ in its definition should be thought of as a subset of the $x$-axis). We can thus define a notion of ``vertically attractive geodesic'' by exchanging the role of the $x$ and $y$ axes in our definitions, and our statements will apply just as well for this notion. This fact is especially relevant for the 45-degree assumption~\eqref{eq:at most 45 degree slope} as one sees that if this assumption is not satisfied by $\gamma$, then it will be satisfied once the $x$ and $y$ axes are exchanged. In this sense~\eqref{eq:at most 45 degree slope} is not a serious restriction.
In the proofs of Theorem~\ref{mainthm2} and Theorem~\ref{thm:density2}, thanks to this symmetry of the lattice, we can assume without loss of generality that the geodesic satisfies the 45-degree assumption. 

Proposition~\ref{prop:nobigjumps} ensures that the geodesic does not make ``big jumps'' with high probability so that, in particular, any sub-path of the geodesic does not make ``big jumps''. In the proof of Theorem~\ref{mainthm2}, we apply Proposition~\ref{prop:nobigjumps} to the whole geodesic, and then apply  Proposition~\ref{prop:attractive geodesics intro} to suitable sub-paths of the geodesic near its endpoints in order to prove that these sub-paths are typically attractive for suitable choices of $r$ and $m$.

A second related observation is that if one first rotates the $\ZZ^2$ lattice by $45$ degrees, thus making the line $y=-x$ into the new $x$-axis, one obtains yet another notion of attractive geodesic (where the interval $J$ in the definition of $r$-closeness should be thought of as a subset of the line $y=-x$ in the original coordinate system). The proofs of our propositions apply also in this rotated coordinate system. It is then worthwhile to note that if the limit shape in the original coordinate system was a dilation of the $\ell_1$ ball then after the rotation the limit shape will be a dilation of the $\ell_\infty$ ball, allowing to apply Proposition~\ref{prop:nobigjumps}. In this sense, a version of our results holds without need to verify assumption~\eqref{eq:assumption not ell1}. This observation is used in our proof of Theorem~\ref{thm:density2} in order to obtain the result without explicit limit shape assumptions.

\subsection{Overview of the proofs}
We briefly explain here how some of our main theorems are proved.

 \begin{figure}[!ht]
\def\svgwidth{0.7\textwidth}
 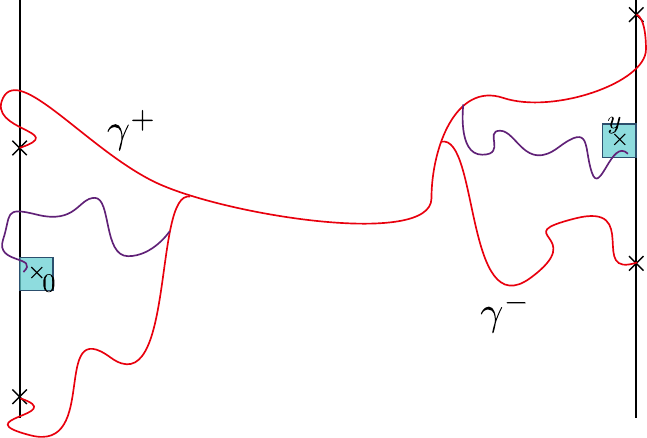
 \caption[figcoalescence]{\label{fig:coalescenceintro}Illustration of the proof of Theorem \ref{mainthm2}}
\end{figure}
Theorem~\ref{mainthm2} shows that geodesics which start near $(0,0)$ and end near a point $y\in\mathbb{Z}^2$ will coalesce with high probability. We prove it using a \emph{trapping strategy}, showing that all such geodesics stay, with high probability, between two reference coalescing geodesics $\gamma^+$ and $\gamma^-$, thereby forcing the coalescence event by the planar geometry (see Figure \ref{fig:coalescenceintro}). The reference geodesic $\gamma^+$ ($\gamma^-$) starts and ends at a suitably chosen distance $h$ above (below) $(0,0)$ and $y$. For the reference geodesics to form a trap, we need to ensure that they stay \emph{ordered}, meaning that $\gamma^+$ is always above $\gamma^-$ (in a suitable sense), and that they stay away from the neighborhoods of $(0,0)$ and $y$. We prove that these properties are satisfied with high probability, when $h$ is somewhat large, using our assumptions on the limit shape (see Proposition \ref{prop:limit}). The coalescence of the reference geodesics is proved using the attractive geodesics proposition, applied to sub-geodesics of $\gamma^-$ of length $L\ll\|y\|$ located at the extremities of $\gamma^-$: To this end, first, Proposition~\ref{prop:nobigjumps} is used to verify that the reference geodesics have bounded slope with high probability (assuming WLOG that $\gamma^-$ satisfies the `45-degree slope' condition as in the first remark after Proposition~\ref{prop:nobigjumps}). Second, the planar geometry, translation invariance of the lattice and the fact that the reference geodesics remain ordered are used to prove that for $r\gg h$, using Markov's inequality, the reference geodesics are $r$-close to each other above each segment with high probability.

Theorem~\ref{prop:probedgebulk} shows that the probability of the event $E_z^{u,v}$ that a vertex $z$ lies on the geodesic between the vertices $u$ and $v$ is small, when $z$ is separated from $u$ and $v$. The theorem is deduced from the coalescence result, Theorem~\ref{mainthm2}, using an \emph{averaging trick} (see Figure \ref{figBKS}) as used in the later proofs of the BKS-type concentration bound by Damron--Hanson--Sosoe \cite{Damron2015}. Translation invariance of the lattice shows that the probability of $E_z^{u,v}$ equals the probability of $E_{z+w}^{u+w,v+w}$ for every $w$. This gives, in particular, that
\begin{equation}\label{eq:averaging trick}
    \mathbb{P}(E_z^{u,v}) = \frac{1}{|\Lambda_\ell|}\sum_{w\in\Lambda_\ell}\mathbb{P}(E_{z+w}^{u+w,v+w}) = \mathbb{E}\bigg[\frac{1}{|\Lambda_\ell|}\sum_{w\in\Lambda_\ell}\mathds 1_{E_{z+w}^{u+w,v+w}}\bigg], 
\end{equation}
where $\Lambda_\ell$ is a discrete square of side length $\ell$. For $w\in\Lambda_\ell$, with $\ell$ suitably small, Theorem~\ref{mainthm2} shows that all geodesics of the form $\gamma(u+w,z+w)$ coalesce and all geodesics of the form $\gamma(z+w, v+w)$ coalesce, with high probability. When this happens, then all geodesics of the form $\gamma(u+w, v+w)$ for which $E_{z+w}^{u+w,v+w}$ occurs (with $w\in\Lambda_\ell$) must coincide on the box $z+\Lambda_\ell$. Therefore, on this event, the quantity inside the expectation in~\eqref{eq:averaging trick} does not exceed the order $\frac{1}{\ell}$ (with high probability, since geodesics only spend order $\ell$ time in $z+\Lambda_\ell$), yielding the required bound.

\begin{figure}[!ht]
\def\svgwidth{0.7\textwidth}
 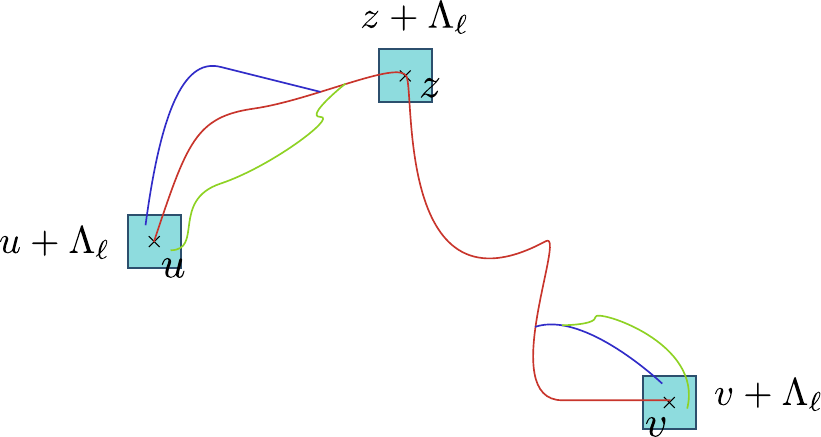
 \caption[figBKS]{\label{figBKS}Illustration of the proof of Theorem \ref{prop:probedgebulk}}
\end{figure}

%The proof of Theorem~\ref{thm:density2} uses the fact that if a significant portion of $\Lambda_n$ is covered by infinite (or long finite) geodesics starting from the origin then, since geodesics cannot cross each other by planarity, there must be two such geodesics which spend a long time next to each other. 

The highways and byways result of Theorem~\ref{thm:density2} is deduced directly from the attractive geodesics proposition, Proposition~\ref{prop:attractive geodesics intro}, without relying on the coalescence result of Theorem~\ref{mainthm2}. It is first noted that if two geodesics which start at the origin follow non-identical paths in $\Lambda_n$, then their continuations as they exit $\Lambda_n$ must be disjoint (since there is a unique geodesic between every two points). However, by the attractive geodesics proposition, disjoint geodesics cannot be close to each other for a long time. Consequently, due to planarity and the limited area in the annulus $\Lambda_{4n}\setminus \Lambda_n$, it follows that most pairs of geodesics starting at the origin and heading in a similar direction must not separate before exiting $\Lambda_n$. As noted above, no explicit limit shape assumption is used in the proof - while assumption~\eqref{eq:assumption not ell1} is used in the proof of Proposition~\ref{prop:nobigjumps} (and the conclusion of Proposition~\ref{prop:nobigjumps} is needed when applying the attractive geodesics proposition), the assumption may be avoided by also considering $45$-degree rotations of the $\mathbb{Z}^2$ lattice.

The main idea in the proof of Theorem~\ref{thm:log over loglog sides} is to take advantage of the fact that in some directions there are more deterministic paths of a given length. For example, there is a unique path of length $2n$ from $(0,0)$ to $(2n,0)$ while there are $\binom{2n}{n}$ paths of length $2n$ from $(0,0)$ to $(n,n)$. We also use the fact that the weight distribution is a small perturbation of a constant in order to argue that the geodesics are close to being shortest paths in the graph $\mathbb Z ^2$. Using this, we prove that two directions which are not very close cannot be on the same flat edge of the limit shape. 

The proof of the attractive geodesics proposition in Section~\ref{sec:attractive geodesics proof} starts with a main steps part (Section~\ref{sec:main steps}) which may serve as an overview of it. Among the ingredients in the proof is a Mermin--Wagner style argument which is explained in Section~\ref{sec:mermin--wagner}.

\subsection{Discussion, extensions and open problems}\label{sec:background}

\subsubsection{Coalescence of geodesics}\label{sec:coalescence discussion}
Theorem~\ref{mainthm2} proves that geodesics of length $n$ whose starting and ending points are at distance $h=n^{1/8-\ep}$ coalesce with high probability. We briefly review here the literature on similar results.

The most progress has been achieved for ``exactly-solvable models'': \emph{Directed last-passage percolation} models (in two dimensions) for which exact formulas have been found for the basic statistics. In these models the exponent $2/3$ was shown to govern the coalescence (as is also predicted for first-passage percolation). The first result is due to  W\"{u}trich \cite{wuthrich2002asymptotic} who proved that with high probability geodesics will not coalesce at distance $h=n^{2/3+\epsilon }$. This was later improved by Pimentel \cite{pimentel2016duality} to show that the geodesics will not coalesce with uniformly positive probability when $h=Cn^{2/3}$ for any $C>0$. Basu, Sarkar and Sly \cite{basu2019coalescence} proved that $2/3$ is the right exponent by showing that geodesics at distance $Cn^{2/3}$ will coalesce with probability tending to $1$ as $C\downarrow 0 $. Zhang \cite{zhang2020optimal}, and independently Balázs, Busani and Seppäläinen \cite{balazs2021local} and Seppäläinen and Shen \cite{seppalainen2020coalescence} improved the quantitative bounds and estimated the probability of coalescence up to constants as $C\to 0 $ and as $C\to \infty$. See also \cite{seppalainen2021global,basu2019coalescence,Seppalainen,Seppalainen2021busemann,BatesGangulyHammond} for the study of infinite geodesics in exactly-solvable models.

In the \emph{first-passage percolation} setting there are no quantitative and unconditional coalescence results such as Theorem~\ref{mainthm2} (taking into account Theorem~\ref{thm:log over loglog sides}). In fact, the only quantitative result we are aware of is that of Alexander~\cite{alexander2020geodesics} who obtained statements with precise exponents, but under strong assumptions which are currently proved only in the exactly-solvable models (in particular, they are not known to hold for any weight distribution in the first-passage percolation setting). A non-quantitative coalescence result was proved by Licea and Newman~\cite{licea1996geodesics,newman1995surface} for \emph{infinite geodesics} (semi-infinite paths for which every finite sub-path is a geodesic) in two dimensions. They showed that for almost all directions $\theta \in [0,2\pi )$, any two \emph{infinite geodesics} with asymptotic direction $\theta$ must coalesce. 
Their results were strengthened by Damron--Hanson~\cite{damron2014busemann} and Ahlberg--Hoffman~\cite{ahlberghoffman}:
For $\theta\in[0,2\pi)$ denote by $v_\theta$ the unique point in $\partial \mathcal B_G$ in the direction $\theta$. Damron--Hanson proved that in two dimensions, for any direction $\theta\in[0,2\pi)$ such that the limit shape is differentiable at $v_\theta$, there exist no disjoint infinite geodesics with $\theta$ as a direction. 
Ahlberg--Hoffman developed an ergodic theory of random coalescing geodesics in dimension 2. They proved that the properties of coalescence described in \cite{damron2014busemann} are not only valid for some geodesics but are in some sense valid for a dense set of geodesics. 
The results of~\cite{damron2014busemann,ahlberghoffman} are also non-quantitative, but have the advantage of applying to the more general setup of ergodic edge weights. 

We further refer to \cite{newman1997topics,wehr1997number, wehr1998absence, hoffman2005coexistence, hoffman2008geodesics,damron2014busemann, auffinger2015limiting,damron2017bigeodesics,janjigian2019geometry,schmid2021mixing,schmid2022mixing,elboim2022mixing} and the survey \cite{auffinger201750} for additional results on the geometry of geodesics in first- and last-passage percolation.

\smallskip
An interesting direction for extending Theorem~\ref{mainthm2} is to prove a quantitative coalescence result for \emph{infinite} geodesics. To this end, one would naturally need a coalescence result in which the distance to coalescence does not depend on the overall length of the geodesics. The following is an example of such a statement: Let $\gamma_{n,h}$ be the geodesic from $(0,0)$ to $(n,h)$. Prove that for some $C>0$ depending on $G$, universal $\alpha_1,\alpha_2>0$ and all $h,n>0$,
\begin{equation}\label{eq:quick coalescence}
    \mathbb{P}(|\gamma_{n,0}\triangle\gamma_{n,h}|\ge h^{\alpha_1})\le Ch^{-\alpha_2}.
\end{equation}
The obstacles in adapting our argument to prove~\eqref{eq:quick coalescence} are to control the vertical fluctuations of the geodesics (as in Proposition~\ref{prop:nobigjumps}) and to create suitable `traps' for the geodesics as in Section \ref{sec:proofmainthm}. We believe these may be overcome by relying on stronger limit shape assumptions than those in Theorem~\ref{mainthm2} but have not pursued this extension here.

\subsubsection{The influence of edges}
As previously mentioned, the works of Damron--Hanson~\cite{damron2014busemann} and Ahlberg--Hoffman~\cite{ahlberghoffman} provided a non-quantitative resolution of the BKS midpoint problem. In addition, the aforementioned work of Alexander~\cite{alexander2020geodesics} provided quantitative bounds for the BKS problem under strong assumptions which are currently known to hold only in the exactly-solvable models.

\smallskip
The exponent $1/16$ in Theorem \ref{prop:probedgebulk} follows from optimizing between the different parameters in our proof. The correct exponent is expected to be $2/3$, the same as the exponent predicted to govern the transversal fluctuations of geodesics.

\smallskip
 Theorem~\ref{prop:probedgebulk} can be seen as a bound on the ``first-order influence of edges'' in the sense that it bounds the probability that a single given edge is in the geodesic. It is natural to ask also about ``higher-order influences'' in the sense of asking about the probability that several edges are simultaneously in the geodesic. In this direction, we offer the following natural problem: Does the correlation between the choices of first and last edges in the geodesic from $(0,0)$ to $(n,0)$ tends to zero as $n\to \infty $?
 
 \smallskip
 It is conjectured that in two-dimensional first-passage percolation there are no infinite \emph{bigeodesics}, at least when the edge weight distribution $G$ is continuous (the problem originates from Furstenberg; see~\cite[(9.22)]{Kesten:StFlour}). This has been rigorously established in some of the \emph{exactly-solvable last-passage percolation} models \cite{Balazs2020nonexistence,BasuHoffmanSly2022nonexistence,Seppalainen2021busemann,Groathouse2021nonexistence}. In first-passage percolation,
 Alexander~\cite{alexander2020geodesics} proved the non-existence of bigeodesics in all dimensions $d\ge2$ under the strong assumptions mentioned above. Under an assumption on $G$ and on the curvature of the limit shape, Newman \cite{newman1995surface} proved that any infinite geodesic admits almost surely an asymptotic direction.
Licea and Newman~\cite{licea1996geodesics,newman1995surface} rule out the existence of bigeodesics with both ends in fixed directions (outside a set of null measure) in two dimensions.
Their results are strengthened by Damron--Hanson~\cite{damron2014busemann} and Ahlberg--Hoffman~\cite{ahlberghoffman}: they proved that in two dimensions, for any direction $\theta\in[0,2\pi)$ such that the limit shape is differentiable at $v_\theta$, there exist no infinite bigeodesics with $\theta$ as a direction.

\subsubsection{Highways and byways} Besides the work of Ahlberg--Hanson--Hoffman~\cite{AhlbergHansonHoffman} mentioned above, we are aware of only one earlier study of upper bounds in the highways and byways problem. Coupier~\cite{coupier2018sublinearity} considers a class of random trees embedded in $\mathbb{R}^2$ and studies the number of intersection points of a large circle around the origin with semi-infinite paths in the tree. His framework covers both the tree of first-passage paths starting at the (closest point to the) origin in the \emph{isotropic}, Poisson-process based, first-passage percolation model of Howard and Newman~\cite{howard1997euclidean} and the tree of last-passage paths from the origin in directed last-passage percolation on the lattice. In both cases, Coupier obtains a non-quantitative upper bound on the number of intersections, of a similar flavor to that obtained by Ahlberg--Hanson--Hoffman, with the bound in the directed last-passage percolation case proved under the assumption that the limit shape is strictly concave and differentiable. Coupier further discusses a random tree of a very different nature, formed by \emph{local rules}. For this tree, which he terms radial Poisson tree, a quantitative, power-law upper bound on the number of intersections is obtained. 

%Directed last-passage percolation was studied earlier by Coupier~\cite{coupier2018sublinearity}, who established a related non-quantitative estimate under strict concavity and differentiability assumptions on the limit shape.

\subsubsection{Limit shape properties}\label{sec:discussion limit shape}
Theorem~\ref{thm:log over loglog sides} proves that the limit shape has many sides for a particular class of distributions. We are only aware of few related results in the literature, as we now describe.

Damron--Hochman~\cite{DamronHochman}, relying on results of Marchand~\cite{marchand} and Kesten~\cite{Kesten:StFlour}, construct an \emph{atomic} distribution whose limit shape has an infinite number of sides. 

Basdevant--Gouéré--Théret~\cite{basdevant2021first} determined the first-order behavior of the limit shape corresponding to the weight distribution $\mathrm{Bernoulli}( 1-\ep)$ as $\ep$ tends to $0$. Using their result one can show that the number of extreme points of the limit shape corresponding to $\text{Bernoulli}(1-\ep)$ tends to infinity as $\epsilon \downarrow 0$.

\subsubsection{Related models}
Analogs of our main results (Theorem~\ref{mainthm2},  Theorem~\ref{prop:probedgebulk} and Theorem~\ref{thm:density}) continue to hold for point-to-line geodesics (i.e., geodesics from nearby starting points to the same line will coalesce with high probability), requiring only notational modifications in the proof. 

The proofs of our main results should also extend, with minimal changes, to directed first- and last-passage percolation models in a planar geometry (under the added condition that the slope between the starting and ending points of the geodesics under study is bounded away from the maximal and minimal allowed slopes). In fact, the proofs should simplify in this setting, since if a directed geodesic starts and ends above another directed geodesic then they must deterministically preserve their order throughout.

Lastly, one may also consider first-passage percolation in the ``slab'' $S_{[0,n]}$  for some integer $n>0$ (this may be thought of as first-passage percolation on $\mathbb{Z}^2$ in which the weights of the edges not fully contained in $S_{[0,n]}$ are set to infinity). Analogs of our main results can also be proved in this setting, with minimal modifications to our proofs, for geodesics connecting the sides of the slab (i.e., starting at $(0,y_1)$ and ending at $(n,y_2)$ for some $y_1, y_2$). Similarly to the directed models, geodesics connecting the sides of the slab deterministically preserve their ordering in this setting. The lower bound on the required number of sides of the limit shape in this setting stems solely from the analog of Proposition~\ref{prop:nobigjumps}, so the coalescence result should hold \emph{without any limit shape assumptions} in the case of horizontal geodesics (i.e., geodesics satisfying~\eqref{eq:horizontal geodesic}).

\subsubsection{Higher dimensions and minimal surfaces}
For first-passage percolation on $\mathbb{Z}^d$ with dimension $d\ge 3$, Proposition \ref{prop:attractive geodesics intro} remains true with minimal change to the proof, as long as condition~\eqref{eq:assumptions on r} is suitably modified. Among other things, the new condition needs to imply that the ``cost'' to connect two geodesics separated by distance $r$, which is of order $r$ in the way we argue (in any dimension), is smaller than the increase generated by the Mermin--Wagner style argument (Section \ref{sec:mermin--wagner})), which is of order $\sqrt{m/r^{d-1}}$.

However, planarity is crucially used in the proof of Theorem~\ref{mainthm2} to keep the ordering between the geodesics (i.e., to show that if a geodesic has its endpoints above those of another geodesic then it will very likely remain above the other geodesic throughout). Ordering, in turn, is used to ensure (using Markov's inequality) that geodesics with nearby starting and ending points spend significant time near each other with high probability (so that Proposition \ref{prop:attractive geodesics intro} is applicable). As the ordering is lost in dimensions $d\ge3$, we do not know how to apply Proposition \ref{prop:attractive geodesics intro} in order to deduce coalescence.

\smallskip
We mention that while the BKS midpoint problem is open in dimensions $d\ge 3$, partial results are available~\cite{dembin2023ell} as well as results under assumptions which are still unverified~\cite{alexander2020geodesics}.

\smallskip
The above regards first-passage percolation on $\mathbb{Z}^d$ for $d\ge 3$. There is also a different extension of first-passage percolation to higher dimensions, in which ``time'' is taken to be higher dimensional. In this version the object of study is a \emph{minimal surface in a random environment}. Such minimal surfaces model the domain walls in the disordered ferromagnet (the random-bond Ising model); see~\cite[Section 1.1]{bassan2023non} and~\cite[Section 1.4]{dembin2024minimal} for background and~\cite{dembin2024superconcentration,bassan2023non,dembin2024minimal} for recent work on transversal and ground energy fluctuations. The problems of coalescence and midpoint delocalization make sense also in this context and would be interesting to explore.

\subsubsection{The assumptions}\label{sec:assumptions discussion}
The assumption~\eqref{eq:assumption i} is used to ensure that the probability that the passage time between $u,v\in\ZZ^2$ is larger than $\rho \|u-v\|$, for some constant $\rho>0$, is exponentially small in $\|u-v\|$ (see Claim~\ref{claim:time}). Weaker decay rates may also suffice in our arguments.

The second assumption~\eqref{eq:assumption ii} is mostly used in Claim~\ref{claim:real analysis} as part of the proof of our Mermin--Wagner style result. It may be possible to push our arguments to a class of non absolutely-continuous distributions but we have not attempted to do so.

The assumed lower bound on the number of sides of the limit shape is required in order to have sufficient control on the geometry of geodesics to produce the ``traps'' used in the proof of Theorem~\ref{mainthm2}. In particular, we want to ensure that a geodesic that has endpoints far above another geodesic is unlikely to go below that other geodesic.

\subsection{Reader's guide}
The rest of the paper is organized as follows. In the next section we prove Proposition~\ref{prop:attractive geodesics intro}, which is the main technical ingredient in our proofs.
%of the coalescence Theorem~\ref{mainthm2}, as well as in the highways and byways Theorem~\ref{thm:density2}.
Theorem~\ref{mainthm2}, Theorem~\ref{prop:probedgebulk}, Theorem~\ref{thm:density} and Theorem~\ref{thm:density2} are then deduced in Section~\ref{sec:proofmainthm}. The first three of these theorems further require control on the amount of time that a geodesic spends ``going in a wrong direction'', as stated in Proposition~\ref{prop:limit}. This control is achieved in Section~\ref{sec:the limit shape} where we study the geometry of geodesics and prove Proposition~\ref{prop:nobigjumps} and Proposition~\ref{prop:limit} using our assumptions on the limit shape. Finally, in Section~\ref{sec:log} we establish the lower bound on the number of sides of the limit shape given in Theorem~\ref{thm:log over loglog sides}. The latter proof is independent of the rest of the paper.

\section{Proof of the attractive geodesics proposition}\label{sec:attractive geodesics proof}
In this section we prove Proposition~\ref{prop:attractive geodesics intro}. We assume that $L$ is sufficiently large for the arguments (as a function of the distribution $G$ and the parameter $\rho$) as the constants in the proposition may be adjusted to fit smaller $L$. We also assume throughout that $G$ satisfies~\eqref{eq:assumption i} and~\eqref{eq:assumption ii} and we continue with the notation of the proposition.

\subsection{Main steps}\label{sec:main steps} In this section we give an overview of the proof of the proposition, postponing the proofs of some of the intermediate steps to later sections.
\subsubsection{Attractive intervals}
 Consider the geodesic
\begin{equation}
\gamma:=\gamma((0,0),(L,s))
\end{equation}
for integer $L>0$ and $s$.
Call an interval $J\subset[0,L]$ with integer endpoints \emph{attractive} if every geodesic which is $r$-close to $\gamma$ on $J$ necessarily has an edge in common with $\gamma$. The following containment of events is immediate,
\begin{equation}\label{eq:attractive intervals containment}
    \left\{\sum_{i=0}^{N-1} \mathds 1_{\{I_i\text{ is attractive}\}}>\xi N\right\}\subset \textup{Att}_{r,\xi}.
\end{equation}
We thus focus on giving a lower bound for the probability of the left-hand side event. As the first step we develop a sufficient condition for an interval to be attractive.

The following basic bound, controlling the passage time and length of geodesics, will be helpful. 
\begin{lem}\label{lem:basic geometric control}
There exist $C,c,\rho_1,\rho_2>0$, depending only on $G$, such that the following holds. Let $\OB$ be the event that for all $u,v\in[-L^2,L^2]^2\cap\ZZ^2$ %with $\|u-v\|_1\ge \log^2 L$
it holds that
\begin{equation}\label{eq:passage time and length estimates}
    T(u,v)\le \rho_1\max\{\|u-v\|_1,\log^2 L\}\quad\text{and}\quad|\gamma(u,v)|\le\rho_2\max\{\|u-v\|_1,\log^2 L\},
\end{equation}
where we write $|p|$ for the number of edges in a path $p$. Then
\begin{equation}
    \mathbb P(\OB)\ge 1 - Ce^{-c\log^2 L}.
\end{equation}
\end{lem}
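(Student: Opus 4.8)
The two estimates in~\eqref{eq:passage time and length estimates} are classical consequences of assumption~\eqref{eq:assumption i}, and the strategy is to prove them for a single fixed pair $u,v$ with a stretched-exponential tail, and then take a union bound over the polynomially many pairs in $[-L^2,L^2]^2\cap\ZZ^2$. The plan is as follows. First, for the passage-time bound, fix $u,v\in[-L^2,L^2]\cap\ZZ^2$ and consider the straight-line lattice path $p_{u,v}$ joining them (the monotone staircase path), which has $\|u-v\|_1$ edges. Then $T(u,v)\le T(p_{u,v})=\sum_{e\in p_{u,v}} t_e$ is a sum of $\|u-v\|_1$ i.i.d.\ copies of $t_e$. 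By~\eqref{eq:assumption i} the common distribution has a finite exponential moment, so a standard Chernoff/Cramér bound gives, for a suitable $\rho_1>0$ (e.g.\ $\rho_1 := 2\max\{\mathbb{E}[t_e], 1\}$ works with room to spare, the precise choice coming from the exponential-moment rate),
\begin{equation}\label{eq:cramer bound for passage time}
    \mathbb{P}\big(T(p_{u,v}) > \rho_1 k\big) \le e^{-c_1 k}
\end{equation}
whenever $\|u-v\|_1 \le k$, for some $c_1=c_1(G)>0$. Applying this with $k=\max\{\|u-v\|_1,\log^2 L\}$ yields that the first estimate in~\eqref{eq:passage time and length estimates} fails for this particular pair with probability at most $e^{-c_1\log^2 L}$. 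Since there are at most $(4L^2+1)^4$ pairs $(u,v)$ in the box, a union bound shows the first estimate holds simultaneously for all such pairs with probability at least $1-Ce^{-c\log^2 L}$ after adjusting constants (the polynomial prefactor $L^{16}$ is absorbed into the stretched exponential $e^{-c\log^2 L}$ since $\log(L^{16}) = 16\log L = o(\log^2 L)$).

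Second, for the length bound, the point is that a geodesic cannot be too long because its passage time would then be forced to exceed the already-controlled quantity $T(u,v)$. Concretely, the edge weights are i.i.d.\ and absolutely continuous, hence strictly positive a.s.; one needs a quantitative lower bound saying that any fixed path of length $\ell$ has passage time at least $c_2\ell$ except with probability exponentially small in $\ell$. For this, note $t_e\ge 0$ and $\mathbb{P}(t_e=0)=0$, so there is $a>0$ with $\mathbb{P}(t_e\ge a)=:q>0$; then for a path $p$ with $|p|=\ell$, the number of edges of $p$ with weight $\ge a$ stochastically dominates a $\mathrm{Binomial}(\ell,q)$, so $\mathbb{P}(T(p)\le \tfrac{aq}{2}\ell)\le e^{-c_3\ell}$ by a binomial lower-tail bound. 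The number of lattice paths of length $\ell$ starting from a fixed vertex is at most $4^\ell$ (actually $3^\ell$ for non-backtracking paths), and $\gamma(u,v)$ starts at $u$; so the probability that there exists \emph{some} path $p$ of length $\ell$ starting at $u$ with $T(p)\le\tfrac{aq}{2}\ell$ is at most $4^\ell e^{-c_3\ell}$. Choosing $c_2 := aq/2$, this is summable over $\ell$ beyond a threshold once $c_3>\log 4$, which can be arranged by taking $a$ small (hence $q$ close to $1$) — alternatively one compares the binomial tail more carefully; either way one obtains, for a suitable $c_2,c_4>0$,
\begin{equation}\label{eq:length lower bound}
    \mathbb{P}\big(\exists\, p\text{ starting at }u,\ |p|\ge \ell,\ T(p)\le c_2|p|\big)\le Ce^{-c_4\ell}.
\end{equation}
Now on the (already high-probability) event from the first part, $T(u,v)\le \rho_1\max\{\|u-v\|_1,\log^2 L\}$; combining with~\eqref{eq:length lower bound} applied at $\ell=\rho_2\max\{\|u-v\|_1,\log^2 L\}$ with $\rho_2 := 2\rho_1/c_2$, we get that $|\gamma(u,v)|>\rho_2\max\{\|u-v\|_1,\log^2 L\}$ forces either the passage-time event to fail or the event in~\eqref{eq:length lower bound} to occur, the latter having probability at most $Ce^{-c_4\rho_2\log^2 L}$. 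A final union bound over the $O(L^{16})$ pairs $(u,v)$, again absorbing the polynomial factor into the stretched exponential, completes the proof with the claimed form $\mathbb{P}(\OB)\ge 1-Ce^{-c\log^2 L}$.

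**Main obstacle.** There is no deep difficulty here — this is a routine "all geodesics in a polynomial box are well-behaved" lemma. The only point requiring a little care is the length lower bound~\eqref{eq:length lower bound}: one must beat the entropy factor $4^\ell$ (number of paths of length $\ell$) with the probabilistic cost of a path being too fast. This is handled either by choosing the threshold $a$ in $\mathbb{P}(t_e\ge a)$ small enough that $q=\mathbb{P}(t_e\ge a)$ is close to $1$ and the binomial lower tail $\mathbb{P}(\mathrm{Bin}(\ell,q)\le \ell/2)$ decays faster than $4^{-\ell}$, or — more robustly — by using a Cramér bound for $\sum_{e\in p}t_e$ from below together with the observation that the relevant rate can be made to exceed $\log 4$ by truncating the weights; I would present the truncation version since it is cleanest and uses only~\eqref{eq:assumption i} and~\eqref{eq:assumption ii} (absolute continuity guaranteeing $\mathbb{P}(t_e=0)=0$, so that a genuine positive lower tail exists). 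Everything else is a direct Chernoff bound plus a union bound over a polynomial set, with the standard trick that $L^{O(1)}e^{-c\log^2 L}=e^{-c'\log^2 L}$.
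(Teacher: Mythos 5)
Your proof is correct and follows the same overall architecture as the paper's: a Chernoff bound on a deterministic staircase path for the passage-time estimate, a ``no long cheap paths'' estimate to convert the passage-time bound into a length bound, and a union bound over the polynomially many pairs with the polynomial prefactor absorbed into $e^{-c\log^2 L}$. The one genuine difference is in how the length bound is obtained. The paper isolates the two one-point estimates as Claim~\ref{claim:time} and Claim~\ref{claim:length}, and for the latter it invokes Kesten's theorem (Proposition~(5.8) of \cite{Kesten:StFlour}, quoted as Theorem~\ref{thm:kesten}) as a black box: under $G(\{0\})<p_c(2)$ there is, with exponentially high probability, no path of length $\ge n$ from a fixed vertex with passage time below $\rho_0 n$. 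You instead prove this statement directly: absolute continuity gives $\mathbb P(t_e=0)=0$, so one may choose $a>0$ with $q=\mathbb P(t_e\ge a)$ as close to $1$ as desired, making the binomial lower-tail rate $D(q/2\,\|\,q)$ exceed the path-entropy rate $\log 4$. This elementary argument is valid and self-contained under the paper's hypotheses (your handling of the entropy-versus-rate competition is the one point that needed care, and you address it correctly), though it is strictly less general than Kesten's result, which tolerates an atom at $0$ of mass below $p_c$; since the paper assumes~\eqref{eq:assumption ii} throughout, nothing is lost here.
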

The lemma is proved in Section \ref{sec:auxiliary lemmas}. The notation $\rho_1,\rho_2$ is reserved throughout our argument to the constants from the lemma.

To make use of the lemma for the geodesic $\gamma$, we first note that when $\gamma$ has $(\rho,m)$-bounded slope then $|f_\gamma(L)|\le\rho L$. Thus, as we've assumed that $L$ is large as a function of $\rho$, the subgeodesic of $\gamma$ between $(0,0)$ and $(L,f_\gamma(L))$ satisfies the estimates~\eqref{eq:passage time and length estimates}. 

Let
\begin{equation}
    \mathcal{P}:=\{p\colon \text{$p$ is a path in $\ZZ^2$ with $\mathbb P(\gamma=p)>0$}\},
    %\mathcal{P}:=\{p\colon \text{$p$ is a path in $\ZZ^2$ from $(0,0)$ to $(L,s)$ with $|p|\le\rho_2(L+s)$}\}
\end{equation}
so that, in particular, each $p\in\mathcal{P}$ is a path from $(0,0)$ to $(L,s)$ and $\gamma\in\mathcal{P}$ almost surely.

Let $J=[a,b]\subset[0,L]$ be an interval with integer endpoints and $p\in\mathcal{P}$. Write
\begin{equation}\label{eq:T p J def}
    T_p(J):=T((a,f_p(a)), (b,f_p(b)))
\end{equation}
for the passage time from the pioneer point of $p$ above $a$ to the pioneer point of $p$ above $b$ (using the geodesic between these two points, which may differ from $p$).

A central role in our analysis is played by the following notion of the \emph{restricted passage time} $\bar{T}_p(J)$, defined as the minimal passage time among (simple) paths $q$ satisfying
\begin{enumerate}
    \item $q$ is edge-disjoint from $p$.
    \item One endpoint of $q$ is in $\tube_r(p)\cap S_a$ and the other is in $\tube_r(p)\cap S_b$.
    \item The number of edges of $q$ with both endpoints in $\tube_r(p)\cap S_J$ is at least $\frac{1}{2}|J|$.
    \item $|q|\le \rho_2\max\{\|u-v\|_1, \log^2 L\}$ where $u$ and $v$ are the endpoints of $q$.
\end{enumerate}
(setting $\bar{T}_p(J):=\infty$ if no such path exists). The set of paths satisfying these properties is denoted by $Q_p(J)$.

The following is our sufficient condition for the attractiveness of $J$: Let
\begin{equation}\label{eq:definition of Omega(J)}
    \Omega(J):=\{\bar{T}_\gamma(J)> T_\gamma(J)+2\rho_1 \max\{r,\log^2 L\}\}.
\end{equation}
Then
\begin{equation}\label{eq:sufficient condition for attrativeness}
    \text{on $\Omega(J)\cap\OB\cap\{\gamma$ has $(\rho,m)$\text{-bounded slope}$\}$ it holds that $J$ is attractive}.
\end{equation}
Let us prove this. Assume $\OB$ and assume that $\gamma$ has $(\rho,m)$-bounded slope. We show that the existence of a geodesic $\gamma'$ which is $r$-close to $\gamma$ on $J$ and is edge-disjoint from $\gamma$ implies that $\Omega(J)$ does not occur. First, it follows from the properties of $\gamma'$ that it contains a subgeodesic $\gamma''$ connecting some $(a,y_1)\in \tube_r(\gamma)\cap S_a$ to some $(b,y_2)\in \tube_r(\gamma)\cap S_b$ which satisfies properties (1),(2),(3) above with $q=\gamma''$ and $p=\gamma$. Moreover, we claim that $\gamma''$ also satisfies property (4) so that it belongs to $Q_\gamma(J)$. This follows from $\OB$, as the endpoints of $\gamma''$ are in $[-L^2,L^2]^2$ by our upper bound~\eqref{eq:assumptions on r} on $r$ (with $\alpha_\rho\le 1$, say) and since $\gamma$ has $(\rho,m)$-bounded slope. Second, since $\gamma''$ is a geodesic, its passage time must be at most that of the path going along the geodesic from $(a,y_1)$ to $(a,f_\gamma(a))$, then along $\gamma$ from $(a,f_\gamma(a))$ to $(b,f_\gamma(b))$ and finally along the geodesic from $(b,f_\gamma(b))$ to $(b,y_2)$. On $\OB$, the latter path has passage time at most $T_\gamma(J)+2\rho_1 \max\{r,\log^2 L\}$. Since $\bar{T}_\gamma(J)\le T(\gamma'')$, we conclude that $\Omega(J)$ does not hold.

With the sufficient condition~\eqref{eq:sufficient condition for attrativeness} in hand, and taking into account the containment~\eqref{eq:attractive intervals containment} and Lemma~\ref{lem:basic geometric control}, we see that Proposition~\ref{prop:attractive geodesics intro} follows from the following statement: There exist $C_\rho,c_\rho>0$, depending only on $G$ and $\rho$, such that
\begin{equation}\label{eq:attractive interval prob2}
    \mathbb{P}\left(\left\{\sum_{i=0}^{N-1} \mathds 1_{\Omega(I_i)}\le\xi N\right\}\cap\{\gamma\text{ has $(\rho,m)$-bounded slope}\}\right)\le C_\rho e^{-c_\rho \log^2 L}.
\end{equation}

The next sections present the proof of this estimate, which relies on the following ingredients: 
\begin{enumerate}
    \item An upper bound for the passage time $T_\gamma(I_i)$ of many of the intervals $(I_i)$. The main observation here is that Talagrand's concentration inequality self-improves when applied to sub-geodesics of $\gamma$ due to the concavity of the square root function. 
    \item A lower bound for the probability that a restricted passage time $\bar{T}_p(I_i)$ is long. This uses a Mermin--Wagner style argument (perturbing the edge passage times) to obtain lower bounds on the fluctuations of $\bar{T}_p(I_i)$.
    \item The Harris correlation inequality for monotonic events in independent variables.
\end{enumerate}

\subsubsection{The passage time of $\gamma$ on many of the intervals $I_i$ is short}

For $u,v\in\ZZ^2$, write
\begin{align}
    E(u,v) &:= \mathbb E[T(u,v)],\label{eq:expected passage time def}\\
    D(u,v) &:= T(u,v) - E(u,v)
\end{align}
for the expected passage time between $u$ and $v$ and the deviation from the expectation. We may note that $E$ is a (deterministic) metric on $\ZZ^2$, since $T$ is a (random) metric on $\ZZ^2$. Talagrand's concentration inequality provides the following control on $D(u,v)$.
\begin{lem}\label{lem:Talagrand control}
There exist $C,c>0$, depending only on $G$, such that the following holds. Let $\OT$ be the event that for all $u,v\in[-L^2,L^2]^2\cap\ZZ^2$,
\begin{equation}\label{eq:Talagrands bound}
        |D(u,v)|\le \sqrt{\|u-v\|_1}\log L.
\end{equation}
Then
\begin{equation}
    \mathbb P(\OT)\ge 1 - Ce^{-c\log^2 L}.
\end{equation}
\end{lem}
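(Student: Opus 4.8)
The plan is to invoke Talagrand's concentration inequality for first-passage percolation in the standard form (as in, e.g., Talagrand's original work or the exposition in Kesten's Saint-Flour notes / \cite{auffinger201750}): under the exponential moment assumption~\eqref{eq:assumption i}, there are constants $C_1,c_1>0$ depending only on $G$ such that for every pair $u,v\in\ZZ^2$ and every $\lambda>0$,
\begin{equation}\label{eq:talagrand single pair}
    \mathbb P\big(|T(u,v)-E(u,v)|\ge \lambda\sqrt{\|u-v\|_1}\big)\le C_1\exp(-c_1\min\{\lambda^2,\lambda\sqrt{\|u-v\|_1}\}).
\end{equation}
First I would record~\eqref{eq:talagrand single pair}, being slightly careful about the trivial case $u=v$ (where $T(u,v)=E(u,v)=0$ and there is nothing to prove) and about the regime of very small $\|u-v\|_1$, where $\sqrt{\|u-v\|_1}\log L$ may exceed $\lambda\sqrt{\|u-v\|_1}$ only for $\lambda$ up to $\log L$; in that regime the bound is still useful because we only need a $Ce^{-c\log^2 L}$ estimate.

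Next I would apply~\eqref{eq:talagrand single pair} with $\lambda=\log L$ to a single pair $u,v\in[-L^2,L^2]\cap\ZZ^2$. Since $\|u-v\|_1\le 4L^2$, we have $\min\{\lambda^2,\lambda\sqrt{\|u-v\|_1}\}=\min\{(\log L)^2,\ (\log L)\sqrt{\|u-v\|_1}\}$. If $\|u-v\|_1\ge (\log L)^2$ this minimum equals $(\log L)^2$; if $0<\|u-v\|_1<(\log L)^2$ the minimum is $(\log L)\sqrt{\|u-v\|_1}\ge \log L$, which is still $\ge \tfrac12(\log L)^2$ once $\|u-v\|_1\ge \tfrac14(\log L)^2$, and for $1\le \|u-v\|_1<\tfrac14(\log L)^2$ one instead uses that the deviation $|D(u,v)|$ is at most, say, $T(u,v)+E(u,v)$, which on the complementary small event is dominated by a sum of $O((\log L)^2)$ i.i.d.\ weights — but it is cleaner simply to note that for $\|u-v\|_1\le (\log L)^2$ one can also take $\lambda$ somewhat larger, say $\lambda=(\log L)^2/\sqrt{\|u-v\|_1}\ge \log L$, so that $\lambda\sqrt{\|u-v\|_1}=(\log L)^2$ and $\lambda^2\ge(\log L)^2$, giving the failure probability $\le C_1 e^{-c_1(\log L)^2}$ uniformly. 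In all cases the probability that~\eqref{eq:Talagrands bound} fails for this fixed pair is at most $C_1 e^{-c_2(\log L)^2}$ for a suitable $c_2>0$.

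Finally I would take a union bound over all pairs $u,v\in[-L^2,L^2]\cap\ZZ^2$. There are at most $(2L^2+1)^4\le C_3 L^{8}$ such pairs, and since $L^{8}=e^{8\log L}$ is sub-exponential in $(\log L)^2$ (indeed $e^{8\log L}e^{-c_2(\log L)^2}\le e^{-\frac{c_2}{2}(\log L)^2}$ once $L$ is large, and for small $L$ the constants are absorbed), we conclude
\begin{equation}
    \mathbb P(\OT^c)\le C_3 L^{8}\cdot C_1 e^{-c_2(\log L)^2}\le C e^{-c(\log L)^2}
\end{equation}
for appropriate $C,c>0$ depending only on $G$. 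This is the claimed bound.

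There is no real obstacle here; the lemma is a routine packaging of Talagrand's inequality together with a union bound, and the only point demanding a little care is the small-$\|u-v\|_1$ regime, handled as above by choosing $\lambda$ as a function of $\|u-v\|_1$ so that the concentration exponent is always at least of order $(\log L)^2$. The same remark — that this is ``standard'' — presumably explains why the paper states it as a lemma to be cited rather than proved in detail; I would likewise relegate the verification to the auxiliary-lemmas section alongside Lemma~\ref{lem:basic geometric control}.
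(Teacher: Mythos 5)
Your proposal is correct and follows essentially the same route as the paper: apply Talagrand's concentration inequality with deviation parameter $t=\log L$ to each pair and take a union bound over the $O(L^8)$ pairs in $[-L^2,L^2]\cap\ZZ^2$, absorbing the polynomial factor into $e^{-c(\log L)^2}$. The only difference is that the paper cites Talagrand's inequality in the clean form $\mathbb P(|T(u,v)-\mathbb E[T(u,v)]|\ge t\sqrt{\|u-v\|_1})\le Ce^{-ct^2}$ valid for all $t\ge0$, so your extra care in the small-$\|u-v\|_1$ regime (adjusting $\lambda$ to keep the exponent of order $(\log L)^2$) is not needed there, though it is a legitimate way to work from the weaker, more commonly stated version.
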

The lemma is proved in Section \ref{sec:auxiliary lemmas}.

The observation made in this section, stated in~\eqref{eq:deviation improvement on average} below, is that~\eqref{eq:Talagrands bound} may be improved `on average' for sub-geodesics of $\gamma$ due to the concavity of the square root function.

For an interval $J=[a,b]\subset[0,L]$ with $a,b\in\ZZ$ and a path $p\in\mathcal{P}$, let
\begin{equation}
    E_p(J):=E((a,f_p(a)), (b,f_p(b)))
\end{equation}
be the expected passage time between the pioneer points of $p$ above the endpoints of $J$. We think of $E_p(J)$ as a deterministic function of the path $p$ and when we write $E_\gamma (J)$ we simply substitute the random path $\gamma $ inside this function (so that $E_\gamma(J)$ is a random variable, different from the deterministic quantity $\mathbb E[T_\gamma(J)]$). 
Define $p[J]$ to be the subpath of $p$ between the points $(a,f_p(a))$ and $(b,f_p(b))$, and define
%Set
\begin{equation}\label{eq:Dp(J) def}
    D_p(J):=T(p[J])-E_p(J)\,
\end{equation}
where the time of a path was defined in \eqref{eq:time of a path}. Note that $T_\gamma(J)=T(\gamma[J])$ almost surely as $\gamma$ is a geodesic.
The quantity $D_\gamma(J)$ is a measure of the deviation of the passage time of the sub-geodesic of $\gamma$ between the pioneer points at $a$ and $b$.

It is straightforward to check that the following statements hold almost surely,
\begin{align}
    %&\mathbb E(T_\gamma(J))\le E((a,f_\gamma(a)), (b,f_\gamma(b))),\\
    &T_\gamma([0,L])=\sum_{i=0}^{N-1} T_\gamma(I_i),\\
    &E_\gamma([0,L]) \le \sum_{i=0}^{N-1} E_\gamma(I_i)
\end{align}
(the inequality follows since $E(\cdot,\cdot)$ is a metric).
Consequently,
\begin{equation}\label{eq:deviation improvement on average}
    \sum_{i=0}^{N-1} D_\gamma(I_i)\le D_\gamma([0,L]) = D((0,0),(L,f_\gamma(L)))\quad\text{almost surely}.
\end{equation}
We conclude that on $\OT\cap\{\gamma\text{ has }(\rho,m)\text{-bounded slope}\}$,
\begin{equation}\label{eq:deviation bounds for gamma}
    \sum_{i=0}^{N-1} D_\gamma(I_i)\le \sqrt{(1+\rho)L}\log L\quad\text{and}\quad
    \min_{0\le i\le N-1} D_\gamma(I_i)\ge -\sqrt{2m(1+\rho)}\log L.
\end{equation}

The assertion~\eqref{eq:deviation bounds for gamma} is harnessed in the following way. It is straightforward that if~\eqref{eq:deviation bounds for gamma} holds then for each $\tau$ satisfying \begin{equation}\label{eq:tau requirement}
\tau\ge\frac{\sqrt{(1+\rho)L}\log L}{N}
\end{equation}
we have that either $\Omega_{1,\tau}$ or $\Omega_{2,\tau}$ occurs, with
\begin{alignat}{1}
    \Omega_{1,\tau}&:=\left\{\left|\left\{0\le i\le N-1\colon D_\gamma(I_i)\le 6\tau\right\}\right|\ge \frac{1}{2} N\right\},\label{eq:Omega 1 tau def}\\
    \Omega_{2,\tau}&:=\left\{\sum_{i\colon -\sqrt{2m(1+\rho)}\log L\le D_\gamma(I_i)<-\tau}D_\gamma(I_i)\le -\tau N\right\}.\label{eq:Omega 2 tau def}
\end{alignat}
We will use this conclusion with $\tau = \sqrt{\frac{m}{r}}$, noting that~\eqref{eq:tau requirement} is satisfied due to our assumption~\eqref{eq:assumptions on r} (choosing $\alpha_\rho$ sufficiently small). For a path $p\in\mathcal{P}$, it will be convenient to denote by $\Omega_{1,\tau}(p)$ and $\Omega_{2,\tau}(p)$ the events appearing in~\eqref{eq:Omega 1 tau def} and~\eqref{eq:Omega 2 tau def}, respectively, in which all occurrences of $D_\gamma(I_i)$ are replaced by $D_p(I_i)$.

\subsubsection{The restricted passage time is long with non-negligible probability}

In this section we provide lower bounds for the probability that a restricted passage time $\bar{T}_p(I_i)$ is long and further discuss the independence properties of the restricted passage times.

\begin{lem}\label{lem:long restricted passage time}
There exists $c_\rho>0$, depending only on $G$ and $\rho$, such that the following holds. 
For each path $p\in\mathcal{P}$ having $(\rho,m)$-bounded slope, each $0\le i\le N-1$ and each $5\rho_1 \max\{r,\log^2 L\}\le t\le \sqrt{2m(1+\rho)}\log L$,
\begin{align}
    &\mathbb P\left(\bar{T}_p(I_i)\ge E_p(I_i) - t+2\rho_1 \max\{r,\log^2L\}\right)\ge \frac{c_\rho t}{\sqrt m\log L},\label{eq:Talagrand lower bound}\\
    &\mathbb P\left(\bar{T}_p(I_i)\ge E_p(I_i) + 6\sqrt{\frac{m}{r}}+2\rho_1 \max\{r,\log^2 L\}\right)\ge \frac{c_\rho}{\sqrt{r}\log L}.\label{eq:MW lower bound}
\end{align}
\end{lem}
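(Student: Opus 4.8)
\textbf{Proof plan for Lemma~\ref{lem:long restricted passage time}.}

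The plan is to lower bound the probability that the restricted passage time $\bar T_p(I_i)$ exceeds a target value by exhibiting a \emph{single candidate path} $q\in Q_p(I_i)$ whose passage time is likely to be large, and then arguing that the event $\{T(q)\text{ large}\}$ forces $\bar T_p(I_i)$ to be large as well --- the point being that $\bar T_p(I_i)$ is a minimum over all of $Q_p(I_i)$, so a lower bound on $\bar T_p(I_i)$ seems to require controlling all competing paths, but in fact it suffices to observe that \emph{the geodesic realizing the restricted time is one particular element of $Q_p(I_i)$}, and to give a lower tail bound for the passage time of the relevant (random) region. More precisely, fix a deterministic reference path $q_0\in Q_p(I_i)$ that stays inside $\tube_r(p)\cap S_{I_i}$ except near its endpoints, of length $\Theta(m)$; such a $q_0$ exists because $p$ has $(\rho,m)$-bounded slope, so the pioneer points of $p$ above $I_i$ lie within vertical distance $O(\rho m)$ of each other and one can route a path through the tube of controlled length satisfying properties (1)--(4). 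The passage time $\bar T_p(I_i)$ is the minimum over $Q_p(I_i)$, hence $\bar T_p(I_i)=T(q^*)$ for the optimal $q^*$, and the key inequality is a comparison showing $T(q^*)\ge E_p(I_i) - (\text{endpoint corrections}) + (\text{whatever lower tail we prove for the region})$. For the first bound~\eqref{eq:Talagrand lower bound}, the corrections are the $2\rho_1\max\{r,\log^2 L\}$ cost of connecting the tube endpoints to the pioneer points of $p$, and the statement reduces to: with probability $\ge c_\rho t/(\sqrt m\log L)$, every path through the strip $S_{I_i}$ staying $r$-close to $p$ costs at least $E_p(I_i)-t$.

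For~\eqref{eq:Talagrand lower bound}, I would use Talagrand's concentration inequality (as in Lemma~\ref{lem:Talagrand control}) in its \emph{two-sided} form, applied to the point-to-point passage time between the two pioneer points of $p$ above the endpoints of $I_i$, together with an anti-concentration input: the left tail of $D_p(I_i)$ (the deviation of this passage time below its mean) is at least of order $t/(\sqrt m\log L)$ when $t$ ranges in the stated window. Concretely, $T((a_i,f_p(a_i)),(a_{i+1},f_p(a_{i+1})))$ has standard deviation of order $\sqrt m$ up to logs, so $\mathbb P(D\le -t)\gtrsim t/(\sqrt m\log L)$ follows from the Gaussian-type lower bound on the lower tail that accompanies Talagrand's upper bound (or, alternatively, from a simple second-moment / Paley--Zygmund argument combined with the variance lower bound, which under~\eqref{eq:assumption i},~\eqref{eq:assumption ii} is of order $m$). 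On the event that this point-to-point time is small, \emph{every} path in $Q_p(I_i)$ has time at least... --- no, wait: one must be careful, the restricted path is \emph{edge-disjoint} from $p$ and constrained to the tube, so its time is \emph{at least} the unrestricted point-to-point time, which is what we want for a \emph{lower} bound. So the logic is: $\bar T_p(I_i)\ge T((a_i,f_p(a_i)),(a_{i+1},f_p(a_{i+1})))$... no again --- the endpoints of the restricted path are in the tube, not exactly at the pioneer points. The clean way: $\bar T_p(I_i)\ge \min_{u\in\tube_r(p)\cap S_{a_i},\,v\in\tube_r(p)\cap S_{a_{i+1}}} T(u,v) \ge E_p(I_i) - 2\rho_1\max\{r,\log^2 L\} - |D_p(I_i)|\text{-type error}$, and then one wants $T$ between the pioneer points to be \emph{at least} its mean minus $t$ --- i.e. one actually needs a bound saying the passage time is not too \emph{small}, contradiction with wanting a left-tail lower bound. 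Resolving this sign issue is the first thing to nail down: I believe the correct reading is that~\eqref{eq:Talagrand lower bound} asks for $\bar T_p(I_i)$ to be \emph{not much smaller} than $E_p(I_i)$, i.e. it is the \emph{upper} tail of $-D$ that is small with good probability, so one wants $\mathbb P(D_p(I_i)\ge -t)\ge c_\rho t/(\sqrt m\log L)$, which is immediate from $\mathbb P(D_p(I_i)\ge 0)\ge c$ plus continuity of the distribution, and is certainly implied by a lower bound on $\mathbb P(D_p(I_i)\in[-t,0])$; so this half is essentially a soft anti-concentration / non-degeneracy statement.

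For~\eqref{eq:MW lower bound} --- the genuinely substantial half --- I would run the Mermin--Wagner style argument announced in Section~\ref{sec:mermin--wagner}. The idea: take the environment $(t_e)$, and perturb the passage times of edges inside $\tube_r(p)\cap S_{I_i}$ by a smooth divergence-free-like ``shift'' of total magnitude chosen so that the perturbation moves the restricted passage time $\bar T_p(I_i)$ up by order $\sqrt{m/r}$ while costing only a bounded Radon--Nikodym factor; absolute continuity of $G$ (used in Claim~\ref{claim:real analysis}) is what lets one compare the perturbed and unperturbed measures. Since the restricted path has $\Theta(m)$ edges spread across a tube of width $r$, a shift of the weights by an amount $\delta$ per edge in a well-chosen profile changes $\bar T_p(I_i)$ by roughly $\delta\cdot(\text{number of edges})/(\text{something})$, and optimizing $\delta$ against the total relative-entropy cost $\asymp m\delta^2/r$ being $O(1)$ gives $\delta\asymp\sqrt{r/m}$ and a shift of order $m\delta/r\cdot(\dots)$; the bookkeeping should produce exactly the gain $6\sqrt{m/r}$ and the probability cost $c_\rho/(\sqrt r\log L)$ (the $\log L$ entering through the need to also control the fluctuation of $\bar T_p$ itself, via Lemma~\ref{lem:Talagrand control}, on a favorable event). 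In practice I would phrase it as: on a ``good'' event of probability $\gtrsim 1$ (controlled passage times and lengths, Lemma~\ref{lem:basic geometric control}), the perturbation map is a measure-preserving-up-to-bounded-density bijection of environments that increases $\bar T_p(I_i)$ by the claimed amount, hence the image event has probability at least $c_\rho/(\sqrt r\log L)$. \textbf{The main obstacle} is precisely this Mermin--Wagner step: choosing the perturbation profile so that (i) it provably increases the \emph{minimum} over $Q_p(I_i)$ (not just one path's time) --- this needs the profile to be monotone/coherent along the strip so that \emph{every} competing restricted path picks up the same gain, and (ii) the change of measure has relative entropy $O(1)$ uniformly, which forces the profile to be spread over all $\Theta(m)$ horizontal scales and $r$ vertical scales, giving the $\sqrt{m/r}$ scaling --- this is where the assumption $\max\{r,\log^2 L\}\le\alpha_\rho\sqrt{m/r}$ from~\eqref{eq:assumptions on r} is used, to guarantee the gain $\sqrt{m/r}$ dominates the endpoint-correction terms $\rho_1\max\{r,\log^2 L\}$ appearing in the target inequality. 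I expect the edge-disjointness constraint (1) in the definition of $Q_p(I_i)$ to cause the most friction here, since the perturbed path must avoid $p$, and one has to check the perturbation doesn't create a shortcut through $p$'s edges --- handled by only perturbing edges strictly inside the tube and away from $p$ itself, at the cost of slightly shrinking the effective region (absorbed into the constants).
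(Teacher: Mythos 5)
Your skeleton matches the paper's: reduce $\bar{T}_p(I_i)$ to the point-to-point passage time $T_p(I_i)$ between the pioneer points via a connection-cost estimate (this is exactly Claim~\ref{claim:connection}, with the $2\rho_1\max\{r,\log^2L\}$ correction), and prove \eqref{eq:MW lower bound} by a Mermin--Wagner perturbation that raises all tube-edge weights so that every path in $Q_p(I_i)$, having $\gtrsim m$ edges in a tube of $\asymp mr$ edges, gains $\asymp\alpha\sqrt{m/r}$ at Gaussian cost $e^{-C\alpha^2}$. But both of your probabilistic inputs have genuine gaps. For \eqref{eq:Talagrand lower bound} you invoke, in all three suggested variants, an anti-concentration statement that is not available: ``$\mathbb P(D_p(I_i)\ge 0)\ge c$'' is precisely the ``natural fact'' the paper explicitly says it does \emph{not} know (see the remark after Lemma~\ref{lem:MW corollary for T p J}); Talagrand's inequality has no accompanying Gaussian \emph{lower} bound on the tail; and a variance lower bound of order $m$ for $T_p(I_i)$ is unknown (known lower bounds are only logarithmic, and order $m$ would contradict the conjectured $m^{2/3}$ scaling), so Paley--Zygmund has nothing to feed on. The paper's actual argument is softer and different: since $X:=T_p(I_i)-E_p(I_i)$ has mean zero, writing $0=\mathbb E[X]$ and cutting the upper tail at $\sqrt{2m(1+\rho)}\log L$ via Theorem~\ref{thm:talagrand} forces $\mathbb P(X\ge -t/5)\gtrsim t/(\sqrt m\log L)$ --- a first-moment dichotomy, not anti-concentration, and it only yields the stated (small) probability rather than a constant.

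For \eqref{eq:MW lower bound} the missing piece is \emph{which event you push forward}. A measure-tilting step of constant entropy moves mass up by only $O(\sqrt{m/r})$, so you need a starting event of not-too-small probability on which $\bar{T}_p(I_i)$ (equivalently, after Claim~\ref{claim:connection}, $T_p(I_i)$) is already within $O(\sqrt{m/r})$ of $E_p(I_i)$ from below. Your plan supplies this only through Lemma~\ref{lem:Talagrand control}, i.e.\ control at scale $\sqrt m\log L$; shifting up from that level requires $\alpha\asymp\sqrt r\log L$ and costs $e^{-Cr\log^2L}$, far below the target $c_\rho/(\sqrt r\log L)$ (and this is not rescued by \eqref{eq:assumptions on r}). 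The paper closes this gap with a two-case argument combining the same mean-zero computation with the perturbation step: setting $\zeta':=\mathbb P\bigl(X\le -7\sqrt{m/r}\bigr)$, either $\zeta'>3/4$, in which case the mean-zero identity plus the Talagrand cutoff already forces $\mathbb P\bigl(X\ge 7\sqrt{m/r}\bigr)\gtrsim \sqrt{m/r}/(\sqrt m\log L)=1/(\sqrt r\log L)$ with no perturbation at all, or $\zeta'\le 3/4$, in which case at least a quarter of the mass sits above $E_p(I_i)-7\sqrt{m/r}$ and a \emph{constant} shift ($\alpha=14$ in Lemma~\ref{lem:MW corollary for T p J}) pushes constant mass above $E_p(I_i)+7\sqrt{m/r}$. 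Without this dichotomy (or some substitute identifying a favorable starting level at height $O(\sqrt{m/r})$ below the mean), your argument does not produce the claimed $c_\rho/(\sqrt r\log L)$. Your remaining concerns (coherence of the shift over all competing paths, and edge-disjointness from $p$) are handled in the paper simply by shifting \emph{all} edges of the tube monotonically and a union bound over the $e^{O(m)}$ paths in $Q_p(I_i)$, so they are not real obstacles; also note that your opening device of exhibiting one candidate path with large passage time cannot, by itself, lower bound a minimum over $Q_p(I_i)$ --- the connection-cost reduction you arrive at later is the correct route.
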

The proof of Lemma~\ref{lem:long restricted passage time}, relying on Talagrand's concentration inequality, is given in Section~\ref{sec:auxiliary lemmas}. 
The proof of~\eqref{eq:MW lower bound} additionally uses a ``Mermin--Wagner style argument'' developed in Section~\ref{sec:mermin--wagner}. On an intuitive level, the argument yields that the distribution of $\bar{T}_p(I_i)$ ``contains a Gaussian component with variance of order $\frac{m}{r}$'' (see Lemma~\ref{lem:MW for T p J} for the precise result). This implies the following statement.
\begin{lem}\label{lem:MW corollary for T p J}
Let $p\in\mathcal{P}$ have $(\rho,m)$-bounded slope and let $J\subset[0,L]$ be an interval with integer endpoints satisfying $m\le |J|\le 2m$. There exist $C_\rho,c_\rho>0$, depending only on $G$ and $\rho$, such that for each $0\le\alpha\le c_\rho\sqrt{m r}$ and each real $a$,
\begin{equation}\label{eq:probability for increasing weights}
    \sqrt{\mathbb{P}\left(\bar{T}_p(J)\ge a+\alpha\sqrt{\frac{m}{r}}\right)}\ge e^{-C_\rho\alpha^2} \big( \mathbb{P}\left(\bar{T}_p(J)\ge a\right) - e^{-m}\big) .
\end{equation}
\end{lem}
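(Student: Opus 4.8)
The plan is to derive this lemma from the precise Mermin--Wagner statement promised in Section~\ref{sec:mermin--wagner}, referred to here as Lemma~\ref{lem:MW for T p J}. That lemma should say, roughly, that there is a coupling (or a comparison of laws) of the environment $t$ with a perturbed environment $\tilde t$, in which each edge weight in the relevant tube is shifted by a small deterministic amount, such that the Radon--Nikodym derivative between the two environments is controlled, and under the perturbation the restricted passage time $\bar T_p(J)$ increases by a deterministic amount of order $\alpha\sqrt{m/r}$ (the perturbation pushes all the $\sim m$ weights along the ``width-$r$'' tube in one direction, costing $\sim m$ edges each perturbed by $\sim \alpha/\sqrt{mr}$, so the $L^2$ cost of the tilt is $\sim \alpha^2$, while the gain in passage time is $\sim (m)\cdot(\alpha/\sqrt{mr}) = \alpha\sqrt{m/r}$). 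Concretely, I expect Lemma~\ref{lem:MW for T p J} to give, for a suitable family of perturbations indexed by $\alpha$, that for any event $A$ measurable with respect to the weights,
\begin{equation*}
  \mathbb P_{\tilde t}(A)\le e^{C_\rho\alpha^2}\,\mathbb P_t(A)^{1/2}+ e^{-m},
\end{equation*}
or an equivalent two-sided bound, where $\mathbb P_{\tilde t}$ is the law of the perturbed environment. The square root and the additive $e^{-m}$ are the typical artifacts of a change-of-measure via Cauchy--Schwarz together with a truncation of the (rare) event that some weight is pushed out of the support or that the relative density is too large.

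The key steps, in order: First, fix $p\in\mathcal P$ with $(\rho,m)$-bounded slope and $J$ with $m\le |J|\le 2m$, and fix $a\in\mathbb R$ and $0\le\alpha\le c_\rho\sqrt{mr}$. Apply the perturbation of Lemma~\ref{lem:MW for T p J} to the event $A:=\{\bar T_p(J)\ge a\}$. The point is that $\bar T_p(J)$ is a minimum of passage times over a fixed (environment-independent) collection of paths $Q_p(J)$ lying in $\tube_r(p)\cap S_J$, so the deterministic upward shift of the weights in that tube increases $T(q)$ for every such $q$ by at least $\alpha\sqrt{m/r}$ (up to the constant), hence shifts $\bar T_p(J)$ up by the same amount; this is where one uses that every competitor path in $Q_p(J)$ has at least $\frac12|J|\ge \frac m2$ edges inside the tube (property (3) in the definition of $Q_p(J)$) so the gain is genuinely of order $\alpha\sqrt{m/r}$ and not smaller. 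Consequently
\begin{equation*}
  \mathbb P_{\tilde t}\!\left(\bar T_p(J)\ge a+\alpha\sqrt{\tfrac mr}\right)\ \ge\ \mathbb P_{t}\!\left(\bar T_p(J)\ge a\right)
\end{equation*}
after identifying $\bar T_p(J)$ under $\tilde t$ with the shifted quantity. Second, invert the change of measure: write $\mathbb P_t(B) = \mathbb P_{\tilde t}\big(B; \text{good}\big)\cdot(\text{density})$ and apply Cauchy--Schwarz to the density, which produces the square root on $\mathbb P_{\tilde t}(\cdot)$ and the $e^{C_\rho\alpha^2}$ prefactor from the (Gaussian-type) second moment of the log-density, together with an $e^{-m}$ error for discarding the bad set where the perturbation is ill-defined or the density estimate fails; combining with the previous display gives exactly~\eqref{eq:probability for increasing weights}. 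Third, check that the constraint $\alpha\le c_\rho\sqrt{mr}$ is precisely what is needed for the per-edge perturbation $\sim\alpha/\sqrt{mr}$ to stay within the ``safe'' regime of the absolute-continuity/density estimate (so that the weights do not leave the support and the density bound $e^{C_\rho\alpha^2}$ holds with the stated constants depending only on $G$ and $\rho$).

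The main obstacle is packaging the Mermin--Wagner perturbation cleanly enough that it applies uniformly to the \emph{restricted} passage time $\bar T_p(J)$ rather than to an unconstrained passage time: one must be sure that the perturbation supported on $\tube_r(p)\cap S_J$ raises the passage time of \emph{every} admissible competitor path in $Q_p(J)$ by the claimed amount while keeping the non-admissibility of all other paths (in particular the edge-disjointness from $p$ and the length bound (4) are unaffected since they are combinatorial constraints independent of the weights). This is exactly the content I am assuming is established as Lemma~\ref{lem:MW for T p J} in Section~\ref{sec:mermin--wagner}; granting it, the present lemma is a short deduction as sketched above, with the only real work being the bookkeeping of constants and the Cauchy--Schwarz step that converts the one-sided measure comparison into the symmetric inequality~\eqref{eq:probability for increasing weights}.
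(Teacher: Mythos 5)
Your proposal is correct and follows essentially the same route as the paper: the paper proves a two-sided statement (Lemma~\ref{lem:MW for T p J}) by tilting the weights on the edges of $\tube_r(p)\cap S_J$ by $\sim\alpha/\sqrt{mr}$ each (total $L^2$ cost $\sim\alpha^2$, via the Gaussian-transport construction of Lemma~\ref{lem:MW}), using that every path in $Q_p(J)$ has at least $|J|/2\ge m/2$ edges in the tube so that $\bar T_p(J)$ shifts by $\alpha\sqrt{m/r}$ on a good event of probability $\ge 1-e^{-m}$ (Claim~\ref{claim:union bound}), and then applying the Cauchy--Schwarz change of measure; the stated lemma is exactly the $b=\infty$ case. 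The only point you elide is that for a general absolutely continuous $G$ one cannot shift each weight by a literal deterministic amount — the paper shifts in Gaussian coordinates and only guarantees an increase of $\delta_0\sigma$ per edge on the set $B_{\delta_0}$, which is precisely what the $e^{-m}$-truncated good event controls, consistent with your sketch.
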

The argument leading from Lemma~\ref{lem:MW corollary for T p J} to the inequality~\eqref{eq:MW lower bound} is explained in Section~\ref{sec:auxiliary lemmas}. 

We remark that the ``standard deviation lower bound $\sqrt{\frac{m}{r}}$'' is obtained as the ratio between the length of $J$ and the square root of the volume of the $r$-tube of $p$ above $J$ (using in the process that paths in $Q_p(J)$ must spend a significant fraction of their time in the $r$-tube). This is of the same nature as the relation $\chi\ge \frac{1 - (d-1)\xi}{2}$ on $\mathbb{Z}^d$, between the fluctuation exponent $\chi$ and transversal exponent $\xi$, obtained by Wehr--Aizenman~\cite[Section 6]{wehr1990fluctuations} and Newman--Piza~\cite[Theorem 5]{newman1995divergence}. Our arguments may also be used to obtain such a relation. 

We also remark that it would have been helpful to know the natural fact that $T_p(I_i)\ge E_p(I_i)$ occurs with probability bounded away from zero uniformly in $L,m,r$. Such a fact would both simplify and lead to better probability lower bounds in Lemma \ref{lem:long restricted passage time}.

\smallskip

Recall that our goal is to prove the probability bound~\eqref{eq:attractive interval prob2}. This requires showing that several of the restricted passage times $\bar{T}_\gamma(I_i)$ are simultaneously large. To this end, the following independence property is handy: Set $\rho_3:=2\rho_2(1+\rho)$. For each $p\in \mathcal P$ and subset $\mathcal{I}\subset\{0,1,\ldots, N-1\}$,
\begin{equation}\label{eq:independence of restricted passage times}
    \text{if $|i_1-i_2|\ge 2\rho_3$ for all distinct $i_1,i_2\in\mathcal{I}$ then }\left(\bar{T}_p(I_i)\right)_{i\in\mathcal{I}}\text{ are independent}.
\end{equation}
Indeed, recall that $\bar{T}_p(I_i)$ is the minimal passage time among the paths in $Q_p(I_i)$, and that the paths $q\in Q_p(I_i)$ have endpoints with $x$ coordinates $a_i$ and $a_{i+1}$ and satisfy \[|q|\le\rho_2((a_{i+1} - a_i )(1+\rho)+2r)\le\rho_3(a_{i+1}-a_i)\] (using here that $a_{i+1}-a_i\ge m\ge\log^2 L$ and $2r\le m\le a_{i+1}-a_i $ by~\eqref{eq:assumptions on r} with $\alpha_\rho\le 1$). Such paths $q$ thus stay in the slab $S_{[a_i - \alpha,\, a_{i+1} + \alpha]}$ with $\alpha = \frac{1}{2}(\rho_3-1)(a_{i+1}-a_i)\le(\rho_3-1)m$. Thus, $\bar{T}_p(I_i)$ is measurable with respect to the weights of the edges with both endpoints in $S_{[a_i - (\rho_3-1) m,\, a_{i+1}+(\rho_3-1) m]}$, from which~\eqref{eq:independence of restricted passage times} follows as edges have independent weights.
\smallskip
Lemma~\ref{lem:long restricted passage time} and the independence property~\eqref{eq:independence of restricted passage times} will be used in the following way. For a path $p\in\mathcal{P}$ having $(\rho,m)$-bounded slope and a vector of reals $d = (d_i)_{i=0}^{N-1}$ define the event
\begin{equation}\label{eq:Epd def}
    \mathcal{E}_{p,d}:=\left\{\sum_{i=0}^{N-1} \mathds 1_{\bar{T}_p(I_i)> E_p(I_i) + d_i+2\rho_1 \max\{r,\log^2 L\}}\le\xi N\right\}
\end{equation}
(where we recall from~\eqref{eq:xi def} that $\xi = \frac{\alpha_\rho}{\sqrt{r}\log L}$). The following bounds the probability of $\mathcal{E}_{p,d}$.
\begin{prop}\label{prop:probability for long restricted passage times}
If $\alpha_\rho$ is sufficiently small (as a function of $G$ and $\rho$) then
\begin{equation}
    \mathbb P(\mathcal{E}_{p,d})\le e^{-\frac{1}{4} \xi N}
\end{equation}
when 
\begin{equation}\label{eq:assumptions on d}
\begin{split}
    &\text{either}\;\;\left|\left\{0\le i\le N-1\colon d_i\le 6\sqrt{\frac{m}{r}}\right\}\right|\ge \frac{1}{2}N\;\;\\
    &\text{or}\;\;\sum_{i\colon -\sqrt{2m(1+\rho)}\log L\le d_i<-\sqrt{\frac{m}{r}}} d_i\le -\sqrt{\frac{m}{r}}N.
\end{split}
\end{equation}
\end{prop}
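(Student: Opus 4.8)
The plan is to fix a path $p\in\mathcal P$ with $(\rho,m)$-bounded slope and reduce the event $\mathcal E_{p,d}$ to an event about a large family of \emph{independent} indicator variables, then apply a union bound over subsets together with the single-interval estimates of Lemma~\ref{lem:long restricted passage time}. First I would split the index set $\{0,\ldots,N-1\}$ into $\lceil 2\rho_3\rceil$ arithmetic progressions with common difference $\lceil 2\rho_3\rceil$; by~\eqref{eq:independence of restricted passage times} the restricted passage times $\bar T_p(I_i)$ along any one progression are mutually independent. Since there are boundedly many progressions (the bound depending only on $G$ and $\rho$), at least one of them, call its index set $\mathcal I$, contains at least $N/(2\lceil 2\rho_3\rceil +1)\ge cN$ indices, and moreover — this is where the two alternatives in~\eqref{eq:assumptions on d} are used — one can choose $\mathcal I$ so that it retains a constant fraction of the ``good'' indices: in the first case, a constant fraction of the $\ge N/2$ indices with $d_i\le 6\sqrt{m/r}$; in the second case, a sub-progression carrying a constant fraction of the negative mass $\sum d_i\le -\sqrt{m/r}\,N$ (a pigeonhole over the progressions). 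Restricting attention to $\mathcal I$, the complementary event to $\mathcal E_{p,d}$ would follow if \emph{more than} $\xi N$ of the events $A_i:=\{\bar T_p(I_i)\ge E_p(I_i)+d_i+2\rho_1\max\{r,\log^2L\}\}$ occur, so it suffices to bound $\mathbb P(\sum_{i\in\mathcal I}\mathbf 1_{A_i}\le \xi N)$.

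Next I would lower-bound $\mathbb P(A_i)$ for $i\in\mathcal I$ using Lemma~\ref{lem:long restricted passage time}. In the first case ($d_i\le 6\sqrt{m/r}$), monotonicity of the threshold together with~\eqref{eq:MW lower bound} gives $\mathbb P(A_i)\ge \mathbb P(\bar T_p(I_i)\ge E_p(I_i)+6\sqrt{m/r}+2\rho_1\max\{r,\log^2L\})\ge c_\rho/(\sqrt r\log L)$, which is of order $\xi/\alpha_\rho$. In the second case, for the indices carrying most of the negative mass one applies~\eqref{eq:Talagrand lower bound} with $t=-d_i$ (valid since $5\rho_1\max\{r,\log^2L\}\le -d_i\le \sqrt{2m(1+\rho)}\log L$ on the relevant range — the lower cutoff may require first discarding the $O(\sqrt r\log L/\sqrt m \cdot N)=o(N)$ indices with $-d_i$ below $5\rho_1\max\{r,\log^2L\}$, which is negligible by~\eqref{eq:assumptions on r}), obtaining $\mathbb P(A_i)\ge c_\rho|d_i|/(\sqrt m\log L)$; summing these lower bounds over the retained indices and using $\sum|d_i|\gtrsim \sqrt{m/r}\,N$ gives $\sum_{i\in\mathcal I}\mathbb P(A_i)\ge c\sqrt{m/r}\,N/(\sqrt m\log L)=cN/(\sqrt r\log L)\gtrsim \xi N/\alpha_\rho$. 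So in both cases $\mu:=\sum_{i\in\mathcal I}\mathbb P(A_i)\ge (c/\alpha_\rho)\,\xi N$, which exceeds $2\xi N$ once $\alpha_\rho$ is small enough.

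Finally I would apply a standard lower-tail bound for a sum of independent (not identically distributed) Bernoulli variables with mean $\mu\ge 2\xi N$: $\mathbb P(\sum_{i\in\mathcal I}\mathbf 1_{A_i}\le \xi N)\le \mathbb P(\sum \mathbf 1_{A_i}\le \tfrac12\mu)\le e^{-\mu/8}\le e^{-\xi N/4}$, which is exactly the claimed bound. (Concretely this is the multiplicative Chernoff bound, or a union bound over all subsets $\mathcal J\subset\mathcal I$ of size $\le\xi N$ of $\prod_{i\notin\mathcal J}\mathbb P(A_i^c)\le\prod_{i\notin\mathcal J}(1-\mathbb P(A_i))$, combined with $\binom{|\mathcal I|}{\le\xi N}\le e^{\xi N\log(e/\xi)}$ and the fact that $\xi\log(1/\xi)$ is small — here one uses $\xi=\alpha_\rho/(\sqrt r\log L)$ and $r\ge1$, $L$ large.)

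I expect the main obstacle to be the bookkeeping in the second case of~\eqref{eq:assumptions on d}: one must show that after intersecting with a single residue class $\mathcal I$ and after discarding indices with $|d_i|$ too small to apply~\eqref{eq:Talagrand lower bound}, a constant fraction of the negative mass $\sqrt{m/r}\,N$ survives, so that the summed lower bounds still total $\gtrsim\xi N/\alpha_\rho$. This is a pigeonhole argument: the total mass is spread over $\lceil 2\rho_3\rceil$ classes and the discarded indices contribute mass at most $5\rho_1\max\{r,\log^2L\}\cdot o(N)$, which is $o(\sqrt{m/r}\,N)$ precisely by the assumption $\max\{r,\log^2L\}\le\alpha_\rho\sqrt{m/r}$ in~\eqref{eq:assumptions on r}; the other steps are routine given the lemmas already established.
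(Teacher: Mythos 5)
Your proposal is correct and follows essentially the same route as the paper: select a well-separated subsequence $\mathcal I$ by pigeonhole so that~\eqref{eq:independence of restricted passage times} gives independence, lower-bound each $\mathbb P(A_i)$ via~\eqref{eq:MW lower bound} in the first case and~\eqref{eq:Talagrand lower bound} with $t=-d_i$ in the second, and conclude with the Chernoff lower-tail bound using $\mu\ge 2\xi N$ for small $\alpha_\rho$. The only superfluous step is your discarding of indices with $-d_i$ below $5\rho_1\max\{r,\log^2 L\}$ in the second case: the second alternative of~\eqref{eq:assumptions on d} already restricts to $d_i<-\sqrt{m/r}$, and $\sqrt{m/r}\ge 5\rho_1\max\{r,\log^2 L\}$ by~\eqref{eq:assumptions on r}, which is exactly the check the paper performs.
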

The proof uses the following special case of Chernoff's bound (see, e.g.,~\cite[equation (7)]{hagerup1990guided}). Let $X_1,\ldots, X_k$ be independent random variables taking values in $\{0,1\}$ and write $\mu:=\sum_{i=1}^k\mathbb{E}(X_i)$ for the expectation of their sum. Then
\begin{equation}\label{eq:Chernoff bound}
    \mathbb{P}\left(\sum_{i=1}^k X_i\le\frac{1}{2}\mu\right)\le e^{-\frac{1}{8}\mu}.
\end{equation}
\begin{proof}[Proof of Proposition~\ref{prop:probability for long restricted passage times}]If the first condition in~\eqref{eq:assumptions on d} holds then there exists a subset $\mathcal{I}\subset\{0,1\ldots, N-1\}$ such that $d_i\le 6\sqrt{\frac{m}{r}}$ for $i\in\mathcal{I}$, $|i_1-i_2|\ge 2\rho_3$ for all distinct $i_1,i_2\in\mathcal I$ and $|\mathcal{I}|\ge \frac{N}{2\lceil 2\rho_3\rceil}$. The variables $(\bar{T}_p(I_i))_{i\in\mathcal{I}}$ are then independent by~\eqref{eq:independence of restricted passage times}. Thus, by~\eqref{eq:Chernoff bound},
\begin{equation}
    \mathbb P(\mathcal{E}_{p,d})\le \mathbb{P}\left(\sum_{i\in\mathcal{I}} \mathds 1_{\bar{T}_p(I_i)\ge E_p(I_i) + 6\sqrt{\frac{m}{r}}+2\rho_1 \max\{r,\log^2 L\}}\le\xi N\right)\le e^{-\frac{1}{8}\mu_1}\le e^{-\frac{1}{4}\xi N},
\end{equation}
where $\mu_1:=\sum_{i\in\mathcal{I}}\mathbb{P}(\bar{T}_p(I_i)\ge E_p(I_i) + 6\sqrt{\frac{m}{r}}+2\rho_1 \max\{r,\log^2 L\})\ge \frac{c_\rho N}{2\lceil2\rho_3\rceil\sqrt{r}\log L}$ by~\eqref{eq:MW lower bound} and where we use that $\mu_1\ge 2\xi N$ when $\alpha_\rho$ is chosen sufficiently small (here $c_\rho$ is the constant from~\eqref{eq:MW lower bound}).

Similarly, if the second condition in~\eqref{eq:assumptions on d} holds then there exists $\mathcal{I}\subset\{0,1\ldots, N-1\}$ such that $-\sqrt{2m(1+\rho)}\log L\le d_i<-\sqrt{\frac{m}{r}}$ for $i\in\mathcal{I}$, $|i_1-i_2|\ge 2\rho_3$ for all distinct $i_1,i_2\in\mathcal I$ and $\sum_{i\in\mathcal{I}}d_i \le -\frac{1
}{\lceil2\rho_3\rceil}\sqrt{\frac{m}{r}} N$. The variables $(\bar{T}_p(I_i))_{i\in\mathcal{I}}$ are again independent by~\eqref{eq:independence of restricted passage times}. Thus, by~\eqref{eq:Chernoff bound},
\begin{equation}
    \mathbb P(\mathcal{E}_{p,d})\le \mathbb{P}\left(\sum_{i\in\mathcal{I}} \mathds 1_{\bar{T}_p(I_i)\ge E_p(I_i) + d_i+2\rho_1 \max\{r,\log^2 L\}}\le\xi N\right)\le e^{-\frac{1}{8}\mu_2}\le e^{-\frac{1}{4}\xi N},
\end{equation}
where 
\begin{equation}
\mu_2=\!\sum_{i\in\mathcal{I}}\mathbb{P}(\bar{T}_p(I_i)\ge E_p(I_i) + d_i +2\rho_1 \max\{r,\log^2 L\})\!\ge \!\frac{c_\rho }{\sqrt{m}\log L}\!\sum_{i\in\mathcal{I}}|d_i|\!\ge \!\frac{c_\rho N}{\lceil 2\rho_3\rceil\sqrt{r}\log L}
\end{equation}
by~\eqref{eq:Talagrand lower bound} (checking that $\sqrt{\frac{m}{r}}\ge 5\rho_1\max\{r,\log^2L\}$ by~\eqref{eq:assumptions on r} when $\alpha_\rho$ is sufficiently small), and where we use that $\mu_2\ge 2\xi N$ when $\alpha_\rho$ is sufficiently small (here $c_\rho$ is the constant from~\eqref{eq:Talagrand lower bound}).
\end{proof}

\subsubsection{Monotonic events}
Recall that Harris' inequality (generalized to dependent variables by the FKG inequality) states that two increasing events in independent random variables are non-negatively correlated \cite{harris_1960}. In this section we explain the use that we make of this inequality in our context.

Let $p\in\mathcal{P}$. Observe that the event $\{\gamma=p\}$ is decreasing in the weights $(t_e)_{e\in p}$ and increasing in the weights $(t_e)_{e\notin p}$. Thus, by Harris' inequality, it is non-negatively correlated with every event sharing the same monotonicity properties. We employ the following variant of this observation.
\begin{lem}\label{lem:Harris use}
Let $p\in\mathcal{P}$. Let $E$ be an event which is decreasing in the weights $(t_e)_{e\notin p}$ (for every fixed value of $(t_e)_{e\in p}$). Then the following inequality of conditional probabilities holds almost surely,
\begin{equation}\label{eq:Harris use}
    \mathbb P\big( \{\gamma=p\}\cap E\,|\,(t_e)_{e\in p} \big) \le \mathbb P \big( \{\gamma = p\}\,|\,(t_e)_{e\in p} \big) \cdot\mathbb P \big(E\,|\,(t_e)_{e\in p} \big) .
\end{equation}
\end{lem}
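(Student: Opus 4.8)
The plan is to reduce the inequality to a single application of Harris' inequality, carried out in the product space of the weights $(t_e)_{e\notin p}$ after conditioning on $(t_e)_{e\in p}$. First I would fix a value of $(t_e)_{e\in p}$ and work with the conditional probability measure on the remaining coordinates $(t_e)_{e\notin p}$, which is still a product measure since the edge weights are independent. Under this conditioning, the event $\{\gamma=p\}$ becomes an event depending only on $(t_e)_{e\notin p}$ (the weights on $p$ being frozen), and it is increasing in those weights: increasing the weight of an edge $e\notin p$ can only make competing paths more costly relative to $p$, hence can only help the event $\{\gamma = p\}$ to occur. This is the monotonicity observation already noted in the text preceding the lemma; I would spell it out carefully, using that $\{\gamma = p\}$ is the event that $T(p) < T(q)$ for every other path $q$ between the endpoints (strict inequalities, valid almost surely by~\eqref{eq:assumption ii}), and that for $q\ne p$ the quantity $T(q) - T(p)$ is nondecreasing in each $t_e$ with $e\notin p$.

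The event $E$ is, by hypothesis, decreasing in $(t_e)_{e\notin p}$ for every fixed value of $(t_e)_{e\in p}$. Thus, conditionally on $(t_e)_{e\in p}$, the event $E^c$ is increasing in $(t_e)_{e\notin p}$, and so is $\{\gamma = p\}$. Harris' inequality applied to these two increasing events in the independent variables $(t_e)_{e\notin p}$ gives
\begin{equation}
    \mathbb P(\{\gamma=p\}\cap E^c\,|\,(t_e)_{e\in p})\ge \mathbb P(\{\gamma = p\}\,|\,(t_e)_{e\in p})\cdot\mathbb P(E^c\,|\,(t_e)_{e\in p})
\end{equation}
almost surely. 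Rewriting $\mathbb P(\{\gamma=p\}\cap E^c\,|\,\cdot) = \mathbb P(\{\gamma=p\}\,|\,\cdot) - \mathbb P(\{\gamma=p\}\cap E\,|\,\cdot)$ and $\mathbb P(E^c\,|\,\cdot) = 1 - \mathbb P(E\,|\,\cdot)$ and rearranging yields exactly~\eqref{eq:Harris use}.

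The only genuine point requiring care — and the part I would treat as the main obstacle, modest as it is — is the measure-theoretic bookkeeping of the conditioning: one must check that the conditional law of $(t_e)_{e\notin p}$ given $(t_e)_{e\in p}$ is (a version of) the product law, that $\{\gamma=p\}$ and $E$ are measurable in the appropriate sense so that the conditional probabilities are well-defined, and that Harris' inequality may be invoked for each fixed value of $(t_e)_{e\in p}$ with the resulting inequality holding for almost every such value simultaneously. Since $\gamma$ is determined by comparing countably many path weights, all relevant events lie in the product $\sigma$-algebra and these points are routine; the substance of the lemma is the monotonicity of $\{\gamma=p\}$ and the plus Harris, as above.
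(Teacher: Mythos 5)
Your proposal is correct and is essentially the paper's proof: both condition on $(t_e)_{e\in p}$, note that $\{\gamma=p\}$ is increasing and $E$ decreasing in the independent variables $(t_e)_{e\notin p}$, and apply Harris' inequality in the conditional product space. The only cosmetic difference is that the paper invokes Harris directly for an increasing and a decreasing event, while you pass to $E^c$ and rearrange, which is the same statement.
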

\begin{proof}
The IID structure of the environment implies that the $(t_e)_{e\notin p}$ remain independent after conditioning on $(t_e)_{e\in p}$. As $\{\gamma = p\}$ is an increasing event in $(t_e)_{e\notin p}$ while $E$ is decreasing in these variables, we may apply
Harris' inequality in the conditional probability space to obtain~\eqref{eq:Harris use}.
\end{proof}

\subsubsection{Putting all the ingredients together}
In this section we explain how the results of the previous sections are combined to prove~\eqref{eq:attractive interval prob2}, from which the attractive geodesic proposition, Proposition~\ref{prop:attractive geodesics intro}, follows. We assume throughout that the constant $\alpha_\rho$ in~\eqref{eq:assumptions on r} is taken sufficiently small for the arguments.

Write $\mathcal{P}_{\rho,m}:=\{p\in\mathcal{P}\colon p\text{ has $(\rho,m)$-bounded slope}\}$. Also set $D_p:=(D_p(I_i))_{0\le i\le N-1}$ (recalling the definition of $D_p(J)$ from~\eqref{eq:Dp(J) def}).
First,
\begin{alignat}{1}
    &\mathbb{P}\left(\left\{\sum_{i=0}^{N-1} \mathds 1_{\Omega(I_i)}\le\xi N\right\}\cap\{\gamma\text{ has $(\rho,m)$-bounded slope}\}\right)\\
    &=\sum_{p\in\mathcal{P}_{\rho,m}}\mathbb{P}\left(\left\{\sum_{i=0}^{N-1} \mathds 1_{\Omega(I_i)}\le\xi N\right\}\cap\{\gamma=p\}\right)\\
    &=\sum_{p\in\mathcal{P}_{\rho,m}}\mathbb{P}\left(\mathcal{E}_{p,D_p}\cap\{\gamma=p\}\right)\\
    &\le Ce^{-c\log^2 L}+\sum_{p\in\mathcal{P}_{\rho,m}}\mathbb{P}\left(\mathcal{E}_{p,D_p}\cap\OT\cap\{\gamma=p\}\right)
\end{alignat}
with the second equality following by comparing the definition~\eqref{eq:definition of Omega(J)} of $\Omega(J)$, the definition~\eqref{eq:Dp(J) def} of $D_p(J)$ and the definition~\eqref{eq:Epd def} of $\mathcal{E}_{p,d}$, and with the inequality following from Lemma~\ref{lem:Talagrand control}. Second, for each $p\in\mathcal{P}_{\rho,m}$,
\begin{alignat}{1}
    &\mathbb{P}\left(\mathcal{E}_{p,D_p}\cap\OT\cap\{\gamma=p\}\right)\\
    &=\mathbb{P}\left(\mathcal{E}_{p,D_p}\cap\OT\cap\{\gamma=p\}\cap\left(\Omega_{1,\sqrt{\frac{m}{r}}}\cup\Omega_{2,\sqrt{\frac{m}{r}}}\right)\right)\\
    &=\mathbb{P}\left(\mathcal{E}_{p,D_p}\cap\OT\cap\{\gamma=p\}\cap\left(\Omega_{1,\sqrt{\frac{m}{r}}}(p)\cup\Omega_{2,\sqrt{\frac{m}{r}}}(p)\right)\right)\\
    &\le\mathbb{P}\left(\mathcal{E}_{p,D_p}\cap\{\gamma=p\}\cap\left(\Omega_{1,\sqrt{\frac{m}{r}}}(p)\cup\Omega_{2,\sqrt{\frac{m}{r}}}(p)\right)\right)=(*)
\end{alignat}
with the first equality following from the discussion after~\eqref{eq:deviation bounds for gamma} and the second equality following from the definition of $\Omega_{1,\tau}(p)$ and $\Omega_{2,\tau}(p)$ following~\eqref{eq:Omega 2 tau def}, making use of the intersection with the event $\{\gamma=p\}$. Third, we condition on the passage time of the edges on the path $p$ and observe that $D_p$ and hence also the $\Omega_{i,\tau}(p)$ events are measurable with respect to this conditioning. Thus,
\begin{alignat}{1}
    (*)&=\mathbb{E}\left(\mathds 1_{\Omega_{1,\sqrt{\frac{m}{r}}}(p)\cup\Omega_{2,\sqrt{\frac{m}{r}}}(p)}\,\mathbb{P}\left(\mathcal{E}_{p,D_p}\cap\{\gamma=p\}\right)\,\Big|\,(t_e)_{e\in p})\right)\\
    &\le\mathbb{E}\left(\mathds 1_{\Omega_{1,\sqrt{\frac{m}{r}}}(p)\cup\Omega_{2,\sqrt{\frac{m}{r}}}(p)}\,\mathbb{P}\left(\mathcal{E}_{p,D_p}\,\Big|\,(t_e)_{e\in p}\right)\mathbb{P}\left(\gamma=p\,\Big|\,(t_e)_{e\in p}\right)\right)\\
    &\le e^{-\frac{1}{4}\xi N}\mathbb{E}\left(\mathds 1_{\Omega_{1,\sqrt{\frac{m}{r}}}(p)\cup\Omega_{2,\sqrt{\frac{m}{r}}}(p)}\,\mathbb{P}\left(\gamma=p\,\Big|\,(t_e)_{e\in p}\right)\right)\\
    &\le e^{-\frac{1}{4}\xi N}\mathbb{P}\left(\gamma=p\right)
\end{alignat}
where the first inequality follows from Lemma~\ref{lem:Harris use} (as $\mathcal{E}_{p,D_p}$ is decreasing in $(t_e)_{e\notin p}$, for each fixed value of $(t_e)_{e\in p}$) and the second inequality follows from Proposition~\ref{prop:probability for long restricted passage times} (using the IID structure of the environment, as $\mathcal{E}_{p,d}$ is independent of $(t_e)_{e\in p}$ while $D_p$ is measurable with respect to these variables), noting that the $\Omega_{i,\tau}(p)$ events exactly ensure that condition~\eqref{eq:assumptions on d} holds.

Putting the previous displayed equations together, we finally conclude that
\begin{alignat}{1}
    &\mathbb{P}\left(\left\{\sum_{i=0}^{N-1} \mathds 1_{\Omega(I_i)}\le\xi N\right\}\cap\{\gamma\text{ has $(\rho,m)$-bounded slope}\}\right)\\
    &\le Ce^{-c\log^2 L}+e^{-\frac{1}{4}\xi N}\sum_{p\in\mathcal{P}_{\rho,m}}\mathbb{P}\left(\gamma=p\right)\le Ce^{-c\log^2 L}+e^{-\frac{1}{4}\xi N}.
\end{alignat}
This concludes the proof of~\eqref{eq:attractive interval prob2}, and hence of Proposition~\ref{prop:attractive geodesics intro}, once we note that $\xi N\ge c_\rho\log^2 L$ by~\eqref{eq:assumptions on r}, for some $c_\rho>0$ depending only on $G$ and $\rho$.

\subsection{Basic lemmas}\label{sec:auxiliary lemmas}
In this section we prove that the events $\OB$ (Lemma~\ref{lem:basic geometric control}) and $\OT$ (Lemma~\ref{lem:Talagrand control}) occur with high probability, and also prove Lemma \ref{lem:long restricted passage time} using Lemma~\ref{lem:MW corollary for T p J}.

Lemma \ref{lem:Talagrand control} is deduced from Talagrand's concentration inequality.
\begin{thm}[Talagrand's inequality \cite{Talagrand1995}]\label{thm:talagrand}There exist  $C,c>0$, depending only on $G$, such that
  for all $u,v\in \mathbb Z ^2 $ we have that 
  \begin{equation}
     \forall t\ge 0:\qquad \mathbb P \Big(  \left| T(u,v)-\mathbb E [T(u,v)] \right|   \ge t \sqrt{\|u-v\|_1} \Big) \le Ce^{-ct^2}.
  \end{equation}
\end{thm}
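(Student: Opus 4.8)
The statement is Talagrand's concentration inequality for first-passage percolation, and the plan is to derive it from Talagrand's general convex-distance inequality for functions of independent variables, as in~\cite{Talagrand1995} (see also the expositions in~\cite{Kesten:StFlour,auffinger201750}). Write $n:=\|u-v\|_1$ and $F:=T(u,v)$. Because the edge weights are unbounded, I would first pass to a truncated model: using~\eqref{eq:assumption i}, fix a large constant $M=M(G)$, replace each $t_e$ by $t_e\wedge M$, and let $F^M$ be the resulting passage time. The event $\{F\neq F^M\}$ forces some edge used by a geodesic realizing $F$ to have weight above $M$; since such a geodesic has $O(n)$ edges except on an event of small probability (controllable by comparison with the straight lattice path of length $n$ and a union bound over self-avoiding paths, or via the event $\OB$ of Lemma~\ref{lem:basic geometric control}), the exponential moment makes both $\mathbb P(F\neq F^M)$ and $\mathbb E|F-F^M|$ negligible on the scale $\sqrt n$; hence it suffices to establish the bound for $F^M$.

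The heart of the argument is to apply Talagrand's inequality to $F^M$, regarded as a function of the bounded independent variables $(t_e\wedge M)$. Two structural facts are used: \emph{(i)} changing one of the weights changes $F^M$ by at most the change in that weight, in particular by at most $M$, so that after rescaling by $M$ the function $F^M$ is $1$-Lipschitz in each coordinate, since altering a single weight changes the time of every path, hence the infimum, by at most that amount; and \emph{(ii)} for each level $a$ the event $\{F^M\le a\}$ is \emph{certified} by a set of at most $N(a)$ coordinates --- the edges of any path achieving passage time $\le a$ --- with $N(a)=O(n)$ when $a$ is of order $\mathbb E F^M$ (using that the geodesic has $O(n)$ edges, or simply the straight lattice path). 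Feeding \emph{(i)}--\emph{(ii)} into Talagrand's convex-distance inequality yields a two-sided bound $\mathbb P\big(|F^M-\mathrm{med}(F^M)|\ge t\sqrt n\big)\le Ce^{-ct^2}$ with $C,c$ depending only on $G$, in the range $t\lesssim\sqrt n$; one then replaces the median by $\mathbb E F^M$ at a cost $O(\sqrt n)$ absorbed by the bound, and undoes the truncation.

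For the complementary regime $t\gtrsim\sqrt n$ I would argue by hand: the upper deviation $\{F\ge\mathbb E F+t\sqrt n\}$ is contained in $\{\sum_{e\in P_0}t_e\ge t\sqrt n\}$ for the straight path $P_0$ of $n$ edges, a large deviation of a sum of $n$ i.i.d.\ weights that~\eqref{eq:assumption i} controls via Chernoff, while the lower deviation is vacuous there since $F\ge 0$ and $\mathbb E F=O(n)$; taking the constant $c$ in the conclusion small enough makes this crude estimate dominated by $Ce^{-ct^2}$ throughout. The step I expect to be most delicate is \emph{(ii)} together with the truncation: one must ensure that the certifying set has size $O(n)$ with a failure probability already below the target Gaussian bound, which is exactly what the control of geodesic lengths and the exponential moment of $G$ provide. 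Everything else is a routine invocation of Talagrand's machinery.
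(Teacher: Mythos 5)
First, note that the paper does not prove this statement at all: it is imported verbatim from the literature with the citation to Talagrand's 1995 paper (the relevant argument is his Section~8.3; see also Kesten's earlier, weaker $e^{-ct}$ bound). So there is no internal proof to compare against, and the question is only whether your outline would actually work. Your high-level strategy --- certifiability of $\{T\le a\}$ by the $O(n)$ edges of a geodesic, coordinate-Lipschitzness, Kesten's control of geodesic length to make the certificate short, and Talagrand's convex-distance inequality --- is indeed the standard route and is correct in spirit for \emph{bounded} weights.

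The genuine gap is the truncation step. With a \emph{fixed} level $M=M(G)$, neither $\mathbb P(F\neq F^M)$ nor $\mathbb E|F-F^M|$ is negligible: a geodesic has order $n$ edges, a positive fraction of which (in expectation, about $ne^{-\alpha M}$ of them) carry weight exceeding $M$, so $\mathbb P(F\neq F^M)\to 1$ and $\mathbb E[F-F^M]$ is of order $n$, not $o(\sqrt n)$. If instead you let $M$ grow, say $M\asymp\log n$, the error terms become manageable but the coordinate-Lipschitz constant in the convex-distance inequality becomes $M$, and the resulting bound is $e^{-ct^2/\log^2 n}$ --- the constant $c$ is no longer independent of $u,v$, which is exactly what the theorem asserts. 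Avoiding this logarithmic loss for unbounded weights is the actual technical content of Talagrand's Section~8.3, where he uses a version of the convex-distance inequality adapted to variables with exponential tails (a ``penalty'' formulation) rather than a plain truncation. Your sketch hides the entire difficulty in the sentence claiming the truncation error is ``negligible on the scale $\sqrt n$.''

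A second, smaller error is the large-$t$ regime. For $t\gtrsim\sqrt n$ the Chernoff bound on the straight path gives $\mathbb P(T\ge \mathbb E T+t\sqrt n)\le e^{Cn-\alpha t\sqrt n}$, i.e.\ decay like $e^{-c t\sqrt n}$; since $t\sqrt n\le t^2$ there, this is \emph{weaker} than $e^{-ct^2}$, so it cannot be ``dominated by $Ce^{-ct^2}$'' by shrinking $c$ --- the inequality goes the wrong way. In fact, for weights with only an exponential moment the stated bound is false for $t$ much larger than $\sqrt{\|u-v\|_1}$ (already for adjacent $u,v$ and exponential weights, $\mathbb P(T\ge s)\ge e^{-4s}$ beats any Gaussian tail). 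The statement should be read, as it is used in the paper (always with $t=\log L$ and $\|u-v\|_1\gtrsim\log^2L$), as holding in the regime $t\lesssim\sqrt{\|u-v\|_1}$; your proof should either restrict to that regime or note explicitly why the applications only need it there.
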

\begin{proof}[Proof of Lemma \ref{lem:Talagrand control}]We have
\begin{equation}
    \OT^c\subset \bigcup_{u,v\in[-L^2,L^2]^2\cap \ZZ^2}\left\{  |T(u,v)-\mathbb E [T(u,v)]|>\sqrt{\|u-v\|_1}\log L\right\}.
\end{equation}
The result follows from Theorem \ref{thm:talagrand} and a union bound.
\end{proof}
To prove Lemma \ref{lem:basic geometric control}, we need the following two claims.
\begin{claim}\label{claim:length} There exist $C,c,\rho_2>0$, depending only on $G$, such that for every $u,v\in \mathbb Z ^2$ and $n\ge \rho_2\|u-v\|_1$, we have 
  \begin{equation}
      \mathbb P \big( |\gamma(u,v) |\ge n  \big) \le Ce^{-cn}.
  \end{equation}
\end{claim}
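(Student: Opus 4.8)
The plan is to prove Claim~\ref{claim:length} by the standard first-passage-percolation argument that long geodesics must contain many low-weight edges, whose abundance is exponentially unlikely. First I would fix a threshold $M>0$ chosen so that $p:=G((M,\infty))=\mathbb P(t_e>M)$ is close to $1$; such an $M$ exists since $G$ is a probability measure on $[0,\infty)$. The key observation is that if $\gamma(u,v)$ has at least $n$ edges, then at most $T(u,v)/M$ of these edges can have weight exceeding $M$, so at least $n - T(u,v)/M$ edges of $\gamma(u,v)$ have weight $\le M$. Combined with the deterministic bound $T(u,v)\le T(\text{straight-ish lattice path})$, which together with assumption~\eqref{eq:assumption i} gives $T(u,v)\le \rho_0\|u-v\|_1$ with exponentially small failure probability (this is essentially Claim~\ref{claim:time}, alluded to in the excerpt), we get that on a high-probability event $\gamma(u,v)$ contains at least $n/2$ edges of weight $\le M$ once $n\ge \rho_2\|u-v\|_1$ for $\rho_2$ large enough depending on $M,\rho_0$.

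Next I would run a union bound / counting argument over self-avoiding paths. Any geodesic is self-avoiding, so $\{|\gamma(u,v)|\ge n\}$ implies there exists a self-avoiding path $\pi$ from $u$ of length exactly $\lceil n\rceil$ (a prefix of $\gamma(u,v)$, say) at least half of whose edges carry weight $\le M$. The number of self-avoiding paths of length $k$ starting at $u$ is at most $4\cdot 3^{k-1}\le 3^{k+1}$. For a fixed such path $\pi$ of length $k$, the probability that at least $k/2$ of its (independent) edges have weight $\le M$ is, by a Chernoff bound for the binomial, at most $e^{-c_1 k}$ provided $p=\mathbb P(t_e>M)$ was chosen with $p>1/2$ (indeed $p$ can be taken arbitrarily close to $1$), where $c_1>0$ depends only on $p$, hence only on $G$. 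Taking $M$ large enough that $3e^{-c_1}<1$, i.e. that $3^{k}e^{-c_1 k}$ decays exponentially, the union bound over paths of length $k=\lceil n\rceil$ gives $\mathbb P(\exists\text{ such }\pi)\le C e^{-c n}$.

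Putting the two pieces together: on the complement of the exponentially-unlikely event $\{T(u,v)>\rho_0\|u-v\|_1\}$, the event $\{|\gamma(u,v)|\ge n\}$ with $n\ge\rho_2\|u-v\|_1$ forces the existence of a self-avoiding path of length $\ge n$ with at least half its edges of weight $\le M$, whose probability we just bounded by $Ce^{-cn}$; adding back the failure probability of the deterministic time bound, which is also at most $Ce^{-c\|u-v\|_1}\le Ce^{-c'n}$ in the regime $n\ge\rho_2\|u-v\|_1$, yields $\mathbb P(|\gamma(u,v)|\ge n)\le Ce^{-cn}$ as required, after adjusting constants.

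The main obstacle, and the only slightly delicate point, is making the two thresholds interact correctly: the choice of $M$ must simultaneously make $\mathbb P(t_e>M)$ large enough for the Chernoff bound to beat the entropy factor $3^k$, and the resulting constant $\rho_2$ (governing how large $n$ must be relative to $\|u-v\|_1$) must be taken large enough that the "few heavy edges" bookkeeping goes through, i.e. $\rho_0\|u-v\|_1/M \le n/2$ whenever $n\ge \rho_2\|u-v\|_1$. Since all of $M$, $\rho_0$, $c_1$ depend only on $G$, these choices can be made in the right order ($M$ first, then $\rho_2$), and no circularity arises. One should also take care that the straight-line upper bound on $T(u,v)$ uses a concrete deterministic lattice path of length $O(\|u-v\|_1)$ so that assumption~\eqref{eq:assumption i} applies edge by edge; this is routine.
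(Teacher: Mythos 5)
Your argument is correct, but it takes a different route from the paper. The paper's proof is a two-line reduction: on the event $\{T(u,v)<\rho_0 n\}$ (controlled by Claim~\ref{claim:time} once $n\ge(\rho_1/\rho_0)\|u-v\|_1$), a geodesic of length $\ge n$ would be a self-avoiding path from $u$ of length $\ge n$ and passage time $<\rho_0 n$, an event ruled out by citing Kesten's Theorem~\ref{thm:kesten} (Proposition~(5.8) of \cite{Kesten:StFlour}) as a black box. You instead re-derive the content of Kesten's estimate from scratch via the standard ``many light edges'' counting argument: a long geodesic with controlled passage time must have at least half its edges of weight $\le M$, and a union bound over the at most $4\cdot 3^{k-1}$ self-avoiding paths of length $k$, combined with a binomial Chernoff bound, kills this. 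Your version is more self-contained and elementary; the trade-off is that the naive entropy-versus-Chernoff comparison needs $\mathbb P(t_e\le M)$ to be smaller than an absolute constant (roughly $1/36$ with your bookkeeping), which is available here only because \eqref{eq:assumption ii} forces $G(\{0\})=0$, whereas Kesten's theorem holds under the weaker and sharp condition $G(\{0\})<p_c(2)=1/2$. Two small slips worth fixing: the existence of a suitable threshold $M$ does not follow merely from ``$G$ is a probability measure on $[0,\infty)$'' (take $G=\delta_0$); you must invoke $G(\{0\})=0$, i.e.\ absolute continuity, to make $\mathbb P(t_e>M)$ close to $1$ by taking $M$ \emph{small}. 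Correspondingly, ``taking $M$ large enough that $3e^{-c_1}<1$'' should read ``small enough''. Neither affects the substance of the proof.
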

\begin{claim}\label{claim:time} There exist $c,\rho_1>0$, depending only on $G$, such that for every $u,v\in \mathbb Z ^2$ and $n\ge \rho_1\|u-v\|_1$, we have 
  \begin{equation}
      \mathbb P \big(T(u,v)\ge n  \big) \le e^{-cn}.
  \end{equation}
\end{claim}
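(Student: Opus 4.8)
The plan is to bound $T(u,v)$ by the passage time of a single deterministic path, namely a shortest lattice path from $u$ to $v$ (say a monotone ``staircase'' path $p_0$ with $|p_0| = \|u-v\|_1$ edges), and then apply the exponential moment assumption~\eqref{eq:assumption i} together with a standard Chernoff/large-deviations argument for a sum of i.i.d.\ variables. Since $T(u,v)\le T(p_0)=\sum_{e\in p_0} t_e$, it suffices to control the latter sum. Write $N:=\|u-v\|_1$, so $T(p_0)$ is a sum of $N$ i.i.d.\ copies of $t_e$, each with finite exponential moment.

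First I would fix $\alpha>0$ as in~\eqref{eq:assumption i} (shrinking it if needed so that $M(\alpha):=\mathbb{E}[e^{\alpha t_e}]<\infty$), and set $\lambda := \log M(\alpha) < \infty$. By Markov's inequality applied to $e^{\alpha T(p_0)}$,
\begin{equation}
  \mathbb{P}\big(T(u,v)\ge n\big)\le \mathbb{P}\big(T(p_0)\ge n\big)\le e^{-\alpha n}\,\mathbb{E}\big[e^{\alpha T(p_0)}\big]=e^{-\alpha n}\,M(\alpha)^{N}=e^{-\alpha n + \lambda N}.
\end{equation}
Now choose $\rho_1 := \frac{2\lambda}{\alpha}$ (we may assume $\rho_1\ge 1$ by enlarging it, which only weakens the hypothesis $n\ge\rho_1\|u-v\|_1$). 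Then for $n\ge \rho_1 N$ we have $\lambda N \le \frac{\alpha}{2} n$, hence the bound above is at most $e^{-\frac{\alpha}{2} n}$, and the claim follows with $c := \alpha/2$. One should handle the degenerate case $u=v$ separately (then $T(u,v)=0$ and the statement is trivial for $n\ge 0$), and note that when $N\ge 1$ the staircase path genuinely has $N$ edges so the product expansion of $\mathbb{E}[e^{\alpha T(p_0)}]$ over independent edges is valid.

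I do not anticipate a serious obstacle here: the only points requiring care are (i) ensuring the reference path $p_0$ is chosen with exactly $\|u-v\|_1$ edges and with distinct edges so that independence gives the clean product $M(\alpha)^N$, and (ii) tracking that the constants $c,\rho_1$ depend only on $G$ (through $\alpha$ and $M(\alpha)$), which is immediate from the construction. (This claim is in fact strictly easier than Claim~\ref{claim:length}, since bounding the passage time needs no control on how long the geodesic is.)
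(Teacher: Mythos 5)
Your proposal is correct and follows essentially the same route as the paper: bound $T(u,v)$ by the passage time of a deterministic path of $\ell_1$-length $\|u-v\|_1$, apply Markov's inequality to $e^{\alpha T(p)}$ using the exponential moment assumption, and take $\rho_1$ large enough that the exponent is negative. The details you flag (degenerate case, distinct edges, dependence of constants only on $G$) are handled the same way, so there is nothing to add.
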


\begin{proof}[Proof of Lemma \ref{lem:basic geometric control}]
We have
\begin{equation}
\begin{alignedat}{1}
    \OB^c\subset \bigcup_{u,v\in[-L^2,L^2]^2\cap \ZZ^2} \Big(&\left\{  T(u,v)>\rho_1\max\{\|u-v\|_1,\log^2L\}\right\}\\
    &\cup \left\{ |\gamma(u,v)|>\rho_2\max\{\|u-v\|_1,\log^2 L\} \right\}\Big).
\end{alignedat}
\end{equation}
The result follows from Claim~\ref{claim:length}, Claim~\ref{claim:time} and a union bound.
\end{proof}
We proceed to prove the claims.
\begin{proof}[Proof of Claim \ref{claim:time}]
Let $p$ be a deterministic path between $u$ and $v$ such that $|p|=\|u-v\|_1$. For instance, one can choose the path that first goes straight in the vertical direction and then straight in the horizontal direction.
For each $\alpha>0$, Markov's inequality and the independence of the edge weights yield that every $n\ge \rho_1\|u-v\|_1$,
\begin{equation}
\begin{split}
    \mathbb P(T(u,v)\ge n)&\le  \mathbb P(T(p)\ge n)=\mathbb P\left(e^{\alpha T(p)}\ge e^{\alpha n}\right)\\
    &\le \left(\mathbb E e^{\alpha t_e}\right)^{\|u-v\|_1} e^{-\alpha n}\le \left(\left(\mathbb E e^{\alpha t_e}\right)^{1/\rho_1} e^{-\alpha}\right)^n.
\end{split}
\end{equation}
The claim follows by choosing $\alpha$ to be the constant from our assumption~\eqref{eq:assumption i} and choosing $\rho_1$ sufficiently large.
\end{proof}
To prove Claim \ref{claim:length} we need the following result due to Kesten, a corollary of Proposition 5.8 in \cite{Kesten:StFlour}.
\begin{thm}\label{thm:kesten}Suppose the edge-weight distribution $G$ satisfies $G(\{0\})< p_c(2)$ (with $p_c(2)=1/2$ the critical probability for bond-percolation on $\mathbb{Z}^2$). Then there exist $C,c,\rho_0>0$, depending only on $G$, such that 
\[\forall n\geq 0: \qquad \mathbb P \left(\begin{array}{c}\text{There exists a (simple) path $q$ starting }\\\text{from $(0,0)$ such that $|r|\geq n$ and $T(r)<\rho_0n$}\end{array}\right)\leq C\exp(-cn)\,.\]
\end{thm}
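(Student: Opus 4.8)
The plan is to deduce the claim from Theorem~\ref{thm:kesten}, which we may invoke since our standing assumption~\eqref{eq:assumption ii} that $G$ is absolutely continuous implies in particular $G(\{0\})=0<p_c(2)$. The intuition is simple: a geodesic with many edges must, by the limit shape theorem (or by Claim~\ref{claim:time}), have large passage time; but Theorem~\ref{thm:kesten} says that long paths with small passage time starting from a fixed point are exponentially unlikely. The subtlety is that Theorem~\ref{thm:kesten} is stated for paths emanating from $(0,0)$, whereas we need the bound uniformly in $u,v\in\ZZ^2$; we handle this by translation invariance, anchoring the bad path at the fixed endpoint $u$.

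First I would fix $\rho_2$ (to be chosen) and $n\ge \rho_2\|u-v\|_1$, and consider the event $\{|\gamma(u,v)|\ge n\}$. On this event, $\gamma(u,v)$ is a simple path starting at $u$ with at least $n$ edges. The key dichotomy: either $T(\gamma(u,v))\ge \rho_0 n$ (where $\rho_0$ is the constant from Theorem~\ref{thm:kesten}), or $T(\gamma(u,v))<\rho_0 n$. In the first case, since $\gamma(u,v)$ is the geodesic, $T(u,v)=T(\gamma(u,v))\ge \rho_0 n$; but by Claim~\ref{claim:time}, choosing $\rho_2\ge \rho_1/\rho_0$ (so that $\rho_0 n\ge \rho_1\|u-v\|_1$), this has probability at most $e^{-c\rho_0 n}$. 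In the second case, there exists a simple path starting at $u$ with at least $n$ edges and passage time less than $\rho_0 n$; by translation invariance of the i.i.d.\ environment, the probability of the existence of such a path is the same as for paths starting at $(0,0)$, so Theorem~\ref{thm:kesten} bounds it by $C\exp(-cn)$.

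Combining the two cases via a union bound gives $\mathbb P(|\gamma(u,v)|\ge n)\le e^{-c\rho_0 n}+C\exp(-cn)\le C'e^{-c'n}$ for suitable constants $C',c'>0$ depending only on $G$, as desired. I expect the only point requiring care is bookkeeping the constants: one must pick $\rho_2$ large enough that \emph{both} the inequality $\rho_0\rho_2\|u-v\|_1\le \rho_0 n$ feeds correctly into Claim~\ref{claim:time} (which needs $\rho_0 n\ge\rho_1\|u-v\|_1$, hence $\rho_2\ge\rho_1/\rho_0$) and that Theorem~\ref{thm:kesten} is applied with its own threshold already satisfied (it holds for all $n\ge 0$, so no constraint there). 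There is no serious obstacle; the argument is a short reduction.
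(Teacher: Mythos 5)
There is a genuine gap: you have not proved the statement in question at all. The statement is Theorem~\ref{thm:kesten} itself -- Kesten's bound that, when $G(\{0\})<p_c(2)$, the existence of a (simple) path from the origin of length at least $n$ with passage time below $\rho_0 n$ has probability at most $Ce^{-cn}$. Your write-up instead proves Claim~\ref{claim:length}, and it does so by \emph{invoking} Theorem~\ref{thm:kesten} as a known input; as a proof of Theorem~\ref{thm:kesten} this is circular, since the one step of your argument that controls long paths of small weight is exactly the theorem you were asked to establish. (For what it is worth, your deduction of Claim~\ref{claim:length} -- the dichotomy between $T(\gamma(u,v))\ge\rho_0 n$, handled by Claim~\ref{claim:time} with $\rho_2=\rho_1/\rho_0$, and $T(\gamma(u,v))<\rho_0 n$, handled by translation invariance and Theorem~\ref{thm:kesten} -- coincides with the paper's own proof of that claim. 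But that is not the task here.) Note also that the paper itself does not prove Theorem~\ref{thm:kesten}; it quotes it as a corollary of Proposition~(5.8) in \cite{Kesten:StFlour}, so a blind proof of the statement must reproduce Kesten's argument, not the short reduction that follows it.

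To actually prove Theorem~\ref{thm:kesten} one must use the hypothesis $G(\{0\})<p_c(2)$, which appears nowhere in your argument. The standard route is a percolation argument: choose $\ep>0$ with $G([0,\ep))<p_c(2)$ and $\rho_0$ small; a simple path $r$ from the origin with $|r|\ge n$ and $T(r)<\rho_0 n$ must contain at least $(1-\rho_0/\ep)|r|$ edges of weight less than $\ep$, i.e.\ it consists almost entirely of edges that are open for a \emph{subcritical} percolation process. One then needs to show that self-avoiding paths from the origin of length $n$ made up of a $(1-\delta)$-fraction of subcritical edges are exponentially unlikely, which does not follow from a naive union bound over the $\sim 3^n$ paths (the per-path probability need not beat the entropy) and requires the coarse-graining/exponential-decay machinery behind Kesten's Proposition~(5.8). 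Neither Claim~\ref{claim:time}, nor the limit shape theorem, nor assumption~\eqref{eq:assumption ii} and translation invariance supply this; that missing percolation input is the entire content of the theorem.
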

\begin{proof}[Proof of Claim \ref{claim:length}]
For each $u,v\in \mathbb Z^2$ and $n>0$,
\begin{equation}
    \mathbb P(|\gamma(u,v)|\geq n)\le \mathbb P \left(\!\!\!\begin{array}{c}\text{There exists a (simple) path $q$ starting }\\\text{from $u$ such that $|r|\geq n$ and $T(r)<\rho_0n$}\end{array}\!\!\right)+\mathbb P(T(u,v)\ge \rho_0 n)
\end{equation}
The claim thus follows from Theorem~\ref{thm:kesten} (using translation invariance) and Claim~\ref{claim:time} with $\rho_2 := \rho_1 /\rho_0$.
\end{proof}

Let us now prove Lemma \ref{lem:long restricted passage time}. The following preliminary claim shows that $\bar{T}_p(J)$ is unlikely to be much smaller than $T_p(J)$.

\begin{claim}[Connection cost]\label{claim:connection} There exist $C,c>0$, depending only on $G$, such that the following holds. For each path $p\in\mathcal P$ and each interval $J=[a,b]\subset [0,L]$ with integer endpoints,
\begin{equation}
    \mathbb P \big( \bar{T}_p(J) \ge T_p(J) -2\rho_1\max\{r,\log^2 L\} \big) \ge 1-Ce^ {-c\log ^ 2L}\,.
\end{equation}
\end{claim}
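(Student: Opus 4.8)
The plan is to observe that any path competing for the restricted passage time $\bar T_p(J)$ can be ``closed up'' to a path between the two pioneer points $(a,f_p(a))$ and $(b,f_p(b))$ by attaching two vertical segments of length at most $r$, so that $T_p(J)$ exceeds $T(q)$ by at most the passage time of those two segments; this excess is then controlled by the same exponential‑moment estimate used for Claim~\ref{claim:time}.

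First I would fix $q\in Q_p(J)$ with endpoints $u=(a,y_1)\in\tube_r(p)\cap S_a$ and $v=(b,y_2)\in\tube_r(p)\cap S_b$, so that $|y_1-f_p(a)|\le r$ and $|y_2-f_p(b)|\le r$ by definition of the tube. Let $\sigma_a$ be the straight vertical lattice segment joining the pioneer point $(a,f_p(a))$ to $u$, and $\sigma_b$ the one joining $(b,f_p(b))$ to $v$; each has at most $r$ edges. Since $T$ is a metric and $q$ is a path from $u$ to $v$,
\[
T_p(J)=T\big((a,f_p(a)),(b,f_p(b))\big)\le T(\sigma_a)+T(u,v)+T(\sigma_b)\le T(\sigma_a)+T(q)+T(\sigma_b).
\]
Hence, on the event $\mathcal G$ that $T(\sigma)<\rho_1\max\{r,\log^2 L\}$ for every vertical lattice segment $\sigma$ joining $(a,f_p(a))$ to a point of $\tube_r(p)\cap S_a$ or joining $(b,f_p(b))$ to a point of $\tube_r(p)\cap S_b$, one gets $T(q)\ge T_p(J)-2\rho_1\max\{r,\log^2 L\}$ for every $q\in Q_p(J)$, and therefore $\bar T_p(J)\ge T_p(J)-2\rho_1\max\{r,\log^2 L\}$ (the inequality being vacuous if $Q_p(J)=\emptyset$).

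It then remains to bound $\mathbb P(\mathcal G^c)$. There are at most $2r+1$ lattice points in each of $\tube_r(p)\cap S_a$ and $\tube_r(p)\cap S_b$, hence at most $4r+2$ segments $\sigma$ to consider, each of length at most $r\le\max\{r,\log^2 L\}$. For a fixed such $\sigma$, Markov's inequality applied to $e^{\alpha T(\sigma)}$, with $\alpha$ the exponent from~\eqref{eq:assumption i} and using the independence of edge weights, gives
\[
\mathbb P\big(T(\sigma)\ge\rho_1\max\{r,\log^2 L\}\big)\le\big(\mathbb E e^{\alpha t_e}\big)^{|\sigma|}e^{-\alpha\rho_1\max\{r,\log^2 L\}}\le\big((\mathbb E e^{\alpha t_e})e^{-\alpha\rho_1}\big)^{\max\{r,\log^2 L\}}\le e^{-c\log^2 L},
\]
exactly as in the proof of Claim~\ref{claim:time}, taking $\rho_1$ to be the same large constant so that $(\mathbb E e^{\alpha t_e})e^{-\alpha\rho_1}<1$. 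A union bound yields $\mathbb P(\mathcal G^c)\le(4r+2)e^{-c\log^2 L}$, and since $r$ is at most polynomial in $L$ by~\eqref{eq:assumptions on r} (recall $N\le L$ as the $I_i$ tile $[0,L]$), the prefactor is absorbed into the exponential for $L$ large, giving $\mathbb P(\mathcal G^c)\le Ce^{-c\log^2 L}$ with constants depending only on $G$.

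I do not expect a serious obstacle here: the argument is a soft geometric ``closing‑up'' observation combined with a routine large‑deviation bound for a bounded‑length sum of edge weights. The only mild care needed is to control all $O(r)$ vertical connecting segments simultaneously inside $\mathcal G$ — which is cheap precisely because $r$ is only polynomial in $L$ while the target error is $e^{-c\log^2 L}$ — and to note that for a general $p\in\mathcal P$ the pioneer points $(a,f_p(a)),(b,f_p(b))$ need not lie in $[-L^2,L^2]$, which is why the vertical detours must be bounded directly via~\eqref{eq:assumption i} rather than through the event $\OB$.
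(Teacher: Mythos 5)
Your proof is correct and follows essentially the same route as the paper: close up a competing path $q\in Q_p(J)$ to the pioneer points via the triangle inequality, then control the two connecting passage times of length at most $r\le\max\{r,\log^2 L\}$ by the exponential-moment tail bound (Claim~\ref{claim:time}) and a union bound over the $O(r)$ possible tube endpoints, absorbing the polynomial prefactor into $e^{-c\log^2 L}$. The only cosmetic difference is that the paper bounds the metric distances $T((a,f_p(a)),u)$ directly rather than the passage times of explicit vertical segments, which is an equivalent (slightly weaker hypothesis) formulation.
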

\begin{proof}
By the triangle inequality,
\begin{equation}
\begin{alignedat}{1}
    T_p(J)&\le \min_{\substack{u\in \tube_r(p)\cap S_a\\v\in \tube_r(p)\cap S_b}} T((a,f_p(a)),u)+T(u,v)+T(v,(b,f_p(b)))\\
    &\le \bar{T}_p(J)+\max_{u\in \tube_r(p)\cap S_a} T((a,f_p(a)),u)+\max_{v\in \tube_r(p)\cap S_b}T(v,(b,f_p(b))).
\end{alignedat}
\end{equation}
The claim thus follows from Claim~\ref{claim:time}, applied with $n=\rho_1\max\{r,\log^2 L\}$, and a union bound (using~\eqref{eq:assumptions on r} with $\alpha_\rho\le 1$).
\end{proof}

\begin{proof}[Proof of Lemma \ref{lem:long restricted passage time}, inequality \eqref{eq:Talagrand lower bound}]
Let $J=[a,b]\subset [0,L]$ be an interval with $a,b\in \mathbb Z$ and $m\le|J|\le 2m$. Let $t\in[5\rho_1 \max\{r,\log^2 L\}, \sqrt{2m(1+\rho)}\log L]$. Set $X:=T_p(J)-E_p(J)$. Taking into account Claim~\ref{claim:connection} and the fact that $t\ge 5\rho_1 \max\{r,\log^2 L\}$ we see that it suffices to prove that
\begin{equation}\label{eq:zeta inequality1}
    \zeta:=\mathbb{P}\left(X\ge -\frac{t}{5}\right)\ge \frac{c_\rho t}{\sqrt{m}\log L}
\end{equation}
for some $c_\rho>0$ (using that $Ce^{-c\log ^ 2L}\le \frac{t}{\sqrt{m}\log L}$ for large $L$ by~\eqref{eq:assumptions on r} with, say, $\alpha_\rho\le 1$).

Talagrand's concentration inequality, Theorem~\ref{thm:talagrand}, together with the fact that $p$ has $(\rho,m)$-bounded slope and the fact that $|J|\le 2m$ imply that
\begin{equation}
    \mathbb{E}\left(X\cdot\mathds{1}_{|X|\ge \sqrt{2m(1+\rho)}\log L}\right)\le Ce^{-c\log^2 L}.
\end{equation}
Therefore,
\begin{equation}
    \begin{split}
     0=\mathbb E [X] &=  \mathbb E \big[ X\cdot \mathds 1 \big\{ X < -t/5 \big\}\big] 
     +\mathbb E \big[ X\cdot \mathds 1 \big\{  -t/5 \le X< \sqrt{2m(1+\rho)}\log L  \big\} \big]\\&+ \mathbb E \big[ X\cdot \mathds 1 \big\{  X \ge \sqrt{2m(1+\rho)}\log L \big\}  \big]\\
     &\le (1-\zeta)(-t/5)+\zeta \sqrt{2m(1+\rho)}\log L +Ce^{-c\log ^2 L},
    \end{split}
\end{equation}
which implies~\eqref{eq:zeta inequality1}.
\end{proof}

\begin{proof}[Proof of Lemma \ref{lem:long restricted passage time}, inequality \eqref{eq:MW lower bound}, using Lemma~\ref{lem:MW corollary for T p J}] Let $J=[a,b]\subset [0,L]$ be an interval with integer endpoints satisfying $m\le|J|\le 2m$. Set $X:=T_p(J)-E_p(J)$ and define
\begin{equation}\label{eq:zeta inequality2}
    \zeta:=\mathbb{P}\left(X\ge 7\sqrt{\frac{m}{r}}\right) \quad \text{and} \quad \zeta':=\mathbb{P}\left(X\le -7\sqrt{\frac{m}{r}}\right).
\end{equation}
Note that by~\eqref{eq:assumptions on r} with small $\alpha_\rho$ we have that
\begin{equation}\label{eq:sqrt m r and rho1 max}
    \sqrt{\frac{m}{r}}\ge 4\rho_1 \max\{r,\log^2 L\}.
\end{equation}

Suppose first that $\zeta' \le 3/4$. Then, by Claim~\ref{claim:connection} and~\eqref{eq:sqrt m r and rho1 max},
\begin{equation}
    \mathbb P \bigg( \bar{T}_p(J) \ge E_p(J)- 8\sqrt{\frac{m}{r}} \bigg) \ge \frac{1}{5}.
\end{equation}
Using Lemma~\ref{lem:MW corollary for T p J} with $\alpha=15$ and $a=E_p(J)-8\sqrt{m/r}$ we obtain
\begin{equation}
    \sqrt{\mathbb P \bigg( \bar{T}_p(J) \ge E_p(J)+ 7\sqrt{\frac{m}{r}} \bigg)} \ge e^{-225C_\rho}\left(\frac{1}{5} - e^{-m}\right)\ge c_\rho.
\end{equation}
The bound in \eqref{eq:MW lower bound} follows from the last estimate using~\eqref{eq:sqrt m r and rho1 max}.

Suppose next that $\zeta ' > 3/4$. In this case, as in the previous proof,
\begin{equation}\label{eq:zeta and zeta prime 1}
    \begin{alignedat}{1}
     &0=\mathbb E [X] =  \mathbb E \big[ X\cdot \mathds 1 \big\{ X \le -7\sqrt{m/r} \big\}\big] +\mathbb E \big[ X\cdot \mathds 1 \big\{ |X| < 7\sqrt{m/r} \big\}  \big] +\\
     &+\mathbb E \big[ X\cdot \mathds 1 \big\{  7\sqrt{m/r} \le X< \sqrt{2m(1+\rho)}\log L \big\} \big]+ \mathbb E \big[ X\cdot \mathds 1 \big\{  X \ge \sqrt{2m(1+\rho)}\log L \big\}\big]\\
     &\le \zeta'\left(-7\sqrt{\frac{m}{r}}\right)+(1-\zeta-\zeta')7\sqrt{\frac{m}{r}} + \zeta \sqrt{2m(1+\rho)}\log L + Ce^{-c\log ^2 L}\\
     &\le (1-2\zeta')7\sqrt{\frac{m}{r}}+\zeta \sqrt{2m(1+\rho)}\log L + Ce^{-c\log ^2 L}\\
     &\le -3\sqrt{\frac{m}{r}} +\zeta \sqrt{2m(1+\rho)}\log L.
    \end{alignedat}
\end{equation}
This shows that $\zeta \ge c_{\rho }/(\sqrt{r}\log L)$, which implies~\eqref{eq:MW lower bound} using Claim~\ref{claim:connection} and~\eqref{eq:sqrt m r and rho1 max}.
\end{proof}

\subsection{Perturbing the weights (a Mermin--Wagner style argument)}\label{sec:mermin--wagner}
In this section we prove Lemma~\ref{lem:MW corollary for T p J}. Our basic tool is a ``Mermin--Wagner style argument''; by this terminology, we mean the idea of perturbing a distribution (in our case, the edge passage time distribution) in a way which, on the one hand, significantly alters the observable of interest (in our case, the restricted passage time) and, on the other hand, can be usefully compared with the original distribution. This basic (and somewhat vague) approach has been key in many proofs of the Mermin--Wagner theorem in statistical physics, including~\cite{dobrushin1975absence,mcbryan1977decay,dobrushin1980nonexistence, pfister1981symmetry,richthammer2007translation,milos2015delocalization},~\cite[Theorem 9.2]{friedli2017statistical} and~\cite[Section~2.6]{peled2019lectures}), whence the name, but has also been used in other contexts, e.g. in~\cite{schenker2009eigenvector,chatterjee2019general,kozma2021power,elboim2022long}. Our treatment here draws inspiration from~\cite{pfister1981symmetry,richthammer2007translation,milos2015delocalization,kozma2021power} and has the benefit of providing Gaussian lower bounds on the tail probabilities.

\subsubsection{The basic probabilistic estimate}
The following statement is the basic ``Mermin--Wagner style estimate'' that we will use. Given a subset $S$ we write $S^n$ for its $n$-fold Cartesian product and given a probability measure $\nu$ we write $\nu^n$ for its $n$-fold product measure. In our application, the measure $\nu$ will be the distribution $G$ of the edge passage time.

\begin{lem}\label{lem:MW}
  Let $\nu$ be an absolutely-continuous probability measure on $\mathbb{R}$. There exist 
  \begin{itemize}
      \item a Borel $S_\nu\subset\mathbb{R}$ with $\nu(S_\nu)=1$,
      \item Borel subsets $(B_\delta)_{\delta>0}$ of $S_\nu$ with $\lim_{\delta\downarrow 0}\nu(B_\delta)=1$,
      \item for each $\sigma\in [0,1]$, two increasing bijections $g_{\nu,\sigma}^+:S_\nu\to S_\nu$ and $g_{\nu,\sigma}^-:S_\nu\to S_\nu$,
  \end{itemize}
   such that the following holds:
  \begin{enumerate}
      \item For $\sigma\in[0,1]$ and $\delta>0$,
      \begin{equation}\label{eq:inclusion implies shift}
          g_{\nu,\sigma}^+(w)\ge w+\delta\sigma\quad\text{and}\quad g_{\nu,\sigma}^-(w)\le w-\delta\sigma\quad\text{for $w\in B_\delta$}.
      \end{equation}
      \item Given an integer $n\ge 1$ and vector $\tau=(\tau_1,\ldots,\tau_n)\in[0,1]^n$ define two bijections $T_{\nu,\tau}^+:S_\nu^n\to S_\nu^n$ and $T_{\nu,\tau}^-:S_\nu^n\to S_\nu^n$ by \begin{equation}\label{eq:T plus minus def}
          T_{\nu,\tau}^\pm(w)_i=g_{\nu,\tau_i}^\pm(w_i)\quad\text{for $1\le i\le n$}.
      \end{equation}
      Then, for each Borel $A\subset\mathbb{R}^n$,
      \begin{equation}\label{eq:MW probability estimate}
          \sqrt{\nu^n(T_{\nu,\tau}^+(A))\,\nu^n(T_{\nu,\tau}^-(A))}\ge e^{-\frac{1}{2}\|\tau\|_2^2}\nu^n(A),
      \end{equation}
	where we use the notation $T(A):=\{T(a)\colon a\in A\cap S_\nu^n\}$.
  \end{enumerate}
\end{lem}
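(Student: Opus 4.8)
The plan is to obtain $g^\pm_{\nu,\sigma}$ as the conjugates, by the quantile coupling between $\nu$ and the standard Gaussian, of the honest translations $y\mapsto y\pm\sigma$ of $\mathbb R$; once this is set up, part~(2) reduces to a translation inequality for Gaussian measures (proved by Cauchy--Schwarz) and part~(1) reduces to the almost-everywhere differentiability of monotone functions. \emph{Setup.} Let $F$ be the cumulative distribution function of $\nu$ and $\Phi$ that of the standard Gaussian. Since $\nu$ is absolutely continuous, $F$ is continuous; choose a Borel set $S_\nu\subset\mathbb R$ with $\nu(S_\nu)=1$ on which $F$ restricts to a strictly increasing bijection onto $(0,1)$ --- one deletes from $\mathbb R$ the sets $\{F=0\}$ and $\{F=1\}$ and the countably many maximal intervals on which $F$ is constant (all $\nu$-null) and puts back one point of each deleted interval of constancy. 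Put $\bar\Psi:=\Phi^{-1}\circ F$, a non-decreasing function on $\mathbb R$, finite on the full-measure set $\{0<F<1\}$, and let $\Psi:=\bar\Psi|_{S_\nu}:S_\nu\to\mathbb R$; this is a strictly increasing Borel bijection with Borel inverse and $\Psi_*\nu=\mathcal G$, the standard Gaussian measure on $\mathbb R$ (because $F(X)$ is uniform on $(0,1)$ when $X\sim\nu$). Define $g^\pm_{\nu,\sigma}:=\Psi^{-1}\circ(y\mapsto y\pm\sigma)\circ\Psi$, increasing bijections of $S_\nu$, and define $T^\pm_{\nu,\tau}$ by~\eqref{eq:T plus minus def}. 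Letting $\Psi^{(n)}:S_\nu^n\to\mathbb R^n$ act coordinatewise one has $\Psi^{(n)}\circ T^\pm_{\nu,\tau}=(y\mapsto y\pm\tau)\circ\Psi^{(n)}$ and $(\Psi^{(n)})_*\nu^n=\mathcal G^{\otimes n}$; since $\nu^n(S_\nu^n)=1$ we may assume the given Borel $A\subset\mathbb R^n$ lies in $S_\nu^n$, and then, writing $A':=\Psi^{(n)}(A)$ (Borel),
\[\nu^n\big(T^\pm_{\nu,\tau}(A)\big)=\mathcal G^{\otimes n}(A'\pm\tau),\qquad \nu^n(A)=\mathcal G^{\otimes n}(A').\]

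\emph{Part (2).} It remains to prove the Gaussian translation inequality $\sqrt{\mathcal G^{\otimes n}(A'+\tau)\,\mathcal G^{\otimes n}(A'-\tau)}\ge e^{-\frac12\|\tau\|_2^2}\,\mathcal G^{\otimes n}(A')$ for every Borel $A'\subset\mathbb R^n$ and $\tau\in\mathbb R^n$. Writing $\phi_n$ for the standard Gaussian density on $\mathbb R^n$, the identity $\phi_n(u+\tau)\phi_n(u-\tau)=e^{-\|\tau\|_2^2}\phi_n(u)^2$ gives, after translating each integral and applying the Cauchy--Schwarz inequality,
\[\mathcal G^{\otimes n}(A')=e^{\frac12\|\tau\|_2^2}\int_{A'}\sqrt{\tfrac{\phi_n(u+\tau)}{\phi_n(u)}}\,\sqrt{\tfrac{\phi_n(u-\tau)}{\phi_n(u)}}\;d\mathcal G^{\otimes n}(u)\le e^{\frac12\|\tau\|_2^2}\sqrt{\mathcal G^{\otimes n}(A'+\tau)}\,\sqrt{\mathcal G^{\otimes n}(A'-\tau)},\]
which, combined with the identities above, is exactly~\eqref{eq:MW probability estimate}.

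\emph{Part (1).} For $w\in S_\nu$ we have $\bar\Psi(g^+_{\nu,\sigma}(w))=\Psi(w)+\sigma$, so by monotonicity of $\bar\Psi$ the inequality $g^+_{\nu,\sigma}(w)<w+\delta\sigma$ would force $\bar\Psi(w+\delta\sigma)-\Psi(w)\ge\sigma$; symmetrically $g^-_{\nu,\sigma}(w)>w-\delta\sigma$ would force $\Psi(w)-\bar\Psi(w-\delta\sigma)\ge\sigma$. Hence, defining
\[B_\delta:=\Big\{w\in S_\nu:\ |\bar\Psi(w+s)-\Psi(w)|\le\tfrac{s}{2\delta}\ \text{ and }\ |\bar\Psi(w-s)-\Psi(w)|\le\tfrac{s}{2\delta}\ \text{ for all }s\in(0,\delta]\Big\},\]
property~\eqref{eq:inclusion implies shift} holds on $B_\delta$ for every $\sigma\in[0,1]$ (apply the contrapositives with $s=\delta\sigma\le\delta$). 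To see $\lim_{\delta\downarrow0}\nu(B_\delta)=1$: a monotone function is differentiable Lebesgue-a.e.\ with finite derivative and $\nu\ll\mathrm{Leb}$, so for $\nu$-a.e.\ $w$ the two-sided difference quotients of $\bar\Psi$ at $w$ are bounded by $\bar\Psi'(w)+1$ on some interval $(0,\eta(w)]$; such a $w$ lies in $B_\delta$ whenever $\delta\le\min\{\eta(w),(2\bar\Psi'(w)+2)^{-1}\}$. Thus $\liminf_{\delta\downarrow0}\mathds 1_{B_\delta}=1$ $\nu$-a.e., and Fatou's lemma yields $\liminf_{\delta\downarrow0}\nu(B_\delta)\ge1$.

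\emph{Main obstacle.} The only genuinely delicate point is the measure-theoretic bookkeeping of the setup: checking that $S_\nu$ can be chosen Borel with $\nu(S_\nu)=1$ so that $\Psi$ is an honest Borel bijection onto $\mathbb R$ (so that $A'=\Psi^{(n)}(A)$ is Borel and $T^\pm_{\nu,\tau}$ are self-bijections of $S_\nu^n$), and that the intervals of constancy of $F$ are countable and $\nu$-null. Everything else is a short computation; no estimate here is quantitatively sensitive, so all constants --- and the factor $\tfrac12$ in the definition of $B_\delta$ --- may be chosen freely.
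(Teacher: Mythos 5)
Your proposal is correct and follows essentially the same route as the paper: the quantile coupling $h=F^{-1}\circ\Phi$ (your $\Psi^{-1}$) transports everything to the standard Gaussian, where the translation estimate \eqref{eq:MW probability estimate} is exactly the Cauchy--Schwarz computation you give. The only point of divergence is the construction of the sets $B_\delta$: the paper lower-bounds difference quotients of $h$ using the Hardy--Littlewood maximal function of $\nu$, whereas you upper-bound difference quotients of $\bar\Psi=\Phi^{-1}\circ F$ via Lebesgue's a.e.\ differentiability of monotone functions together with $\nu\ll\mathrm{Leb}$ and Fatou; these are equivalent, equally standard uses of the absolute-continuity hypothesis.
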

We remark that measurability of $T_{\nu,\tau}^\pm(A)$ in~\eqref{eq:MW probability estimate} is ensured as $(T_{\nu,\tau}^\pm)^{-1}$ are Borel measurable (since $g_{\nu,\sigma}^\pm$ are increasing bijections, they and their inverses are Borel measurable).

The rest of this section is devoted to the proof of Lemma~\ref{lem:MW}. The first step is to establish the lemma when $\nu$ is the standard Gaussian distribution. This case is simpler and already of interest on its own (cf.~\cite[Section 1.1.1]{milos2015delocalization} for a simple application of this technique to the delocalization of height functions). 

\begin{claim}\label{claim:MW Gaussian}
  Let $\tau=(\tau_1,\ldots, \tau_n)\in[0,\infty)^n$ and let $X = (X_1,\dots ,X_n)$ be a vector of independent standard Gaussian random variables. Then, for every measurable $A\subset\mathbb{R}^n$,
  \begin{equation}
     \sqrt{\mathbb{P}(X+\tau\in A)\mathbb{P}(X-\tau\in A)}\ge e^{-\frac{1}{2}\|\tau\|_2^2} \mathbb{P}(X\in A).
  \end{equation}
\end{claim}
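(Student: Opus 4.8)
The plan is to prove Claim~\ref{claim:MW Gaussian} by an explicit change of variables, using the fact that the Gaussian density transforms nicely under translation. Write $\phi(x)=(2\pi)^{-1/2}e^{-x^2/2}$ for the standard Gaussian density and $\phi_n$ for the density of $X=(X_1,\dots,X_n)$, so that $\phi_n(x)=\prod_i\phi(x_i)$. The key elementary identity is the pointwise bound
\begin{equation}
\phi_n(x-\tau)\,\phi_n(x+\tau)=\phi_n(x)^2\,e^{-\|\tau\|_2^2}\le \phi_n(x)^2 e^{-\|\tau\|_2^2},
\end{equation}
which follows by expanding the quadratic forms in the exponent: the cross terms cancel and one is left with $-\|x\|_2^2-\|\tau\|_2^2$ in the exponent of the product (times the Gaussian normalization squared). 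Equivalently, $\sqrt{\phi_n(x-\tau)\phi_n(x+\tau)}=\phi_n(x)e^{-\frac12\|\tau\|_2^2}$.

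Next I would translate this density identity into the probability statement. By the change of variables $x\mapsto x+\tau$ and $x\mapsto x-\tau$,
\begin{equation}
\mathbb{P}(X+\tau\in A)=\int_A \phi_n(x-\tau)\,dx,\qquad \mathbb{P}(X-\tau\in A)=\int_A \phi_n(x+\tau)\,dx.
\end{equation}
Now apply the Cauchy--Schwarz inequality in the form $\int fg\le(\int f^2)^{1/2}(\int g^2)^{1/2}$ — actually what I want is the reverse-type estimate, so instead I would use Cauchy--Schwarz as follows:
\begin{equation}
\mathbb{P}(X\in A)=\int_A \phi_n(x)\,dx=\int_A \sqrt{\phi_n(x-\tau)}\,\sqrt{\phi_n(x+\tau)}\;e^{\frac12\|\tau\|_2^2}\,dx\le e^{\frac12\|\tau\|_2^2}\Big(\int_A\phi_n(x-\tau)\,dx\Big)^{1/2}\Big(\int_A\phi_n(x+\tau)\,dx\Big)^{1/2},
\end{equation}
where the first equality uses the density identity $\phi_n(x)=e^{\frac12\|\tau\|_2^2}\sqrt{\phi_n(x-\tau)\phi_n(x+\tau)}$ and the inequality is Cauchy--Schwarz. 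Rearranging gives exactly
\begin{equation}
\sqrt{\mathbb{P}(X+\tau\in A)\,\mathbb{P}(X-\tau\in A)}\ge e^{-\frac12\|\tau\|_2^2}\,\mathbb{P}(X\in A),
\end{equation}
as claimed.

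There is no real obstacle here — the claim is essentially the completion-of-the-square identity for Gaussians combined with one application of Cauchy--Schwarz; the only points to be careful about are measurability of $A$ (given) and that $\tau$ is allowed to have zero coordinates (harmless, as the identity degenerates to the trivial $\phi_n(x)^2$ in those coordinates). The genuinely substantive work lies in the subsequent bootstrap from this Gaussian case to a general absolutely continuous $\nu$ (Lemma~\ref{lem:MW}), where one transports $\nu$ to a Gaussian via the increasing rearrangement $\Phi^{-1}\circ F_\nu$ (with $F_\nu$ the CDF of $\nu$), defines $g_{\nu,\sigma}^{\pm}$ as the conjugates of the Gaussian shifts by this rearrangement, and checks property~\eqref{eq:inclusion implies shift} on the sets $B_\delta$ where the rearrangement map has derivative bounded below — but that is the content of the next claims, not of Claim~\ref{claim:MW Gaussian} itself.
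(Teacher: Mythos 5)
Your proof is correct and is essentially identical to the paper's: the same completion-of-the-square identity $\sqrt{\phi_n(x-\tau)\phi_n(x+\tau)}=e^{-\frac12\|\tau\|_2^2}\phi_n(x)$ followed by one application of Cauchy--Schwarz to the integral over $A$. The only blemish is the redundant ``$\le$'' in your first display (an equality compared with itself), which is harmless.
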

\begin{proof}
 Let $f:\mathbb{R}^n\to[0,\infty)$ be the density of $X$, i.e.,
 \begin{equation}
     f(x):=(2\pi)^{-\frac{1}{2}n}e^{-\frac{1}{2}\|x\|_2^2}.
 \end{equation}
 Observe that the density of $X\pm \tau$ is $f(x\mp \tau)$ and that
 \begin{equation}
     \sqrt{f(x-\tau)f(x+\tau)} = e^{-\frac{1}{2}\|\tau\|_2^2}f(x).
 \end{equation}
 Thus, on the one hand,
 \begin{equation}
     I:=\int_A \sqrt{f(x-\tau)f(x+\tau)}dx = e^{-\frac{1}{2}\|\tau\|_2^2}\int_A f(x)dx = e^{-\frac{1}{2}\|\tau\|_2^2}\cdot\mathbb{P}(X\in A),
 \end{equation}
 while, on the other hand, by the Cauchy-Schwarz inequality,
 \begin{equation}
     I\le \sqrt{\int_A f(x-\tau)dx\int_A f(x+\tau)dx} = \sqrt{\mathbb{P}(X+\tau\in A)\mathbb{P}(X-\tau\in A)}.
 \end{equation}
 The claim follows by combining the previous two displayed equations.
\end{proof}

The second step is to define the bijections $g_{\nu,\sigma}^\pm$. For the rest of the section fix an absolutely-continuous probability measure $\nu$ on $\mathbb{R}$. 

Define the Borel set
\begin{equation}\label{eq:S nu def}
    S_\nu:=\{w\in\mathbb{R}\colon \nu((-\infty,v])<\nu((-\infty,w])\text{ for all $v<w$}\}.
\end{equation}
It is simple to check that $\nu(S_\nu)=1$. Let $\nu_{\text{Gauss}}$ be the standard Gaussian distribution. Let $h:\mathbb{R}\to S_\nu$ be defined by\noeqref{eq:S nu def}
\begin{equation}\label{eq:h bijection def}
    h(x) = w\text{ for the unique $w\in S_\nu$ satisfying $\nu_{\text{Gauss}}((-\infty,x])=\nu((-\infty,w])$}.
\end{equation}
Such a $w$ exists as $\nu$ has no atoms while uniqueness follows from the definition of $S_\nu$. It follows also that $h$ is an increasing bijection satisfying $h(\nu_{\text{Gauss}})=\nu$ and $h^{-1}(\nu)=\nu_{\text{Gauss}}$ (by this we mean that $h(N)\sim \nu $ where $N\sim \nu_{\text{Gauss}}$ and $h^{-1}(X)\sim \nu_{\text{Gauss}}$ where $X\sim \nu$). For $\sigma\in[0,1]$ define $g_{\nu,\sigma}^+:S_\nu\to S_\nu$ and $g_{\nu,\sigma}^-:S_\nu\to S_\nu$ by
\begin{equation}\label{eq:defg}
    g_{\nu,\sigma}^\pm(w) = h(h^{-1}(w) \pm \sigma).
\end{equation}
We note also that $g_{\nu,\sigma}^+(w)\ge w$ and $g_{\nu,\sigma}^-(w)\le w$ for all $w\in S_\nu$.

As the third step, we establish~\eqref{eq:MW probability estimate}. Let $\tau=(\tau_1,\ldots,\tau_n)\in[0,1]^n$. Define $T_{\nu,\tau}^+:S_\nu^n\to S_\nu^n$ and $T_{\nu,\tau}^-:S_\nu^n\to S_\nu^n$ by~\eqref{eq:T plus minus def}. Let $A\subset\mathbb{R}^n$ be Borel. The fact that $h(\nu_{\text{Gauss}})=\nu$ implies that
\begin{equation}
\begin{alignedat}{1}
    \nu^n(T_{\nu,\tau}^\pm(A))&= \nu^n(\{(h(h^{-1}(w_1)\pm\tau_1),\ldots,h(h^{-1}(w_n)\pm\tau_n))\colon w\in A\cap S_\nu^n\})\\
    &= \nu_{\text{Gauss}}^n(\{(h^{-1}(w_1)\pm\tau_1,\ldots,h^{-1}(w_n)\pm\tau_n)\colon w\in A\cap S_\nu^n\}).
\end{alignedat}
\end{equation}
Therefore, Claim~\ref{claim:MW Gaussian} and the fact that $h^{-1}(\nu)=\nu_{\text{Gauss}}$ imply
\begin{equation}
\begin{split}
    \sqrt{\nu^n(T_{\nu,\tau}^+(A))\,\nu^n(T_{\nu,\tau}^-(A))}&\ge e^{-\frac{1}{2}\|\tau\|_2^2}\nu_{\text{Gauss}}^n(\{(h^{-1}(w_1),\ldots,h^{-1}(w_n))\colon w\in A\cap S_\nu^n\})\\
    &= e^{-\frac{1}{2}\|\tau\|_2^2} \nu^n(A\cap S_\nu^n) = e^{-\frac{1}{2}\|\tau\|_2^2} \nu^n(A),
\end{split}
\end{equation}
proving~\eqref{eq:MW probability estimate}.

Lastly, we proceed to define the sets $(B_\delta)$ and establish~\eqref{eq:inclusion implies shift}. We use the following real analysis statement, which follows from the absolute continuity of $\nu$.
\begin{claim}\label{claim:real analysis}
For $\delta >0$, define the set 
\begin{equation}
    A_\delta  :=\left\{ x\in \mathbb R\colon \frac{h(y)-h(x)}{y-x}\ge \delta \text{ for all $y\in[x-1,x+1]\setminus\{x\}$}\right\}. 
\end{equation}
Then
\begin{equation}\label{eq:6}
    \lim_{\delta\downarrow 0}\nu_{{\rm Gauss}}(A_\delta)=1.
\end{equation}
\end{claim}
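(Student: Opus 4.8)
The plan is to exploit the explicit description of $h$ together with a covering argument. From the definition of $h$ we have $F_\nu(h(x))=\nu_{\text{Gauss}}((-\infty,x])=\Phi(x)$ for all $x$, where $\Phi$ is the standard Gaussian distribution function, $\phi=\Phi'$ its density and $F_\nu$ the distribution function of $\nu$; moreover $h$ is a strictly increasing injection with $h_*\nu_{\text{Gauss}}=\nu$, so $\nu_{\text{Gauss}}(E)=\nu(h(E))$ for Borel $E\subset\mathbb R$. Since \eqref{eq:6} says precisely $\nu_{\text{Gauss}}(A_\delta^c)\to0$ as $\delta\downarrow0$, and $\nu_{\text{Gauss}}(A_\delta^c)\le\nu_{\text{Gauss}}(\mathbb R\setminus[-M,M])+\nu_{\text{Gauss}}(A_\delta^c\cap[-M,M])$ with the first term arbitrarily small for large $M$ uniformly in $\delta$, I would fix $M$ and reduce to proving $\nu_{\text{Gauss}}(A_\delta^c\cap[-M,M])\to0$.

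The key observation is this. If $x\in[-M,M]\setminus A_\delta$, pick $y\in[x-1,x+1]\setminus\{x\}$ witnessing the failure of the defining inequality of $A_\delta$, and let $I_x$ be the closed interval with endpoints $h(x)$ and $h(y)$. Then $h(x)\in I_x$ and $\mathrm{Leb}(I_x)=|h(y)-h(x)|<\delta|y-x|\le\delta$; on the other hand, since $F_\nu\circ h=\Phi$ and $\nu$ is atomless,
\begin{equation}\label{eq:plan-nu-lower}
    \nu(I_x)=|\Phi(y)-\Phi(x)|=\int_{\min(x,y)}^{\max(x,y)}\phi(t)\,dt\ \ge\ \phi(M+1)\,|y-x|,
\end{equation}
using $x,y\in[-M-1,M+1]$ and that $\phi$ is decreasing in $|t|$. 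Hence every such $x$ produces a bounded interval $I_x\ni h(x)$ with $\mathrm{Leb}(I_x)\le\tfrac{\delta}{\phi(M+1)}\,\nu(I_x)$ and $\mathrm{Leb}(I_x)\le\delta$.

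Then I would apply the Vitali ($5r$-)covering lemma to $\{I_x:x\in A_\delta^c\cap[-M,M]\}$, which is legitimate as the lengths are uniformly bounded, to extract a countable pairwise disjoint subfamily $\{I_{x_j}\}$ whose concentric $5$-dilates cover $\bigcup_x I_x\supseteq h(A_\delta^c\cap[-M,M])$, so that
\begin{equation}
    \mathrm{Leb}\big(h(A_\delta^c\cap[-M,M])\big)\ \le\ 5\sum_j\mathrm{Leb}(I_{x_j})\ \le\ \frac{5\delta}{\phi(M+1)}\sum_j\nu(I_{x_j})\ \le\ \frac{5\delta}{\phi(M+1)}
\end{equation}
by disjointness and $\nu(\mathbb R)=1$. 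Thus $\mathrm{Leb}(h(A_\delta^c\cap[-M,M]))\to0$ as $\delta\downarrow0$, and since $\nu\ll\mathrm{Leb}$ this upgrades to $\nu(h(A_\delta^c\cap[-M,M]))=\nu_{\text{Gauss}}(A_\delta^c\cap[-M,M])\to0$, as required.

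I do not anticipate a serious obstacle; the remaining work is bookkeeping. In particular one must note that $A_\delta$ is Borel: monotonicity of $h$ lets one replace ``for all $y\in[x-1,x+1]\setminus\{x\}$'' by ``for all rational such $y$'' (approximate $y$ from inside by rationals and use that $h$ increases), presenting $A_\delta$ as a countable intersection of Borel sets, and $h(A_\delta^c\cap[-M,M])$ is then Borel, being the preimage of a Borel set under the continuous map $h^{-1}=\Phi^{-1}\circ F_\nu$. The only quantitative input is the elementary inequality \eqref{eq:plan-nu-lower}; the rest is the covering lemma and $\nu\ll\mathrm{Leb}$. (Alternatively, one can bound $h(A_\delta^c\cap[-M,M])$ inside the super-level set $\{w:\sup_{J\ni w}\nu(J)/\mathrm{Leb}(J)>\phi(M+1)/\delta\}$ of the Hardy--Littlewood maximal function of the density of $\nu$ and use the weak $(1,1)$ inequality together with $\nu\ll\mathrm{Leb}$; this avoids invoking a covering lemma explicitly but is essentially the same argument.)
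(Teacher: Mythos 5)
Your proof is correct. It is a genuine variant of the paper's argument rather than a reproduction of it: the paper introduces the Hardy--Littlewood maximal function $\bar\nu$ of $\nu$, defines the good set $D_M=\{x\in[-M,M]:\bar\nu(h(x))\le M\}$, shows $\nu_{\text{Gauss}}(D_M)\to1$ via the maximal inequality together with $\nu\ll\mathrm{Leb}$, and then checks pointwise that $D_M\subseteq A_\delta$ for some $\delta=\delta(M)>0$; you instead bound the bad set directly, covering $h(A_\delta^c\cap[-M,M])$ by the image intervals $I_x$ and running the Vitali $5r$-lemma by hand to get the quantitative estimate $\mathrm{Leb}\big(h(A_\delta^c\cap[-M,M])\big)\le 5\delta/\phi(M+1)$, which you then upgrade to a $\nu_{\text{Gauss}}$-bound using the $\epsilon$--$\eta$ form of absolute continuity. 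The two routes rest on the same two inputs (the Gaussian density is bounded below on compacts, so $\nu(I_x)\gtrsim_M|y-x|$, and $\nu\ll\mathrm{Leb}$), and indeed the weak $(1,1)$ maximal inequality the paper invokes is itself proved by the covering lemma you use --- as your closing parenthetical correctly observes. What your version buys is an explicit rate in $\delta$ for fixed $M$, at the cost of redoing the covering argument; the paper's version is shorter by citing the maximal inequality but yields only the qualitative containment $D_M\subseteq A_\delta$. One cosmetic point: your measurability remark identifies $h^{-1}$ with $\Phi^{-1}\circ F_\nu$, which agrees with $h^{-1}$ only on $S_\nu$; the cleaner justification is simply that $h^{-1}$ is an increasing bijection on the Borel set $S_\nu$, hence Borel measurable, so $h(E)=(h^{-1})^{-1}(E)$ is Borel. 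This does not affect the argument.
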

\begin{proof}
Let $\bar{\nu}:\mathbb{R}\to[0,\infty]$ be the Hardy-Littlewood maximal function of $\nu$, given by 
\begin{equation}
    \bar{\nu}(w):=\sup _{r>0} \frac{1}{2r} \nu((w-r,w+r)).
\end{equation}
For $M>0$ define the set 
\begin{equation}
    D_M:=\big\{ x\in [-M,M] : \bar{\nu}(h(x)) \le M \big\}.
\end{equation}
First we show that
\begin{equation}\label{eq:7}
    \lim_{M\to\infty}\nu_{\text{Gauss}}(D_M)=1.
\end{equation}
Indeed, since $\nu$ is absolutely continuous, the Hardy-Littlewood maximal inequality \cite[Chapter 3]{stein2009real}  implies that the set $\{w\in\mathbb{R}\colon \bar{\nu}(w)=\infty\}$ has zero Lebesgue measure. Thus,
\begin{equation}
    \lim_{M\to\infty}\nu_{\text{Gauss}}(\{x\colon \bar{\nu}(h(x))>M\})\!=\!\lim_{M\to\infty}\nu(\{w\colon \bar{\nu}(w)>M\})\!=\!\nu(\{w\colon \bar{\nu}(w)=\infty\})\!=\!0,
\end{equation}
from which~\eqref{eq:7} follows.

Second, as $h$ is increasing, for any $x\in D_M$ we have for all $x\le y\le x+1$,
\begin{equation}
\begin{split}
      (2\pi )^{-1/2}e^{-\frac{1}{2}(M+1)^2}(y-x)&\le \nu_{\text{Gauss}}((x,y))=\nu((h(x),h(y)))\\
      &\le 2\bar{\nu}(h(x))(h(y)-h(x)) \le 2M (h(y)-h(x)).
\end{split}
\end{equation}
An analogous statement holds with $x-1\le y\le x$.
Thus, for each $M>0$ there exists $\delta >0$ for which $D_M\subseteq A_\delta $. This finishes the proof of \eqref{eq:6} using \eqref{eq:7}.
\end{proof}
Define the sets $(B_\delta)_{\delta>0}$ by $B_\delta:=\{w\in S_\nu\colon h^{-1}(w)\in A_\delta\}$. Since $h(\nu_{\text{Gauss}})=\nu$, the last claim implies that
\begin{equation}
    \lim_{\delta\downarrow0}\nu(B_\delta) = \lim_{\delta\downarrow 0}\nu_{\text{Gauss}}(A_\delta)=1.
\end{equation}
Next, note that the definition of $A_\delta$ implies that for each $\sigma\in[0,1]$ and $w\in B_\delta$, we have $g_{\nu,\sigma}^+(w) = h(h^{-1}(w)+\sigma)\ge w+\delta \sigma$. An analogous statement holds for $g_{\nu,\sigma}^-$, implying~\eqref{eq:inclusion implies shift}.

\begin{remark}\label{rem:AMW} (Asymmetric Mermin--Wagner)
Lemma~\ref{lem:MW} admits a generalization in which the $T^+$ and $T^-$ bijections play asymmetrical roles:
For all $p,q>1$ with $1/p+1/q=1$, inequality~\eqref{eq:MW probability estimate} can be replaced with 
\begin{equation}\label{eq:amw}
 \nu^n(T_{\nu,q\tau/p}^+(A))^{1/q}\,\nu^n(T_{\nu,\tau}^-(A)) ^{1/p}\ge e^{-q\|\tau\|_2^2/2p}\nu^n(A)
\end{equation}
(the case $p=q=2$ is inequality~\eqref{eq:MW probability estimate} itself). To prove \eqref{eq:amw}, one simply
 use H\"older's inequality instead of Cauchy-Schwarz in Claim~\ref{claim:MW Gaussian} to obtain
\begin{equation}
    e^{-\frac{q\|\tau\|_2^2}{2p}}   \mathbb P (X\in A)=\!\!\int _A f(x-\tau )^{\frac{1}{p}} f(x+q\tau /p )^{\frac{1}{q}} \le \mathbb P (X+\tau \in A) ^{\frac{1}{p}} \,\mathbb P (X-q\tau /p\in A)^{\frac{1}{q}}.
\end{equation}
The rest of the proof is identical to that of Lemma~\ref{lem:MW}.

In addition, a different way of writing~\eqref{eq:amw} is sometimes convenient. 
By~\eqref{eq:defg}, we have $(g^+_{\nu ,\sigma })^{-1}=g^-_{\nu ,\sigma }$ so that $(T^+_{\nu,\tau})^{-1}=T^-_{\nu,\tau}$ (see~\eqref{eq:T plus minus def}). Hence we may rewrite~\eqref{eq:amw} as
\begin{equation}
 \nu^n((T_{\nu,q\tau/p}^-)^{-1}(A))^{1/q}\,\nu^n((T_{\nu,\tau}^+)^{-1}(A)) ^{1/p}\ge e^{-q\|\tau\|_2^2/2p}\nu^n(A).
\end{equation}
In particular, $\nu ^n((T^+_{\nu ,\tau })^{-1}(A))\ge \exp (-q\|\tau \|_2^2/2) \nu ^n(A)^p$.
\end{remark}

\begin{remark}
    In Lemma~\ref{lem:MW} and Remark~\ref{rem:AMW}, if we further assume that $\nu$ is the image of the Gaussian distribution under an increasing Lipschitz function (equivalently, the function $h$ in~\eqref{eq:h bijection def} is Lipschitz) then we obtain that $g_{\nu ,\sigma }^+(w)\le w+C\sigma $ and $g_{\nu ,\sigma }^-(w)\ge w-C\sigma $ where $C$ is the Lipschitz constant of $h$. This statement is immediate from the definition of $g_{\nu ,\sigma }^{\pm}$ given in~\eqref{eq:defg}.
\end{remark}

\subsubsection{Application to the restricted passage time $\bar{T}_p(J)$}

Throughout this section we fix a path $p\in\mathcal{P}$ having $(\rho,m)$-bounded slope and an interval $J\subset[0,L]$ with integer endpoints satisfying $m\le |J|\le 2m$.

Our goal is to use Lemma~\ref{lem:MW} to prove (a generalization of) Lemma~\ref{lem:MW corollary for T p J}. As previously mentioned, on an intuitive level, the result may be thought of as saying that the distribution of $\bar{T}_p(J)$ ``contains a Gaussian component with variance of order $\frac{m}{r}$''. The following is our precise statement.

\begin{lem}\label{lem:MW for T p J}
There exist $C_\rho,c_\rho>0$, depending only on $G$ and $\rho$, such that for each $0\le\alpha\le c_\rho\sqrt{m r}$ and real $a,b\in[-\infty,\infty]$ with $a\le b$,
\begin{equation}
    \sqrt{\mathbb{P}\left(\bar{T}_p(J)\ge a +\alpha\sqrt{\frac{m}{r}}\right)\mathbb{P}\left(\bar{T}_p(J)\le b-\alpha\sqrt{\frac{m}{r}}\right)}\ge e^{-C_\rho\alpha^2}(\mathbb{P}\left(a\le \bar{T}_p(J)\le b\right) - e^{-m}).
\end{equation}
\end{lem}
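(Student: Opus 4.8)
\textbf{Proof plan for Lemma~\ref{lem:MW for T p J}.}

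The plan is to realize $\bar T_p(J)$ as a measurable function $F$ of the weight vector restricted to the relevant slab, and then to apply the abstract Mermin--Wagner estimate of Lemma~\ref{lem:MW} to a carefully chosen set of ``tube edges'' with a suitable perturbation vector $\tau$. First I would fix the finite set of edges $\mathcal R$ consisting of all edges with both endpoints in $\tube_r(p)\cap S_J$, and note that every path in $Q_p(J)$ uses at least $\frac12|J|\ge \frac12 m$ edges of $\mathcal R$ (by property (3) in the definition of $Q_p(J)$). Write $w=(t_e)_e$ and $F(w)=\bar T_p(J)$, recalling that $F$ depends only on weights in a bounded slab around $S_J$; in particular $F$ is a function of finitely many coordinates, so Lemma~\ref{lem:MW} applies with $\nu=G$ (which is absolutely continuous by~\eqref{eq:assumption ii}) to $n=|\mathcal R|$ of the coordinates, the others being left untouched (equivalently set $\tau_e=0$ off $\mathcal R$).

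The key point is the choice of $\tau$ and the monotonicity of $F$ under the maps $T^{\pm}_{G,\tau}$. Take $\tau_e=\sigma$ for a single value $\sigma\in[0,1]$ for all $e\in\mathcal R$ and $\tau_e=0$ otherwise. Since the bijections $g^{\pm}_{\nu,\sigma}$ are increasing and satisfy $g^{+}_{\nu,\sigma}(w)\ge w$, $g^{-}_{\nu,\sigma}(w)\le w$ pointwise, the map $T^{+}_{G,\tau}$ only increases weights and $T^{-}_{G,\tau}$ only decreases them; as $F=\bar T_p(J)$ is a monotone (nondecreasing) function of the weights, we get the pointwise bounds $F(T^{+}_{G,\tau}w)\ge F(w)$ and $F(T^{-}_{G,\tau}w)\le F(w)$, so that for any interval $[a,b]$,
\begin{equation}
T^{+}_{G,\tau}\big(\{a\le F\le b\}\cap B\big)\subseteq \{F\ge a'\},\qquad T^{-}_{G,\tau}\big(\{a\le F\le b\}\cap B\big)\subseteq \{F\le b'\}
\end{equation}
for an appropriate event $B$ on which the shift is quantitatively effective and for $a'=a+\text{(shift)}$, $b'=b-\text{(shift)}$. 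To make the shift concrete I would intersect with the event $B:=\bigcap_{e\in\mathcal R}\{t_e\in B_\delta\}$ from Lemma~\ref{lem:MW}(1), for a fixed small $\delta>0$ (depending only on $G$), on which $g^{+}_{G,\sigma}(t_e)\ge t_e+\delta\sigma$ and $g^{-}_{G,\sigma}(t_e)\le t_e-\delta\sigma$ for every $e\in\mathcal R$. On $B$, applying $T^{+}_{G,\tau}$ raises each tube edge by at least $\delta\sigma$, and since any competitor path in $Q_p(J)$ crosses at least $\frac12 m$ tube edges, its passage time (and hence $\bar T_p(J)$, the minimum over such paths) increases by at least $\frac{\delta\sigma m}{2}$; symmetrically for $T^{-}_{G,\tau}$ it decreases by at least $\frac{\delta\sigma m}{2}$. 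Choosing $\sigma$ so that $\frac{\delta\sigma m}{2}=\alpha\sqrt{m/r}$, i.e. $\sigma=\frac{2\alpha}{\delta\sqrt{mr}}$, gives exactly the desired shift by $\alpha\sqrt{m/r}$; the constraint $\sigma\le 1$ is precisely $\alpha\le c_\rho\sqrt{mr}$ with $c_\rho=\delta/2$. Then $\|\tau\|_2^2=|\mathcal R|\sigma^2\le C\rho r\cdot m\cdot\sigma^2/(rm)\cdot(\dots)$; more carefully, $|\mathcal R|\le C_\rho r m$ (the tube above $J$ has $O(rm)$ vertices, using $|J|\le 2m$ and the bounded slope of $p$, which keeps the tube inside a bounded slab), so $\frac12\|\tau\|_2^2\le \frac12 C_\rho r m\cdot \frac{4\alpha^2}{\delta^2 mr}=C_\rho'\alpha^2$, which is the exponent $C_\rho\alpha^2$ in the statement.

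It remains to assemble the inequality. Apply~\eqref{eq:MW probability estimate} to the set $A:=\{a\le \bar T_p(J)\le b\}\cap B$ (a Borel subset of the finitely many relevant coordinates, extended by the full product on the untouched coordinates, for which $\tau_e=0$ so those factors of $T^{\pm}$ act trivially). By the containments above, $G^{\otimes}(T^{+}_{G,\tau}A)\le \mathbb P(\bar T_p(J)\ge a+\alpha\sqrt{m/r})$ and $G^{\otimes}(T^{-}_{G,\tau}A)\le \mathbb P(\bar T_p(J)\le b-\alpha\sqrt{m/r})$, while~\eqref{eq:MW probability estimate} gives $\sqrt{G^{\otimes}(T^{+}_{G,\tau}A)\,G^{\otimes}(T^{-}_{G,\tau}A)}\ge e^{-\frac12\|\tau\|_2^2}G^{\otimes}(A)\ge e^{-C_\rho\alpha^2}\big(\mathbb P(a\le \bar T_p(J)\le b)-\mathbb P(B^c)\big)$. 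Finally, since $\mathbb P(B^c)\le |\mathcal R|\,(1-G(B_\delta))$ and $|\mathcal R|\le C_\rho rm\le C_\rho m^2$ (using $r\le m$ from~\eqref{eq:assumptions on r}), choosing $\delta$ small enough that $1-G(B_\delta)\le e^{-3m}$ for all $m$ of interest — permissible because $\lim_{\delta\downarrow0}G(B_\delta)=1$ and we may take $\delta=\delta(G,m)$; more robustly one absorbs the polynomial factor and takes $\delta$ fixed with $1-G(B_\delta)$ small, giving $\mathbb P(B^c)\le e^{-m}$ for $m\ge m_0$ and handling small $m$ by adjusting constants — yields $\mathbb P(B^c)\le e^{-m}$, which completes the proof.

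\textbf{Main obstacle.} The technical heart is the monotonicity/shift step: verifying that raising every tube edge by $\delta\sigma$ raises the \emph{restricted} passage time $\bar T_p(J)$ (a minimum over the constrained, edge-disjoint-from-$p$ path family $Q_p(J)$ with a length cap) by the full $\frac12 m\cdot\delta\sigma$, and that this is robust to the fact that the perturbed environment might make a \emph{different} path optimal — here one uses that any competitor in $Q_p(J)$ still pays the $\frac12|J|$ tube-edge tariff, so the minimum cannot drop. A secondary bookkeeping obstacle is controlling $|\mathcal R|$, i.e. bounding the number of tube edges by $O_\rho(rm)$, which is where the $(\rho,m)$-bounded slope hypothesis on $p$ enters, and matching this against $\|\tau\|_2^2$ to land on the stated exponent $C_\rho\alpha^2$ rather than something larger. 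Deducing Lemma~\ref{lem:MW corollary for T p J} from this is then immediate by taking $b=\infty$.
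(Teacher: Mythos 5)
Your overall architecture matches the paper's: the same set of tube edges, the same constant perturbation $\tau_e=\sigma\asymp\alpha/(\delta\sqrt{mr})$ on those edges, the same monotonicity-plus-tariff argument for the shift of $\bar T_p(J)$, and the same computation $\|\tau\|_2^2\lesssim\alpha^2/\delta^2$ from $|\mathcal R|\lesssim rm$. However, there is a genuine gap in your choice of the good event $B:=\bigcap_{e\in\mathcal R}\{t_e\in B_\delta\}$. For fixed $\delta$ with $1-G(B_\delta)=\eta>0$, you have $\mathbb P(B)=G(B_\delta)^{|\mathcal R|}\approx e^{-\eta|\mathcal R|}$ with $|\mathcal R|\asymp rm$, so $\mathbb P(B^c)$ tends to $1$, not to $0$; the bound $\mathbb P(B^c)\le e^{-m}$ is false for every fixed $\delta$, and the lower bound $G^{\otimes}(A)\ge\mathbb P(a\le\bar T_p(J)\le b)-\mathbb P(B^c)$ becomes vacuous. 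Your proposed escape routes do not work either: taking $\delta=\delta(m)\to0$ so that $1-G(B_\delta)\le e^{-3m}$ forces $\sigma=2\alpha/(\delta\sqrt{mr})$ and hence $\|\tau\|_2^2\asymp\alpha^2/\delta(m)^2$ to blow up, destroying the uniform constant $C_\rho$ in the exponent $e^{-C_\rho\alpha^2}$ (and also shrinking the admissible range of $\alpha$ to $\alpha\le\frac{\delta(m)}{2}\sqrt{mr}$, which is not of the form $c_\rho\sqrt{mr}$).

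The paper's fix is to demand much less: define $\Omega_{\delta_0}:=\{\forall q\in Q_p(J),\ |q\cap\mathcal E'_{p,\delta_0}(J)|\ge m/4\}$, i.e.\ every competitor path has a positive \emph{fraction} of good tube edges, rather than all tube edges being good. Since each $q\in Q_p(J)$ has at least $m/2$ tube edges, each independently good with probability $G(B_{\delta_0})$, a binomial tail bound gives $\mathbb P(|q\cap\mathcal E'|<m/4)\le(1-G(B_{\delta_0}))^{m/4}2^{\lceil m/2\rceil}$, and a union bound over the at most $Cr4^{C_\rho m}$ paths in $Q_p(J)$ (here the length cap in property (4) of $Q_p(J)$ and the bounded slope of $p$ are used) yields $\mathbb P(\Omega_{\delta_0}^c)\le e^{-m}$ once $\delta_0$ is fixed small enough that $1-G(B_{\delta_0})$ beats the exponential entropy. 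On $\Omega_{\delta_0}$ each competitor still pays a tariff of $\frac m4\delta_0\sigma$ (rather than your $\frac m2\delta_0\sigma$), which is why the paper takes $\tau_e=\frac{4\alpha}{\delta_0\sqrt{mr}}$. With this replacement of $B$ by $\Omega_{\delta_0}$, the rest of your argument goes through as written.
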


We note that Lemma~\ref{lem:MW corollary for T p J} is the special case $b=\infty$ of this result.
The rest of the section is devoted to the proof of Lemma~\ref{lem:MW for T p J}.

We aim to use Lemma~\ref{lem:MW} to change the weight environment $(t_e)_{e\in E(\mathbb{Z}^2)}$. Since we are only interested in the effect of this change on the restricted passage time $\bar{T}_p(J)$, we restrict attention to a suitable \emph{finite} set of edges $\Sigma$ in $\mathbb{Z}^2$ which contains the edges of all paths $q\in Q_p(J)$ as well as all edges in the set $\mathcal E_p(J)$ below.

Define the set of edges 
\begin{equation}
    \mathcal E_p(J) :=\big\{ e\in E(\mathbb Z ^2) : e\text{ has both endpoints in }\tube_r(p)\cap S_J  \big\}.
\end{equation}
Let $\delta_0>0$ be a small constant, chosen as a function only of $G$ and $\rho$ following Claim~\ref{claim:union bound} below. We apply Lemma~\ref{lem:MW} with $\nu=G$
and with $(\tau_e)_{e\in\Sigma}$ given by
\begin{equation}\label{eq:tau def}
    \tau_e := \begin{cases}\frac{4\alpha}{\delta_0\sqrt{mr}}&e\in \mathcal E_p(J),\\0&e\notin \mathcal E_p(J).
    \end{cases}
\end{equation}
We take $0\le \alpha\le \frac{1}{4}\delta_0\sqrt{mr}$ so that $0\le \tau_e\le 1$ for all $e$. Note that
\begin{equation}\label{eq:tau norm}
    \|\tau\|_2^2 = \frac{16\alpha^2}{\delta^2_0 mr}\big| \mathcal E_p(J) \big| \le \frac{32\alpha^2}{\delta^2_0 mr} \big|\tube_r(p)\cap S_J \big| \le \frac{32\alpha^2}{\delta^2_0 mr}(2r+1)(|J|+1) \le \frac{300\alpha^2}{\delta^2_0}.
\end{equation}

The lemma provides us with two bijections, $T_{G,\tau}^+:S_G^\Sigma\to S_G^\Sigma$ and $T_{G,\tau}^-:S_G^\Sigma\to S_G^\Sigma$, where $S_G\subset\mathbb{R}$ satisfies $G(S_G)=1$. Define new weight environments by
\begin{equation}
    (t_e^+)_{e\in\Sigma} := T_{G,\tau}^+((t_e)_{e\in\Sigma})\quad\text{and}\quad (t_e^-)_{e\in\Sigma} := T_{G,\tau}^-((t_e)_{e\in\Sigma}).
\end{equation}
Recall the events $(B_\delta)$ from Lemma~\ref{lem:MW}. Note that, almost surely,
\begin{equation}\label{eq:monotone weight shift}
    t_e^+\ge t_e\quad\text{and}\quad t_e^-\le t_e\quad\text{for $e\in\Sigma$}
\end{equation}
by~\eqref{eq:inclusion implies shift} and the fact that $\lim_{\delta\downarrow0}G(B_\delta)=1$. Denote by $\bar{T}_p(J)^+$ and $\bar{T}_p(J)^-$ the random variable $\bar{T}_p(J)$ calculated in the environments $(t_e^+)$ and $(t_e^-)$, respectively.

Define the random set of edges
\begin{equation}
    \mathcal E '_{p,\delta_0}(J):=\big\{ e\in \mathcal E _p(J) : t_e\in B_{\delta_0} \big\}
\end{equation}
and the event
\begin{equation}
    \Omega_{\delta_0}:= \{\forall q\in Q_p(J),\, |q\cap \mathcal E'_{p,\delta_0}(J) | \ge m/4\},
\end{equation}
where $|q\cap \mathcal{E}|$ denotes the number of edges in common to the path $q$ and edge set $\mathcal{E}$. Crucially, the passage time of each $q\in Q_p(J)$ can only increase when calculated in $(t_e^+)$ compared to $(t_e)$ (by~\eqref{eq:monotone weight shift}), and on $\Omega_{\delta_0}$ it must increase by at least $\frac{m}{4}\cdot\delta_0\cdot\frac{4\alpha}{\delta_0\sqrt{mr}}=\alpha\sqrt{\frac{m}{r}}$ by~\eqref{eq:inclusion implies shift},~\eqref{eq:T plus minus def} and~\eqref{eq:tau def}. A similar fact holds with the environment $(t_e^-)$. Therefore
\begin{equation}\label{eq:increase on Omega delta}
    \text{when $(t_e)\in\Omega_{\delta_0}$:\quad $\bar{T}_p(J)^+\ge T_p(J)+\alpha\sqrt{\frac{m}{r}}$\quad and\quad $\bar{T}_p(J)^-\le T_p(J)-\alpha\sqrt{\frac{m}{r}}$}.
\end{equation}

We next show that $\Omega_{\delta_0}$ is very likely when $\delta_0$ is sufficiently small.
\begin{claim}\label{claim:union bound}
There exists $\delta_1>0$, depending only on $G$ and $\rho$, such that if $\delta_0\le\delta_1$ then
\begin{equation}
     \mathbb P(\Omega_{\delta_0}^c) \le e^{-m}.
\end{equation}
\end{claim}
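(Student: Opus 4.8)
The plan is to show that with overwhelming probability, \emph{every} path $q\in Q_p(J)$ contains at least $m/4$ edges of $\mathcal E'_{p,\delta_0}(J)$, by controlling the number of ``bad'' edges (those in $\mathcal E_p(J)$ whose weight lies outside $B_{\delta_0}$) that such a path can meet. Recall that by definition each $q\in Q_p(J)$ has at least $\tfrac12|J|\ge \tfrac m2$ edges with both endpoints in $\tube_r(p)\cap S_J$, i.e.\ at least $m/2$ edges lying in $\mathcal E_p(J)$. Thus it suffices to prove that, with probability at least $1-e^{-m}$, no path $q\in Q_p(J)$ has more than $m/4$ of its edges in the random set $\mathcal E_p(J)\setminus\mathcal E'_{p,\delta_0}(J)=\{e\in\mathcal E_p(J):t_e\notin B_{\delta_0}\}$; on that event every $q$ retains at least $m/2-m/4=m/4$ edges in $\mathcal E'_{p,\delta_0}(J)$, which is exactly $\Omega_{\delta_0}$.

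First I would fix the geometry: every $q\in Q_p(J)$ has endpoints in $\tube_r(p)\cap S_a$ and $\tube_r(p)\cap S_b$ and satisfies the length bound $|q|\le\rho_2\max\{\|u-v\|_1,\log^2 L\}$ from property (4) of $Q_p(J)$; since $p$ has $(\rho,m)$-bounded slope and $|J|\le 2m$, the endpoints are at $\ell_1$-distance $O_\rho(m)$ apart (using $m\ge\log^2 L$ from~\eqref{eq:assumptions on r}), so $|q|\le C_\rho m$ for a constant depending only on $G,\rho$. Consequently all the edges of any such $q$ lie in a fixed finite box of $O_\rho(m^2)$ edges, and the number of such paths $q$ is at most $C^{|q|}\le e^{C_\rho m}$ for an absolute constant $C$ (self-avoiding walks of length $\le C_\rho m$ from one of at most $2r+1\le 2m+1$ possible starting vertices). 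Now the key probabilistic input: the edges $e\in\mathcal E_p(J)$ are a \emph{deterministic} set of edges (depending only on $p,J,r$), and the events $\{t_e\notin B_{\delta_0}\}$ are independent with $\mathbb P(t_e\notin B_{\delta_0})=1-G(B_{\delta_0})=:\eta(\delta_0)$, where $\eta(\delta_0)\to0$ as $\delta_0\downarrow0$ by Lemma~\ref{lem:MW}. So for a \emph{fixed} path $q$, the number of its edges in $\mathcal E_p(J)$ that are bad is stochastically dominated by a $\mathrm{Binomial}(|q|,\eta(\delta_0))$ variable; choosing $\delta_0=\delta_1$ small enough that $\eta(\delta_1)$ is much smaller than $1/(4C_\rho)$, a standard Chernoff bound for the binomial (as in~\eqref{eq:Chernoff bound}) gives $\mathbb P(\text{$q$ has}\ \ge m/4\ \text{bad edges in }\mathcal E_p(J))\le e^{-c_\rho m}$ with $c_\rho$ as large as we like relative to the entropy constant $C_\rho$ from the path count.

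I would finish with a union bound over the at most $e^{C_\rho m}$ paths $q\in Q_p(J)$: choosing $\delta_1$ small enough that the Chernoff exponent $c_\rho$ exceeds $C_\rho+1$, we get $\mathbb P(\Omega_{\delta_0}^c)\le e^{C_\rho m}e^{-c_\rho m}\le e^{-m}$ for all $\delta_0\le\delta_1$ and all $m$ large (absorbing small $m$ into the constants as usual). The main obstacle — and the only genuinely delicate point — is the entropy-versus-concentration balance: one must make sure the exponential growth $e^{C_\rho m}$ in the number of candidate paths is beaten by the Chernoff decay, which forces the choice of $\delta_0$ (hence the tail size $\eta(\delta_0)$) to depend on $\rho$ through $C_\rho$. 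This is why $\delta_1$ is allowed to depend on both $G$ and $\rho$. A secondary technical point is verifying that $Q_p(J)$ is nonempty only matters trivially (if it is empty, $\Omega_{\delta_0}$ holds vacuously), and that the length bound (4) in the definition of $Q_p(J)$ is precisely what makes the path-counting finite — without it the claim would fail.
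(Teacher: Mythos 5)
Your proposal is correct and follows essentially the same route as the paper: both arguments note that each $q\in Q_p(J)$ has at least $m/2$ edges in $\mathcal E_p(J)$, that the indicator events $\{t_e\in B_{\delta_0}\}$ are independent so the count of good (or bad) edges along a fixed path is dominated by a binomial with success probability tending to the right limit as $\delta_0\downarrow0$, and then beat the $e^{C_\rho m}$ path-entropy (via the length bound (4) in the definition of $Q_p(J)$) with a binomial tail bound and a union bound, choosing $\delta_1$ small enough depending on $G$ and $\rho$. The only cosmetic difference is that you apply a Chernoff bound to the bad edges while the paper bounds the lower tail of the good-edge count directly; the entropy-versus-concentration tradeoff you highlight is exactly the point of the paper's proof as well.
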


\begin{proof}
Fix a path $q\in Q_p(J)$. By the definition of $Q_p(J)$ and $\mathcal E_p(J)$ we have that 
\begin{equation*}
|q \cap \mathcal E _p(J) |\ge \frac{|J|}{2}\ge \frac{m}{2}     
\end{equation*}
Moreover, any edge $e\in q\cap \mathcal E _p(J)$ is in $\mathcal E'_{p,\delta_0}(J)$ with probability $G(B_{\delta_0})$, independently of the other edges. Thus,
\begin{equation}
    |q\cap \mathcal E'_{p,\delta_0}(J) | \succeq \text{Bin}(\lceil m/2\rceil,G(B_{\delta_0})),
\end{equation}
where $\succeq$ denotes stochastic domination.
It follows that 
\begin{equation}
\begin{split}
    \mathbb P \big( |q\cap \mathcal E'_{p,\delta_0}(J) | < m/4 \big) &\le \mathbb P \big( \text{Bin}(\lceil m/2\rceil,G(B_{\delta_0}) ) < m/4 \big)\\
    &= \sum _{k< m/4} \binom{\lceil m/2\rceil}{k} G(B_{\delta_0})^k(1-G(B_{\delta_0}))^{\lceil m/2\rceil-k} \\
      &\leq(1-G(B_{\delta_0}))^{m/4} \sum _{k< m/4} \binom{\lceil m/2\rceil}{k} \le (1-G(B_{\delta_0}))^{m/4}2^{\lceil m/2\rceil}.
\end{split}
\end{equation}
Finally, recall that each $q\in Q_p(J)$ satisfies $|q|\le \rho_2\max\{\|u-v\|_1,\log^2 L\}$ where $u,v$ are the endpoints of $q$, contained in $\tube_r(p)\cap S_J$. In particular, $|q|\le \rho_2 (2m(1+\rho)+2r)\le C_\rho m$ for some $C_\rho>0$, as $p$ has $(\rho,m)$-bounded slope and by~\eqref{eq:assumptions on r} (with $\alpha_\rho\le 1$). Thus,
$|Q_p(J)|\le Cr 4^{C_\rho m}$ for some $C>0$. A union bound now gives
\begin{equation}
    \mathbb P(\Omega_{\delta_0}^c)\le Cr 4^{C_\rho m}(1-G(B_{\delta_0}))^{m/4}2^{\lceil m/2\rceil}
\end{equation}
from which the claim follows by recalling that $\lim_{\delta\downarrow 0}G(B_\delta)=1$.
\end{proof}
Henceforth we fix $\delta_0$ to the value $\delta_1$ of the last claim. We proceed to deduce Lemma~\ref{lem:MW for T p J}. Let $a,b\in[-\infty,\infty]$ with $a\le b$. Set $A:=\Omega_{\delta_0}\cap \{a\le \bar{T}_p(J)\le b\}$. On the one hand, by~\eqref{eq:MW probability estimate},
\begin{equation}
\begin{alignedat}{1}
    \sqrt{G^\Sigma(T_{G,\tau}^+(A)) \,G^\Sigma(T_{G,\tau}^-(A))}&\ge e^{-\frac{1}{2}\|\tau\|_2^2}G^\Sigma(A)=e^{-\frac{1}{2}\|\tau\|_2^2}\mathbb{P}((t_e)_{e\in\Sigma}\in A)\\
    &\ge e^{-\frac{1}{2}\|\tau\|_2^2}(\mathbb{P}(a\le \bar{T}_p(J)\le b) - \mathbb{P}(\Omega_{\delta_0}^c)).
\end{alignedat}
\end{equation}
On the other hand, by~\eqref{eq:increase on Omega delta}, if $(t_e)\in\Omega_{\delta_0}$ and $a\le \bar{T}_p(J)\le b$ then $\bar{T}_p(J)^+\ge a + \alpha\sqrt{\frac{m}{r}}$ and $\bar{T}_p(J)^-\le b - \alpha\sqrt{\frac{m}{r}}$. Therefore
\begin{equation}
\begin{alignedat}{1}
    \sqrt{G^\Sigma(T_{G,\tau}^+(A)) \,G^\Sigma(T_{G,\tau}^-(A))} &= \sqrt{\mathbb{P}((t_e)_{e\in\Sigma}\in T_{G,\tau}^+(A))\,\mathbb{P}((t_e)_{e\in\Sigma}\in T_{G,\tau}^-(A))}\\
    \le &\sqrt{\mathbb{P}\left(\bar{T}_p(J)\ge a + \alpha\sqrt{\frac{m}{r}}\right)\,\mathbb{P}\left(\bar{T}_p(J)\le b - \alpha\sqrt{\frac{m}{r}}\right)}.
\end{alignedat}
\end{equation}
Lemma~\ref{lem:MW for T p J} follows by combining the last two displayed equations with~\eqref{eq:tau norm} and Claim~\ref{claim:union bound}.

\section{Proof of the main theorems}
%\section{Coalescence of geodesics in $\ZZ^2$ and the BKS midpoint problem}
\label{sec:proofmainthm}
In this section we deduce our main results, Theorem~\ref{mainthm2}, Theorem~\ref{prop:probedgebulk}, Theorem~\ref{thm:density} and Theorem~\ref{thm:density2}, from the attractive geodesics proposition, Proposition~\ref{prop:attractive geodesics intro}. We will also need the following proposition which shows that typically a geodesic does not ``go in the wrong direction for a long time''.
 This proposition will allow us to ``trap'' geodesics.
 
 For $x=(x_1,x_2)\in \mathbb R ^2$ we write $\lfloor x \rfloor :=(\lfloor x_1 \rfloor ,\lfloor x_2 \rfloor)\in \mathbb Z ^2$ where  for $t\in\mathbb R$, $\lfloor t\rfloor$ denotes the largest integer smaller than $t$. Identifying $\mathbb R ^2$ with $\mathbb C$ we have 
\begin{equation}
    \lfloor Re^{i\theta } \rfloor =\big( \lfloor  R \cos \theta \rfloor  , \lfloor R \sin \theta \rfloor \big).
\end{equation}

\begin{prop}\label{prop:limit} Let $k\ge 1$. 
 Suppose that the limit shape is not a polygon with $8k$ sides or less. There exist $\theta_0,\theta_{k}\in(\pi/4,\pi/2)$, $\theta_0<\theta_k$ such that for all $\epsilon >0$, for any $\theta\in[-\theta_0,\theta_0]$, denoting by $\gamma $ the geodesic from $(0,0)$ to $\lfloor ne^ {i\theta}\rfloor$, 
 \begin{equation}
     \mathbb P \Big( \lfloor Re^{i\varphi } \rfloor\in \gamma \text{ for some } R\ge n^{2^{-k}+\epsilon} \text{ and }  \varphi\in(-\pi,\pi], \,|\varphi|\ge \theta_{k}\Big) \le C\exp \big( -n^{c_\epsilon } \big) 
 \end{equation}
 where the constant $c_\epsilon$ may depend on $\epsilon$ and $\theta_0$, $\theta_{k}$.
 
 Moreover if $\theta =0$ then $\theta _{k}$ can be chosen in the interval $(0,\pi /4) $ and $\theta_0=0$.
\end{prop}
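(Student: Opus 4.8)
The plan is to prove the proposition by induction on $k$, using the hypothesis on $\sides(\mathcal B_G)$ to build a finite increasing chain of ``barrier angles'' $\phi_{-1}<\phi_0<\dots<\phi_k$ and then bootstrapping the radius of confinement downwards from $n^{1/2+\epsilon}$ to $n^{2^{-k-1}+\epsilon}$, halving the exponent at each step. First I would isolate the purely convex-geometric input. Let $g=g_G$ be the norm on $\mathbb R^2$ with unit ball $\mathcal B_G$ (so $g(x)=\lim_j T(0,\lfloor jx\rfloor)/j$ a.s.\ by the Limit Shape Theorem, and $g$ is a genuine norm since $(\text{ABS})$ forces $G(\{0\})=0<p_c$). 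Because $\mathcal B_G$ is convex and invariant under the order-$8$ symmetry group of $\ZZ^2$, the number $a$ of its extreme points with angular coordinate in the open sector $(\pi/4,\pi/2)$ satisfies $a\ge\sides(\mathcal B_G)/8-1>k$, hence $a\ge k+1$; list them as $v_1<\dots<v_{k+1}$. Fix $\pi/4<\phi_{-1}<v_1$, $\phi_j\in(v_{j+1},v_{j+2})$ for $0\le j\le k-1$, and $\phi_k\in(v_{k+1},\pi/2)$, and set $\theta_u:=\phi_k$. The key lemma, proved from the subgradient inequality $g(u-w)\ge g(u)-\langle\xi,w\rangle$ ($\xi\in\partial g(u)$), the fact that no supporting functional at a boundary point of angular coordinate in $[-\phi_{j-1},\phi_{j-1}]$ can also support a boundary direction of absolute angular coordinate $\ge\phi_j$ (its face cannot contain the extreme point $v_{j+1}$ in its interior, so it stops before $\phi_j$), and a compactness argument, reads: there is $c=c(G,k)>0$ such that for all $0\le j\le k$, all $u$ with $|\arg u|\le\phi_{j-1}$ and all $w$ with $|\arg w|\ge\phi_j$,
\[
 g(w)+g(u-w)-g(u)\ \ge\ c\,\|w\|.
\]

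Next I would run the probabilistic induction. Fix $\epsilon>0$, set $m_j:=n^{2^{-j-1}+\epsilon}$ for $0\le j\le k$ (so $m_0=n^{1/2+\epsilon}$ and $m_k$ is the target radius), write $v:=\lfloor ne^{i\theta_0}\rfloor$ (note $|\arg v|\le\pi/4+O(1/n)<\phi_{-1}$), and let $\mathcal G$ be the event on which $|\gamma|\le\rho_2 n$ (Claim~\ref{claim:length}) and Talagrand's inequality (Theorem~\ref{thm:talagrand} with $t=n^{\epsilon/4}$) holds for every pair of vertices in $[-\rho_2 n,\rho_2 n]^2$; then $\mathbb P(\mathcal G^c)\le Ce^{-cn^{\epsilon/2}}$, and one also uses the deterministic classical bound $\mathbb E[T(0,u)]\le g(u)+C\sqrt{\|u\|}\log\|u\|$. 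The induction hypothesis at level $j$ is that, off an event of probability $\le Ce^{-cn^{\epsilon/2}}$, no vertex $x$ of $\gamma$ with $\|x\|\ge m_j$ has $|\arg x|\ge\phi_j$. For $j=0$: if some $w\in\gamma$ had $\|w\|=R\ge m_0$ and $|\arg w|\ge\phi_0$, then $T(0,v)=T(0,w)+T(w,v)$, and on $\mathcal G$ the three passage times lie within $O(n^{1/2+3\epsilon/4})$ of $g(v),g(w),g(v-w)$, forcing $g(w)+g(v-w)-g(v)\le O(n^{1/2+3\epsilon/4})$, contradicting the lemma's bound $cR\ge cn^{1/2+\epsilon}$ for $n$ large. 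For the step $j-1\to j$: given $w\in\gamma$ with $\|w\|=R\in[m_j,m_{j-1})$ and $|\arg w|\ge\phi_j$, let $z$ be the first vertex of the sub-geodesic of $\gamma$ after $w$ with $\|z\|\ge m_{j-1}$; then $\gamma(0,z)$ is the concatenation of the sub-geodesics $0\to w$ and $w\to z$ of $\gamma$, and the level-$(j-1)$ hypothesis gives $|\arg z|<\phi_{j-1}$. Applying the lemma with $u=z$ to $T(0,z)=T(0,w)+T(w,z)$, and using that on $\mathcal G$ the errors are now at the finer scale $n^{\epsilon/4}\sqrt{m_{j-1}}=O(n^{2^{-j-1}+3\epsilon/4})$, again yields the contradiction $cR\le O(n^{2^{-j-1}+3\epsilon/4})$. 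Taking $j=k$, absorbing a union bound over the $O(\log n)$ dyadic scales of $R$, and treating both signs of $\arg w$ symmetrically via the lemma, gives the proposition with $\theta_u=\phi_k$ and $c_\epsilon=\epsilon/2$.

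The main obstacle, I expect, is the scale-matching in the inductive step. The geometric lemma only produces a defect of order $R\ge m_j$, which is far below $\sqrt n$, so one cannot afford to compare with the far endpoint $v$, whose passage-time fluctuations are $\sim\sqrt n$; this is exactly why the exit point $z$ at the intermediate distance $m_{j-1}=\sqrt{m_j}\cdot n^{\Theta(\epsilon)}$ must be inserted before concentration is invoked, and why one needs the \emph{sharp} $O(\sqrt{\text{length}}\,\log)$ rate for $\mathbb E[T]-g$ rather than merely $\mathbb E[T]=g\,(1+o(1))$. Checking that the exponents obey $2^{-j-1}+\tfrac34\epsilon<2^{-j-1}+\epsilon$ at every level, leaving a positive final exponent $c_\epsilon$, is the crux of the bookkeeping. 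A secondary, more routine but fiddly, point is the geometric lemma itself: verifying that $\sides(\mathcal B_G)>8(k+1)$ genuinely leaves room for the chain $\phi_{-1}<\dots<\phi_k<\pi/2$ (in particular that a flat face adjacent to the direction $\pi/2$ does not swallow $\phi_k$), and extracting the uniform constant $c$ by compactness when $\mathcal B_G$ is not a polygon.
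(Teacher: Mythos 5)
Your proof is correct and follows essentially the same route as the paper's: the same extraction of $k+1$ extreme points in $(\pi/4,\pi/2)$ from the side count (Claim~\ref{claim:4}), the same linear-in-$\|w\|$ defect of the limiting norm for deviations past a separating extreme point (Claim~\ref{claim:limit}), the same combination of Alexander's bound and Talagrand's inequality, and the same $k$-step cascade halving the exponent at each scale. The only difference is bookkeeping: you anchor each inductive step at the first exit of $\gamma$ from the ball of radius $m_{j-1}$, which requires the geometric estimate to be uniform over a cone of anchor directions, whereas the paper anchors at the last crossings of the fixed vertex-direction rays $A_j$ and phrases the cascade as the event containment \eqref{eq:15}.
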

This proposition is proved in Section \ref{sec:the limit shape}.

Throughout this section and the next ones we will denote by $C,c$ generic positive constants which may depend only on the edge weight distribution $G$, whose value may change from one appearance to the next, with the value of $C$ increasing and the value of $c$ decreasing. Similarly, labeled constants such as $C_0$ or $c_\epsilon$ (which may also depend on $G$ and additionally on their subscript variables) do not change their value throughout the section where they are defined.

\subsection{Coalescence of geodesics in $\ZZ^2$}
In this section, we prove Theorem \ref{mainthm2} using Propositions \ref{prop:attractive geodesics intro}, \ref{prop:nobigjumps} and \ref{prop:limit}.
\begin{proof}[Proof of Theorem \ref{mainthm2}]
We assume that $\sides(\mathcal B_G)>32$.
Let $\ep\in(0,1/17]$ and $y\in\mathbb \ZZ^2$ with $\|y\|\ge 2$. Without loss of generality, thanks to the symmetry of the lattice, we can only study the case where $y=(y_1,y_2)=\|y\|_2e^{i\theta_0}$ with $0\le \theta_0\le \pi/4$. 
Set \[\ell:=\lfloor \|y\|^{1/8 -\epsilon}\rfloor.\]
Let $\varphi_0,\varphi_4\in(\pi/4,\pi/2)$ depending only on $G$ be as in Proposition~\ref{prop:limit} (corresponding to $\theta_0$, $\theta_k$ in the statement of the proposition) applied to $k=4$ (using Proposition~\ref{prop:limit} with $k=4$ ensures that geodesics cannot travel in the wrong direction to distance $\|y\|^{1/16+\epsilon }<<\ell $ and therefore cannot escape a trap of size $O(\ell )$ around them. See Figure~\ref{fig:coalescence}). Let $\kappa$ be the smallest positive integer (depending only on $\varphi_4$) such that
\[\frac{2}{\kappa -1}\le \tan(\pi/2-\varphi_4)\,.\]
Set 
$w^-:=(-\ell,-\kappa\ell)$, $z^-:=y+(\ell,-\kappa\ell)$, $w^+:=(-\ell,\kappa\ell)$ and $z^+:=y+(\ell,\kappa\ell)$. Let $\gamma^-$  (respectively $\gamma^+$) be the geodesic between $w^-$ and $z^-$ (respectively $w^+$ and $z^+$). Our goal is to prove that the geodesics $\gamma^-$ and $\gamma^+$ coalesce with high probability and that all geodesics starting in $\Lambda_\ell$ and ending in $y+\Lambda_\ell$ are trapped between $\gamma^-$ and $\gamma^+$ and forced to coalesce (see Figure~\ref{fig:coalescence}).
Denote by $\mathcal T$ the event where the following holds
\begin{itemize}
    \item The geodesic $\gamma^+$ stays above $\Lambda_\ell$ and $y+\Lambda_\ell$, that is, $\gamma^+$ does not intersect the set $\{-\ell,\dots,\ell\}\times (-\infty,\ell]\cup (y+\{-\ell,\dots,\ell\}\times (-\infty,\ell])$. 
    \item The geodesic $\gamma^-$ stays below $\Lambda_\ell$ and $y+\Lambda_\ell$, that is, $\gamma^-$ does not intersect the set $\{-\ell,\dots,\ell\}\times [-\ell,+\infty)\cup (y+\{-\ell,\dots,\ell\}\times [-\ell,+\infty))$. 
    \item Any geodesic that starts at a point $u\in\Lambda_\ell$ and ends at a point $v\in(y+\Lambda_\ell)$ does not circle $\gamma^+$ or $\gamma^-$, that is, it does not intersect the following set
\begin{equation}
    V:=\left\{ (-\ell,t): |t|\ge \kappa \ell\right\}\cup \left\{ y+(\ell,t): |t|\ge \kappa \ell\right\}.
\end{equation}
\end{itemize}
On the event $\mathcal T$, the geodesics starting in $\Lambda_\ell$ and ending in $y+\Lambda_\ell$ are "trapped" between $\gamma^+$ and $\gamma^-$.
Let us prove that the event $\mathcal T$ occurs with high probability.
Let us first prove that with high probability, the geodesics $\gamma^-$ and $\gamma^+$ stay respectively below and above $\Lambda_{\ell}$ and $y+\Lambda_\ell$.

Set $\ep_0:=1/3(1/16-1/17)$. In particular, we have $\ep+\ep_0< 1/16-\ep_0$.
Thanks to Proposition \ref{prop:limit} and the invariance under translation, under the assumption $\sides(\mathcal B_G)>32$, we have
 \begin{equation}
     \mathbb P \big( \lfloor Re^{i\theta } \rfloor+w^+\in \gamma^+ \text{ for some } R\ge \|y\|^{1/16+\epsilon_0} \text{ and } |\theta|\ge \varphi_4\big) \le C\exp \big( -\|y\|^{c_{\epsilon_0} } \big) \,.
 \end{equation}
Let $\theta_1$ be the angle in absolute value between the axis $x=-\ell$ and the line $\mathcal D$ that joins $(-\ell,\kappa \ell)$ and $(\ell,\ell)$ (see figure \ref{fig:coalescence}).
 It is easy to check that 
 \begin{equation}
      \tan \theta_1 =\frac{2\ell}{\kappa \ell-\ell}\le \tan(\pi/2-\varphi_4).
 \end{equation}
Let us assume that $\gamma^+$ does not stay above $\Lambda_\ell$. Then $\gamma^+$ intersects the set $\{-\ell,\dots,\ell\}\times (-\infty,\ell]$. 
Recall that
\begin{equation}\label{eq:ineqsides}
    \frac{1}{16}+\ep_0< \frac{1}{8}-\ep-\ep_0.
\end{equation}
It yields that 
\begin{equation}\label{eq:gammalarger}
    (\kappa-1)\ell\ge \|y\|^ {  \frac{1}{16}+\ep_0}
\end{equation}
where $ (\kappa-1)\ell$ corresponds to the distance between $w^+$ and $\Lambda_\ell$.
Thanks to Proposition \ref{prop:limit} since the angle $-\pi/2+\theta_1<-\varphi_4$, 
$\gamma^+$ intersects the set $\{-\ell,\dots,\ell\}\times (-\infty,\ell]$ with probability at most $C\exp(-c\|y\|^{c_{\ep_0}})$.

 It follows that $\gamma^+$ stays above $\Lambda_\ell$ with probability at least $1-C\exp(-\log ^2\|y\|)$. By similar arguments, we conclude with high probability at least $1-C\exp(-c\log ^2\|y\|)$, the geodesic $\gamma^+ $ stays above $\Lambda_\ell +y$ and the geodesic $\gamma^-$ stays below $\Lambda_\ell$ and $\Lambda_\ell +y$.
 \begin{figure}[!ht]
\def\svgwidth{0.8\textwidth}
 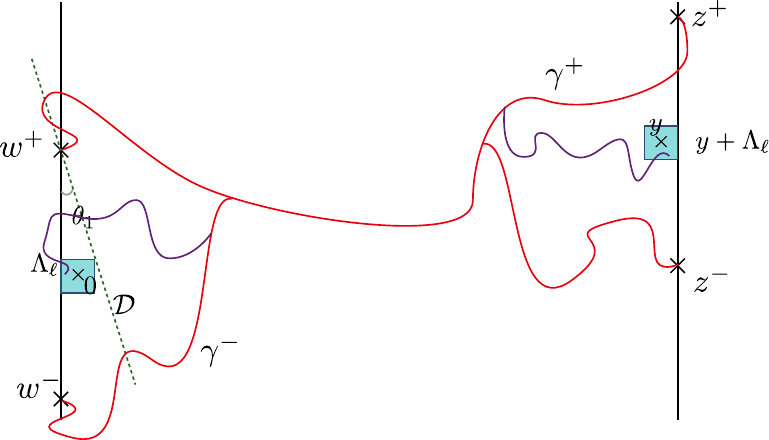
 \caption[figcoalescence]{\label{fig:coalescence}Illustration of the proof of Theorem \ref{mainthm2}} 
\end{figure}
Lastly, we need to prove that any geodesic $\gamma$ starting at a point $u\in\Lambda_\ell$ and ending at a point $v\in(y+\Lambda_\ell)$ cannot exit the trap. The only option for $\gamma$ to exit the trap is to leave the slab and intersect the set $V$. It is easy to check that the direction of the geodesic between $u$ and $v$ is contained in $[-\pi/4,\varphi_0]$ for $\|y\|$ large enough.
 Let $w$ be the last point of intersection of $\gamma$ with the line $x=-\ell$. Thanks to Proposition \ref{prop:limit}, we have that $\|w-u\|\le \|y\|^{1/16+\ep_0}$ with probability at least $1-C\exp(-\|y\|^{c_{\ep_0}})$. It follows from a similar use of  Claim \ref{claim:length} as above, that with probability at least $1-C\exp(-c\log^2\|y\|)$, the geodesic $\gamma$ does not go through a vertex in $V$. By union bound, it follows that any geodesic $\gamma$ from $\Lambda_{\ell}$ to $y+\Lambda_\ell$ cannot exit the trap with probability at least $1-C\exp(-c\log^2\|y\|)$. Finally, we have
 \begin{equation}\label{eq:trap}
     \PP(\mathcal T^c)\le C\exp(-c\log ^2\|y\|)
 \end{equation}
and on the event $\mathcal T$, when $\gamma^-$ and $\gamma^+$ coalesce, any geodesic $\gamma$ from $\Lambda_{\ell}$ to $y+\Lambda_\ell$ will also coalesce : $\gamma^-\cap\gamma^+\subset \gamma$.

 We turn to show that $\gamma^-$ and $\gamma ^+$ coalesce with high probability. Fix $\rho$ such that Proposition~\ref{prop:nobigjumps} holds and $\rho_2$ such that Lemma~\ref{lem:basic geometric control} holds (depending only on $G$). Let $\delta>0$ depending on $\ep$. Set \[N:=\lfloor \|y\|^{(1-\delta)/4}\rfloor\qquad  r:=\alpha_\rho N \log^{-2}\|y\| \quad  \text{and}\qquad m:=\lfloor \|y\|^{3(1-\delta)/4}/(10\rho_2(1+\rho))\rfloor\]
where $\alpha_\rho$ is the constant in Proposition~\ref{prop:attractive geodesics intro}.
 Let $(I_k)_{k=0}^{N-1}$ be intervals of the form $I_k := [a_k, a_{k+1}]$ where $a_k:=-\ell +km$. Let us prove that the geodesics $\gamma^-$ and $\gamma^+$ are $r$-close on most of the intervals $(I_k)_{k=0}^{N-1}$.
We recall that the definition of $r$-closeness interval was defined before Proposition \ref{prop:attractive geodesics intro}. We first need to prove that with high probability $\gamma^+$ stays above $\gamma^-$.
By translation invariance in law of the environment, it yields that
\begin{equation}
\forall x\in \{-\ell,\dots , a_N\} \qquad \EE [f_{\gamma^+}(x)]= \EE[f_{\gamma^-}(x)]+2\kappa \ell\,.
\end{equation}
If there exists $x$ such that $f_{\gamma^+}(x)\le f_{\gamma^-}(x)$, then one of the geodesic has to circle around the other: the event $\cE^-\cup\cE^+$ occurs where
\[\cE^-:=\left\{\gamma^-\cap \{ (-\ell,t): t\ge \kappa \ell\}\cup\{y+ (-\ell,t): t\ge \kappa \ell\}\ne\emptyset\right\}\]
and
\[\cE^+:=\left\{\gamma^+\cap \{ (-\ell,t): t\le \kappa \ell\}\cup\{y+ (-\ell,t): t\le \kappa \ell\}\ne\emptyset\right\}\,.\]
By Proposition \ref{prop:limit}, we have
\[\PP(\cE^-\cup\cE^+)\le C\exp(-c\|y\|^{c_\ep})\,.\]
It yields that
\begin{equation}
\begin{split}
\EE \big[ |f_{\gamma^+}(x)-f_{\gamma^-}(x)| \big] &=\EE \big[ f_{\gamma^+}(x)-f_{\gamma^-}(x) \big]+ 2\EE \big[ (f_{\gamma^-}(x)-f_{\gamma^+}(x))\mathds{1}_{f_{\gamma^-}(x)\ge f_{\gamma^+}(x)} \big] \\
&\le 2\kappa \ell+2\sqrt{\EE\big[ ( f_{\gamma^-}(x)-f_{\gamma^+}(x))^2 \big] \PP(\cE^-\cup\cE^+)}
\end{split}
\end{equation}
where we used Cauchy-Schwarz inequality in the last line.
Thanks to Claim \ref{claim:length}, the quantity $\EE \big[ (f_{\gamma^-}(x)-f_{\gamma^+}(x))^2\big] $ is at most polynomial in $\|y\|$.
Hence, there exists a positive constant $C_0$ such that
for every $x\in\{-\ell,\dots,a_N\}$
\[\EE \big[  |f_{\gamma^+}(x)-f_{\gamma^-}(x)| \big] \le C_0\ell\,.\]
Thus, by Markov's inequality, we have
\begin{equation}\label{695}
\forall x\in \{-\ell,\dots , a_N\} \qquad  \mathbb P \big( |f_{\gamma^+}(x)-f_{\gamma^-}(x)|\ge r \big) \leq \frac{C_0\ell }{r}
\end{equation}
and
\begin{equation}\label{696}
\begin{split}
\mathbb P\bigg( \big|\big\{x\in I_k \ : \ |f_{\gamma^+}(x)-f_{\gamma^-}(x)|\le r \big\} \big| \le & \frac{|I_k|}{2}\bigg)
\leq \mathbb P\bigg(\sum_{x\in I_k} |f_{\gamma^+}(x)-f_{\gamma^-}(x)| \ge  \frac{r|I_k|}{2}  \bigg) \\
 &\le \frac{2}{r|I_k|} \sum_{x\in I_k} \mathbb E \big[ |f_{\gamma^+}(x)-f_{\gamma^-}(x) \big] \leq \frac{2C_0\ell}{r}\,.
\end{split}
\end{equation}
Using \eqref{695} for the endpoints of $I_k$ and \eqref{696} we obtain 
\begin{equation}
\mathbb P\big( \gamma^-\text { is $r$-close to $\gamma^+$ on $I_k$} \big) \geq 1-\frac{4C_0\ell}{r}\,.
\end{equation}
Thus,
\begin{equation}
\mathbb E \left[ \left|\left\{k\in\{0,\dots, N-1\}: \,\gamma^-\text { is not $r$-close to $\gamma^+$ on $I_k$}\right\}\right|  \right] \leq  N\frac{4C_0\ell}{r}.
\end{equation}
Finally, we can control the total number of $r$-close intervals using again Markov's inequality
\begin{equation}\label{eq:markov1}
\mathbb P \left( \left|\left\{k\in\{0,\dots, N-1\}: \,\gamma^-\text { is not $r$-close to $\gamma^+$ on $I_k$}\right\}\right|\geq \xi N \right)\leq \frac{C\log ^2\|y\|}{\|y\|^{\ep-\delta/8}},
\end{equation}
where $\xi =\alpha _\rho /(\sqrt{r}\log \|y\|)$ is from Proposition~\ref{prop:attractive geodesics intro}.
By Proposition \ref{prop:nobigjumps}, thanks to our choice of $\rho$, we have
\begin{equation}\label{eq:noboundedslope}
    \mathbb P \big( \gamma^-\text{ has a $(\rho,m)$-bounded slope} \big) \ge 1- Ce^{-c\log ^2\|y\|}\,.
\end{equation}
Note that if a path has $(\rho,m)$-bounded slope, it is also true for any subpath.

By Proposition~\ref{prop:attractive geodesics intro}, translation invariance and a union bound we have that
\begin{equation}\label{eq:attgeoest}
    \mathbb P \Big( \! \begin{array}{c}\text{for all $|h|\le 2\rho \|y\|$ the geodesic $\gamma (w^-,(a_N,h) )$ is either }\\ 
    \text{attractive or it doesn't have a $(\rho ,m)$ bounded slope }\end{array} \!\! \Big) \ge 1-C e^{-c \log^2 \|y\|}.
\end{equation}

On intersection of the events in \eqref{eq:noboundedslope} and \eqref{eq:attgeoest} and the complement of the events in \eqref{eq:markov1}, the geodesics $\gamma ^-$ and $\gamma ^+$ intersect before reaching the line $\{x=-\ell +Nm\}$. Thus,
\begin{equation}\label{eq:interleft}
\PP \big( \text{$\gamma^-$ and $\gamma^+$ are edge disjoint on the interval $[-\ell,-\ell+Nm]$} \big) \le \frac{C\log ^2\|y\|}{\|y\|^{\ep-\delta/8}}\,.
\end{equation}
By the same arguments, we have
\begin{equation}\label{eq:interright}
\PP \big( \text{$\gamma^-$ and $\gamma^+$ are edge disjoint on the interval $[\ell+y_1-Nm,\ell+y_1]$} \big) \le  \frac{C\log ^2\|y\|}{\|y\|^{\ep-\delta/8}}\,.
\end{equation}
 Let us denote by $\mathcal F$ the event where $\gamma^-$ and $\gamma^+$ intersect on the intervals 
$[-\ell,-\ell+Nm]$ and $[\ell+y_1-Nm,\ell+y_1]$.
Let us now control the symmetric difference of geodesics starting at $\Lambda_\ell$ and ending at $y+\Lambda_\ell$ on the event $\mathcal T\cap\mathcal F$.
Let $\gamma_0$ and $\gamma_1$ be two geodesics starting at a point of $\Lambda_\ell$ and ending at a point in $y+\Lambda_\ell$. On the event $\mathcal T\cap\mathcal F$, the geodesics are trapped: $\gamma_0$ and $\gamma_1$ coalesce on the interval $[-\ell+Nm,\ell+y_1-Nm]$. 

Let $w'$ and $z'$ be respectively the first intersection of $\gamma^-$ with $x=-\ell+Nm$ and $x=\ell+y_1-Nm$. Note that both $\gamma_0$ and $\gamma_1$ also intersect $w'$ and $z'$ and coincide between these two points. We can upper bound the symmetric difference by the length of the subpaths of $\gamma_0$ and $\gamma_1$ from their endpoints to $w'$ and $z'$. With probability at least $1-C e^{-c\log^2\|y\|}$, by inequality \eqref{eq:noboundedslope}, we have $\|w'-w^-\|\le (1+\rho)(Nm+\ell) $ and $\|z'-z^-\|\le (1+\rho)(Nm+\ell)$.
It follows that for $\|y\|$ large enough (depending only on $G$)
\begin{equation}
  \sup_{w\in\Lambda_\ell}\|w-w'\|\le \sup_{w\in\Lambda_\ell}\|w-w^-\|+\|w'-w^-\|\le (\kappa+2d)\ell  + (1+\rho)(Nm+\ell)\le \frac{1}{8\rho_2}\|y\|^{1-\delta}. 
\end{equation}
Similarly, we have
\begin{equation}
  \sup_{z\in(y+\Lambda_\ell)}\|z-z'\|\le  \frac{1}{8\rho_2}\|y\|^{1-\delta}. 
\end{equation}
Finally, combining the two previous inequalities, on the event $\OB$ (with $L=\|y\|$), we have
\begin{equation}\label{eq:contsymdif}
|\gamma_0\triangle\gamma_1|\le 2\rho_2 \bigg(  \sup_{w\in\Lambda_\ell}\|w-w'\|+ \sup_{z\in(y+\Lambda_\ell)}\|z-z'\|\bigg)\le\frac{1}{2} \|y\|^{1-\delta}\,.
\end{equation}

By combining the previous inequality with inequalities \eqref{eq:trap}, \eqref{eq:noboundedslope}, \eqref{eq:interleft}, \eqref{eq:interright} and Lemma \ref{lem:basic geometric control}, there exists a constant $C$ depending only on $G$ such that
 \begin{equation}
      \mathbb P \Big(\exists u,z\in \Lambda_{\|y\|^{1/8 -\epsilon}}\, \exists v,w\in  (y+\Lambda_{\|y\|^{1/8 -\epsilon}})\quad \!\! \big| \gamma (u,v) \triangle \gamma (z,w) \big| > \frac{1}{2}\|y\|^{1-\delta } \Big) \le \frac{C\log^2 \|y\|} {\|y\|^{\ep-\delta/8 }}
  \end{equation}
  as needed.
\end{proof}
When the endpoints of the geodesics are getting closer ($\ep$ increasing), we need more sides on the limit shape for the trap to be efficient. Indeed, it is easier to circle the other geodesic when the endpoints are getting closer. For our later application of this theorem to prove the quantified version of BKS, we will need to use it for $\ep=1/16$. We state here another version of the theorem that will be sufficient for this application.
\begin{thm}\label{mainthm2bis}
   Suppose $G$ satisfies \eqref{eq:assumption i}, \eqref{eq:assumption ii}. Under the assumption $\sides(\mathcal B_G)>32$, for each $\ep\in(0,1/16)$, there exists $C_\ep>0$ (depending only on $G$ and $\ep$) such that for all $y\in\ZZ^2$ with $\|y\|\ge 2$,
  \begin{equation}
      \mathbb P \Big(\exists u,z\in \Lambda_{\|y\|^{1/8 -\epsilon}}\, \exists v,w\in  (y+\Lambda_{\|y\|^{1/8 -\epsilon}})\quad \!\! \big| \gamma (u,v) \triangle \gamma (z,w) \big| > \frac{1}{4}\|y\| \Big) \le \frac{C_\ep\log^2 \|y\|} {\|y\|^{\ep }}.
  \end{equation}
Under the assumption $\sides(\mathcal B_G)>40$, there exists $C>0$ such that for any $\ep\in(0,1/16]$,  for all $y\in\ZZ^2$,
  \begin{equation}
      \mathbb P \Big(\exists u,z\in \Lambda_{\|y\|^{1/8 -\epsilon}}\, \exists v,w\in  (y+\Lambda_{\|y\|^{1/8 -\epsilon}})\quad \!\! \big| \gamma (u,v) \triangle \gamma (z,w) \big| > \frac{1}{4}\|y\| \Big) \le \frac{C\log^2 \|y\|} {\|y\|^{\ep }}.
  \end{equation}
\end{thm}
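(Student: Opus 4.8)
The plan is to re-run the trapping argument of the proof of Theorem~\ref{mainthm2}, keeping careful track of the two places where the restrictions $\ep\le 1/17$ and $\delta>0$ were used. Recall that with $\delta=0$ the conclusion of Theorem~\ref{mainthm2} already reads $\frac{C\log^3\|y\|}{\|y\|^{\ep}}$ with symmetric-difference threshold $\frac12\|y\|$, so the only genuine loss is the hypothesis $\ep\le1/17$. That hypothesis entered solely through the inequality $\ep+2\ep_0<\tfrac1{16}$ (inequality~\eqref{eq:ineqsides}), which guarantees that the ``trap'' built from the reference geodesics $\gamma^+,\gamma^-$ is robust: here $\ell=\|y\|^{1/8-\ep}$ is the scale of the trap, while $\|y\|^{1/16+\ep_0}$ is the ``uncontrolled length scale'' on which Proposition~\ref{prop:limit} (applied with $k=3$) fails to pin down the direction of a geodesic, and the argument needs $\ell$ to beat this scale by a power of $\|y\|$. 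With this in mind, the first statement will follow by letting $\ep_0$ depend on $\ep$, and the second by upgrading Proposition~\ref{prop:limit} from $k=3$ to $k=4$, which is available once $\mathcal B_G$ has more than $8(4+1)=40$ sides.

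For the first statement ($\sides(\mathcal B_G)>32$, $\ep\in(0,1/16)$) I will repeat the proof of Theorem~\ref{mainthm2} verbatim with $\delta=0$, the only change being the choice $\ep_0:=\tfrac13\big(\tfrac1{16}-\ep\big)>0$ in place of $\tfrac13\big(\tfrac1{16}-\tfrac1{17}\big)$. One checks immediately that $\ep+\ep_0<\tfrac1{16}-\ep_0$ (equivalently $\ep+2\ep_0<\tfrac1{16}$) holds precisely because $\ep<\tfrac1{16}$, so inequality~\eqref{eq:ineqsides} is still valid and the trap argument goes through unchanged: by Proposition~\ref{prop:limit} (with $k=3$) together with Claim~\ref{claim:length}, with probability $1-Ce^{-c\log^2\|y\|}$ the geodesics $\gamma^\pm$ and every geodesic between $\Lambda_\ell$ and $y+\Lambda_\ell$ stay inside the trap, since any escape would force a subgeodesic of length at least $\tfrac{\kappa-1}{2}\|y\|^{1/8-\ep}$ between points at $\ell_1$-distance at most $\|y\|^{1/16+\ep_0}$, an exponentially unlikely event because $\tfrac18-\ep>\tfrac1{16}+\ep_0$. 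The coalescence of $\gamma^+$ and $\gamma^-$ near their endpoints is obtained exactly as before, via Proposition~\ref{prop:nobigjumps} and Proposition~\ref{prop:attractive geodesics intro}, with $N=\lfloor\|y\|^{1/4}\rfloor$, $m=\lfloor\|y\|^{3/4}/(10\rho_2(1+\rho))\rfloor$, $r=\alpha_\rho N\log^{-2}\|y\|$ and $t=4\|y\|^{\ep}/\log^3\|y\|$; crucially this part is entirely $\ep$-independent, since $\gamma^\pm$ have length of order $\|y\|$ and one only needs to verify~\eqref{eq:assumptions on r} with $L\asymp\|y\|$. Finally, on $\OB$ one bounds $|\gamma_0\triangle\gamma_1|$ by the lengths of the four end-segments lying outside the coalescence interval, which is at most $4\rho_2\big((1+\rho)(Nm+\ell)+(\kappa+4)\ell\big)\le\tfrac12\|y\|$ for $\|y\|$ large, because $Nm\le\|y\|/(10\rho_2(1+\rho))$ and $\ell=o(\|y\|)$. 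Assembling the estimates as at the end of the proof of Theorem~\ref{mainthm2} gives the claim; the constant $C_\ep$ depends on $\ep$ only through $\ep_0$ and the ``$\|y\|$ large enough'' threshold.

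For the second statement ($\sides(\mathcal B_G)>40$, $\ep\in(0,1/16]$, $C$ uniform in $\ep$) I will instead invoke Proposition~\ref{prop:limit} with $k=4$ (legitimate since $\mathcal B_G$ is not a polygon with $40=8(4+1)$ sides or fewer), which controls geodesic directions on all scales $R\ge\|y\|^{2^{-5}+\ep_0}=\|y\|^{1/32+\ep_0}$ for any $\ep_0>0$. I will then fix $\ep_0:=\tfrac1{64}$ once and for all, so that the uncontrolled scale is $\|y\|^{3/64}$ while $\ell=\|y\|^{1/8-\ep}\ge\|y\|^{1/16}=\|y\|^{4/64}$ for every admissible $\ep$; thus $\ell$ dominates the uncontrolled scale by the power $\|y\|^{1/64}$, uniformly in $\ep$. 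Running the same argument as in the previous paragraph, every constant and every smallness-of-$\ep_0$ / largeness-of-$\|y\|$ threshold now depends only on $G$, yielding the uniform bound. The main difficulty here is purely one of bookkeeping rather than of new ideas: one must confirm that nothing in the coalescence-of-reference-geodesics step (the step actually using Propositions~\ref{prop:attractive geodesics intro} and~\ref{prop:nobigjumps}) degrades as $\ep$ ranges up to $1/16$, which is clean precisely because that step sees only the scale $\|y\|$ and the fixed parameters $N,m,r$; and one must check that with $\delta=0$ the symmetric-difference estimate still closes below $\tfrac12\|y\|$, which hinges on $Nm$ being a sufficiently small fixed fraction of $\|y\|$ (here the factor $10\rho_2(1+\rho)$ in the definition of $m$ provides the needed room). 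I would also note that for $\ep$ small the stated bound only becomes informative once $\log^3\|y\|\le\|y\|^{\ep}$, i.e.\ for $\|y\|$ large depending on $\ep$; in the first statement this is absorbed into $C_\ep$, and in the second it is harmless because the inequality is trivially true whenever its right-hand side exceeds $1$.
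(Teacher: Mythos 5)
Your proposal is correct and follows essentially the same route as the paper: for $\sides(\mathcal B_G)>32$ you let $\ep_0$ depend on $\ep$ so that $\ep+2\ep_0<1/16$ still holds (making the threshold in~\eqref{eq:gammalarger} $\ep$-dependent), and for $\sides(\mathcal B_G)>40$ you invoke Proposition~\ref{prop:limit} with $k=4$ and fix $\ep_0=1/64$, exactly as in the paper. Your additional checks — that the coalescence step via Propositions~\ref{prop:nobigjumps} and~\ref{prop:attractive geodesics intro} is $\ep$-independent and that the $\delta=0$ symmetric-difference bound closes below $\tfrac12\|y\|$ — are consistent with the paper's (terser) justification.
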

To prove this version, one needs to slightly adapt the proof of Theorem \ref{mainthm2}. Under the assumption $\sides(\mathcal B_G)>32$, one needs to choose $\ep_0$ depending on $\ep$. As a result, the inequality \eqref{eq:gammalarger} will hold for $\|y\|$ large enough depending on $\ep$.

If we assume that $\sides(\mathcal B_G)>40$, we can chose $\ep\le 1/12$ and $\ep_0:=1/128$. We apply Proposition \ref{prop:limit} for $k=5$.
The inequality \eqref{eq:ineqsides} becomes
\[
\frac{1}{32}+\ep_0< \frac{1}{8}-\ep\]
and inequality \eqref{eq:gammalarger} will hold for $\|y\|$ large enough depending only on $G$.

\subsection{BKS midpoint problem}
In this section, we prove Theorem \ref{prop:probedgebulk} using the quantitative coalescence result of Theorem \ref{mainthm2bis}.
\begin{proof}[Proof of Theorem \ref{prop:probedgebulk}]We assume that $\sides(\mathcal B_G)>40$. Let $\ep\in(0,1/12]$. 
Let $u,v,z\in\ZZ^2$. Without loss of generality let us assume that $2\le \|u-z\|\le \|v-z\|$.
Set \[\ell:=\|u-z\|^{1/8-\ep}\,.\]
We will use here an averaging trick by considering all geodesics from $u+\Lambda_\ell$ to $v+\Lambda_\ell$.
Set $\cE_0$, $\cE_1$ be the following coalescence events
\[\cE_0 := \left\{\forall w_0,w_1\in (u+\Lambda_{\ell})\, \forall v_0,v_1\in  (z+\Lambda_{\ell})\quad  \big| \gamma (w_0,v_0) \triangle \gamma (w_1,v_1) \big| \le \frac{1}{2}\|u-z\|  \right\}\,\]
and
\[\cE_1 := \left\{\forall w_0,w_1\in (z+\Lambda_{\ell})\, \forall v_0,v_1\in  (v+\Lambda_{\ell})\quad  \big| \gamma (w_0,v_0) \triangle \gamma (w_1,v_1) \big| \le \frac{1}{2}\|v-z\| \right\}\,.\]
Thanks to Theorem \ref{mainthm2bis},
we have
\begin{equation}\label{eq:conte0e1}
    \PP(\cE_0\cap\cE_1)\ge 1-\frac{2C\log ^2 \|u-z\|}{\|u-z\|^{\ep}}\,.
\end{equation}
Let $\gamma_1$ and $\gamma_2$ be two geodesics with starting points in $u+\Lambda_\ell$ and ending points in $v+\Lambda_\ell$. On the event $\cE_0\cap \cE_1$, if $\gamma_1\cap (z+\Lambda_\ell)\ne\emptyset$ and $\gamma_2\cap (z+\Lambda_\ell)\ne\emptyset$, then $\gamma_1$ and $\gamma_2$ must intersect before and after intersecting $z+\Lambda_\ell$. Hence,
\[\gamma_1\cap (z+\Lambda_\ell)=\gamma_2\cap (z+\Lambda_\ell)\,.\]
By translation invariance, we have
\begin{equation}
    \begin{split}
    |\Lambda_\ell|\cdot \mathbb P(z\in\gamma&(u,v))=\sum_{w\in \Lambda_\ell}\mathbb P (z+w\in\gamma(u+w,v+w))\\
    &=\EE\Big[ \sum_{w\in \Lambda_\ell}\ind_{z+w\in\gamma(u+w,v+w)}\Big]\\
    &=\EE\Big[ \sum_{w\in \Lambda_\ell}\ind_{z+w\in\gamma(u+w,v+w)}\ind_{\cE_0^c\cup\cE_1^c}\Big]+\EE\Big[ \sum_{w\in \Lambda_\ell}\ind_{z+w\in\gamma(u+w,v+w)}\ind_{\cE_0\cap\cE_1}\Big]\\
    &\leq |\Lambda_\ell|\cdot \mathbb P \big( \cE_0^c\cup \cE_1^c   \big)+\EE\Big[ \max_{w,x\in\partial \Lambda_\ell}|\gamma(w,x)\cap \Lambda_\ell| \Big] \,.
    \end{split}
\end{equation}
By Lemma~\ref{lem:basic geometric control} (applied for $L=\|y\|$), we have for $\|y\|$ large enough (depending on $G$)
\begin{equation}
\begin{split}
 \EE\Big[ \max_{w,x\in\partial \Lambda_\ell}|\gamma(w,x)\cap \Lambda_\ell| \Big]&\le\EE\Big[ \max_{w,x\in\partial \Lambda_\ell}|\gamma(w,x)\cap \Lambda_\ell|\ind_{\OB} \Big]+\EE\Big[ | \Lambda_\ell|\ind_{\OB^c} \Big]\\
 &\le 6\rho_2 \ell +|\Lambda_\ell|C\exp(-c\log^2\|y\|)\le 7\rho_2\ell
 \end{split}
\end{equation}
Combining the two previous inequalities together with \eqref{eq:conte0e1}, it follows that
\[\mathbb P(z\in\gamma(u,v))\le C\left( \frac{\log ^2 \|u-z\|}{\|u-z\|^{\ep}}+\frac{1}{\|u-z\|^{1/8-\ep}}\right)\,.\]
By taking $\ep=1/16$, we get
\[\mathbb P(z\in\gamma(u,v))\le  \frac{2C\log ^2 \|u-z\|}{\|u-z\|^{1/16}}=\frac{2C\log ^2 \min\{\|u-z\|,\|v-z\|\}}{\min\{\|u-z\|,\|v-z\|\}^{1/16}}\,.\]
The result follows.
\end{proof}
Under the weaker assumption $\sides(\mathcal B_G)>32$, thanks to Theorem \ref{mainthm2bis}, we can prove that for every $\ep>0$, there exists $C_\ep>0$ (depending on $G$ and~$\ep$) such that for all $u,v,z\in\ZZ^2$, \begin{equation}\label{eq:BKS variant}
    \mathbb P\big(z\in\gamma(u,v)\big)\le\frac{C_\ep(\log  \min\{\|u-z\|,\|v-z\|\})^3}{\min\{\|u-z\|,\|v-z\|\}^{\frac{1}{16}-\ep}}.
\end{equation}

\subsection{The density of visited points on the vertical axis}
In this section we prove Theorem \ref{thm:density}, which, for fixed $n$, provides a quantitative control on the density of points on the vertical axis which are visited by a geodesic between $(-n,s)$ and $(n,s)$ for some $s$. 
%To translate our assumption on the number of sides in the limit shape into suitable control on the geometry of geodesics we also rely on Lemma \ref{lem:theta_1}, a result from the next section which is the tool by which Proposition~\ref{prop:limit} is proved.

\begin{proof}[Proof of Theorem \ref{thm:density}] We assume that $\sides(\mathcal B_G)>40$.
Let $n\ge 2$ and $m=\lfloor n^{1/24}\rfloor$. 
Set
\[Z_{n,m}:=\bigg\{ t\in\ZZ :  (0,t)\in\bigcup_{ s\in\{0,\dots,m\}}\gamma((-n,s),(n,s))\bigg\}.\]
Using translation invariance, we have
\begin{equation}\label{eq:bound on expectation of Z n m}
\begin{split}
   &m \EE[|Z_{n,m}|]=m \EE\Big[\Big| \big\{ (0,t): t\in\ZZ \big\} \cap  \bigcup_{s\in\{0,\dots,m\}}\gamma((-n,s),(n,s))\Big|\Big]\\
   &\ \ \ \ \le \EE\Big[\Big| \big\{ (x,t):  x\in\{0,\dots,m\}, t\in\ZZ \big\} \cap  \bigcup_{ x,s\in\{0,\dots,m\} }\gamma((-n+x,s),(n+x,s))\Big|\Big].
\end{split}
\end{equation}
Denote $\Lambda_0:= [0,m]^2+(-n,0)$, $\Lambda_1:= [0,m]^2+(n,0)$ and $E:=\{(x,t):  x\in\{0,\dots,m\}, t\in\ZZ\}$.
Let us prove that for any $x,s\in \{0, \dots ,m\}$ the geodesic $\gamma((x-n,s),(x+n,s))$ does not stay too long in the set $E$. Fix $x,s$ in $\{0, \dots ,m\}$. Denote by $z_0=(0,a)$ the first intersection point of the geodesic $\gamma:=\gamma((x-n,s),(x+n,s))$ with the set $E$ and by $z_1=(m,b)$ the last intersection point with $E$ (note that $z_0$ and $z_1$ implicitly depend on $x$ and $s$). We proceed to prove that there exists a constant $\kappa$ depending on $G$ such that with high probability $|b-a|\le \kappa m$. 

To this end, denote by $\theta_1$ the (random) angle that the line between $(x-n,s)$ and $z_0$ forms with the horizontal line.  
By the second part of Proposition~\ref{prop:limit} (when $\theta =0$, $k=2$ and $\ep=1/4$) there exists $\varphi _2 \in (0,\pi /4)$ such that 
\begin{equation}
    \mathbb P\left( \begin{array}{c} w+(-n,0)+\lfloor Re^{i\theta } \rfloor\in \gamma(w+(-n,0),w+(n,0))\\ \text{ for some } w\in \{0,\dots ,m\}^2,  |\theta|\ge\varphi_2 \text{ and } R\ge n^{1/2}\end{array}\right)\le C\exp(-n^{c}),
\end{equation}   
where in here we also union bound over the points in $\Lambda _0$.
\begin{comment}
Thanks to the assumption on the limit shape, there exists $\theta_0\in(0,\pi/4)$ such that $e^{i\theta_0}$ is an extreme point of (a dilation of) the limit shape. This allows us to bound the vertical displacement of the point $z_0$. Let $\delta>0$ such that $\theta_0+\delta\le \pi/4$. 
Following the same reasoning as in the proof of Proposition \ref{prop:limit}, by union bound over points in $\Lambda_0$, we can prove that
\end{comment}
Hence, with probability at least $1-C\exp(-n^{c})$, we have $|\theta_1|\le \varphi_2\le \pi/4$ and therefore $|a-s|\le |x-n||\tan(\theta_1)|\le n$.

% {\color{red} I replaced the old writeup: ``Choosing $\delta>0$ such that $\theta_0+\delta\le \pi/4$ we have by Lemma \ref{lem:theta_1}, with probability at least $1-C\exp(-n^{c_\delta})$, that $|a-s|\le (n-x)\tan(\theta_0+\delta)$.
% Denote by $\theta_1$ the (random) angle that the line between $z_0$ and $(-n+x,s)$ forms with the horizontal line.
% On the above event we have {\color{blue} should the first inequality be an equality?}
% \[|\tan\theta_1|= \frac{|a-s|}{n-x}\le \tan(\theta_0+\delta)\le 1\]
% and $|\theta_1|\le\pi/4$.'' with the shorter formulation above. I hope I didn't miss anything here.}
Denote by $\cE$ the following event
\begin{equation*}
    \cE:=\bigcup_{x,s\in\{0,\dots,m\}}\bigcup_{l:|l-s|\le n}\left\{\! \! \begin{array}{c} (0,l) +\lfloor Re^{i\theta } \rfloor\in \gamma((0,l),(n+x,s))\\ \text{ for some } R\ge n^{2^{-5}+1/128} \text{ and } |\theta|\ge \varphi_5\end{array}\right\}
\end{equation*}
with $\varphi_5\in(\pi/4,\pi/2)$ the angle from Proposition~\ref{prop:limit} for $k=5$ (corresponding to $\theta_5)$.
For $x,s\in\{0,\dots,m\}$ and $|l-s|\le n$, denote by $\theta_2$ the angle that the line between $(0,l)$ and $(n+x,s)$ forms with the horizontal line. We have
\[|\tan\theta_2|= \frac{|l-s|}{n+x}\le 1\]
and $|\theta_2|\le\pi/4$.
Using a union bound over $l$ and $x,s \in \{0,\dots ,m\}$ and Proposition~\ref{prop:limit} with $k=5$, $\ep=1/128$ and $\theta _0=\theta _2$ we have $\mathbb P(\cE)\le C\exp(-n^ {c})$.
%(by a union bound on the points in $\{(0,l):|l-m|\le n\}${\color{red} is this a union bound over the possible options for $z_0$? If so, why is the condition $|l-m|\le n$? is this where you use that $|\theta_1|\le \pi/4$ with high probability? it would help to be explicit about which geodesic $\gamma$ the proposition is applied to}, 
Finally, on the event 
\begin{equation}
 \mathcal G:=\cE^ c\cap \big\{ \forall x,s\in\{0,\dots,m\}\quad z_0\in \{(0,l):|l-s|\le n \}\big\}   
\end{equation}
 since $\|z_0-z_1\|\ge m\ge n^ {1/32+1/128}$ (note that $1/32+1/128< 1/24$), we have
 \[|b-a|\le \kappa m\,\]
 where $\kappa=\tan(\varphi_5)$.

We may now continue~\eqref{eq:bound on expectation of Z n m}. Let $\mathcal C $  be the event that all the geodesics with starting point in  $\Lambda_0$ and ending point in  $\Lambda _1$ coalesce and have the same intersection with $E$. By Theorem \ref{mainthm2bis} with $\epsilon=1/12$, we have $\mathbb P(\mathcal C)\ge 1-C\log ^2n/n^{1/12}$.

By similar computations as in the proof of Theorem \ref{prop:probedgebulk}, we get
\begin{equation*}
\begin{split}
   m \EE[|Z_{n,m}|]&\le C\rho_2\kappa m+ \rho_2\kappa \frac{m^3 C\log ^2 n}{ n^\frac{1}{12}}+ C\rho_2 nm^2 \cdot \exp (-n^c),
\end{split}
\end{equation*}
where the first term is the contribution to the expectation in the right hand side of \eqref{eq:bound on expectation of Z n m} from the event $\mathcal G\cap \mathcal C$, the second term from the event $\mathcal G\cap \mathcal C^c$ and the last term from the event $\mathcal G^c$.
Hence, we get for $n$ large enough $ \EE[|Z_{n,m}|]\le 2C\rho_2\kappa \log ^2 n$.
\begin{comment}
Besides, we have using Lemma \ref{lem:basic geometric control} and invariance under translation 
\begin{equation*}
\begin{split}
    n\PP((0,0)\in \cup_{s\in\ZZ}\gamma((-n,s),(-n,s)))&\le n\PP((0,0)\in \cup_{s\in \ZZ :|s|\le \rho_2 n }\gamma((-n,s),(-n,s)))\\
    &\quad+ Cn\exp(-\log^2 n)\\
    &\le \frac{2\rho_2 n}{m}\EE[|Z_{n,m}|]+ Cn\exp(-\log ^2 n)\le C n ^{1-1/24}\log ^2n.
    \end{split}
\end{equation*}
\end{comment}
Thus,
\begin{equation*}
\begin{split}
    n&\cdot \PP \Big( (0,0)\in \bigcup_{s\in\ZZ}\gamma((-n,s),(n,s)) \Big) \\
    &\le \mathbb E \Big[ \Big| \Big\{ 1\le t\le n :  (0,t)\in \bigcup_{s\in\ZZ}\gamma((-n,s),(n,s)) \Big\} \Big|  \Big]\\
    &\le  \mathbb E \Big[ \Big| \Big\{ 1\le t\le n :  (0,t)\in \bigcup_{|s|\le (1+\rho _2)n}\gamma((-n,s),(n,s)) \Big\} \Big|  \Big] +Ce^{-c\log ^2 n}\\
    &\le \mathbb E \Big[ \Big| \Big\{ t \in \mathbb Z :  (0,t)\in \bigcup_{|s|\le (1+\rho _2)n}\gamma((-n,s),(n,s)) \Big\} \Big|  \Big] +Ce^{-c\log ^2 n} \\
    &\le \frac{C n}{m}\EE[|Z_{n,m}|]+ Ce^{-c\log ^2 n} \le C n ^{1-1/24}\log ^2n.
    \end{split}
\end{equation*}
where in the first inequality we used translation invariance, in the second inequality we used Lemma~\ref{lem:basic geometric control} and in the fourth inequality we used translation invariance once again. This finishes the proof of the theorem.
\begin{comment}
\begin{equation}
\begin{split}
    n\PP((0,0)\in \cup_{s\in \ZZ :|s|\le \rho_2 n }\gamma((-n,s),(-n,s)))&\le \EE[\sum_{k=0}^n \mathds{1}_{(0,k)\in \cup_{s\in \ZZ :|s|\le 2\rho_2 n }\gamma((-n,s),(-n,s)))}]\\&\le \EE[|\{t\in\ZZ: (0,t)\in \cup_{s\in \ZZ :|s|\le 2\rho_2 n }\gamma((-n,s),(-n,s))\}|]
    \end{split}
\end{equation}

Finally, it yields
\begin{equation*}
    \PP((0,0)\in \cup_{s\in\ZZ}\gamma((-n,s),(-n,s)))\le C\frac{\log ^2 n}{n^{1/24}}.
\end{equation*}
\end{comment}
\end{proof}
\subsection{The density of long geodesics starting at the origin}
In this section, we prove Theorem~\ref{thm:density2}, which, for fixed $n$, provides a quantitative control on the density of points on long geodesics from the origin.

\begin{proof}[Proof of Theorem \ref{thm:density2}] 
We first suppose that Assumption~\eqref{eq:assumption not ell1} is satisfied, from which we will conclude that for all integer $n\ge 2$,
\begin{equation}\label{eq:long geodesic density bound}
    \mathbb E\left[\frac{|\mathcal T_{2n}\cap \Lambda_n|}{|\Lambda_n|}\right]\le C n^{-1/8}\log^2 n.
\end{equation}
Afterwards we will consider the alternative possibility. Following these, we will conclude the proof of the theorem.

Assume that~\eqref{eq:assumption not ell1} holds. Suppose, without loss of generality, that $n$ is sufficiently large for the following arguments.
%Denote by $ \gamma(0,x)$ the geodesic constrained to stay inside $\Lambda_{2n}$.
Define 
\begin{equation}\label{eq:mathcal X}
  \mathcal X:=\min \bigg\{|E|: E\subset \partial\Lambda_{2n}, \ \ \mathcal T_{2n} \cap \Lambda_n\subset \bigcup_{x\in E}\gamma(0,x)\bigg\}.  
\end{equation}
Our goal will be to show that $\mathcal X$ has sub-linear size with high probability. We let $E$ be a random set achieving the minimum in \eqref{eq:mathcal X}. In case, there are several such sets, we choose one according to a deterministic rule. Note that, by minimality of $E$, it holds that $\gamma(0,x)\cap \Lambda_n\ne \gamma(0,y)\cap \Lambda_n$ for all $x,y\in E$ with $x\neq y$.

For each $x\in E$, we associate $\tau(x)$, the first point in $\partial \Lambda_{2n}$ hit by $\gamma(0,x)$ (as it is traversed from $0$ to $x$). In particular, if $\gamma(0,x)$ remains inside $\Lambda_{2n}$ then $\tau(x)=x$.
Set 
\[\widetilde E:=\{\tau(x):x\in E\}.\]
First, let us show that with high probability,  $\widetilde E$ is also a set attaining the minimum in the definition of $\mathcal X$. That is, we show that there exists $c>0$, depending only on $G$, such that
\begin{equation}\label{eq:inclusion}
    \mathbb P\left(\bigcup_{x\in \widetilde E}\gamma(0,x)\cap \Lambda_n = \bigcup_{x\in E}\gamma(0,x)\cap \Lambda_n\right)\ge 1-e^{-cn}.
\end{equation}
Indeed, if this event does not occur, then there exists $y\in\partial \Lambda_{n}$ such that $\gamma(0,y)$ intersects $\partial \Lambda_{2n}$. %It follows that the geodesic goes for a long time in an opposite direction (starting from $0$ or from $y$). Inequality \eqref{eq:inclusion} follows from union bound over all $y\in\partial \Lambda_n$ and by applying Lemma \ref{lem:theta_1}.
Inequality~\eqref{eq:inclusion} then follows easily by Talagrand's inequality (Theorem \ref{thm:talagrand}) using that for all $y\in\partial\Lambda_n$ and $z\in\partial \Lambda_{2n}$ we have 
\[ \mu(y)\le \mu(n(e_1+e_2)) \le 2n\mu(e_1)\quad\text{and}\quad  \mu(z)+\mu(y-z)\ge \mu(2n e_1) + \mu(n e_1) = 3n\mu(e_1).\]
Here, $\mu$ is the time constant as defined in Section~\ref{sec:geometry in limit norm}, i.e., the norm for which $\mathcal B_G$ is the unit ball, and the inequalities are a simple consequence of the convexity of $\mathcal B_G$ and its invariance to lattice symmetries (see Claim~\ref{claim:ineqnu} for the second inequality).

Next, we enumerate the points in $\tilde{E}$ along the left boundary $\{-2n\}\times [-2n,2n]$ by $(-2n,y_i)$ where $-2n\le y_1<\dots<y_M\le 2n$. We start by bounding $M$ with high probability. To this end we define the following events. Fix $\rho >0$ sufficiently large so that Proposition~\ref{prop:nobigjumps} holds (this is where we use Assumption~\eqref{eq:assumption not ell1}) and let $m:=\lfloor n^{3/4} \rfloor $. Define the event
\begin{equation}
    \mathcal A := \bigcap _{|y|\le 2n}\big\{ \gamma ( (-2n,y),0 ) \text{ has }(\rho ,m) \text{ bounded slope} \big\}.
\end{equation}
By Proposition~\ref{prop:nobigjumps}, translation invariance and a union bound we have that $\mathbb P (\mathcal A ) \ge 1-\exp (-c\log ^2 n)$.

Next, let $N:=\lceil n /m \rceil -1$ and $r=\frac{\alpha_\rho m^ {1/3}}{\log^2 n}$ where $\alpha_\rho\in(0,1]$ is given in Proposition~\ref{prop:attractive geodesics intro}. For any $1\le k\le m$ define the intervals
\begin{equation}
I_i^k:=
    \begin{cases}
        [-2n,-2n+k+m] \quad i=0  \\
        [-2n+k+(i-1)m,-2n+k+im] \quad   1\le i\le N-2 \\
        [-2n+k+(i-1)m,-n]\quad i=N-1
    \end{cases}.
\end{equation}
Recall the definition of attractive geodesics given in Proposition~\ref{prop:attractive geodesics intro}. Define the event
\begin{equation}
    \mathcal B := \bigcap _{|y|\le 2n} \bigcap _{|x|\le 2n} \bigcap _{k=1}^{m} \Bigg\{ \!\!\! \begin{array}{c}\text{the geodesic $\gamma ( (-2n,y),(-n,x) )$ is either}\\ \text{attractive with respect to the intervals $(I_i^k)_i$ }\\
    \text{or it doesn't have a $(\rho ,m)$ bounded slope }\end{array} \!\!\! \Bigg\}.
\end{equation}
By Proposition~\ref{prop:attractive geodesics intro}, translation invariance and a union bound we have that $\mathbb P (\mathcal B ) \ge 1-\exp (-c\log ^2 n)$.

We claim that on the event $\mathcal A \cap \mathcal B $ we have that $M\le C n^{7/8}\log^2 n$.

To see this, let $\gamma_j =\gamma((-2n,y_j),0)$ for $j\le M$. By definition of $\widetilde E$, the geodesics $\gamma_j$ stay inside the square $\Lambda_{2n}$. Note that the geodesics $\gamma_j$ cannot intersect before reaching $\Lambda_n$, as that would contradict the minimality of $E$. It follows that the geodesics are ordered in the sense that $f_{\gamma_i}(x)< f_{\gamma_j}(x)$ for $i\le j$ and $x\in[-2n,-n]$ (recall that $f_\gamma(x)$ denotes a pioneer point as defined in Section \ref{c}).
We have
\[\sum_{j=1}^{M-1}\sum_{x=-2n}^{-n}f_{\gamma_{j+1}}(x)-f_{\gamma_{j}}(x)=\sum_{x=-2n}^{-n}f_{\gamma_M}(x)-f_{\gamma_1}(x)\le 4n(n+1). \]
By the pigeon-hole principle, it follows that there exists $1\le j_0\le M-1$ for which 
\begin{equation}\label{eq:toncotradictdensity}
    \sum_{x=-2n}^{-n}f_{\gamma_{j_0+1}}(x)-f_{\gamma_{j_0}}(x)\le \frac{4n(n+1)}{M-1}.
\end{equation}
There exists $1\le k_0\le m$ such that
\[\# \big\{ 1\le i\le N-2: f_{\gamma_{j_0+1}}(-2n+k_0+(i-1)m)-f_{\gamma_{j_0}}(-2n+k_0+(i-1)m)\ge r \big\} \le \frac{4n(n+1)}{mr(M-1)} \]
since otherwise it would contradict \eqref{eq:toncotradictdensity}.

Similarly, we have
\begin{equation}
    \begin{split}
        \#&\Big\{ 1\le i\le N-2: \#\{\ell\in I^{k_0}_i: f_{\gamma_{j_0+1}}(\ell)-f_{\gamma_{j_0}}(\ell)\ge r-1\}\ge \frac{m}{4} \Big\} \le \frac{16n(n+1)}{m(r-1)(M-1)}.
    \end{split}
\end{equation}
It follows that among the $(I^{k_0}_i)_i$ there are at most $2+\frac{8n(n+1)}{mr(M-1)}+\frac{16n(n+1)}{m(r-1)(M-1)}\le \frac{25n^2}{mr(M-1)}$ intervals where $\gamma_{j_0}$ and $\gamma_{j_0+1}$ are not $r$-close.

On the event $\mathcal A\cap \mathcal B$, the sub geodesic of $\gamma_{j_0}$ between the points $(-2n,y_{j_0})$ and $(-n,f_{\gamma_{j_0}}(-n))$ shares an edge with any geodesic that is $r$-close to it on at least $\big( 1-\frac{\alpha_\rho}{\sqrt r\log n} \big)N$ of the intervals. Thus, since the geodesics $\gamma_{j_0}$ and $\gamma_{j_0+1}$ don't intersect above the interval $[-2n,-n]$ it follows that
\[\frac{25n^2}{mr(M-1)}\ge \frac{\alpha_\rho}{\sqrt r\log n}N\]
and therefore
\begin{equation}\label{eq:bound on M}
    M\le 1+\frac{25n^2\log n}{\alpha_\rho m\sqrt r N}\le C n^{7/8}\log^2 n
\end{equation}
as long as $n$ is sufficiently large. This shows that $\mathbb P (M> C n^{7/8}\log^2 n) \le e^{-c\log ^2 n}$.
Finally, by \eqref{eq:inclusion} and using the $90^{\circ}$ rotation invariance we obtain
\begin{equation}
\label{eq:boundX}
\begin{split}
\mathbb P(\mathcal X> 4C n^{7/8}\log^2 n)&\le \mathbb P( |\tilde{E}|> 4C n^{7/8}\log^2 n) +e^{-cn} \\
&\le 4 \cdot \mathbb P (M> C n^{7/8}\log^2 n) +e^{-cn}  \le e^ {-c \log^2 n}.
\end{split}
\end{equation}
Using Lemma \ref{lem:basic geometric control} and \eqref{eq:boundX}, we get
\begin{equation}
\begin{split}
    \mathbb E\left[\frac{|\mathcal T_{2n}\cap \Lambda_n|}{|\Lambda_n|}\right]& \le   \mathbb P(\mathcal X> 4C n^{7/8}\log^2 n)+ \mathbb E\Big[\max_{x\in\partial\Lambda_{2n}}|\gamma(0,x)| \Big]  \frac{C n^{7/8}\log^2 n}{n^2} \le C n^{-1/8}\log^2 n.
    \end{split}
\end{equation}
This concludes the proof of~\eqref{eq:long geodesic density bound}, under Assumption~\eqref{eq:assumption not ell1}.
%Moreover, with high probability $\gamma(0,x) $ stays inside $\Lambda_{2n}$. Indeed   

As the next step, we suppose that Assumption~\eqref{eq:assumption not ell1} is violated. In other words, we suppose that the limit shape $\mathcal B_G$ \emph{is} a dilation of the $\ell_1$ unit ball. To continue to apply Proposition~\ref{prop:attractive geodesics intro} and Proposition~\ref{prop:nobigjumps} under this assumption, we will employ a $45$-degree rotation of the $\mathbb{Z}^2$ lattice (as remarked following Proposition~\ref{prop:nobigjumps}).

Denote by $R:\mathbb{Z}^2\to\mathbb{R}^2$ the 45 degree rotation and scaling operation $R(x,y) = (x-y, x+y)$. Denote by $\tilde{\mathbb{Z}}^2:=R(\mathbb{Z}^2)$ the rotated and scaled lattice. Our hypothesis that $\mathcal B_G$ is a dilation of the $\ell_1$ unit ball implies that the limit shape for $\tilde{\mathbb{Z}}^2$ is a dilation of the $\ell_\infty$ unit ball. As remarked after Proposition~\ref{prop:nobigjumps}, the proofs of Proposition~\ref{prop:attractive geodesics intro} and Proposition~\ref{prop:nobigjumps} continue to apply in the rotated coordinate system (modifying the definitions in the beginning of Section~\ref{c} so that they are based on projections to the $x$-coordinate in $\tilde{\mathbb{Z}}^2$). Consequently, we may use the same arguments as in the proof of~\eqref{eq:long geodesic density bound} in order to deduce that for integer $n\ge 2$,
\begin{equation}\label{eq:long geodesic density bound2}
    \mathbb E\left[\frac{|\tilde{\mathcal T}_{2n}\cap \tilde{\Lambda}_n|}{|\tilde{\Lambda}_n|}\right]\le C n^{-1/8}\log^2 n,
\end{equation}
where
\begin{equation}
    \tilde{\Lambda}_n:=[-n,n]^2\cap \tilde{\mathbb{Z}}^2\quad\text{and}\quad\tilde{\mathcal T}_n:=\bigcup_{x\in \partial \tilde{\Lambda}_n}\tilde{\gamma}(0,x)
\end{equation}
and where $\partial\tilde{\Lambda}_n = \{x\in\tilde{\mathbb Z}^2\colon \|x\|_\infty = n\}$ and, for $z,w\in\tilde{\mathbb{Z}}^2$, $\tilde{\gamma}(z,w)$ denotes the geodesic in $\tilde{\mathbb{Z}}^2$ (i.e., $\tilde{\gamma}(z,w):=R(\gamma(R^{-1}z,R^{-1}w))$).

We now conclude the proof of Theorem~\ref{thm:density2}. If Assumption~\eqref{eq:assumption not ell1} is satisfied then the theorem follows directly from~\eqref{eq:long geodesic density bound} by noting that $|\mathcal T_{m}\cap \Lambda_n|$ is non-increasing in $m\ge n$. Suppose that Assumption~\eqref{eq:assumption not ell1} is violated. By inequality~\eqref{eq:long geodesic density bound2} and the fact that $R$ is one-to-one we have for each integer $m\ge 2$,
\begin{equation}\label{eq:long geodesic density bound3}
    \mathbb E\left[\frac{|R^{-1}(\tilde{\mathcal T}_{2m}\cap \tilde{\Lambda}_m)|}{|R^{-1}(\tilde{\Lambda}_m)|}\right]\le C m^{-1/8}\log^2 m.
\end{equation}
It remains to note that, for integer $n\ge 1$,
\begin{equation}\label{eq:containment for long geodesics}
    \frac{|\mathcal T_{4n}\cap \Lambda_n|}{|\Lambda_n|}\le \frac{|\mathcal T_{4n}\cap R^{-1}(\tilde{\Lambda}_{2n})|}{|R^{-1}(\tilde{\Lambda}_n)|}\le \frac{|R^{-1}(\tilde{\mathcal T}_{4n})\cap R^{-1}(\tilde{\Lambda}_{2n})|}{|R^{-1}(\tilde{\Lambda}_n)|}\le C\frac{|R^{-1}(\tilde{\mathcal T}_{4n}\cap \tilde{\Lambda}_{2n})|}{|R^{-1}(\tilde{\Lambda}_{2n})|}.
\end{equation}
where the first inequality uses the inclusions $R^{-1}(\tilde{\Lambda}_n)\subset \Lambda_n\subset R^{-1}(\tilde{\Lambda}_{2n})$, and the second inequality further uses the fact that the endpoints of the geodesics in $R^{-1}(\tilde{\mathcal{T}}_{4n})$ are contained in $\Lambda_{4n}$. Theorem~\ref{thm:density2} follows from~\eqref{eq:containment for long geodesics} and~\eqref{eq:long geodesic density bound3} with $m=2n$.
\end{proof}

\section{From the limit shape to the geometry of geodesics}\label{sec:the limit shape}

In this section, we assume some properties on the limit shape and derive properties of the geodesic in the limiting norm. From these properties, we can control the asymptotic behavior of geodesics. In particular, we show that typically a geodesic does not ``go in the wrong direction for long'' (Proposition \ref{prop:limit}). We also prove Proposition \ref{prop:nobigjumps}.

\subsection{Geometry of the geodesics in the limiting norm} \label{sec:geometry in limit norm}
 In this section, we prove a characterisation of being in the same flat edge of the limit shape and deduce some properties of the geodesics in the limiting norm. 
 The following theorem states that under some mild assumptions on the distribution $G$, one can prove that asymptotically when $n$ is large, the random variable $T(0,nx)$ behaves like $n\cdot \mu(x)$ where $\mu(x)$ is a deterministic constant depending only on the distribution $G$ and the point $x$. More precisely, we have the following theorem.
\begin{namedthm*}{Time constant}\label{thm:defmu}Let $G$ be a distribution such that $\EE [t_e]<\infty$.
There exists a deterministic function $\mu$ on $\mathbb R^2$ depending on $G$ such that
\begin{equation}\label{eq:defmu}
    \forall x \in\ZZ^d\qquad \lim_{n\rightarrow \infty}\frac{T(0,nx)}{n}=\mu(x)\quad\text{a.s. and in $L^1$.}
\end{equation}
The constant $\mu(x)$ is called the time constant of $x$.
\end{namedthm*}
 This constant may be interpreted as an inverse speed in the direction $x$.
Kesten proved in  \cite{Kesten:StFlour} that $\mu$ is a norm if and only if $G(\{0\})<p_c(d)$. Under our assumption \eqref{eq:assumption ii}, the function $\mu$ is a norm. In particular, one can prove that $\mathcal B_G$ is the unit ball for the norm $\mu$.

 We quantify how far a path that ``goes in the wrong direction'' is from being a $\mu$-geodesic.  
 The following claim is a useful property of the time constant. It will be used in the proof of Proposition \ref{prop:nobigjumps}.
 \begin{claim}\label{claim:ineqnu} We have
 \begin{equation}
     \forall x \in\mathbb R ^2\qquad \mu(x)\ge \|x\|_\infty \mu(1,0).
 \end{equation}
 \end{claim}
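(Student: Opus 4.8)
The plan is to deduce the inequality from two facts that are already available in the excerpt: that $\mu$ is a genuine norm on $\mathbb{R}^2$ (which holds under assumption~\eqref{eq:assumption ii}, since then $G(\{0\})=0<p_c(2)$, by Kesten's criterion recalled just above the claim), and that $\mu$ inherits the reflection symmetries of the lattice. The latter is immediate from the defining limit~\eqref{eq:defmu}: for the reflection $\sigma(a,b):=(a,-b)$ one has $T(0,\sigma x)\overset{d}{=}T(0,x)$ for every $x\in\mathbb{Z}^2$ by the i.i.d.\ structure of the weights, hence $\mu(\sigma x)=\mu(x)$ on $\mathbb{Z}^2$, and this extends to all of $\mathbb{R}^2$ by continuity of the norm $\mu$. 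The same reasoning applies to the reflection across the $y$-axis and to the diagonal reflection $(a,b)\mapsto(b,a)$, so in particular $\mu(0,1)=\mu(1,0)$.

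Granting these, the argument is a one-line computation. Fix $x=(x_1,x_2)\in\mathbb{R}^2$ and set $x':=(x_1,-x_2)$. By the reflection symmetry $\mu(x')=\mu(x)$, and by the triangle inequality for $\mu$ together with positive homogeneity,
\[
2|x_1|\,\mu(1,0)=\mu\big((2x_1,0)\big)=\mu(x+x')\le\mu(x)+\mu(x')=2\mu(x),
\]
whence $|x_1|\,\mu(1,0)\le\mu(x)$. Reflecting instead across the $y$-axis (writing $x'':=(-x_1,x_2)$ and using $\mu(0,1)=\mu(1,0)$) gives in the same way $|x_2|\,\mu(1,0)\le\mu(x)$. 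Taking the larger of the two bounds yields $\|x\|_\infty\,\mu(1,0)\le\mu(x)$, which is the claim.

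I do not expect a serious obstacle here; the only points to state carefully are the justification that $\mu$ is a norm (so that the triangle inequality and homogeneity may be invoked) and that $\mu(1,0)=\mu(0,1)$, both of which follow from assumption~\eqref{eq:assumption ii} and the cited results of Kesten. As an equivalent geometric phrasing, one may instead argue: since $\mathcal{B}_G$ is the unit ball of $\mu$, is convex, and is invariant under $(a,b)\mapsto(a,-b)$, any point $(x_1,x_2)\in\mathcal{B}_G$ has its reflection $(x_1,-x_2)$ in $\mathcal{B}_G$, hence the midpoint $(x_1,0)\in\mathcal{B}_G$, forcing $|x_1|\le 1/\mu(1,0)$; combined with the $y$-axis reflection this says $\mathcal{B}_G\subseteq[-1/\mu(1,0),1/\mu(1,0)]^2$, which is the claim after rescaling by $\mu(x)$.
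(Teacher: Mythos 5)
Your proof is correct and uses essentially the same argument as the paper: reflect $x$ across the $x$-axis, apply the triangle inequality to the sum $x+x'$, and invoke the lattice symmetry and homogeneity of $\mu$ (the paper simply assumes WLOG that $|x_1|\ge|x_2|$ and does the single computation $|a|\mu(1,0)=\mu(a,0)\le 2\mu(a/2,b/2)=\mu(a,b)$, whereas you carry out both coordinate bounds and take the maximum). The extra care you take in justifying that $\mu$ is a norm and that $\mu(1,0)=\mu(0,1)$ is welcome but not a departure from the paper's route.
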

 \begin{proof} Set $x=(a,b)\in\mathbb R^2$. Without loss of generality assume $|a|\ge |b|$. By the triangle inequality, symmetry and homogeneity of $\mu$,
\begin{equation}
  |a|\mu(1,0)= \mu(a,0)\leq \mu(a/2,b/2)+\mu(a/2,-b/2)=2\mu(a/2,b/2)=\mu(a,b)
\end{equation}
The result follows.
 \end{proof}
 
 To lighten notation, in this section we shorten $\mathcal B_G$ to $\mathcal B$.
We let $\mathcal B (\theta ) $ be the unique $x$ such that $xe^{i\theta }$ is on the boundary of $\mathcal B$, where as usual we identify $\mathbb R ^2$ with $\mathbb C$.
The unit ball of the norm $\mu $ is $\mathcal B$ and therefore $\mu (Re^{i\theta })=R/\mathcal B (\theta )$.

We say that directions $\theta _1$ and $\theta _2$ are on the same flat edge of the limit shape if the interior of $\mathcal B$ does not intersect the line connecting $\mathcal B (\theta _1 )e^{i\theta _1}$ and $\mathcal B (\theta _2)e^{i\theta _2}$ (this is the line connecting the two points on the boundary of the limit shape that are at angles $\theta _1$ and $\theta _2 $). We say that $\theta $ is a vertex direction if there are two distinct lines passing through $\mathcal B (\theta )e^{i\theta }$ such that the interior of $\mathcal B$ does not intersect any of them (these are precisely the directions in which the limit shape is not differentiable).

\begin{claim}\label{claim:basic claim on limit shape}
  The following three statements are equivalent.
  \begin{enumerate}
      \item 
      The directions $\theta _1$ and $\theta _2$ are on the same flat edge of the limit shape.
      \item 
      For all $R_1,R_2\ge 0$ we have  
\begin{equation}\label{eq:equality in triangle}
    \mu \big( R_1e^{i\theta _1}+R_2e^{i\theta _2} \big) =\mu (R_1e^{i\theta _1})+\mu (R_2e^{i\theta _2}).
\end{equation}
\item 
For some $R_1,R_2>0$ we have  
\begin{equation}
    \mu \big( R_1e^{i\theta _1}+R_2e^{i\theta _2}\big) =\mu (R_1e^{i\theta _1})+\mu (R_2e^{i\theta _2}).
\end{equation}
  \end{enumerate}
\end{claim}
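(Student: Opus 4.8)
Claim~\ref{claim:basic claim on limit shape}: for directions $\theta_1,\theta_2$, the three statements — (1) $\theta_1,\theta_2$ lie on the same flat edge of $\mathcal B$; (2) $\mu(R_1e^{i\theta_1}+R_2e^{i\theta_2}) = \mu(R_1e^{i\theta_1}) + \mu(R_2e^{i\theta_2})$ for all $R_1,R_2\ge0$; (3) the same equality holds for some $R_1,R_2>0$ — are equivalent.

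The plan is to pass from directions to the two boundary points and reduce every statement to the triangle inequality for $\mu$. Write $v_j:=\mathcal B(\theta_j)e^{i\theta_j}$ for $j=1,2$, the points where the rays of angle $\theta_1,\theta_2$ meet $\partial\mathcal B$, so that $\mu(v_1)=\mu(v_2)=1$. By homogeneity of $\mu$, setting $a:=R_1/\mathcal B(\theta_1)=\mu(R_1e^{i\theta_1})$ and $b:=R_2/\mathcal B(\theta_2)=\mu(R_2e^{i\theta_2})$ turns the identity $\mu(R_1e^{i\theta_1}+R_2e^{i\theta_2})=\mu(R_1e^{i\theta_1})+\mu(R_2e^{i\theta_2})$ into $\mu(av_1+bv_2)=a+b$, and as $(R_1,R_2)$ ranges over $[0,\infty)^2$ (resp. over $(0,\infty)^2$) so does $(a,b)$. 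The triangle inequality always gives $\mu(av_1+bv_2)\le a\mu(v_1)+b\mu(v_2)=a+b$, so in each of (2) and (3) the content is the reverse inequality. We may assume $\theta_1\ne\theta_2$, i.e.\ $v_1\ne v_2$, the statement being vacuous otherwise, and we use throughout that $\mathcal B$ is a convex body: it is the unit ball of the norm $\mu$, hence convex, bounded (by Claim~\ref{claim:ineqnu} together with $\mu(1,0)>0$) and of nonempty interior; in particular it admits a supporting line at every boundary point.

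For (1)$\Rightarrow$(2): if $\theta_1,\theta_2$ lie on the same flat edge then the chord $[v_1,v_2]$, which is contained in $\mathcal B$ by convexity, is disjoint from $\mathrm{int}\,\mathcal B$, hence $[v_1,v_2]\subseteq\partial\mathcal B$. Then for $a,b\ge 0$ not both zero the point $\frac{av_1+bv_2}{a+b}$ lies on $\partial\mathcal B$ and therefore has $\mu$-norm exactly $1$, giving $\mu(av_1+bv_2)=a+b$; the case $a=b=0$ is trivial. This is (2), and (2)$\Rightarrow$(3) is immediate (take $R_1=\mathcal B(\theta_1)$, $R_2=\mathcal B(\theta_2)$).

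The main step is (3)$\Rightarrow$(1). Suppose $\mu(a_0v_1+b_0v_2)=a_0+b_0$ for some $a_0,b_0>0$, and set $w:=\frac{a_0v_1+b_0v_2}{a_0+b_0}$, so $\mu(w)=1$ and $w\in\partial\mathcal B$. Let $H$ be a supporting line of $\mathcal B$ at $w$ and $H^{+}$ the closed half-plane bounded by $H$ that contains $\mathcal B$. Then $v_1,v_2\in\mathcal B\subseteq H^{+}$, while the convex combination $w=\frac{a_0}{a_0+b_0}v_1+\frac{b_0}{a_0+b_0}v_2$, with both weights in $(0,1)$, lies on $H=\partial H^{+}$; this forces $v_1,v_2\in H$, since if, say, $v_1$ were in the open half-plane $\mathrm{int}\,H^{+}$ then every convex combination of $v_1$ with a point of $H^{+}$ using a positive weight on $v_1$ would also lie in $\mathrm{int}\,H^{+}$, contradicting $w\in H$. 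Hence $[v_1,v_2]\subseteq H\cap\mathcal B\subseteq\partial\mathcal B$, and the line through $v_1$ and $v_2$ is $H$ itself, which does not meet $\mathrm{int}\,\mathcal B$; this is exactly the assertion that $\theta_1$ and $\theta_2$ lie on the same flat edge. The only non-formal ingredient is the classical convexity fact invoked here — a boundary point of a convex body lying in the relative interior of a chord forces the whole chord onto the boundary — which we extract from the existence of a supporting line; everything else is routine bookkeeping with the norm and its unit ball, so I expect no genuine obstacle.
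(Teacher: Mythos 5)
Your proof is correct and follows essentially the same route as the paper: both reduce the identity to the statement that the convex combination $\frac{av_1+bv_2}{a+b}$ of the boundary points $v_1,v_2$ has $\mu$-norm $1$, i.e.\ lies on $\partial\mathcal B$. The only difference is that you spell out the convexity step in (3)$\Rightarrow$(1) via a supporting line, which the paper leaves implicit; this is a welcome clarification but not a different argument.
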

Note that by the triangle inequality and homogeneity, the right hand side of  \eqref{eq:equality in triangle} is always larger than the left hand side of \eqref{eq:equality in triangle}.
\begin{proof}
Statement (3) clearly follows from (2). We start by showing that (2) follows from (1). Suppose $\theta _1$ and $\theta _2$ are on the same flat edge.  Substituting $\mu (R_1e^{i\theta _1})=R_1/\mathcal B(\theta _1)$ and $\mu (R_2e^{i\theta _2})=R_2/\mathcal B (\theta _1) $ we obtain 
\begin{equation}\label{eq:affine}
    \frac{R_1e^{i\theta _1}+R_2e^{i\theta _2}}{\mu (R_1e^{i\theta _1})+\mu (R_2e^{i\theta _2})}=\frac{R_1\mathcal B(\theta _2)}{R_1\mathcal B(\theta _2)+R_2\mathcal B(\theta _1)} \mathcal B(\theta _1)e^{i\theta _1}+\frac{R_2\mathcal B(\theta _1)}{R_1\mathcal B(\theta _2)+R_2\mathcal B(\theta _1)} \mathcal B(\theta _2)e^{i\theta _2}.
\end{equation}
This is a linear combination of the points  $ \mathcal B(\theta _1)e^{i\theta _1}$  and $ \mathcal B(\theta _2)e^{i\theta _2}$ with coefficients that sum up to $1$. Thus, the point in the left hand side of \eqref{eq:affine} is on the flat edge containing $\theta _1$ and $\theta _2$ and its norm has to be $1$. This proves the equality in \eqref{eq:equality in triangle}.

The proof that (1) follows from (3) is similar. Indeed, if \eqref{eq:equality in triangle} holds for some $R_1$ and $R_2$ then the point in the left hand side of \eqref{eq:affine} is on the boundary of the limit shape. Since the point is on the line containing $\mathcal B(\theta _1)e^{i\theta _1}$  and $ \mathcal B(\theta _2)e^{i\theta _2}$, the angles $\theta _1$ and $\theta _2$ have to be on the same flat edge of the limit shape.
\end{proof}

The following claim is a quantitative version of Claim~\ref{claim:basic claim on limit shape}. Let $0\le \theta _1 <\theta _2 <\pi /2$ be two angles not on the same flat edge or such that $\theta_1$ is a vertex direction. The following claim proves that any path in $\mathbb R ^2$ from $(0,0)$ to $ R_1e^{i\theta _1} $ that contains the point $ R_2e^{i\theta _2}$ is very far from being a $\mu $-geodesic. 

\begin{claim}\label{claim:limit}
 Let $0\le \theta _1 <\theta _2 <\pi /2$ such that either $\theta _1$ and $\theta _2 $ are not on the same flat edge of the limit shape or $\theta _1$ is a vertex direction. There exists a constant $c_4$ depending on the edge distribution, $\theta _1$ and $\theta _2$ such that for all $R_1,R_2$
  \begin{equation}\label{eq:12}
      \mu (R_1e^{i\theta _1}) +c_4 R_2 \le \mu (R_2e^{i\theta _2}) +\mu (R_1e^{i\theta _1}-R_2e^{i\theta _2})
  \end{equation}
  and 
  \begin{equation}\label{eq:13}
      \mu (R_1e^{i\theta _1}) +c_4 R_2 \le \mu (R_2e^{-i\theta _2}) +\mu (R_1e^{i\theta _1}-R_2e^{-i\theta _2}).
  \end{equation}
\end{claim}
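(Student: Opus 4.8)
The plan is to work with the dual (supporting-hyperplane) description of the limit-shape norm. Recall that under \eqref{eq:assumption ii} the function $\mu$ is a norm (discussion after the Time constant theorem, since $G(\{0\})=0<p_c(2)$), with unit ball $\mathcal B$ a centrally symmetric convex body carrying the symmetries of $\mathbb Z^2$. Writing $\mathcal B^\circ$ for its polar body, $\mu(x)=\max_{n\in\mathcal B^\circ}\langle x,n\rangle$, and for a direction $\phi$ I call $n\in\mathcal B^\circ$ a \emph{supporting normal at $\phi$} if $\langle e^{i\phi},n\rangle=\mu(e^{i\phi})$, equivalently if the line $\{x:\langle x,n\rangle=1\}$ supports $\mathcal B$ at $\mathcal B(\phi)e^{i\phi}$. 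Every direction has at least one supporting normal, and $\langle e^{i\psi},n\rangle<\mu(e^{i\psi})$ whenever $n$ is \emph{not} a supporting normal at $\psi$.

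First I would reduce the statement by homogeneity: the case $R_2=0$ is trivial, and dividing \eqref{eq:12} (resp.\ \eqref{eq:13}) by $R_2>0$ and setting $\rho=R_1/R_2\ge0$, it suffices to exhibit $c_4>0$ with $\mu(e^{i\theta_2})+\mu(\rho e^{i\theta_1}-e^{i\theta_2})-\mu(\rho e^{i\theta_1})\ge c_4$ (resp.\ the same with $e^{-i\theta_2}$) for all $\rho\ge0$. The key observation is that if $n$ is a supporting normal at $\theta_1$, then $\mu(\rho e^{i\theta_1})=\rho\langle e^{i\theta_1},n\rangle$ while $\mu(\cdot)\ge\langle\cdot,n\rangle$, so
\[
\mu(e^{i\theta_2})+\mu(\rho e^{i\theta_1}-e^{i\theta_2})-\mu(\rho e^{i\theta_1})\ \ge\ \mu(e^{i\theta_2})-\langle e^{i\theta_2},n\rangle,
\]
and the right-hand side is a positive constant independent of $\rho$ as soon as $n$ is not a supporting normal at $\theta_2$. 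Thus the claim reduces to a geometric sub-lemma: under the hypothesis there is a supporting normal at $\theta_1$ that is not a supporting normal at $\theta_2$, and likewise one that is not a supporting normal at $-\theta_2$ (note $\theta_1\notin\{\theta_2,-\theta_2\}$ as $0\le\theta_1<\theta_2<\pi/2$).

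For the sub-lemma I would split on the hypothesis. If $\theta_1$ is a vertex direction there are two distinct supporting normals $n_1\ne n_2$ at $\theta_1$; if both were supporting normals at some $\phi\ne\theta_1$, the two distinct lines $\{\langle\cdot,n_i\rangle=1\}$ would each contain the two distinct points $\mathcal B(\theta_1)e^{i\theta_1}$ and $\mathcal B(\phi)e^{i\phi}$, which is impossible, so at least one of $n_1,n_2$ works for $\phi=\theta_2$ and for $\phi=-\theta_2$ alike. If $\theta_1$ is not a vertex direction, then by hypothesis $\theta_1,\theta_2$ are not on a common flat edge, and $\theta_1$ has a unique supporting normal $n$ with supporting line $L$. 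If $n$ were a supporting normal at $\theta_2$, then $L$ would contain $\mathcal B(\theta_1)e^{i\theta_1}$ and $\mathcal B(\theta_2)e^{i\theta_2}$, so the segment joining them would lie in $L\cap\mathcal B\subseteq\partial\mathcal B$, a flat edge containing $\theta_1$ and $\theta_2$ --- contradiction; this settles $\phi=\theta_2$. For $\phi=-\theta_2$, if $n$ were a supporting normal at $-\theta_2$ the same reasoning yields a flat edge $E$ of $\mathcal B$ whose angular span contains both $\theta_1$ and $-\theta_2$; since $-\theta_2<0\le\theta_1$, the direction $0$ lies in that span, so $\mathcal B(0)(1,0)\in E$. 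Using that reflection across the $x$-axis is a symmetry of $\mathcal B$, the reflected flat edge $\bar E$ also passes through $\mathcal B(0)(1,0)$; comparing supporting lines there forces $\bar E=E$, so $E$ is reflection-symmetric and hence contains $\theta_2$ as well as $\theta_1$ --- again contradicting the hypothesis (the only borderline case, where $0$ is an endpoint of $E$, can occur only if $\theta_1=0$, and then either $E=\bar E$ gives the same contradiction or $\mathcal B(0)(1,0)$ is a vertex, making $\theta_1=0$ a vertex direction, excluded in this branch). Taking $c_4$ to be the minimum of the two constants produced then proves Claim~\ref{claim:limit}.

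The main obstacle is exactly this last bookkeeping in the $e^{-i\theta_2}$ case: the hypothesis concerns the pair $(\theta_1,\theta_2)$, whereas \eqref{eq:13} secretly involves the pair $(\theta_1,-\theta_2)$ straddling the $x$-axis, so one has to upgrade ``$\theta_1,\theta_2$ not on a common flat edge'' to ``$\theta_1,-\theta_2$ not on a common flat edge'' via the lattice reflection symmetry while carefully disposing of the degenerate configurations $\theta_1=0$ and $0$ being an endpoint of the relevant edge. Everything else --- the homogeneity reduction and the one-line support-function estimate --- is routine.
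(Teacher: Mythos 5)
Your argument is correct, and it is worth comparing with the paper's. The core geometric input is the same: under the hypothesis there is a supporting line at $\mathcal B(\theta_1)e^{i\theta_1}$ that does not contain $\mathcal B(\theta_2)e^{i\theta_2}$ (your case analysis establishing this — two normals at a vertex cannot both support at a second point; a unique normal supporting at both points would produce a common flat edge — is sound). But you then run the estimate in the dual: $\mu(\rho e^{i\theta_1})=\rho\langle e^{i\theta_1},n\rangle$ together with $\mu(\cdot)\ge\langle\cdot,n\rangle$ gives the bound in one line, with $c_4=\mu(e^{i\theta_2})-\langle e^{i\theta_2},n\rangle$. The paper works in the primal picture: it introduces $r_2$ with $r_2e^{i\theta_2}$ on the supporting line $l$, sets $R_1'=R_2\mathcal B(\theta_1)/r_2$, and checks by an explicit affine computation that a rescaling of $R_1e^{i\theta_1}-R_2e^{i\theta_2}$ lies on $l$ and hence has norm at least $1$. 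The two constants coincide (since $\langle e^{i\theta_2},n\rangle=1/r_2$, one has $(\mu(r_2e^{i\theta_2})-1)/r_2=\mu(e^{i\theta_2})-\langle e^{i\theta_2},n\rangle$), so your support-function version is a cleaner packaging of the same inequality~\eqref{eq:12}. The real divergence is in~\eqref{eq:13}, which you correctly identify as the delicate point and then handle by redoing the flat-edge analysis for the pair $(\theta_1,-\theta_2)$, using the reflection symmetry of $\mathcal B$ and disposing of the borderline configurations at the $x$-axis. The paper avoids all of this: it deduces~\eqref{eq:13} directly from~\eqref{eq:12} via the monotonicity $\mu(x,y_1)\le\mu(x,y_2)$ for $|y_1|\le|y_2|$ (a consequence of convexity and the reflection symmetry), which gives $\mu(R_1e^{i\theta_1}-R_2e^{i\theta_2})\le\mu(R_1e^{i\theta_1}-R_2e^{-i\theta_2})$ and $\mu(R_2e^{i\theta_2})=\mu(R_2e^{-i\theta_2})$. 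Your symmetry analysis is correct but considerably longer than this one-line reduction; if you streamline, that is the step to replace.
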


\begin{proof}
The assumption on $\theta _1$ and $\theta _2$ exactly ensures that there exists a line $l$ with a negative or infinite slope, that passes through $\mathcal B (\theta _1) e^{i\theta _1}$, is disjoint from the interior of $\mathcal B$ and does not contain $\mathcal B (\theta _2)e^{i\theta _2}$. Let $r_2 > 0$ be the unique radius for which $r_2e^{i\theta _2}$ is on $l$. Note that $r_2>\mathcal B (\theta _2)$ and therefore $\mu (r_2e^{i\theta _2})>1$. Thus, letting $R_1':=R_2\mathcal B (\theta _1)/r_2$ we have

\begin{equation}\label{eq:10}
    \mu (R_2e^{i\theta _2})-\mu (R_1'e^{i\theta _1} )= R_2/r_2\big( \mu (r_2e^{i\theta _2})-\mu (\mathcal B (\theta _1) e^{i\theta _1} ) \big) = c_4R_2,
\end{equation}
where 
\begin{equation}
    c_4:=\big( \mu (r_2e^{i\theta _2})-1 \big) /r_2
\end{equation}
is independent of $R_1$ and $R_2$.

Next, suppose that $R'_1\neq R_1$. In this case, by substituting the definition of $R_1'$ we get that the point
\begin{equation}
\begin{split}
    \frac{\mathcal B (\theta _1)}{R_1-R_1'}\big( R_1e^{i\theta _1}-R_2e^{i\theta _2} \big)&=\frac{r_2R_1}{r_2R_1-R_2\mathcal B (\theta _1)} \mathcal B (\theta _1) e^{i\theta _1} -\frac{\mathcal B (\theta _1)R_2}{r_2R_1-R_2\mathcal B (\theta _1) } r_2e^{i\theta _2}\\
    &= \mathcal B (\theta _1) e^{i\theta _1}+\frac{\mathcal B (\theta _1)R_2}{r_2R_1-R_2\mathcal B (\theta _1) }(\mathcal B (\theta _1) e^{i\theta _1}-r_2e^{i\theta _2})
    \end{split}
\end{equation}
is on the line $l$. See Figure~\ref{fig5}. 

\begin{figure}[!ht]
\def\svgwidth{0.5\textwidth}
 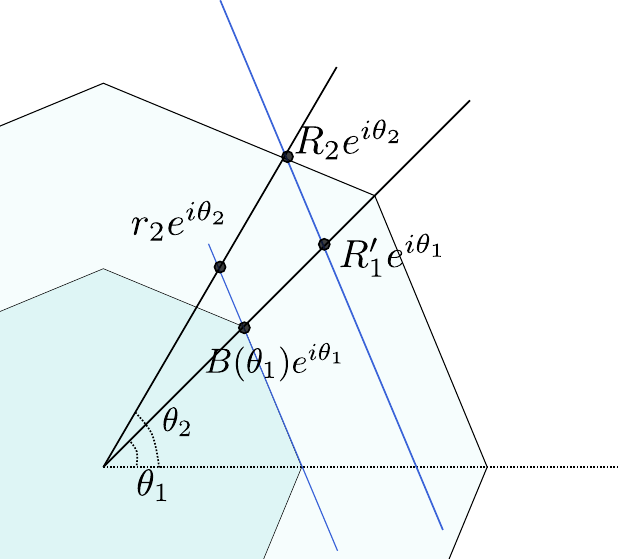
 \caption[fig5]{\label{fig5}Illustration of the proof of Claim \ref{claim:limit} }
\end{figure}
It follows that the norm of this point is at least $1$ and therefore 
\begin{equation}\label{eq:9}
    \mu \big( R_1e^{i\theta _1}-R_2e^{i\theta _2} \big) \ge \frac{|R_1-R_1'|}{\mathcal B (\theta _1)}=\mu \big(  (R_1-R_1')e^{i\theta _1} \big).
\end{equation}
Note that \eqref{eq:9} clearly holds also in the case that $R_1=R_1'$. Combining \eqref{eq:9} and \eqref{eq:10} we obtain 
\begin{equation}
    \mu (R_2e^{i\theta _2})+\mu (R_1e^{i\theta _1}-R_2e^{i\theta _2}) \ge \mu (R'_1e^{i\theta _1} )+c_4R_2+\mu \big( (R_1-R_1')e^{i\theta _1} \big) \ge \mu (R_1e^{i\theta _1})+c_4R_2.
\end{equation}
This finishes the proof of \eqref{eq:12}. The inequality \eqref{eq:13} follows from \eqref{eq:12}. Indeed, for all $x,y_1,y_2$ such that $|y_1|\le |y_2|$ we have that $\mu(x,y_1)\le \mu (x,y_2)$. Thus, $\mu (R_1e^{i\theta _1}-R_2e^{i\theta _2})\le \mu (R_1e^{i\theta _1}-R_2e^{-i\theta _2})$.
\end{proof}
The following claim relates the number of sides in the limit shape to the number of vertex directions in $[0,\pi/4]$.
\begin{claim}\label{claim:4}
  Suppose that the limit shape is not a polygon with $8k$ sides or less. Then, there exist
\begin{equation}
    \frac \pi4<\theta_0<\theta_1<  \dots< \theta _{k}<\frac\pi 2
\end{equation} 
such that for every $j\in\{0,\dots,k-1\}$, $\theta _j$ and $\theta _{j+1}$ are not on the same flat edge of the limit shape. 
\end{claim}

\begin{proof}
If the limit shape is not a polygon then this claim clearly follows. Indeed, in this case one can find infinitely many directions in $[\pi/4,\pi/2)$ such that no two of them are on the same flat edge of the limit shape. Next, suppose that the limit shape is a polygon with strictly more than $8k$ sides. By symmetry, the number of vertex directions is a multiple of $4$. Hence, there are at least $8k+4$ vertex directions. Let $m$ denote the number of vertex directions in $(\pi/4,\pi/2)$. By symmetry, there are at most $8m+8$ vertex directions in total. It follows that $m\ge k$. Let $\varphi_1<\dots<\varphi_k$ be vertex directions in $(\pi/4,\pi/2)$. Fix $\theta_0\in(\pi/4, \varphi_1)$, $\theta_j\in(\varphi_{j},\varphi_{j+1})$ for $j\in\{1,\dots,k-1\}$ and $\theta_{k}\in(\varphi_k,\pi/2)$. Since there exists a vertex direction between two consecutive $\theta_j$, they are not on the same flat edge. This concludes the proof.
\end{proof}
% The following claim relates the number of sides in the limit shape to the number of vertex directions in $[0,\pi/4]$.
% \begin{claim}\label{claim:4}
%   Suppose that the limit shape is not a polygon with $8(k+1)$ sides or less. Let $\theta_0\in[0,\pi/4]$. Then, there are directions 
% \begin{equation}
%     \theta _0 < \theta _1 < \dots< \theta _k<\pi /2
% \end{equation} 
% such that for every $0\le j\le k-1$, either $\theta _j$ and $\theta _{j+1}$ are not on the same flat edge or $\theta _j$ is a vertex direction. 
% \end{claim}

% \begin{proof}
% If the limit shape is not a polygon then this claim clearly follows. Indeed, in this case one can find infinitely many directions in $[\pi/4,\pi/2)$ such that no two of them are on the same flat edge of the limit shape. Next, suppose that the limit shape is a polygon with strictly more than $8(k+1)$ sides. By symmetry, the number of vertex directions is a multiple of $4$. Hence, there are at least $8k+12$ vertex directions. Let $m$ denote the number of vertex directions in $(\pi/4,\pi/2)$. By symmetry, there are at most $8m+8$ vertex directions in total. It follows that $m\ge k+1$. Let $\theta'_1<\dots<\theta'_m$ be the vertex directions in $(\pi/4,\pi/2)$. Set $\theta_i=\theta'_{i+1}$ for $1\le i\le k$. Since there exists a vertex direction between $\theta_0$ and $\theta_1$, $\theta_0$ and $\theta_1$ are not on the same flat edge. This concludes the proof.
% \end{proof}

\subsection{Geometry of geodesics in FPP} In this section we prove Proposition \ref{prop:limit}. We deduce from the results on the geometry of the geodesics in the limiting norm, results on the geometry of geodesics in FPP under some assumption on the limit shape. The following result due to Alexander (Theorem 3.2 in \cite{alexander}), that bounds the ``non-random fluctuations'' of the passage time, enables to make the connection between geodesics in the limiting norm and geodesics in FPP.
\begin{thm}\label{claim:non random}
  If the edge distribution satisfies $\mathbb E [ e^{\alpha t_e}]< \infty $ for some $\alpha >0$. Then, for every $x\in \mathbb Z^2$ with $\|x\|\ge 2$ we have that
  \begin{equation}
      \big| \mathbb E [T (0,x)]-\mu (x) \big| \le C\sqrt{||x||} \log (||x||).
  \end{equation}
\end{thm}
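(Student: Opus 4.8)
\emph{Plan.} The estimate is Theorem~3.2 of Alexander~\cite{alexander}, so the direct route is simply to quote it; for completeness I sketch the structure of a proof. Write $a(x):=\mathbb E[T(0,x)]$. Translation invariance and the triangle inequality give subadditivity, $a(x+y)\le a(x)+a(y)$, so Fekete's subadditivity lemma yields $\mu(x)=\lim_{n\to\infty}a(nx)/n=\inf_{n\ge1}a(nx)/n$; in particular $a(x)\ge\mu(x)$, which is the lower bound in the claim. All the content is in the matching upper bound $a(x)\le\mu(x)+C\sqrt{\|x\|}\log\|x\|$, and the only probabilistic input is Theorem~\ref{thm:talagrand}, which gives $\mathbb P(|T(0,x)-a(x)|\ge t\sqrt{\|x\|})\le Ce^{-ct^2}$ for all $t\ge0$.

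Set $g(x):=a(x)-\mu(x)\ge0$ and $g^*(n):=\max_{\|y\|\le n}g(y)$, so that the target becomes $g^*(n)\le C\sqrt n\log n$. The naive attempt — split the segment from $0$ to $x$ into $k$ roughly equal lattice increments, apply subadditivity, and induct on the scale — only produces $g^*(2n)\le 2g^*(n)+O(1)$ and hence the useless bound $g^*(n)=O(n)$, so the concentration inequality must be used to improve the recursion. Alexander's device is to exploit, at each scale, the freedom in the choice of the intermediate points: one replaces point-to-point passage times by the appropriate point-to-line passage times, which (by convexity of $\mathcal B_G$) still carry the time constant in the direction of $x$, concentrate with fluctuations of order $\sqrt{\|x\|}$ by the bound above, and obey a clean subadditive relation; feeding this into the recursion together with a union bound over the $O(\|x\|^2)$ relevant lattice points forces $g^*$ down to the order $\sqrt n\log n$.

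The delicate point, and the main obstacle, is precisely the execution of this scale recursion: the intermediate scale has to be taken of order $\sqrt{\|x\|}$, and the union bound over lattice points must be balanced against the Gaussian tail in Theorem~\ref{thm:talagrand} so that the logarithmic losses aggregate to a single factor $\log\|x\|$ rather than a power of $\|x\|$. I would carry this out by following Alexander's construction in \cite{alexander}, as I see no shortcut that avoids it.
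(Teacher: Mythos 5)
Your proposal matches the paper exactly: the paper does not prove this statement but simply quotes it as Theorem~3.2 of Alexander~\cite{alexander}, which is precisely your primary route. Your accompanying sketch of Alexander's multiscale/subadditivity argument is accurate but not needed, since the paper treats this as an external input.
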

The following lemma allows us to control the behavior of geodesics using properties of the limit shape. This lemma will be the key ingredient to prove Proposition \ref{prop:limit}.
\begin{lem}\label{lem:theta_1}
Let $0\le \theta _1 <\theta _2 <\pi /2$ satisfy either that $\theta _1$ and $\theta _2 $ are not on the same flat edge of the limit shape or that $\theta _1$ is a vertex direction. Let $\epsilon >0$ and let $\gamma $ be the geodesic from $(0,0)$ to $\lfloor R_1e^{i\theta _1 } \rfloor $. Then,
\begin{equation}\label{eq:exp bound}
\mathbb P \Big(  \lfloor R_2e^{i\theta _2} \rfloor  \in \gamma \text{ or }\lfloor R_2e^{-i\theta _2} \rfloor  \in \gamma  \text{ for some } R_2\ge R_1 ^{1/2+\epsilon } \Big) \le C \exp \big(-R_1^{c_\epsilon}\big),
\end{equation}
where the constant $c_\epsilon $ may depend on $\epsilon $, $\theta _1$ and $\theta _2 $.
\end{lem}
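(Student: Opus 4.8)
The plan is to compare the passage time of $\gamma$ against the deterministic predictions of the limit shape. The analytic inputs are Alexander's non-random fluctuation bound (Theorem~\ref{claim:non random}), which gives $|\mathbb E[T(0,x)]-\mu(x)|\le C\sqrt{\|x\|}\log\|x\|$, and Talagrand's concentration inequality (Theorem~\ref{thm:talagrand}); the geometric input is Claim~\ref{claim:limit}, quantifying that a path through $R_2 e^{\pm i\theta_2}$ is far from being a $\mu$-geodesic to $R_1 e^{i\theta_1}$. Write $a:=\lfloor R_1 e^{i\theta_1}\rfloor$, recall that $\mu$ is a norm under~\eqref{eq:assumption ii} (so $\mu(x)\ge c\|x\|$) and that $\mu(\lfloor x\rfloor)=\mu(x)+O(1)$, and assume $R_1$ large (the statement being trivial otherwise). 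As $R_2$ varies over $[R_1^{1/2+\epsilon},\infty)$ the points $\lfloor R_2 e^{\pm i\theta_2}\rfloor$ range over a discrete set $W$ of lattice points with $\|w\|_1\ge R_1^{1/2+\epsilon}-2$; it thus suffices to bound $\sum_{w\in W}\mathbb P(w\in\gamma)$, which I split into the moderate range $W_1:=\{w\in W:\|w\|_1\le C_0 R_1\}$ and the far range $W\setminus W_1$, for a constant $C_0$ depending only on the limit shape.

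\emph{The far range.} If $w\in\gamma$ then $T(0,w)\le T(0,a)$. By Theorem~\ref{thm:talagrand} with Theorem~\ref{claim:non random}, $T(0,a)\le 2\mu(a)\le CR_1$ off an event of probability $\le Ce^{-cR_1}$, while $\mathbb P\bigl(T(0,w)<\tfrac12\mu(w)\bigr)\le\mathbb P\bigl(|T(0,w)-\mathbb E T(0,w)|>\tfrac14\mu(w)\bigr)\le Ce^{-c\|w\|_1}$ using $\mu(w)^2/\|w\|_1\ge c\|w\|_1$. For $C_0$ large these bounds are incompatible, so $\mathbb P(w\in\gamma)\le Ce^{-c\|w\|_1}+Ce^{-cR_1}$; summing over $w\in W\setminus W_1$ (using that $W$ meets each annulus $\{k\le\|w\|_1<k+1\}$ in $O(1)$ points) gives a total contribution $\le Ce^{-cR_1}$.

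\emph{The moderate range.} Fix $w\in W_1$, so $R_1^{1/2+\epsilon}\le\|w\|_1\le C_0 R_1$ and $\|a\|_1,\|a-w\|_1\le CR_1$. Put $t:=R_1^{\epsilon/2}$ and let $\mathcal G$ be the event that $|T(u,v)-\mathbb E[T(u,v)]|\le t\sqrt{\|u-v\|_1}$ for $(u,v)\in\{(0,a),(0,w),(w,a)\}$; then $\mathbb P(\mathcal G^c)\le Ce^{-cR_1^\epsilon}$ by Theorem~\ref{thm:talagrand}. Suppose $w\in\gamma\cap\mathcal G$. Since subpaths of geodesics are geodesics, $T(0,a)=T(0,w)+T(w,a)$; bounding the left side above and the two right-hand terms below on $\mathcal G$, and replacing each $\mathbb E[T]$ by the corresponding $\mu$ via Theorem~\ref{claim:non random} (with translation invariance for the pair $(w,a)$), gives
\[
\mu(w)+\mu(a-w)\le\mu(a)+C\bigl(\log R_1+t\bigr)\sqrt{R_1}.
\]
On the other hand, Claim~\ref{claim:limit} applied with the radii corresponding to $a$ and $w$ — using~\eqref{eq:12} when $w=\lfloor R_2 e^{i\theta_2}\rfloor$ and~\eqref{eq:13} when $w=\lfloor R_2 e^{-i\theta_2}\rfloor$, and absorbing the $O(1)$ floor errors — gives $\mu(w)+\mu(a-w)\ge\mu(a)+c_4 R_2-C$. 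Combining, $c_4 R_2\le C(\log R_1+t)\sqrt{R_1}\le CR_1^{1/2+\epsilon/2}$ for $R_1$ large, contradicting $R_2\ge R_1^{1/2+\epsilon}$ once $c_4 R_1^{\epsilon/2}>C$. Hence $\mathbb P(w\in\gamma)\le\mathbb P(\mathcal G^c)\le Ce^{-cR_1^\epsilon}$, and a union bound over the $O(R_1)$ points of $W_1$ plus the far-range estimate yields
\[
\mathbb P\bigl(\exists R_2\ge R_1^{1/2+\epsilon}:\ \lfloor R_2 e^{i\theta_2}\rfloor\in\gamma\ \text{or}\ \lfloor R_2 e^{-i\theta_2}\rfloor\in\gamma\bigr)\le CR_1 e^{-cR_1^\epsilon}+Ce^{-cR_1}\le Ce^{-R_1^{c_\epsilon}}
\]
with, say, $c_\epsilon=\epsilon/2$, which is~\eqref{eq:exp bound}.

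\emph{Main obstacle.} There is no deep difficulty here: the genuinely geometric content has already been isolated in Claim~\ref{claim:limit}. The point requiring care is the reduction to $\|w\|_1=O(R_1)$ — so that all the $\sqrt{\cdot}\log(\cdot)$ and Talagrand error terms are of size $R_1^{1/2+o(1)}$ — together with checking that the gain $c_4 R_2\gtrsim R_1^{1/2+\epsilon}$ strictly dominates these errors; one should also note that $\theta_1$ may equal $0$, in which case $a$ lies on the $x$-axis but the argument is unchanged.
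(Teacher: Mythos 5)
Your proof is correct and follows essentially the same route as the paper's: Alexander's bound (Theorem~\ref{claim:non random}) plus Talagrand's inequality to replace the three relevant passage times $T(0,a)$, $T(0,w)$, $T(w,a)$ by their $\mu$-values up to errors of order $R_1^{1/2+\epsilon/2}$, then Claim~\ref{claim:limit} to contradict $w\in\gamma$, then a union bound over the lattice points involved. The only immaterial difference is the far range, where the paper rules out $R_2\ge R_1^2$ via the length bound of Claim~\ref{claim:length} while you compare $T(0,w)$ with $T(0,a)$ directly; both work.
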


We postpone the proof of Lemma~\ref{lem:theta_1} for now and first prove Proposition~\ref{prop:limit}.
\begin{figure}[!ht]
\def\svgwidth{0.8\textwidth}
 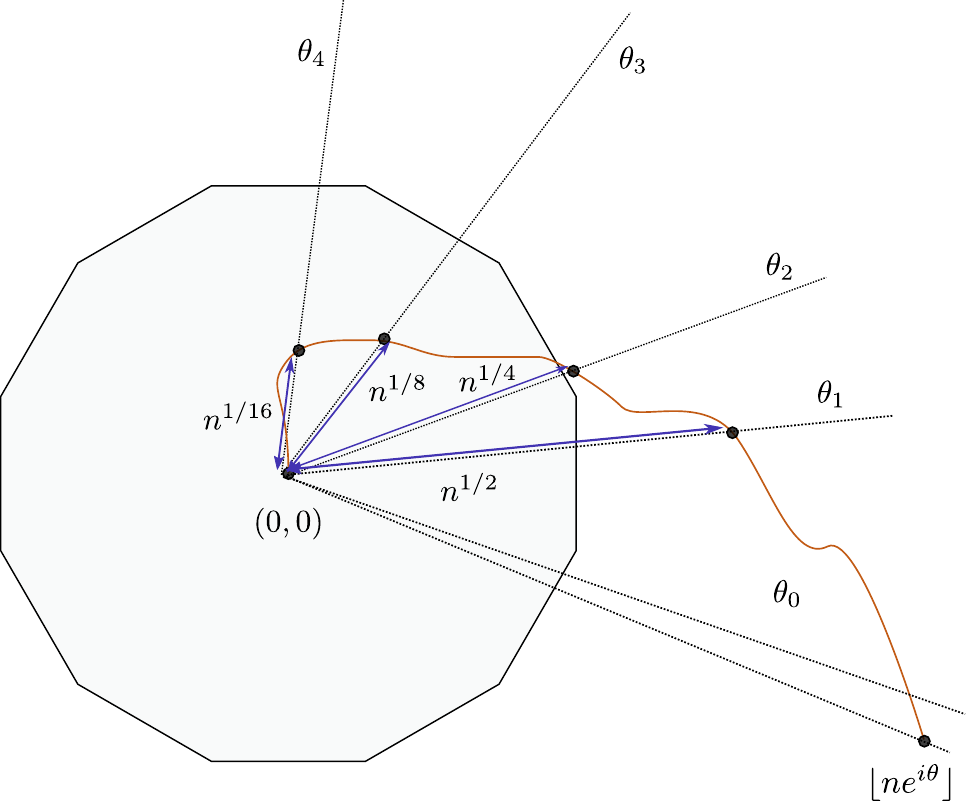
 \caption[fig4]{\label{fig4}The proof of Proposition \ref{prop:limit}. For illustration purposes the angles $\theta_0,\dots,\theta_4$ do not belong to $(\pi/4,\pi/2)$ as in the proof of Proposition \ref{prop:limit}}
\end{figure}
\begin{proof}[Proof of Proposition~\ref{prop:limit}]
 We let $\theta _0<\theta_1<\dots<\theta_{k}$ be the directions from Claim~\ref{claim:4} and 
 let $\theta\in[-\theta_0,\theta_0]$.
 Set $\varphi_0=\theta$ and $\varphi_j=\theta_j$ for $j\in \{1,\dots,k\}$. In particular, two consecutive $\varphi_j$ are not on the same flat edge.
  For all $0\le j\le k$ we let $\gamma _j(R)$ and $\gamma _j'(R)$ be the geodesics from $(0,0)$ to $\lfloor Re^{i\varphi _j} \rfloor$ and from $(0,0)$ to $\lfloor Re^{-i\varphi _j}\rfloor $ respectively. For $0\le j\le k-1$ define the events 
\begin{equation}
    \mathcal A _j=\!\bigg\{ \!\!\! \begin{array}{cc}
        \exists \  n^{2^{-j-1}}\!\!\le R_j\le n ^{2^{-j}+\epsilon },   R_{j+1}\ge n^{2^{-j-1} +\epsilon } \text{ such that }  \lfloor R_{j+1}e^{i\varphi _{j+1}} \rfloor \in \gamma _j(R_j) \text{ or } &  \\
         \lfloor R_{j+1}e^{-i\varphi_{j+1}} \rfloor \in \gamma _j(R_j) \text{ or }\lfloor R_{j+1}e^{i\varphi _{j+1}} \rfloor \in \gamma ' _j(R_j) \text{ or } \lfloor R_{j+1}e^{-i\varphi_{j+1}} \rfloor \in \gamma ' _j(R_j)  & 
    \end{array} \!\!\!\!\!\!\!\!\!\!\! \bigg\} .
\end{equation}
It follows from Lemma~\ref{lem:theta_1} and a union bound that $\mathbb P (\mathcal A _j )\le C\exp (-n^{c_\epsilon })$. 

Next, for $1\le j\le k$ define
\begin{equation}
    \mathcal B_j:= \Big\{ \exists \, 0 \le R_j\le n \text{ with } | \gamma _j(R_j) |\ge  (R_j+1)n^{\epsilon /2} \text{ or } | \gamma ' _j(R_j) |\ge  (R_j+1)n^{\epsilon /2} \Big\}.
\end{equation}
By Claim~\ref{claim:length} we have that $\mathbb P ( \mathcal B _j )\le C\exp (-n^{c_\epsilon })$ for all $j\le k$. 

Next, we claim that 
\begin{equation}\label{eq:15}
    \big\{ \lfloor Re^{i\varphi } \rfloor\in \gamma \text{ for some } R\ge n^{2^{-k-1} +2\epsilon } \text{ and } |\varphi | > \varphi _{k+1} \big\} \subseteq \bigcup _{j=0}^{k-1} \mathcal A _j \cup \bigcup _{j=0}^{k} \mathcal B _j.
\end{equation}
To this end, for $1\le j\le k$ define the sets 
\begin{equation}
    A_j:=\big\{ \lfloor Re^{i\varphi _j} \rfloor : R>0  \big\} \cup \big\{ \lfloor Re^{-i\varphi_{j}} \rfloor : R>0  \big\} \subseteq \mathbb Z ^2.
\end{equation}
Suppose that the event on the left hand side of \eqref{eq:15} holds and let $z\in \gamma $ be a point of the form $z=\lfloor Re^{i\varphi } \rfloor$ with $R\ge n^{2^{-k}+2\epsilon }$ and $|\varphi |>\varphi_{k}=\theta_k$ ($\varphi\in(-\pi,\pi)$). Let $p_j$ be the last intersection of $\gamma $ with $A_j$ before reaching $\lfloor ne^ {i\theta}\rfloor$. Note that it is possible that some of these points are identical. Clearly, on the path $\gamma $ from the origin to $\lfloor ne^ {i\theta}\rfloor$ we first visit $z$ then $p_{k}$, $p_{k-1}$,$\dots$, and lastly $p_1$. Without loss of generality assume that $p_j=\lfloor R_je^{i\varphi _j} \rfloor $.

First consider the case where $R_{k}< n^{2^{-k}+\epsilon}$. Since the geodesic between $0$ and $p_{k}$ contains $z$, it yields that its length is at least $R\ge (R_{k}+1)n^{\ep/2}$ and the event $\mathcal B_{k}$ occurs.
Otherwise, set
\[j_0\coloneqq\min \{j\in\{ 1,\dots,k\}: R_j\ge  n^{2^{-j}+\epsilon}\}. \]
In particular, the minimum is taken over a non-empty set.
If $j_0=1$, then $R_1\ge n^{1/2+\epsilon }$ and since $p_1$ belongs to the geodesic $\gamma =\gamma_0(n)$, the event $\mathcal A_0$ occurs.
If $j_0>1$, then we have $R_{j_0}\ge  n^{2^{-j_0}+\epsilon}$ and $R_{j_0-1}< n^{2^{-j_0+1}+\epsilon}$. We consider two cases, either $R_{j_0-1}\ge n^{2^{-j_0}}$ and the event $\mathcal A_{j_0-1}$ occurs. Otherwise, if $R_{j_0-1}< n^{2^{-j_0}}$, since the geodesic between $0$ and $p_{j_0-1}$ passes through $p_{j_0}$, its length is at least $ n^{2^{-j_0}+\epsilon}\ge (R_{j_0-1}+1)n^{\ep/2}$ and the event $\mathcal B_{j_0}$ occurs.
This proves inequality \eqref{eq:15} and concludes the proof.

When $\theta=0$, we can choose the $\theta_j$ from Claim ~\ref{claim:4} in $(0,\pi/4)$ and use the same arguments to conclude.

% Let us assume that for every $j$ we have $R_{j-1}\ge n^{2^{-j+1}+\epsilon}$. In particular, we have $R_1\ge n^{1/2+\epsilon }$ then $\mathcal A_0$ holds. Otherwise, if $R_{k}\le n^{2^{-k-1}+\epsilon}$, then we have $|\gamma|\ge R\ge n^\ep R_k$ and the event $\mathcal B_k$ occurs. If $R_{k}\ge n^{2^{-k-1}+\epsilon}$, there exists $j\ge 1$ such that $R_j\ge n^{2^{-j}+\epsilon}$ and $R_{j-1}\le n^{2^{-j+1}+\epsilon}$. Fix such $j$. If $R_{j-1}\le n^{2^{-j+1}}$ then $\mathcal B_{j-1} $ holds. If $ n^{2^{-j+1}}\le R_{j-1}\le n^{2^{-j}+\epsilon}$ then $\cA_{j-1}$ holds. This proves inequality \eqref{eq:15} and concludes the proof.
% When $\theta_0=0$, we can choose the $\theta_j$ from Claim ~\ref{claim:4} in $(0,\pi/4)$ and use the same arguments to conclude.
\end{proof}

Next, we turn to prove Lemma~\ref{lem:theta_1}. 

\begin{proof}[Proof of Lemma~\ref{lem:theta_1}]
Fix some $R_2\ge R_1^{1/2+\epsilon }$. There exists $C$ depending on $\mu $ such that $ \big|\mu (R_1e^{i\theta _1})-  \mu (\lfloor R_1e^{i\theta _1} \rfloor) \big|\le C$ and therefore by Theorem~\ref{claim:non random}
\begin{equation}\label{eq:11}
    \big| \mu (R_1e^{i\theta _1})-\mathbb ET(0,\lfloor R_1e^{i\theta _1} \rfloor) \big| \le C +\big| \mu (\lfloor R_1e^{i\theta _1} \rfloor)-\mathbb ET(0,\lfloor R_1e^{i\theta _1} \rfloor ) \big| \le C\sqrt{R_1}\log (R_1).
\end{equation}
Thus, by Talagrand's inequality
\begin{equation}
    \mathbb P \big( \big| T(0,\lfloor R_1e^{i\theta _1} \rfloor)-\mu ( R_1e^{i\theta _1}) \big| \ge R_1^{1/2+\epsilon /2} \big) \le C\exp  \big(-R_1^{c_\epsilon } \big).
\end{equation}
By the same argument we also have 
\begin{equation}
    \mathbb P \big( \big| T(0,\lfloor R_2e^{i\theta _2} \rfloor)-\mu ( R_2e^{i\theta _2}) \big| \ge R_2^{1/2+\epsilon /2} \big) \le C\exp  \big(-R_1^{c_\epsilon } \big).
\end{equation}
and
\begin{equation}
    \mathbb P \big( \big| T \big( \lfloor R_2e^{i\theta _2} \rfloor ,\lfloor R_1e^{i\theta _1} \rfloor \big) -\mu ( R_1e^{i\theta _1}-R_2e^{i\theta _2}) \big| \ge \max (R_1,R_2)^{1/2+\epsilon /2} \big) \le C\exp  \big(-R_1^{c_\epsilon } \big).
\end{equation}
Since $R_2\ge R_1^{1/2+\epsilon }$, on the complement of these bad events, we see that all error terms are negligible compared to $R_2$ and therefore one can switch all the $\mu$ terms in Claim~\ref{claim:limit} with the corresponding passage times and change $c_4$ slightly. Thus, 
\begin{equation} 
T \big( 0, \lfloor R_2e^{i\theta _2} \rfloor \big) +T(\lfloor R_2e^{i\theta _2}\rfloor,\lfloor R_1e^{i\theta _1}\rfloor) \big) \ge c_5R_2 + T \big( 0, \lfloor R_1e^{i\theta _1} \rfloor \big),
\end{equation}
with probability at least $1-C\exp (-R_1^{c_\epsilon })$. Of course, on this event we cannot have $\lfloor R_2e^{i\theta _2} \rfloor \in \gamma $ and therefore 
\begin{equation}
    \mathbb P \big(  \lfloor R_2e^{i\theta _2} \rfloor \in \gamma \big) \le C\exp \big( -R_1^{c_\epsilon }\big).
\end{equation}
Using a union bound over all points of the form $\lfloor R_2e^{i\theta _2} \rfloor $ for some $R_1^{1/2+\epsilon }\le R_2\le R_1^2$ and the fact that with very high probability $|\gamma |\le R_1^2$ and therefore it is unlikely that $\lfloor R_2e^{i\theta _2} \rfloor \in \gamma $ for some $R_2\ge R_1^2$ we get
\begin{equation}
\mathbb P \Big(  \lfloor R_2e^{i\theta _2} \rfloor  \in \gamma \text{ for some } R_2\ge R_1 ^{1/2+\epsilon } \Big) \le C \exp \big(- R_1^{c_\epsilon } \big).
\end{equation}
By the same arguments we also have 
\begin{equation}
\mathbb P \Big(  \lfloor R_2e^{-i\theta _2} \rfloor  \in \gamma \text{ for some } R_2\ge R_1 ^{1/2+\epsilon } \Big) \le C \exp \big(- R_1^{c_\epsilon } \big).
\end{equation}
This finishes the proof of the lemma.
\end{proof}

\subsection{Controlling vertical jumps in the geodesic}\label{part:geometric control}
The aim of this section is to prove that with high probability the geodesic between $(0,0)$ and $(n,s)$ with $|s|\le n
$ does not make large vertical jumps (Proposition \ref{prop:nobigjumps}).

 Let $x\in\mathbb R^2$ and $\theta_1<\theta_2$. Let $\mathrm{Cone}(x,\theta_1,\theta_2)$ denote the cone centered at $x$ between the angle $\theta_1$ and $\theta_2$, that is,
 \begin{equation}
     \mathrm{Cone}(x,\theta_1,\theta_2):=\{ x+re^{i\theta}: r\ge 0, \,\theta\in [\theta_1,\theta_2]\}.
 \end{equation}
 We will need in the proof the following claim. It follows easily from the definition of a cone.
 \begin{claim}\label{claim: cone}
 Let $\theta_0<\theta_1$. For any $x,y\in\mathbb R^2$, we have that 
 $x\in \mathrm{Cone}(y,\theta_0,\theta_1)$ if and only if $y\in \mathrm {Cone}(x,\theta_0+\pi,\theta_1+\pi)$.
 \end{claim}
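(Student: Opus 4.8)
The plan is to simply unwind the definition of a cone; the statement is a direct bookkeeping exercise with angles. Recall that, identifying $\mathbb{R}^2$ with $\mathbb{C}$, the assertion $x\in\mathrm{Cone}(y,\theta_0,\theta_1)$ means by definition that there exist $r\ge 0$ and $\theta\in[\theta_0,\theta_1]$ with $x=y+re^{i\theta}$. First I would rewrite this identity, using $-e^{i\theta}=e^{i(\theta+\pi)}$, as $y=x-re^{i\theta}=x+re^{i(\theta+\pi)}$.

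Next, since $r\ge 0$ and $\theta\in[\theta_0,\theta_1]$ forces $\theta+\pi\in[\theta_0+\pi,\theta_1+\pi]$, the point $y$ is exhibited in precisely the form required for membership in $\mathrm{Cone}(x,\theta_0+\pi,\theta_1+\pi)$. This establishes the forward implication.

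For the converse I would apply the forward implication already proved, but with the roles of $x$ and $y$ interchanged and with the angle pair $(\theta_0,\theta_1)$ replaced by $(\theta_0+\pi,\theta_1+\pi)$: it yields that $y\in\mathrm{Cone}(x,\theta_0+\pi,\theta_1+\pi)$ implies $x\in\mathrm{Cone}(y,\theta_0+2\pi,\theta_1+2\pi)$. The last cone equals $\mathrm{Cone}(y,\theta_0,\theta_1)$ because $\theta\mapsto e^{i\theta}$ is $2\pi$-periodic, so the two angular intervals describe the same set of directions. Combining the two implications gives the claimed equivalence.

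There is essentially no obstacle in this argument; the only point meriting a half-sentence of care is the identification of $\mathrm{Cone}(y,\theta_0+2\pi,\theta_1+2\pi)$ with $\mathrm{Cone}(y,\theta_0,\theta_1)$, which is immediate from $2\pi$-periodicity and which I would state explicitly to keep the angle bookkeeping honest.
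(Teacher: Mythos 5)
Your proof is correct and is exactly the direct unwinding of the definition that the paper intends (the paper states the claim without proof, noting only that "it follows easily from the definition of a cone"). The forward computation $x=y+re^{i\theta}\Leftrightarrow y=x+re^{i(\theta+\pi)}$ together with the $2\pi$-periodicity remark for the converse is all that is needed.
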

Denote by $\mathrm {int}(\cB_G)$ be the directions in the interior of a flat edge:
\[\mathrm {int}(\cB_G):=\{\theta\in[0,2\pi]: \exists \theta_0,\theta_1,\quad \theta\in(\theta_0,\theta_1),  \text{$[\theta_0,\theta_1]$ is a flat edge of $\cB_G$}\}\,.\]
Note that if $\theta\notin\mathrm{int}(\cB_G)$, then it corresponds to an extreme point of the limit shape.
 \begin{proof}[Proof of Proposition \ref{prop:nobigjumps}] We assume here \eqref{eq:assumption not ell1}. Let $n\ge 1$, $|s|\le n$, let $\delta>0$ and $m\ge n ^{1/2+\delta}$. Without loss of generality, we can assume that $s\ge 0$. Let $\gamma$ be the geodesic between $(0,0)$ and $(n,s)$.
  Set \[\theta_0:=\sup\{\theta\in[0,\pi/4]: \theta\notin\mathrm {int}(\cB_G)\}.\]
 Under the assumption \eqref{eq:assumption not ell1} we have $\theta_0>0$. Indeed, otherwise, by symmetry, \[\mathrm {int}(\cB_G)=[0,2\pi)\setminus \left\{0,\frac{\pi}{2},\pi,\frac{3\pi}{2}\right\}\] which implies that the limit shape is a dilation of the $\ell_1$ unit ball.
Set \[\ep:=\frac{\theta_0}{2}\,.\]
 
 We define $[b(\theta),u(\theta)]$ to be the flat edge of $\theta$ for $\theta\in \mathrm {int}(\cB_G)$. We set $u(\theta)=b(\theta):=\theta$ for $\theta\notin \mathrm {int}(\cB_G)$. Note that by symmetry, we always have $u(\theta)-b(\theta)\le \pi/2$.
 
 For $x,y\in \mathbb Z^2$, we denote by $E_{x,y}$ the set of unreachable points for the geodesic between $x$ and $y$ (with high probability the geodesic from $x$ to $y$ does not go through these points), defined as follows. Write $y=x+re^{i\theta}$, set
 \[E_{x,y}:=\left\{x+Re^ {i\theta'}\in \mathbb Z^2: R\ge r^{1/2+\delta}, \theta' \in [0,2\pi]\setminus [b(\theta)-\ep,u(\theta)+\ep]\right\}\,.\]
 Note that this definition is not symmetric in $x$ and $y$.
 Consider the following event
 \begin{equation}
     \cE:=\bigg\{ \!\! \begin{array}{c}
        \forall x,y \in [-n^2,n^2]^2\cap \ZZ^2 \text{ such that } \|x-y\|_2\ge m \text{, the}   \\
          \text{ geodesic between $x$ and $y$ does not  intersect $E_{x,y}$}
     \end{array}  \!\!  \bigg\} .
 \end{equation}
On this event, the geodesics behave well, they don't deviate too much from the optimal direction. Note that in the case of a flat edge, there is a cone of optimal directions.
  It is easy to check thanks to Lemma \ref{lem:theta_1} that for $n$ large enough depending on $\delta$ and $\ep$, by a union bound
 \begin{equation}\label{eq:contEc}
     \PP(\cE^c)\le \exp(-(\log n )^2)\,.
 \end{equation}
 
Next, we show that on the event $\mathcal E\cap\OB $ the geodesic $\gamma $ is $(\rho ,m)$-bounded for some constant $\rho>0$ depending only on the limit shape. Note that on the event $\OB$, for $n$ large enough the geodesic remains inside the box $[-n^2,n^2]^2$.
Let $x=(a,b)$ and $y=(a+d,b+c)$ with $0\le a+d\le n$ and $d\ge m$ such that the geodesic $\gamma$ goes through $x$ and $y$ in this order. We aim to bound $|c|$ on the event $\cE\cap\OB$. Write \[(n,s)=x+R_xe^{i\theta_x}\quad \text{with}\quad R_x\ge 0\quad \text{and}\quad -\frac{\pi}{2}\le\theta_x\le \frac{\pi}{2}\,.\]
Since by symmetry and by definition of $\theta_0$, we have $(\pi/2,\pi/2+\theta_0]\setminus \mathrm {int}(\cB_G)\ne \emptyset$ and $[-\pi/2-\theta_0,-\pi/2)\setminus \mathrm {int}(\cB_G)\ne \emptyset$. Thus, by definition of $b$ and $u$, we have 
 \begin{equation}
     -\frac{\pi}{2}-\theta_0\le b(\theta_x)\le u(\theta_x)\le \frac{\pi}{2}+\theta_0\,.
 \end{equation}
 Similarly, write
 \[0=y+R_ye^{i\theta_y}\quad \text{with}\quad R_y\ge 0\quad \text{and}\quad \frac{\pi}{2}\le\theta_y\le \frac{3\pi}{2}\,.\]
We have
 \begin{equation}
     \frac{\pi}{2}-\theta_0\le b(\theta_y)\le u(\theta_y)\le \frac{3\pi}{2}+\theta_0\,.
 \end{equation}
Recall $\theta_0=2\ep$, since $(b(\theta_x),u(\theta_x))\subset  \mathrm {int}(\cB_G)$ then $(b(\theta_x),u(\theta_x))\cap \{-\pi/2+2\ep,\pi/2-2\ep\}=\emptyset$. Thus, we have three possible cases for $\theta_x$ : $[b(\theta_x),u(\theta_x)]\subset [-\pi/2-2\ep,-\pi/2+2\ep]$, $[b(\theta_x),u(\theta_x)]\subset [-\pi/2+2\ep,\pi/2-2\ep]$ and $[b(\theta_x),u(\theta_x)]\subset [\pi/2-2\ep,\pi/2+2\ep]$.
 First consider the case where $[b(\theta_x),u(\theta_x)]\subset [-\pi/2+2\ep,\pi/2-2\ep]$, then on the event $\cE\cap\OB$, we have
 \[y\in \mathrm {Cone}\left(x,-\frac{\pi}{2}+\ep,\frac{\pi}{2}-\ep\right)\quad\text{and}\quad|c|\le \left|\tan\left(\frac{\pi}{2}-\ep\right)\right|d\,.\] 
Similarly,  we have three possible cases for $\theta_y$ : $[b(\theta_y),u(\theta_y)]\subset [\pi/2-2\ep,\pi/2+2\ep]$, $[b(\theta_y),u(\theta_y)]\subset [\pi/2+2\ep,3\pi/2-2\ep]$ and $[b(\theta_y),u(\theta_y)]\subset [3\pi/2-2\ep,3\pi/2+2\ep]$. In the case where $[b(\theta_y),u(\theta_y)]\subset [\pi/2+2\ep,3\pi/2-2\ep]$, we can prove that on the event $\cE\cap\OB$\[|c|\le \left|\tan\left(\frac{\pi}{2}-\ep\right)\right|d.\]
Finally, let us prove that on the event $\cE\cap \OB$, it is not possible that we have both $[b(\theta_x),u(\theta_x)]\not\subset [-\pi/2+2\ep,\pi/2-2\ep]$ and $[b(\theta_y),u(\theta_y)]\not\subset [\pi/2+2\ep,3\pi/2-2\ep]$.

$\bullet$ Let us first assume that $[b(\theta_x),u(\theta_x)]\subset [-\pi/2-2\ep,-\pi/2+2\ep]$ and $[b(\theta_y),u(\theta_y)]\subset [-\pi/2-2\ep,-\pi/2+2\ep]$. On the event $\cE\cap\OB$, since $x$ is on the geodesic from $0$ to $y$ and $y$ is on the geodesic from $x$ to $(n,s)$, we have $x\in \mathrm{Cone}(y, b(\theta_y)-\ep,u(\theta_y)+\ep)$ and and $y\in \mathrm{Cone}(x, b(\theta_x)-\ep,u(\theta_x)+\ep)$. Though thanks to Claim \ref{claim: cone}, we cannot have both $x\in \mathrm{Cone}(y, -\pi/2-3\ep,-\pi/2+3\ep)$ and $y\in \mathrm{Cone}(x, -\pi/2-3\ep,-\pi/2+3\ep)$ (recall that $3\ep<\pi/2$). It follows that this case cannot occur on the event $\cE\cap \OB$. We conclude similarly that the case $[b(\theta_x),u(\theta_x)]\subset [\pi/2-2\ep,\pi/2+2\ep]$ and $[b(\theta_y),u(\theta_y)]\subset [\pi/2-2\ep,\pi/2+2\ep]$ cannot occur. 

% $\bullet$ Let us assume that $\pi/2-2\ep \in [b(\theta_x),u(\theta_x)]$ and  $\pi/2+2\ep \in [b(\theta_y),u(\theta_y)]$.
%  By definition of $\theta_0$, we have \[[b(\theta_x),u(\theta_x)]\subset\left[\frac{\pi}{2}-\theta_0,\frac{\pi}{2}+\theta_0\right]\subset  \left[\frac{\pi}{4},\frac{3\pi}{4}\right]\]
%  and
%  \[[b(\theta_y),u(\theta_y)]\subset\left[\frac{\pi}{2}-\theta_0,\frac{\pi}{2}+\theta_0\right]\subset  \left[\frac{\pi}{4},\frac{3\pi}{4}\right]\,.\]
% Thanks to Claim \ref{claim: cone}, we cannot have both $x\in \mathrm{Cone}(y, b(\theta_y)-\ep,u(\theta_y)+\ep)$ and $y\in \mathrm{Cone}(x, b(\theta_x)-\ep,u(\theta_x)+\ep)$. It follows that this case cannot occur on the event $\cE\cap \OB$.

$\bullet$ Let us assume that $[b(\theta_x),u(\theta_x)]\subset [-\pi/2-2\ep,-\pi/2+2\ep]$ and $[b(\theta_y),u(\theta_y)]\subset [\pi/2-2\ep,\pi/2+2\ep]$.  
Thanks to Claim \ref{claim: cone}, we have
 \[y\in \mathrm{Cone}(0,b(\theta_y)+\pi,u(\theta_y)+\pi)\]
 with 
$[b(\theta_y)+\pi,u(\theta_y)+\pi]\subset [3\pi/2-2\ep ,3\pi/2+2\ep]$. On the event $\cE\cap\OB$, since $x$ is on the geodesic from $0$ to $y$, we have that \[x\in \mathrm{Cone}(0,b(\theta_y)+\pi-\ep,u(\theta_y)+\pi+\ep)\subset \mathrm{Cone}\left(0,\frac{3\pi}{2}-3\ep,\frac{3\pi}{2}+3\ep\right)\subset (\pi,2\pi)\,. \] 
 In particular, we have $b\le 0$ and since $s\ge 0$ we must have $\theta_x\ge 0$. This is a contradiction with the fact that $[b(\theta_x),u(\theta_x)]\subset [-\pi/2-2\ep,-\pi/2+2\ep]$.
 Hence, this case cannot occur on the event $\cE\cap\OB$. We conclude similarly, that the case $[b(\theta_x),u(\theta_x)]\subset [\pi/2-2\ep,\pi/2+2\ep]$ and $[b(\theta_y),u(\theta_y)]\subset [-\pi/2-2\ep,-\pi/2+2\ep]$
 cannot occur on the event  $\cE\cap\OB$.

 It follows that on the event $\cE\cap \OB$, we have the following control 
  \[|c|\le \left|\tan\left(\frac{\pi}{2}-\ep\right)\right|d\] 
 where $\ep$ depends only on the limit shape. The result follows from \eqref{eq:contEc} together with Lemma \ref{lem:basic geometric control}.

Let us no longer assume \eqref{eq:assumption not ell1}, we now assume that
\begin{equation}\label{eq:condm}
    m\ge |s|+\sqrt{n}\log^2 n
\end{equation}
Let $x=(a,b)$ and $y=(a+d,b+c)$ with $0\le a+d\le n$ and $d\ge m$ such that the geodesic $\gamma$ goes through $x$ and $y$ in this order.
Using Claim \ref{claim:ineqnu}, we have
\begin{equation}
    \mu((n,s))\le (n+|s|)\mu((1,0))
\end{equation}
and 
\begin{equation}
\begin{split}
    \mu(x)+\mu(y-x)+\mu((n,s)-y)&\ge (a+ |c|+n-a-d)\mu((1,0))\\
    &\ge (n+|c|-d)\mu((1,0)).
    \end{split}
\end{equation}
On the event $\OT\cap\OB$ (for $L=n$), thanks to Theorem \ref{claim:non random}, 
we have 
\[T(0,x)+T(x,y)+T(y,(n,s))\ge  (n+|c|-d)\mu((1,0))-6\sqrt{n}\log^2 n\]
and
\[T(0,(n,s))\le (n+|s|)\mu((1,0))+2\sqrt{n}\log^2n\]
It follows that 
\[(n+|s|)\mu((1,0))\ge (n+|c|-d)\mu((1,0))-8\sqrt{n}\log^2 n\]
and by inequality \eqref{eq:condm} and $d\ge m\ge \sqrt n\log^ 2n$, it yields that
\[8\sqrt{n}\log^2 n\ge (|c|-2d-\sqrt{n}\log^2 n)\mu((1,0))\]
and
\[|c|\le (3+8/\mu((1,0)))d.\]
Hence, the geodesic $\gamma$ has $(\rho,m)$-bounded slope on the event $\OT\cap\OB$ with $\rho=(3+8/\mu((1,0)))$.
The result follows from Lemmas \ref{lem:basic geometric control} and \ref{lem:Talagrand control}.
 
 \end{proof}

\section{The number of sides of the limit shape}\label{sec:log}

In this section we prove Theorem~\ref{thm:log over loglog sides}. Recall that $X$ is a random variable supported on $[0,1]$ with $\text{Var}(X)=\sigma ^2$ and let $E:=\mathbb E [X]$.  Let $G_{\epsilon }$ be the distribution of $1+\epsilon X$ and let $\mathcal B _{\epsilon }$ and $\mu _{\epsilon }$ be the limit shape and limiting norm corresponding to $G_{\epsilon }$. The limiting norm of a distribution is defined in Theorem~\ref{thm:defmu}. Recall the definitions and notations at the beginning of Section~\ref{sec:the limit shape}.  We start with the following proposition.
 
\begin{prop}\label{lem:1}
 There exists $\delta _0(\sigma )>0$ such that for all $ \epsilon <\delta _1 <\delta _2<\delta _0 (\sigma )$ with  $\delta _1 \log ^2 (1/\delta _1 )\le \delta _2 $, the directions $(1,\delta _1 )$ and $(1,\delta _2)$ are not on the same flat edge of $\mathcal B _{\epsilon }$. 
\end{prop}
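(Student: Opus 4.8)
Here is the plan.

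\emph{Reduction to a convexity statement.} Write $g(t):=\mu _\epsilon (1,t)$ for $t\ge 0$. By Claim~\ref{claim:basic claim on limit shape} and homogeneity of $\mu _\epsilon$, the directions $(1,\delta _1)$ and $(1,\delta _2)$ lie on the same flat edge of $\mathcal B_\epsilon$ exactly when $g$ is affine on $[\delta _1,\delta _2]$: representing a point of the cone between the two directions as $s_1(1,\delta _1)+s_2(1,\delta _2)$ with $s_1,s_2\ge 0$, the equality in Claim~\ref{claim:basic claim on limit shape}(2) reads $(s_1+s_2)\,g\big(\tfrac{s_1\delta _1+s_2\delta _2}{s_1+s_2}\big)=s_1g(\delta _1)+s_2g(\delta _2)$, which is the affineness of $g$ on $[\delta _1,\delta _2]$. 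Since $\mu _\epsilon$ is a norm, $g$ is convex on $[0,\infty )$, so $\phi (t):=(1+\epsilon E)(1+t)-g(t)$ is \emph{concave} on $[0,\infty )$, and it suffices to show that $\phi $ is not affine on $[\delta _1,\delta _2]$.

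\emph{A two-sided estimate near the direction $(1,0)$.} The heart of the matter is to prove that there are $c_1,C_1>0$ and $\delta _0=\delta _0(\sigma )>0$, depending only on the law of $X$, with
\[
c_1\,\epsilon \sqrt{t}\ \le\ \phi (t)\ \le\ C_1\,\epsilon \sqrt{t\log (1/t)}\qquad (\epsilon <t<\delta _0).
\]
For the \emph{lower} bound on $\phi$ (an upper bound on $g$), I would use that the direction $(1,t)$ carries many short paths: for a large $N$ with $Nt$ and $1/t$ integers (the general case following by Lipschitz continuity of $\mu _\epsilon $), split $[0,N]$ into $Nt$ blocks of width $1/t$ and, in the $j$-th block, let the path rise from height $j-1$ to height $j$ at one of the $1/t$ positions. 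The portions in distinct blocks use disjoint edge sets, hence are independent; within one block the passage time, viewed as a function of the rise position, is a $1$-dependent mean-zero walk of length $1/t$ with increment variance of order $\epsilon ^2\sigma ^2$, so its minimum undercuts its mean by at least $c\,\epsilon \sigma /\sqrt t$. Summing over the $Nt$ blocks and using $\mu _\epsilon (v)\le \mathbb E[T(0,v)]$ gives $g(t)\le (1+\epsilon E)(1+t)-c_1\epsilon \sqrt t$. For the \emph{upper} bound on $\phi$ (a lower bound on $g$), I would pass from $\mu _\epsilon $ to $\mathbb E[T(0,\cdot )]$ up to $O(\sqrt N\log N)$ via Alexander's non-random fluctuation bound (Theorem~\ref{claim:non random}), and then run a union bound over paths using Talagrand's inequality (Theorem~\ref{thm:talagrand}): a fixed path of length $\ell $ has passage time below $\ell (1+\epsilon E)-\lambda $ with probability at most $e^{-c\lambda ^2/(\ell \epsilon ^2\sigma ^2)}$, while the number of paths from $0$ to $(N,Nt)$ of length $N(1+t)+2j$ is at most $\exp \big(O(Nt\log (1/t)+j\log (N/j)+\log N)\big)$; since by Claim~\ref{claim:length} every geodesic here has length at most $3\rho _2N$ with overwhelming probability, choosing $\lambda $ accordingly yields that with probability bounded away from $0$ \emph{no} path beats $N(1+t)(1+\epsilon E)-C\epsilon \sigma N\sqrt{t\log (1/t)}-C\epsilon ^2N\log N$, where the last term is the net cost of the extra length of non-minimal paths. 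Taking $N$ to be a fixed power of $1/t$ makes the errors $O(\log N/\sqrt N)$ and $O(\epsilon ^2\log N)$ negligible against $\epsilon \sqrt{t\log (1/t)}$ when $\delta _0$ is small (using $\epsilon <t$ and that $s\mapsto s^2\log ^2(1/s)$ is increasing), giving the stated lower bound on $g$. The logarithmic gap between the two bounds is exactly what the hypothesis $\delta _1\log ^3(1/\delta _1)\le \delta _2$ will absorb — a polylogarithmic separation is all the scheme needs, and the cube is comfortably enough.

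\emph{Endgame.} Suppose $\phi (t)=p+qt$ on $[\delta _1,\delta _2]$. Being concave and equal there to this line, $\phi $ lies below it everywhere, so $p\ge \phi (0)\ge 0$; also $\phi (\delta _2)>\phi (\delta _1)$ by the two-sided estimate together with $\delta _1\log ^3(1/\delta _1)\le \delta _2$ (which, for $\delta _1<\delta _0$, forces $C_1\sqrt{\delta _1\log (1/\delta _1)}\le \tfrac{c_1}{2}\sqrt{\delta _2}$), hence $q>0$. On $[\delta _1,\delta _2]$ the line $p+qt$ must dominate the concave lower bound $c_1\epsilon \sqrt t$; the difference $p+qt-c_1\epsilon \sqrt t$ is minimized at $t^{\ast}=(c_1\epsilon /2q)^2$, and in each of the cases $t^{\ast}<\delta _1$, $t^{\ast}\in [\delta _1,\delta _2]$, $t^{\ast}>\delta _2$ one extracts a lower bound of the form $q\ge c\,\epsilon /\sqrt{\delta _1\log (1/\delta _1)}$ (with $c$ depending only on $c_1,C_1$), after using $p\le \phi (\delta _1)\le C_1\epsilon \sqrt{\delta _1\log (1/\delta _1)}$. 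On the other hand $q=\phi (\delta _2)/(\delta _2-\delta _1)\le 2C_1\epsilon \sqrt{\log (1/\delta _2)}/\sqrt{\delta _2}$ since $\delta _2-\delta _1\ge \delta _2/2$ and $p\ge 0$. Comparing the two and using $\log (1/\delta _2)\le \log (1/\delta _1)$ reduces the contradiction to $\delta _2/\delta _1>c'\log ^2(1/\delta _1)$, which follows from $\delta _2\ge \delta _1\log ^3(1/\delta _1)$ as soon as $\delta _1<\delta _0$ and $\delta _0(\sigma )$ is chosen small enough (depending only on $c_1,C_1$, hence on $\sigma $). This contradiction finishes the proof.

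\emph{Main obstacle.} The difficult step is the upper bound on $\phi $ (i.e.\ the lower bound on $g$): one has to show that heavily backtracking paths, of which there are exponentially more, cannot outperform the near-minimal paths, and one must make the logarithmic factors and the choice of the auxiliary scale $N$ cooperate uniformly over $\epsilon <t<\delta _0$. Once this and the elementary block construction are in place, the convexity endgame is routine.
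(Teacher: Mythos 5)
Your proposal follows essentially the same route as the paper: the same two-sided estimate $c_\sigma\epsilon\sqrt{\delta}\le(1+\delta)(1+\epsilon E)-\mu_\epsilon(1,\delta)\le C\epsilon\sqrt{\delta\log(1/\delta)}$, obtained from the same block construction (the paper simply takes the minimum of two disjoint paths per block via Berry--Esseen) and the same path-count-plus-concentration union bound, followed by an equivalent way of cashing in the $\sqrt{\log}$ gap (your concavity argument for $\phi(t)$ versus the paper's explicit intermediate direction $\delta_3$). The one detail to repair is taking $N$ to be a fixed power of $1/t$ in the lower bound on $\mu_\epsilon$: since $\epsilon$ may be far smaller than $t$, the $O(\sqrt{N}\log N)$ error from Alexander's theorem need not be negligible against $\epsilon\sqrt{t\log(1/t)}\,N$ at that scale, so one should instead send $N\to\infty$ and use the almost-sure convergence defining $\mu_\epsilon$ (as the paper does), or let $N$ depend on $\epsilon$ as well.
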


We turn to prove Theorem~\ref{thm:log over loglog sides} using the proposition.

\begin{proof}[Proof of Theorem~\ref{thm:log over loglog sides}]
 Suppose that $\mathcal B_\epsilon $ is a polygon and let 
 \begin{equation}
     k:=\Big\lfloor  \frac{\log (1/\epsilon )}{7\log \log (1/\epsilon )} \Big\rfloor
 \end{equation}
 and for all $1\le i \le k$,
 \begin{equation}
     \delta _i :=\epsilon \log ^{3i} (1/\epsilon ).
 \end{equation}
 As long as $\epsilon $ is sufficiently small we have that $\epsilon <\delta _1 \le \cdots \le \delta _k \le \sqrt{\epsilon }$ and that $\delta _i$ and $\delta _{i+1}$ satisfy the assumptions of Proposition~\ref{lem:1} for all $1\le i\le k$. This gives that the directions $(1,\delta _i )$ and $(1,\delta _{i+1})$ are not on the same flat edge of $\mathcal B_\epsilon $ and therefore $\mathcal B_\epsilon$ has at least $k-1$ vertices between the angles $\arg (1,\epsilon )$ and $\arg (1,\sqrt{\epsilon })$. It follows from the symmetries of the lattice that $\mathcal B_\epsilon $ has at least $8(k-1)$ vertices. Indeed, by the reflection symmetry around the $y=x$ line, there are $2(k-1)$ vertices in the first quadrant and by the $90$-degrees rotation symmetry there is at least this number of vertices in all other quadrants. The theorem follows by taking $\epsilon $ sufficiently small.
\end{proof}

In order to prove Proposition~\ref{lem:1} we need the following lemmas. To this end, let $\delta >\epsilon$ and note that 
\begin{equation}\label{eq:trivial upper bound}
\mathbb E \big[ T\big( (0,0), (n,\delta n) \big) \big] \le (1+\delta )(1+\epsilon E)n.    
\end{equation}
Indeed, any fixed path of length $(1+\delta )n$ has expected weight $(1+\delta )(1+\epsilon E)n$. It follows that 
\begin{equation}
    \mu _{\epsilon } (1,\delta )\le (1+\delta )(1+\epsilon E).
\end{equation}
The following lemmas give upper and lower bounds for the difference $(1+\delta )(1+\epsilon E)-\mu_\epsilon (1,\delta )$.

\begin{lem}\label{claim:two directions upper}
  There exists $\delta _0(\sigma )>0$ such that for all $ \epsilon <\delta <\delta _0(\sigma ) $ such that $1/\delta $ is an integer we have 
  \begin{equation}
        \mu _{\epsilon } (1,\delta ) \le  (1+\delta )\big( 1+\epsilon E \big) -c_{\sigma }\epsilon \sqrt{ \delta },
  \end{equation}
where $c_\sigma >0$ is a constant depending only on $\sigma $.
\end{lem}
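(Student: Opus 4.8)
The plan is to exploit the abundance of minimal-length lattice paths in a nearly horizontal direction: a path of length $(1+\delta)n$ from $(0,0)$ to $(n,\delta n)$ has many available routings, and optimizing over a suitable family of them will beat the trivial bound \eqref{eq:trivial upper bound} by an amount of order $\epsilon\sqrt\delta\,n$. Concretely, take $n$ to be a multiple of $1/\delta$, decompose $[0,n]\times[0,\delta n]$ into the $\delta n$ unit-height rectangles $R_k:=[k/\delta,(k+1)/\delta]\times[k,k+1]$ for $0\le k<\delta n$, and inside each $R_k$ consider the two monotone lattice paths from $(k/\delta,k)$ to $((k+1)/\delta,k+1)$: the path $p_k^{\uparrow}$ that makes its single vertical step first and the path $p_k^{\rightarrow}$ that makes it last. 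One checks directly that $p_k^{\uparrow}$ and $p_k^{\rightarrow}$ are edge-disjoint and that the edge sets of all of these $2\delta n$ paths are pairwise disjoint. Let $\pi$ be the random path obtained by concatenating, in each block, whichever of $p_k^{\uparrow},p_k^{\rightarrow}$ carries the smaller $G_\epsilon$-weight; then $|\pi|=(1+\delta)n$ is deterministic and, writing $Y_k^{\uparrow}=\sum_{e\in p_k^{\uparrow}}X_e$ and $Y_k^{\rightarrow}=\sum_{e\in p_k^{\rightarrow}}X_e$, linearity of expectation gives $\mathbb E[T(\pi)]=(1+\delta)n+\epsilon\sum_{k=0}^{\delta n-1}\mathbb E\big[\min(Y_k^{\uparrow},Y_k^{\rightarrow})\big]$.

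Next I would estimate the gain in a single block. Since $p_k^{\uparrow}$ and $p_k^{\rightarrow}$ are edge-disjoint, $Y_k^{\uparrow}$ and $Y_k^{\rightarrow}$ are independent sums of $N:=1/\delta+1$ i.i.d.\ copies of $X$, so both have mean $NE$, and the identity $\min(a,b)=\tfrac12(a+b)-\tfrac12|a-b|$ reduces the task to a lower bound on $\mathbb E|Y_k^{\uparrow}-Y_k^{\rightarrow}|$. The difference $S:=Y_k^{\uparrow}-Y_k^{\rightarrow}=\sum_{e\in p_k^{\uparrow}}(X_e-E)-\sum_{e\in p_k^{\rightarrow}}(X_e-E)$ is a sum of $2N$ independent, mean-zero random variables, each of variance $\sigma^2$ and absolute value at most $1$.

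The main point — and the only genuinely quantitative step — is a reverse moment inequality: for independent mean-zero $Z_1,\dots,Z_M$ with $|Z_i|\le 1$ and $V:=\sum_i\mathrm{Var}(Z_i)\ge 1$ one has $\mathbb E\big|\sum_iZ_i\big|\ge\tfrac12\sqrt V$. I would prove this by a fourth-moment computation: $\mathbb E S^2=V$ while, expanding $S^4$ and using $\mathbb E Z_i^4\le\mathbb E Z_i^2$, one gets $\mathbb E S^4\le V+3V^2\le 4V^2$; Hölder's inequality with exponents $3/2$ and $3$ then yields $\mathbb E S^2\le(\mathbb E|S|)^{2/3}(\mathbb E S^4)^{1/3}$, i.e.\ $\mathbb E|S|\ge(\mathbb E S^2)^{3/2}(\mathbb E S^4)^{-1/2}\ge\tfrac12\sqrt V$. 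Applying this with $M=2N$ and $V=2N\sigma^2$ forces the restriction $\delta\le\delta_0(\sigma):=\sigma^2$ (so that $V=2(1/\delta+1)\sigma^2\ge 1$), and yields $\mathbb E|S|\ge\tfrac{\sigma}{\sqrt2}\sqrt{1/\delta}$, hence $\mathbb E[\min(Y_k^{\uparrow},Y_k^{\rightarrow})]\le NE-\tfrac{\sigma}{2\sqrt2}\sqrt{1/\delta}$.

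Finally, summing over the $\delta n$ blocks and using $\delta n\cdot N=(1+\delta)n$ together with $\delta n\cdot\sqrt{1/\delta}=\sqrt\delta\,n$ gives $\mathbb E[T(\pi)]\le(1+\delta)(1+\epsilon E)n-c_\sigma\,\epsilon\sqrt\delta\,n$ with $c_\sigma:=\tfrac{\sigma}{2\sqrt2}$; since $T((0,0),(n,\delta n))\le T(\pi)$ pointwise, dividing by $n$ and letting $n\to\infty$ through multiples of $1/\delta$ (along which the time constant $\mu_\epsilon(1,\delta)$ is, by homogeneity and the Time Constant Theorem, the limit of $\tfrac1n\mathbb E[T((0,0),(n,\delta n))]$) yields $\mu_\epsilon(1,\delta)\le(1+\delta)(1+\epsilon E)-c_\sigma\epsilon\sqrt\delta$. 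The only bookkeeping to be careful about is identifying exactly which edges belong to which block-path, so as to justify both the within-block independence of $Y_k^{\uparrow},Y_k^{\rightarrow}$ and the disjointness of the blocks; neither presents a real obstacle. (Note that the hypothesis $\epsilon<\delta$ is not needed for this lemma.)
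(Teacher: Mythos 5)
Your construction is exactly the paper's: the same decomposition of the journey into $\delta n$ blocks of width $1/\delta$ and height $1$, the same two edge-disjoint competing paths per block (vertical step first versus last), and the same bound $\mathbb{E}[T_n]\le\sum_k\mathbb{E}[\min(Y_k^{\uparrow},Y_k^{\rightarrow})]$ followed by summation and the $L^1$ statement of the Time Constant Theorem. The only genuine difference is how you quantify the gain from the minimum in a single block. The paper invokes a Berry--Esseen anticoncentration estimate (its Claim~\ref{claim:Berry}) to show that $T(p_i)$ falls below its mean by $\epsilon/\sqrt{\delta}$ with probability bounded below by $c_\sigma$, and then uses independence of the two paths to convert this into a drop of $c_\sigma\epsilon/\sqrt{\delta}$ in $\mathbb{E}[\min]$; you instead write $\min(a,b)=\tfrac12(a+b)-\tfrac12|a-b|$ and lower-bound $\mathbb{E}|Y_k^{\uparrow}-Y_k^{\rightarrow}|$ by $\tfrac12\sqrt{V}$ via the second/fourth-moment reverse H\"older inequality. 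Your moment computation is correct (the condition $V=2(1/\delta+1)\sigma^2\ge1$ is indeed what forces $\delta_0(\sigma)$ to depend on $\sigma$, playing the role of the paper's $M\ge M_0(\sigma)$), and it has the advantage of being entirely self-contained with an explicit constant $c_\sigma=\sigma/(2\sqrt2)$, whereas the Berry--Esseen route yields an inexplicit $c_\sigma$; both give the same $\sqrt{N}\asymp\sqrt{1/\delta}$ scale of fluctuation, which is all that matters. Your closing remark that the hypothesis $\epsilon<\delta$ is not used in this lemma is also correct; it is only relevant for the application in Proposition~\ref{lem:1}.
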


\begin{lem}\label{claim:two directions lower}
 For all $ \epsilon <\delta <1/2 $ such that $1/\delta $ is an integer we have 
  \begin{equation}
        \quad \quad \quad \ \ \mu _{\epsilon } (1,\delta ) \ge  (1+\delta )  \big( 1+\epsilon E  \big) -C\epsilon \sqrt{\delta \log (1/\delta )}.
  \end{equation}
\end{lem}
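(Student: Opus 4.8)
Here I outline how I would prove Lemma~\ref{claim:two directions lower}.

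\textbf{Plan of the argument.} The plan is to bound $\EE[T((0,0),(N,\delta N))]$ from below for $N$ a large multiple of $1/\delta$ — so that $(N,\delta N)\in\ZZ^2$ — and then conclude via the identity $\mu_\epsilon(1,\delta)=\lim_{N\to\infty}\EE[T((0,0),(N,\delta N))]/N$, the limit taken along multiples of $1/\delta$, which follows from the Time Constant theorem applied at the lattice point $(1/\delta,1)$ together with the homogeneity $\mu_\epsilon(1/\delta,1)=\tfrac1\delta\mu_\epsilon(1,\delta)$. Throughout, write the weight of an edge $e$ as $1+\epsilon X_e$ with $(X_e)$ i.i.d.\ copies of $X$, so that $T(p)=|p|+\epsilon S(p)$ for any path $p$, where $S(p):=\sum_{e\in p}X_e$ and $|p|$ is the number of edges. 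Since a fixed monotone path has length $N(1+\delta)$ and weight at most $N(1+\delta)(1+\epsilon)$ (as $X\le1$), the geodesic deterministically satisfies $|\gamma|\in[N(1+\delta),N(1+\delta)(1+\epsilon)]$; more precisely, a path of length $N(1+\delta)+2k$ can be the geodesic only if $2k\le\epsilon N(1+\delta)$, i.e.\ $k\le\epsilon N$, so only $O(\epsilon N)$ values of the length are relevant.

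\textbf{A tail bound.} Next I would establish
\begin{equation}\label{eq:planTail}
\PP\big(T((0,0),(N,\delta N))\le N(1+\delta)(1+\epsilon E)-\epsilon u\big)\le e^{C N\delta\log(1/\delta)}\,e^{-c u^2/N},\qquad u\ge 0,
\end{equation}
with universal constants $C,c>0$, by a union bound over paths combined with Hoeffding's inequality (valid because $X\in[0,1]$). For a path $p$ of relevant length $\ell=N(1+\delta)+2k$, the event in \eqref{eq:planTail} forces $S(p)\le\EE[S(p)]-(u+2k/\epsilon)\le\EE[S(p)]-u$, which has probability at most $e^{-2u^2/\ell}\le e^{-cu^2/N}$; the extra term $2k/\epsilon$ is precisely what lets one absorb the $O(\epsilon N)$ different lengths. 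The number $\mathcal N_k$ of nearest-neighbour paths from $(0,0)$ to $(N,\delta N)$ of length $N(1+\delta)+2k$ is then bounded by an elementary entropy estimate: decompose such a path according to its numbers of $+e_1,-e_1,+e_2,-e_2$ steps and bound the resulting multinomial coefficients by the binary entropy function, using $k\le\epsilon N\le\delta N$ and $\epsilon<\delta$ to absorb the contribution of the ``$k$ backtracking steps'' into $N\delta\log(1/\delta)$. This gives $\mathcal N_k\le e^{CN\delta\log(1/\delta)}$ uniformly over the relevant $k$, provided $\delta$ is smaller than a universal $\delta_0$ and $N$ is large. Summing over the $O(\epsilon N)$ relevant $k$ yields \eqref{eq:planTail}.

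\textbf{Integrating the tail.} Set $W:=N(1+\delta)(1+\epsilon E)-T((0,0),(N,\delta N))$, so that $\EE[T]=N(1+\delta)(1+\epsilon E)-\EE[W]\ge N(1+\delta)(1+\epsilon E)-\EE[W^+]$ and $\EE[W^+]=\epsilon\int_0^\infty\PP(W\ge\epsilon u)\,du$. Using $\PP(W\ge\epsilon u)\le\min\{1,\ e^{CN\delta\log(1/\delta)-cu^2/N}\}$ and splitting the integral at $u_\ast$, the value of $u$ at which the exponent vanishes (of order $N\sqrt{\delta\log(1/\delta)}$), the contribution from $u\le u_\ast$ is at most $u_\ast$, while the bound $u^2-u_\ast^2\ge(u-u_\ast)^2$ turns the tail $u\ge u_\ast$ into a Gaussian integral of size $O(\sqrt N)$. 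Hence $\EE[W^+]\le\epsilon\big(u_\ast+O(\sqrt N)\big)\le C'\epsilon N\sqrt{\delta\log(1/\delta)}$ once $N$ is large (the $O(\sqrt N)$ term being lower order for fixed $\delta$), so $\EE[T((0,0),(N,\delta N))]/N\ge(1+\delta)(1+\epsilon E)-C'\epsilon\sqrt{\delta\log(1/\delta)}-o(1)$; letting $N\to\infty$ proves the claimed inequality for $\delta<\delta_0$. For the remaining range $\delta\in[\delta_0,1/2)$ the trivial bound $\mu_\epsilon(1,\delta)\ge 1+\delta$ (from $T(p)\ge|p|\ge N(1+\delta)$) already gives the statement, since there $(1+\delta)\epsilon E\le C\epsilon\sqrt{\delta\log(1/\delta)}$ for $C$ large enough (using $E\le1$).

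\textbf{Main obstacle.} I expect the combinatorial estimate $\mathcal N_k\le e^{CN\delta\log(1/\delta)}$ to be the point requiring real care: it encodes that near-minimal-length paths to $(N,\delta N)$ are essentially monotone, so their number grows like $e^{(\mathrm{const}\cdot\delta\log(1/\delta))N}$ rather than at the connective-constant rate $e^{(\log\mu_{\mathrm{SAW}})N}$; crude bounds such as $4^\ell$ are hopelessly weak here, so the multinomial/entropy bookkeeping — together with the use of $\epsilon<\delta$ to handle the backtracking steps — must be carried out explicitly.
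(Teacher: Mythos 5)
Your proposal is correct and follows essentially the same route as the paper: both arguments hinge on the entropy bound $e^{CN\delta\log(1/\delta)}$ for the number of near-minimal-length paths to $(N,\delta N)$, combined with Hoeffding/Azuma concentration for the weight of each fixed path and a union bound, after first restricting the geodesic to such paths. The only differences are cosmetic — you restrict the geodesic's length deterministically (via $|\gamma|\le T(\gamma)\le N(1+\delta)(1+\epsilon)$) where the paper uses a Talagrand-based high-probability argument, and you pass to $\mu_\epsilon$ through $L^1$ convergence and tail integration rather than through the almost-sure convergence combined with a high-probability lower bound on $T_n$.
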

 
We start with a proof of Lemma~\ref{claim:two directions upper}. To this end we need the following claim.

\begin{claim}\label{claim:Berry}
  There are constants $c$ and $M_0$ depending only on $\sigma $ such that for all $M\ge M_0$ the following holds. Let $X_1,\dots ,X_M$ be i.i.d. copies of $X$ and let $S:=\sum _{i=1}^M X_i$. Then
  \begin{equation}
      \mathbb P \bigg( \sum _{j=1}^M X_i \le ME - \sqrt{M} \bigg)\ge c.
  \end{equation}
\end{claim}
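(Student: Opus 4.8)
The plan is to deduce the claim directly from the Berry--Esseen theorem. Since $X$ is supported on $[0,1]$, all of its moments are finite; in particular the third absolute central moment $\rho:=\mathbb{E}\big[|X-E|^3\big]\le 1$, and by hypothesis $\sigma^2=\mathrm{Var}(X)>0$. Write $\overline S:=\frac{S-ME}{\sigma\sqrt M}$ for the normalized sum, where $ME=\mathbb{E}[S]$ and $\sigma\sqrt M$ is its standard deviation. The Berry--Esseen theorem provides a universal constant $C_{\mathrm{BE}}>0$ such that
\begin{equation*}
\sup_{t\in\mathbb R}\Big|\mathbb{P}\big(\overline S\le t\big)-\Phi(t)\Big|\le \frac{C_{\mathrm{BE}}\,\rho}{\sigma^3\sqrt M}\le \frac{C_{\mathrm{BE}}}{\sigma^3\sqrt M},
\end{equation*}
where $\Phi$ is the standard normal distribution function.

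Next I would rewrite the event in the claim: the inequality $S\le ME-\sqrt M$ is equivalent to $\frac{S-ME}{\sigma\sqrt M}\le -\frac1\sigma$, i.e.\ to $\{\overline S\le -1/\sigma\}$. Applying the Berry--Esseen bound at $t=-1/\sigma$ therefore gives
\begin{equation*}
\mathbb{P}\big(S\le ME-\sqrt M\big)=\mathbb{P}\big(\overline S\le -1/\sigma\big)\ge \Phi(-1/\sigma)-\frac{C_{\mathrm{BE}}}{\sigma^3\sqrt M}.
\end{equation*}
Since $\Phi(-1/\sigma)>0$ is a strictly positive quantity depending only on $\sigma$, one chooses $M_0=M_0(\sigma)$ large enough that $\frac{C_{\mathrm{BE}}}{\sigma^3\sqrt{M_0}}\le \tfrac12\Phi(-1/\sigma)$; then for every $M\ge M_0$ the right-hand side is at least $c:=\tfrac12\Phi(-1/\sigma)>0$, which is the assertion.

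I do not expect a genuine obstacle: the statement is a soft consequence of the classical central limit theorem, and the quantitative Berry--Esseen input is not even essential — one could instead invoke the CLT directly, noting that $-1/\sigma$ is a continuity point of $\Phi$, so $\mathbb{P}(\overline S\le -1/\sigma)\to\Phi(-1/\sigma)>0$ as $M\to\infty$, whence $\mathbb{P}(\overline S\le -1/\sigma)\ge\tfrac12\Phi(-1/\sigma)$ for all $M$ past some threshold $M_0(\sigma)$. The only point needing (minor) care is to keep the dependence of $M_0$ and $c$ on $\sigma$ explicit, which is exactly why the Berry--Esseen formulation is convenient; the boundedness of $X$ enters only through the finiteness of $\rho$, and any distribution with a finite third moment would serve just as well.
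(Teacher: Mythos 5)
Your proof is correct and is exactly the argument the paper intends: the paper's entire justification for this claim is the single sentence ``Claim~\ref{claim:Berry} follows from the Berry--Esseen Theorem,'' and your write-up simply fills in the routine details (boundedness of $X$ gives a finite third moment, the event rescales to $\{\overline S\le -1/\sigma\}$, and the Berry--Esseen error is absorbed by taking $M_0(\sigma)$ large).
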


Claim~\ref{claim:Berry} follows from the Berry-Esseen Theorem, see \cite[Theorem 3.4.17]{durrett2019probability}.
Throughout the proofs of Lemma~\ref{claim:two directions upper} and Lemma~\ref{claim:two directions lower}  we take $n\ge 1$ sufficiently large depending on all other parameters such that $\delta n$ is an integer and give upper and lower bounds on the passage time 
 \begin{equation}
     T_n:=T\big( (0,0),(n,\delta n) \big).
 \end{equation}
 
 \begin{proof}[Proof of Lemma~\ref{claim:two directions upper}]
Let $\delta _0:=1/M_0$ where $M_0$ is the constant from Claim~\ref{claim:Berry}. Consider the sequence of points $x_i:=(i/\delta , i)\in \mathbb Z ^2$ for $0\le i\le \delta n$. Let $p_i$ be the path from $x_{i-1}$ to $x_i$ that goes one step up and then $1/\delta $ steps to the right and let $q_i$ be the path from $x_{i-1}$ to $x_i$ that goes $1/\delta $ steps to the right and then one step up. Finally, define $\bar{T}_i:=\min (T(p_i),T(q_i))$. By the triangle inequality we have  
\begin{equation}\label{eq:36}
     T_n \le \sum _{i=1}^{\delta n} T(x_{i-1},x_i)  \le  \sum _{i=1}^{\delta n} \bar{T }_i.
\end{equation}
Thus, it suffices to bound the expectation of $\bar{T}_i$. We show that 
\begin{equation}\label{eq:35}
    \mathbb{E}[\bar{T }_i] \le (1+1/\delta)(1+\epsilon E ) -c_{\sigma }\epsilon /\sqrt{\delta }.
\end{equation}
Since $p_i$ and $q_i$ are disjoint and have the same length, the random variables $T(p_i)$ and $T(q_i)$ are independent and identically distributed with $\mathbb E [T(p_i)]=\mathbb E [T(q_i)]=(1+1/\delta )(1+\epsilon E)$. Moreover, these variables equal in distribution to $1+1/\delta $ plus $\epsilon $ times a sum of $1+1/\delta $ i.i.d. variables that are independent of $\delta $ and $\epsilon $. Define the event
\begin{equation}
  \mathcal A :=\big\{ T(p_i) \le (1+1/\delta)(1+\epsilon E ) - \epsilon /\sqrt{\delta } \big\}.
\end{equation}
By Claim~\ref{claim:Berry} with $M:=1+1/\delta $ we have that $\mathbb P (\mathcal A) \ge c_{\sigma }$ and therefore 
 \begin{equation}
 \begin{split}
     \mathbb E \big[ \min \big (  T(p_i),&T(q_i) \big) \big] \le \mathbb E \big[ \mathds 1 _{\mathcal A }  \big( (1+1/\delta) (1+\epsilon E ) - \epsilon /\sqrt{\delta } \big) +\mathds 1 _{\mathcal A ^c}  \cdot T(q_i) \big] \\
     &\le \mathbb P ( \mathcal A )  \cdot \big( (1+1/\delta)(1+\epsilon E ) -  \epsilon /\sqrt{\delta } \big) +\mathbb P (\mathcal A ^c) \cdot  (1+\epsilon E )(1+1/\delta) \\
     &\le (1+1/\delta)(1+\epsilon E ) -c_{\sigma }\epsilon /\sqrt{\delta }.
 \end{split}
 \end{equation}
 This finishes the proof of \eqref{eq:35}. Finally, by \eqref{eq:36} we have that
  \begin{equation}
      \mathbb E \big[  T_n  \big] \le \big( (1+\delta )(1+\epsilon E)-c_{\sigma }\epsilon \sqrt{\delta }  \big)n. 
  \end{equation}
  The lemma follows from the last inequality. 
  \end{proof}
  \begin{proof}[Proof of Lemma~\ref{claim:two directions lower}]
  Let $\mathcal P _{\delta ,n}$ be the set of paths from $(0,0)$ to $(n,\delta n)$ of length at most $(1+2\delta )n$. Each path in $\mathcal P _{\delta ,n}$ has at most $2\delta n$ edges which are not directed to the right. Thus, summing over the possible lengths of the paths we obtain
  \begin{equation}\label{eq:bound on P}
      \big| \mathcal P _{\delta ,n} \big| \le \sum _{k=n}^{n+2\delta n} \binom{k}{2\delta n} 3^{2\delta n} \le e^{C\delta n}\binom{n+2\delta n}{2\delta n} \le \exp \big( C\delta \log (1/\delta ) n \big), 
  \end{equation}
  where the last inequality follows from Stirling's formula.
  Next, for each path $p\in \mathcal P _{\delta ,n }$, the length of $p$ is at least $(1+\delta  )n$  and therefore $\mathbb E [T(p)]\ge (1+\epsilon E)(1+\delta )n$. Thus, by Azuma--Hoeffding inequality for all $C_1>0$ we have 
  \begin{equation}\label{eq:weight1}
    \mathbb P \Big( T(p)\le (1+\delta )(1+\epsilon E)n -C_1\epsilon \sqrt{\delta \log (1/\delta )}n \Big) \le \exp \big( - C_1^2\delta \log (1/\delta )n/5  \big).
  \end{equation}
  Taking $C_1$ sufficiently large and using \eqref{eq:weight1}, \eqref{eq:bound on P} and a union bound, we get that with high probability 
  \begin{equation}\label{eq:20}
      \forall p \in \mathcal P _{\delta ,n} ,\quad T(p)\ge (1+\delta )(1+\epsilon E)n -C_1\epsilon \sqrt{\delta \log (1/\delta )}n.
  \end{equation}
  Finally, we claim that the geodesic $\gamma $ from $(0,0)$ to $(n,\delta n)$ is in $\mathcal P _{\delta ,n}$ with high probability. Indeed, since the weights are almost surely at least $1$, on the event $\big\{ T_n<(1+2\delta )n\big\}$ the length of the geodesic is at most $(1+2\delta )n$. Thus,
  \begin{equation}\label{eq:21}
      \mathbb P \big( \gamma \notin  \mathcal P _{\delta ,n} \big) \le \mathbb P \big( T_n \ge (1+2\delta )n \big) \to 0, \quad n\to \infty 
  \end{equation}
  where the limit follows from \eqref{eq:trivial upper bound} and Talagrand's inequality. Combining \eqref{eq:20} and \eqref{eq:21} we get that with high probability 
  \begin{equation}
      T_n \ge (1+\delta )(1+\epsilon E)n -C_1\epsilon \sqrt{\delta \log (1/\delta )}n.
  \end{equation}
  The lemma follows from this.
 \end{proof}
 
 We turn to prove Proposition~\ref{lem:1}.
 
 \begin{proof}[Proof of Proposition~\ref{lem:1}]
 Let $0<\delta _0<1/2$ sufficiently small and let $ \epsilon <\delta _1 <\delta _2 <\delta _0 $ such that $\delta _1 \log ^2 (1/\delta _1) \le 2\delta _2 $ and such that $1/\delta _1$ and $1/\delta _2$ are integers. It suffices to show that $(1,\delta _1)$ and $(1,\delta _2)$ are not on the same flat edge of $\mathcal B _{\epsilon }$. We take some $\delta _1 <\delta _3<\delta _2 $ whose exact value will be chosen later. We have that 
 \begin{equation}
     (1,\delta _3 )=\frac{\delta _2 -\delta _3 }{\delta _2 -\delta _1 } (1,\delta _1 ) + \frac{\delta _3 -\delta _1 }{\delta _2 -\delta _1 }(1,\delta _2)
 \end{equation}
 and therefore, by the triangle inequality,
 \begin{equation}\label{eq:22}
     \mu_\epsilon (1,\delta _3 )\le \frac{\delta _2 -\delta _3 }{\delta _2 -\delta _1 } \mu_\epsilon (1,\delta _1 ) + \frac{\delta _3 -\delta _1 }{\delta _2 -\delta _1 }\mu_\epsilon (1,\delta _2).
 \end{equation}
 By Claim~\ref{claim:basic claim on limit shape}, the directions $(1,\delta _1)$, $(1,\delta _2)$ are on the same flat edge of the limit shape if and only if \eqref{eq:22} holds as an equality. Thus, it remains to show that, for a suitable choice of $\delta _3$, \eqref{eq:22} holds as a strict inequality. By Lemma~\ref{claim:two directions upper} we have that 
 \begin{equation}\label{eq:23}
     \mu_\epsilon (1,\delta _3) \le (1+\delta _3 )\big( 1+\epsilon E \big) -c\epsilon \sqrt{ \delta _3 }.
 \end{equation}
 By Lemma~\ref{claim:two directions lower} we have
 \begin{equation}\label{eq:24}
 \begin{split}
 \frac{\delta _2 -\delta _3 }{\delta _2 -\delta _1 } &\mu_\epsilon (1,\delta _1 ) + \frac{\delta _3 -\delta _1 }{\delta _2 -\delta _1 }\mu_\epsilon (1,\delta _2) \\
 &\ge \frac{\delta _2 -\delta _3 }{\delta _2 -\delta _1 }  \Big( (1+\delta _1 ) \big( 1+\epsilon E \big)  -C\epsilon \sqrt{\delta _1 \log (1/\delta _1)}  \Big)  +\\
    &\quad \quad \quad \quad \quad \quad \quad \quad \quad +\frac{\delta _3 -\delta _1 }{\delta _2 -\delta _1 } \Big( (1+\delta _2 )  \big( 1+\epsilon E \big)  -C\epsilon \sqrt{\delta _2 \log (1/\delta _2)}  \Big)\\
    &=   (1+\delta _3 )  \big( 1+\epsilon E \big)  -C \epsilon \frac{\delta _2 -\delta _3 }{\delta _2 -\delta _1 }   \sqrt{\delta _1 \log (1/\delta _1 )}-C \epsilon \frac{\delta _3 -\delta _1 }{\delta _2 -\delta _1 }   \sqrt{\delta _2 \log (1/\delta _2 )}\\
    &\ge (1+\delta _3 ) \big( 1+\epsilon E \big)  -C \epsilon  \sqrt{\delta _1 \log (1/\delta _1 )}-C \epsilon \frac{\delta _3  }{\delta _2  }   \sqrt{\delta _2 \log (1/\delta _2 )}
 \end{split}
 \end{equation}
 where in the last inequality we used that $\delta _1 <\delta _3<\delta _2 $. 
 Finally, the right hand side of \eqref{eq:23} is strictly smaller than the right hand side of \eqref{eq:24} as long as 
 \begin{equation}
     \delta _1 \log (1/\delta _1 ) \le c_1 \delta _3 \quad \text{and} \quad  \delta _3 \log (1/\delta _2) \le c_1 \delta _2,
 \end{equation}
 for a sufficiently small $c_1>0$. Such a choice of $\delta _3$ is clearly possible when $\delta _1 \log ^2(1/\delta _1) \le \delta _2$ and $\delta _0$ is sufficiently small.
 \end{proof}

\subsection*{Acknowledgements}The research of B.D. is partially funded by the SNF Grant 175505 and the ERC Starting Grant CriSP (grant agreement No 851565) and is part of NCCR SwissMAP. The research of R.P. is supported by the Israel Science Foundation grant
1971/19, by the European Research Council starting grant 678520 (LocalOrder) and by the European Research Council Consolidator grant 101002733 (Transitions).

Part of this work was completed while R.P. was a Cynthia and Robert Hillas Founders' Circle Member of the Institute for Advanced Study and a visiting fellow at the Mathematics Department of Princeton University. R.P. is grateful for their support.

We thank Paul Dario for important contributions in the early stages of this work. We are grateful to Jean-Baptiste Gou\'er\'e for pointing out a small mistake in the previous proof of~\eqref{eq:MW lower bound}. We thank Itai Benjamini for stimulating discussions and thank Allan Sly and Lingfu Zhang for helpful comments. Finally, we thank the two referees of the paper for excellent comments which greatly improved the presentation.

\bibliographystyle{plain}
\bibliography{biblio}

\end{document}